\documentclass[11pt]{article}
\usepackage[T1]{fontenc}
\usepackage{lmodern}

\usepackage{amsmath,amsthm,amssymb}
\usepackage[usenames,dvipsnames]{xcolor}
\usepackage{enumerate}
\usepackage{graphicx}
\usepackage[nospace,noadjust]{cite}
\usepackage{comment}
\usepackage{oands}
\usepackage{tikz}
\usepackage{changepage}
\usepackage{bbm}
\usepackage{mathtools}
\usepackage[margin=1in]{geometry}
\usepackage[pagewise,mathlines]{lineno}
\usepackage{appendix}
\usepackage{stmaryrd}
\usepackage{multicol}
\usepackage{microtype}
\usepackage[colorinlistoftodos]{todonotes}
\usepackage{dsfont}
\usepackage{mathrsfs}  
\usepackage{multirow}
\usepackage{graphicx,subfigure}
\usepackage{array}
\usepackage{bm}
\usepackage{tipa}
\usepackage{textcomp}
\newcolumntype{P}[1]{>{\centering\arraybackslash}p{#1}}
\usepackage{extarrows}
\usepackage{scalerel}
\usepackage[bottom]{footmisc}

\usepackage{longtable}
\usepackage{upgreek}

\usepackage[pdftitle={Supermultiplicativity},
  pdfauthor={Arka Adhikari, Jacopo Borga, Thomas Budzinski, William Da Silva, Delphin Sénizergues},
colorlinks=true,linkcolor=NavyBlue,urlcolor=RoyalBlue,citecolor=PineGreen,bookmarks=true,bookmarksopen=true,bookmarksopenlevel=2,unicode=true,linktocpage]{hyperref}

\usepackage[capitalise,noabbrev]{cleveref}

\makeatletter
\let\c@table\c@figure
\makeatother

\setcounter{tocdepth}{3}

\theoremstyle{plain}
\newtheorem{thm}{Theorem}[section]
\newtheorem{cor}[thm]{Corollary}
\newtheorem{lem}[thm]{Lemma}
\newtheorem{prop}[thm]{Proposition}
\newtheorem{conj}[thm]{Conjecture}

\def\@rst #1 #2other{#1}
\newcommand\MR[1]{\relax\ifhmode\unskip\spacefactor3000 \space\fi
  \MRhref{\expandafter\@rst #1 other}{#1}}
\newcommand{\MRhref}[2]{\href{http://www.ams.org/mathscinet-getitem?mr=#1}{MR#2}}

\newcommand{\dd}{\mathrm{d}}

\theoremstyle{definition}
\newtheorem{defn}[thm]{Definition}
\newtheorem{remark}[thm]{Remark}

\numberwithin{equation}{section}

\newcommand{\dsb}{\begin{adjustwidth}{2.5em}{0pt}
\begin{footnotesize}}
\newcommand{\dse}{\end{footnotesize}
\end{adjustwidth}}

\newcommand{\ssb}{\begin{adjustwidth}{2.5em}{0pt}}
\newcommand{\ssse}{\end{adjustwidth}}

\newcommand{\aryb}{\begin{eqnarray*}}
\newcommand{\arye}{\end{eqnarray*}}
\def\alb#1\ale{\begin{align*}#1\end{align*}}
\def\allb#1\alle{\begin{align}#1\end{align}}
\newcommand{\eqb}{\begin{equation}}
\newcommand{\eqe}{\end{equation}}
\newcommand{\eqbn}{\begin{equation*}}
\newcommand{\eqen}{\end{equation*}}

\newcommand{\rta}{\rightarrow}

\newcommand{\mcl}{\mathcal}

\newcommand{\eps}{\varepsilon}
\newcommand{\lis}{\mathrm{LIS}}
\newcommand{\lcl}{\mathrm{LCL}}
\newcommand{\lin}{\mathrm{LIN}}
\newcommand{\dist}{\mathrm{d}}

\newcommand{\Law}{\mathrm{Law}}

\newcommand{\Pp}[1]{\mathbb{P} \left(#1\right)}

\DeclareMathOperator{\Perm}{Perm}
\DeclareMathOperator{\Graph}{Graph}

\newcommand{\bbP}{\mathbb{P}}
\newcommand{\bbE}{\mathbb{E}}

\newcommand{\N}{\mathbb{N}}
\newcommand{\bbR}{\mathbb{R}}

\newcommand{\efrak}{\mathfrak{e}}
\newcommand{\sfrak}{\mathfrak{s}}



\newcommand{\Ec}[1]{\mathbb{E} \left[#1\right]}

\newcommand{\Pc}[1]{\mathbb{P} \left[#1\right]}
\newcommand{\Ppp}[2]{\mathbb{P}_{#1} \left(#2\right)}
\newcommand{\Pppt}[2]{\widetilde{\mathbb{P}}_{#1} \left(#2\right)}
\newcommand{\Pppsq}[3]{\mathbb{P}_{#1} \left(#2\mathrel{}\middle|\mathrel{}#3\right)}
\newcommand{\Ecsq}[2]{\mathbb{E} \left[#1\mathrel{}\middle|\mathrel{}#2\right]}
\newcommand{\Ecpsq}[3]{\mathbb{E}_{#1} \left[#2\mathrel{}\middle|\mathrel{}#3\right]}

\newcommand{\Ppsq}[2]{\mathbb{P} \left(#1\mathrel{}\middle|\mathrel{}#2\right)}
\newcommand{\Ecp}[2]{\mathbb{E}_{#1} \left[#2\right]}
\newcommand{\Ecpt}[2]{\widetilde{\mathbb{E}}_{#1} \left[#2\right]}
\newcommand{\Var}[1]{\mathrm{Var}\left(#1\right)}

\newcommand{\Ff}{\mathcal{F}}
\newcommand{\Gf}{\mathcal{G}}
\newcommand{\revF}{\overleftarrow{\mathcal{F}}}

\newcommand{\indicator}[1]{\mathds{1}_{#1}}

\newcommand{\intervalle}[4]{\mathopen{#1}#2
                            \mathclose{}\mathpunct{},#3
                            \mathclose{#4}}
\newcommand{\intervalleff}[2]{\intervalle{[}{#1}{#2}{]}}

\newcommand{\intervallefo}[2]{\intervalle{[}{#1}{#2}{)}}
\newcommand{\intervalleoo}[2]{\intervalle{(}{#1}{#2}{)}}
\newcommand{\intervalleentier}[2]{\intervalle\llbracket{#1}{#2}
                               \rrbracket}
\newcommand{\enstq}[2]{\left\lbrace#1\mathrel{}\middle|\mathrel{}#2\right\rbrace}
 
\let\originalleft\left
\let\originalright\right
\renewcommand{\left}{\mathopen{}\mathclose\bgroup\originalleft}
\renewcommand{\right}{\aftergroup\egroup\originalright}

\DeclareMathOperator{\LIS}{LIS}

\DeclareMathOperator{\LIN}{LIN}
\DeclareMathOperator{\LCL}{LCL}
\DeclareMathOperator{\Leb}{Leb}

\DeclareMathOperator{\divv}{div}
\DeclareMathOperator{\mer}{mer}
\DeclareMathOperator{\lmax}{lmax}
\newcommand{\TLis}[1]{T^{#1}}
\newcommand{\DMC}{M}
\newcommand{\lfa}{\raisebox{1.5pt}{\smash{\scaleto{\leftarrow}{2pt}}}}
\newcommand{\rga}{\raisebox{1.5pt}{\smash{\scaleto{\rightarrow}{2pt}}}}
\newcommand{\bunderline}[2][4]{\underline{#2\mkern-#1mu}\mkern#1mu}
\newcommand{\tpr}{\pi}
\newcommand{\stt}{H}
\newcommand{\sss}{s}
\newcommand{\sttm}{\tau}
\newcommand{\unc}{\mathrm{u}}
\newcommand{\Dir}{\mathrm{Dir}}
\newcommand{\DirM}{\mathrm{DirMult}}
\newcommand{\Mst}[1]{M^*_{#1}}
\newcommand{\stmer}{\mathfrak{t}_{\mer}}
\newcommand{\stmerpr}{\mathfrak{t}'_{\mer}}
\newcommand{\stdivv}{\mathfrak{h}_{\divv}}
\newcommand{\xmer}{V_{\mer}}
\newcommand{\lrv}{X}
\newcommand{\uleaf}[1]{L_{#1}}
\newcommand{\uedge}[1]{E_{#1}}
\newcommand{\usign}[1]{S_{#1}}
\newcommand{\svfq}{\varphi}
\newcommand{\svfbQ}{\Phi}
\newcommand{\resproc}{\mathcal{X}}
\newcommand{\auxproc}{\mathscr{X}}
\newcommand{\teps}{\mathcal{T}_\eps}
\newcommand{\stpmg}{\uptau}
\newcommand{\evnt}{\mathcal{E}}
\newcommand{\svx}{x}
\newcommand{\symcst}{0.901}
\newcommand{\leftevent}{\mathcal{H}'_{\ell}}

\newcommand{\sth}{\eta}
\newcommand{\stht}[1]{%
  \mathchoice
    {
      \eta_{\lower 0.3ex \hbox{$\scriptstyle #1$}}
    }{
      \eta_{\lower 0.3ex \hbox{$\scriptstyle #1$}}
    }{
      \eta_{\lower 0.3ex \hbox{$\scriptscriptstyle #1$}}
    }{
      \eta_{\lower 0.3ex \hbox{$\scriptscriptstyle #1$}}
    }%
}


\newcommand{\sthtpr}[1]{%
  \mathchoice
    {
      \eta'_{\lower 0.3ex \hbox{$\scriptstyle #1$}}
    }{
      \eta'_{\lower 0.3ex \hbox{$\scriptstyle #1$}}
    }{
      \eta'_{\lower 0.3ex \hbox{$\scriptscriptstyle #1$}}
    }{
      \eta'_{\lower 0.3ex \hbox{$\scriptscriptstyle #1$}}
    }%
}

\title{The longest increasing subsequence of Brownian separable permutons}
 \date{ }
 \author{
\begin{tabular}{c} Arka Adhikari\\[-2pt]\small University of Maryland \end{tabular}	
\begin{tabular}{c} Jacopo Borga\\[-2pt]\small MIT \end{tabular}
\begin{tabular}{c} Thomas Budzinski\\[-2pt]\small  ENS de Lyon \end{tabular}\\[15pt]
\begin{tabular}{c} William Da Silva\\[-2pt]\small University of Vienna \end{tabular} 
\begin{tabular}{c} Delphin Sénizergues\\[-2pt]\small Universit\'e Paris Nanterre \end{tabular} 
}

\begin{document}

\newpage

\maketitle
\thispagestyle{empty}
\vspace{-0.5cm}

\begin{abstract}
We establish a scaling limit result for the length $\lis(\sigma_n)$ of the longest increasing subsequence  of a permutation $\sigma_n$ of size $n$ sampled from the Brownian separable permuton $\bm{\mu}_p$ of parameter $p\in(0,1)$, which is the universal limit of pattern-avoiding permutations. Specifically, we prove that
\[\frac{\lis(\sigma_n)}{n^{\alpha}} \xlongrightarrow[n\rightarrow \infty]{\mathrm{a.s.}} X,\] 
where $\alpha=\alpha(p)$ is the unique solution in the interval $(1/2,1)$ to the equation	
    \begin{equation*}
        \frac{1}{4^{\frac{1}{2\alpha}}\sqrt{\pi}}\,\frac{\Gamma\big(\tfrac{1}{2}-\tfrac{1}{2\alpha}\big)}{\Gamma\big(1-\tfrac{1}{2\alpha}\big)}=\frac{p}{p-1},
    \end{equation*}
    and $X=X(p)$ is a non-deterministic and a.s.\ positive and finite random variable, which is a measurable function of the Brownian separable permuton. 
    Notably, the exponent $\alpha(p)$ is an increasing continuous function of $p$ with $\alpha(0^+)=1/2$, $\alpha(1^-)=1$ and $\alpha(1/2)\approx0.815226$, which corresponds to the permuton limit of uniform separable permutations. We prove analogous results for the size of the largest clique of a graph sampled from the Brownian cographon of parameter $p\in(0,1)$.
\end{abstract}

 

\tableofcontents

\bigskip

%
%

\section{Introduction}

The Brownian separable permuton is the universal random limiting permuton describing the scaling limit of many families of random pattern-avoiding permutations. The Brownian cographon is the universal random limiting graphon describing the scaling limit of various dense random graphs sampled from graph classes. In this paper, we solve the following open problems:
\begin{itemize}
\item What is the length $\lis(\sigma_n)$ of the longest increasing subsequence of a random permutation $\sigma_n$ of size $n$ sampled from the Brownian separable permuton?

\item What is the size $\lcl(G_n)$ of the largest clique of a random graph $G_n$ on $n$ vertices sampled from the Brownian cographon? 
\end{itemize}
Specifically, we prove a scaling limit result for both quantities, that is,\footnote{The natural couplings underlying the almost sure convergence will be described in detail in Sections~\ref{sect:perm-main}~and~\ref{sect:graph-main}.}
\[ \frac{\lis(\sigma_n)}{n^{\alpha}} \xlongrightarrow[n\rightarrow \infty]{\mathrm{a.s.}} X \qquad\text{and}\qquad \frac{\lcl(G_n)}{n^{\alpha}} \xlongrightarrow[n\rightarrow \infty]{\mathrm{a.s.}} X,  \]
where $\alpha$ is explicitly determined through a simple equation and the limiting random variable $X$ is non-deterministic, almost surely positive and finite, and is a deterministic measurable function of the Brownian separable permuton and the Brownian cographon, respectively. In particular, our main results -- Theorems \ref{thm:main_permutations} and \ref{thm:main_graphs} -- solve \cite[Conjecture 1.5]{bdsg-lis-perm}
and go significantly further, as they establish precise scaling limits.

\smallskip

In the remainder of this section, we introduce both the Brownian separable permuton and the Brownian cographon, present precise statements of our results, and discuss the motivations behind these questions, including relevant references.

\medskip

\noindent\textbf{Acknowledgments.} 
We thank Nathanaël Berestycki, Valentin Féray, Ewain Gwynne, Anders Karlsson and Allan Sly for some helpful discussions and for sharing their interest in this problem. 
J.B.\ was partially supported by NSF grant DMS-2441646. W.D.S.\ acknowledges the support of the Austrian Science Fund (FWF) grant on “Emergent
branching structures in random geometry” (DOI: 10.55776/ESP534). 

\subsection{Scaling limit for the length of the longest increasing subsequence}\label{sect:perm-main}

A \textbf{permuton} is a Borel probability measure $\mu$ on the unit square $ [ 0,1 ] ^ 2$ with two uniform marginals, that is, $\mu([a,b]\times[0,1]) = \mu( [0,1] \times [a,b] )=b-a$ for all $0\le a < b\le 1$. Permutons are natural objects to describe the scaling limits of random permutations and have been studied quite intensively in the past decade; see~\cite[Section 2.1]{borga-thesis} for an introduction to the theory of permutons and, in particular, to the notion of permuton limit. The Brownian separable permutons $\bm{\mu}_p$ are a one-parameter family of \emph{random} permutons, indexed by $p\in(0,1)$, which are known to describe the permuton limit of many families of pattern-avoiding permutations~\cite{bassino-separable-permuton, maazoun-separable-permuton, bassino2022scaling}. Moreover, the Brownian separable permutons represent the critical regime for the skew Brownian permutons introduced in~\cite{borga-skew-permuton}, a larger universality class for generalized pattern-avoiding permutations which enjoy several connections with classical objects studied in random geometry, such as Liouville quantum gravity and Schramm--Loewner evolutions~\cite{borga-skew-permuton, bhsy-baxter-permuton, bgs-meander}. 

Given a (possibly random) permuton $\mu$, there is a natural way to sample permutations of size $n$ from it:  conditional on $\mu$,  sample a sequence $(Z_i)_{i\geq 1}$ of independent points in the unit
square $[0, 1]^2$ with distribution $\mu$. The first $n$ points $(Z_i)_{i\leq n}$ induce a random permutation $\Perm[\mu,n]$: for any $i,j\in \{1,\dots,n\}$, let $\Perm[\mu,n](i) = j$ if the point with $i$-th lowest $x$-coordinate has $j$-th lowest $y$-coordinate. Note that $\Perm[\mu,n]$ are naturally coupled for different values of $n$.

We are interested in studying the longest increasing subsequence $\lis(\sigma_n)$ of the permutations
\begin{equation}\label{eq:finite_perm_brownian}
    \sigma_n=\sigma_n(p)\coloneqq \Perm[\bm{\mu}_p,n]
\end{equation}
sampled from the Brownian separable permuton  $\bm{\mu}_p$ of parameter $p\in(0,1)$. The precise definition of the Brownian separable permuton is given in~\cref{sect:brow-perm-cograp}. We stress that the definition is given for the sake of completeness, but for the purposes of this paper, it will be enough to know the distribution of $\sigma_n$, which we now describe.

Let $T_n=T_n(p)$ be a uniform planar rooted binary tree with $n$ leaves, decorated (given the tree) with i.i.d.\ Bernoulli($p$) $\oplus/\ominus$ signs on its nodes, \emph{i.e.}\ $\Pp{\oplus}=1-\Pp{\ominus}=p$. We will say that $T_n$ is a $p$-\textbf{signed uniform binary tree}.
See, for instance, the left-hand side of~\cref{fig:patterns-trees}.
Given $T_n$, we construct $\pi_n$ via the following procedure:
\begin{itemize}
\item Number the leaves of $T_n$ from $1$ to $n$ from left to right.
\item For each node $v$ of $T_n$ decorated by $\ominus$, swap the order of the two children of $v$ (together with their subtrees), but keep the subtrees oriented in the same way.
\item Finally, define $\pi_n=\pi_n(1)\dots\pi_n(n)$ as the sequence of numbers for the leaves of the reordered tree from left to right.
\end{itemize}

\begin{figure}[b!]
	\begin{center}
		\includegraphics[width=\textwidth]{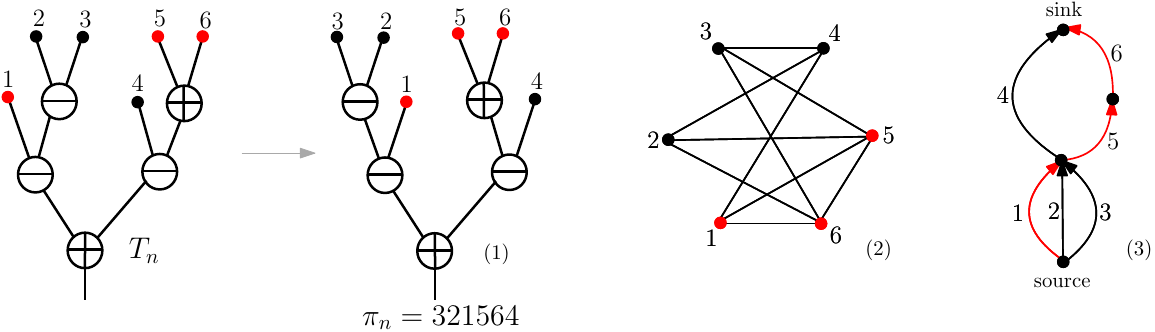}  
		\caption{\textbf{Left:}  A planar rooted binary tree with $6$ leaves, decorated with $\oplus/\ominus$ signs on its nodes. \textbf{Right:} (1) The tree  is obtained from the tree on the left by swapping the children of each $\ominus$ decorated node as explained below \eqref{eq:finite_perm_brownian}. The corresponding permutation $\pi_n$ is obtained by reading the labels of the leaves from left to right. (2) The graph  is obtained from the tree on the left by following the strategy detailed below \eqref{eq:finite_graph_brownian}. (3) The rooted planar map is obtained from the tree on the left by operating parallel compositions at $\ominus$ nodes and series compositions at $\oplus$ nodes. Here the leaves of the tree on the left correspond to the edges of the map. The map has a natural orientation from the \emph{source} to the \emph{sink}, inherited from the tree.
        Note that the tree leaves highlighted in red on the tree on the left induce a largest positive subtree (see \cref{sect:positive-subtrees} for terminology). These three leaves induce a longest increasing subsequence of the corresponding permutation $\pi_n$, a largest clique of the corresponding graph, and a longest directed path of the corresponding planar map. \label{fig:patterns-trees}}
	\end{center}
	\vspace{-3ex}
\end{figure}

Then, as explained for instance in \cite[Section 5.1]{bbfgp-universal}, the permutation $\pi_n$ has the same law as $\sigma_n$ in~\eqref{eq:finite_perm_brownian}. An example of this construction is given in~\cref{fig:patterns-trees}. Moreover, as we will carefully explain in~\cref{remk:coupled-trees}, if the trees $T_n$ are coupled for different values of $n$ via the R\'emy algorithm introduced in \cref{sect:remy-algo}, then the corresponding permutations $\sigma_n$ are coupled exactly as in~\eqref{eq:finite_perm_brownian}.

Our interest in studying the longest increasing subsequence $\lis(\sigma_n)$ is motivated by various factors, including the general study of longest increasing subsequences in non-uniform permutation models and the connections between the longest increasing subsequence of permutations sampled from skew Brownian permutons and certain models of directed metrics on random planar maps. We refer the reader to~\cite[Sections 1.1.2 and 1.3.1]{bdsg-lis-perm} for further details. 
Our main result on the Brownian separable permutons is the following. We denote by $\Gamma$ the standard Gamma function.

\begin{thm}[Scaling limit for the length of the longest increasing subsequence of the Brownian separable permutons]\label{thm:main_permutations}
    For all $p\in(0,1)$, let $\sigma_n=\Perm[\bm{\mu}_p,n]$ be a random permutation of size $n$ sampled from the Brownian separable permuton $\bm{\mu}_p$ of parameter $p$, as introduced in \eqref{eq:finite_perm_brownian}. Recall that the permutations $\sigma_n$ are naturally coupled for different values of $n$. We have the almost sure convergence
    \begin{equation}\label{eq:as-conv-perm}
        \frac{\lis(\sigma_n)}{n^{\alpha}} \xlongrightarrow[n\rightarrow \infty]{a.s.} X,
    \end{equation}  
    where $\alpha=\alpha(p)$ is the unique solution in the interval $(1/2,1)$ to the equation	
        \begin{equation}\label{eq:eq-to-defn-alpha}
            \frac{1}{4^{\frac{1}{2\alpha}}\sqrt{\pi}}\,\frac{\Gamma\big(\tfrac{1}{2}-\tfrac{1}{2\alpha}\big)}{\Gamma\big(1-\tfrac{1}{2\alpha}\big)}=\frac{p}{p-1},
        \end{equation}
    and the limiting random variable $X=X(p)$ is non-deterministic and almost surely positive and finite.
    Moreover, $X$ is a deterministic measurable function of the Brownian separable permuton $\bm{\mu}_p$.
\end{thm}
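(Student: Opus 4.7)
The plan is to work with the tree representation of the permutation, which reduces the problem to the study of a recursive functional on signed binary trees. With $T_n$ denoting the $p$-signed uniform binary tree on $n$ leaves, $\lis(\sigma_n)$ equals the functional $L(T_n)$ defined by $L(T) = L(T_L) + L(T_R)$ if the root of $T$ is $\oplus$ and $L(T) = \max(L(T_L), L(T_R))$ if it is $\ominus$, since an increasing subsequence corresponds precisely to a set of leaves whose pairwise lowest common ancestors are all $\oplus$-labelled, i.e.\ to a \emph{positive subtree} of $T_n$. Using Rémy's algorithm to couple the $T_n$'s makes $\{L(T_n)\}_{n\ge 1}$ nondecreasing with unit increments, so the task is to identify its growth rate and the fluctuations around it.

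To pin down the exponent $\alpha$, I would analyse the recursion for $L$ under the asymptotics of the uniform planar binary tree. The left-subtree size $K$ at the root satisfies $\bbP(K = k) = C_{k-1} C_{n-k-1}/C_{n-1}$ with $C_k$ the $k$-th Catalan number, and this distribution degenerates as $n \to \infty$: with asymptotic probability $1/2$ one of $K$ or $n-K$ remains bounded with limit law $C_{k-1}/4^k$, while the other absorbs the macroscopic mass. Inserting the ansatz $L(T_n) \sim X\,n^\alpha$ into the averaged recursion, cancelling the leading $n^\alpha$ terms and carefully matching the subleading boundary contributions from both degenerate regimes, produces a fixed-point equation for $\alpha$. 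Rewriting the resulting Catalan-type sums through the generating function $\sum_k C_k z^k = (1 - \sqrt{1-4z})/(2z)$ evaluated at $z = 1/4$ and its fractional-power analogues, and recognising the answer as a Mellin transform of a Beta-type density, one recovers the Gamma-ratio formula in~\eqref{eq:eq-to-defn-alpha}. Monotonicity and Stirling asymptotics of $\Phi(\alpha) := \tfrac{1}{4^{1/(2\alpha)}\sqrt{\pi}} \tfrac{\Gamma(1/2 - 1/(2\alpha))}{\Gamma(1 - 1/(2\alpha))}$ on $(1/2,1)$ then give existence and uniqueness of $\alpha(p)$ and the endpoint values $\alpha(0^+)=1/2$, $\alpha(1^-)=1$.

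Having determined $\alpha$, I would establish the almost-sure convergence through the supermultiplicativity property announced by the paper's title. Supermultiplicativity of $L$ under grafting (which is transparent at $\oplus$-roots and must be promoted across $\ominus$-roots by a comparison argument) combines with Rémy's coupling to produce a nonnegative supermartingale $M_n$ closely tracking $L(T_n)/n^\alpha$, yielding almost-sure convergence to a limit $X \ge 0$. A matching upper moment bound at some order $s > 1$ then upgrades this to $L^1$-convergence and uniform integrability, while a second-moment argument rules out degeneration and ensures $0 < X < \infty$ almost surely. The identification of $X$ as a measurable functional of $\bm{\mu}_p$ follows by passing to the scaling limit: the $p$-signed trees $T_n$ converge to the decorated Brownian CRT underlying the construction of $\bm{\mu}_p$ in \cite{maazoun-separable-permuton}, and $X$ arises as the continuum counterpart of $L(T_n)/n^\alpha$ on this decorated CRT.

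The main technical obstacle is to make the above supermartingale and moment argument quantitative at the critical exponent $\alpha$, where the recursion for the $s$-th moments of $L(T_n)$ is singular. Obtaining a uniform $L^{1+\eps}$ bound demands a delicate balance between the two asymptotic regimes of the subtree-size distribution: the frequent ``small bush'' boundary contributions, which drive the supermultiplicative lower bound, and the rare but non-negligible balanced-cut events, which dominate the upper bound. Simultaneously extracting positivity ($X > 0$) and finiteness ($X < \infty$) of the limit, and ruling out the degenerate fixed points of the moment recursion, is where the bulk of the technical work lies.
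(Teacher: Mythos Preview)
Your reduction to the tree functional $L(T_n)$ is correct and matches the paper. However, the two core steps of your outline --- determining $\alpha$ by matching asymptotics in the root recursion, and obtaining a.s.\ convergence from a supermartingale built out of supermultiplicativity --- both have genuine gaps that the paper spends most of its length circumventing.

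For the exponent, your plan is essentially the heuristic the paper writes out in Section~2.3.2: insert a power-law ansatz into the recursion, cancel the leading term, and read off $\alpha$ from the subleading balance. The paper explicitly notes that this requires \emph{a priori} regularity of the type $q(k)\sim A k^{-\gamma}$ and $q(k-1)-q(k)\sim \gamma A k^{-\gamma-1}$ (the ``optimal regularity estimates''~\eqref{eq:dreamed asymptotics}), which it never manages to prove. The root-split law you write down degenerates, so the leading terms cancel trivially and the equation for $\alpha$ lives entirely in the subleading corrections; extracting it rigorously demands exactly the regularity you are assuming. The paper's actual route is a long detour: first prove existence of $\alpha$ by supermultiplicativity (your idea, but used only for existence, not the value), then establish \emph{rough} regularity of $q(k)=\bbP(\lis(T)=k)$ for the Bienaym\'e--Galton--Watson tree, then run a delicate local-limit/coupling argument for the maximal positive subtree seen from a uniform leaf, derive a law of large numbers $\#\mathcal{L}^{\max}_\cap(T^k)/k\to\lambda$, bootstrap to \emph{good} regularity (slowly varying corrections), and only then extract $\alpha$ from the recursion via a comparison-with-test-sequences trick. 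Your proposal skips this entire mechanism.

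For the a.s.\ convergence, supermultiplicativity alone does not give a supermartingale at the critical exponent --- the superbranching inequality~\eqref{eqn:superbranching_v3} involves random Dirichlet-multinomial region sizes and only yields the weaker $\log X_n/\log n\to\alpha$ in probability. The paper's scaling limit argument is entirely different: it runs R\'emy's algorithm \emph{backwards}, notes that $X_n$ drops by one exactly when the removed leaf lies in $\mathcal{L}^{\max}_\cap(T_n)$, and uses the concentration $\#\mathcal{L}^{\max}_\cap(T_n)\approx \alpha X_n$ (coming from the law of large numbers above, with a quantitative error) to show that $c_n X_n$ is a backward martingale up to summable remainders. This is the key idea missing from your outline, and it is also what identifies $\lambda=\alpha$ and makes the limit nondegenerate. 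Finally, the measurability of $X$ in $\bm{\mu}_p$ is not obtained by passing to a continuum limit but by a short Kolmogorov 0--1 law argument on the i.i.d.\ sampling points $(U_i)$.
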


Setting $\zeta_{\mathbb{Z}}(s)\coloneqq \frac{1}{4^s\sqrt{\pi}}\,\frac{\Gamma(1/2-s)}{\Gamma(1-s)}$, we can write the equation in~\eqref{eq:eq-to-defn-alpha} as $\zeta_{\mathbb{Z}}(\tfrac{1}{2\alpha})=\frac{p}{p-1}$. The function $\zeta_{\mathbb{Z}}(s)$ is the spectral zeta function of $\mathbb{Z}$ introduced in~\cite{friedli2017spectral}. The beautiful formula in \cite[Theorem 2]{karlsson2022volumes} allows us to rewrite our equation in~\eqref{eq:eq-to-defn-alpha} in the following elegant product form:
\[\prod_{i=1}^\infty \frac{(i-\tfrac{1}{2\alpha})^2}{i(i-\tfrac{1}{\alpha})}=\frac{p}{p-1}.\]
We refer the reader to~\cite{karlsson2020spectral,karlsson2022volumes} for more background on spectral zeta functions.
The graph of $\alpha(p)$ is shown in~\cref{fig:alpha_p_graph}. A heuristic explanation for why the exponent $\alpha$ satisfies the equation~\eqref{eq:eq-to-defn-alpha} is given in~\cref{sect:heuristic-calculation}. However, at present, we have no explanation for the appearance of the number-theoretic function $\zeta_{\mathbb{Z}}$. Further details on the fact that $X$ is a deterministic measurable function of the Brownian separable permuton are given in~\cref{sect:measurability}. 

\begin{figure}[!htb]
	\begin{center}
		\includegraphics[width=0.55\textwidth]{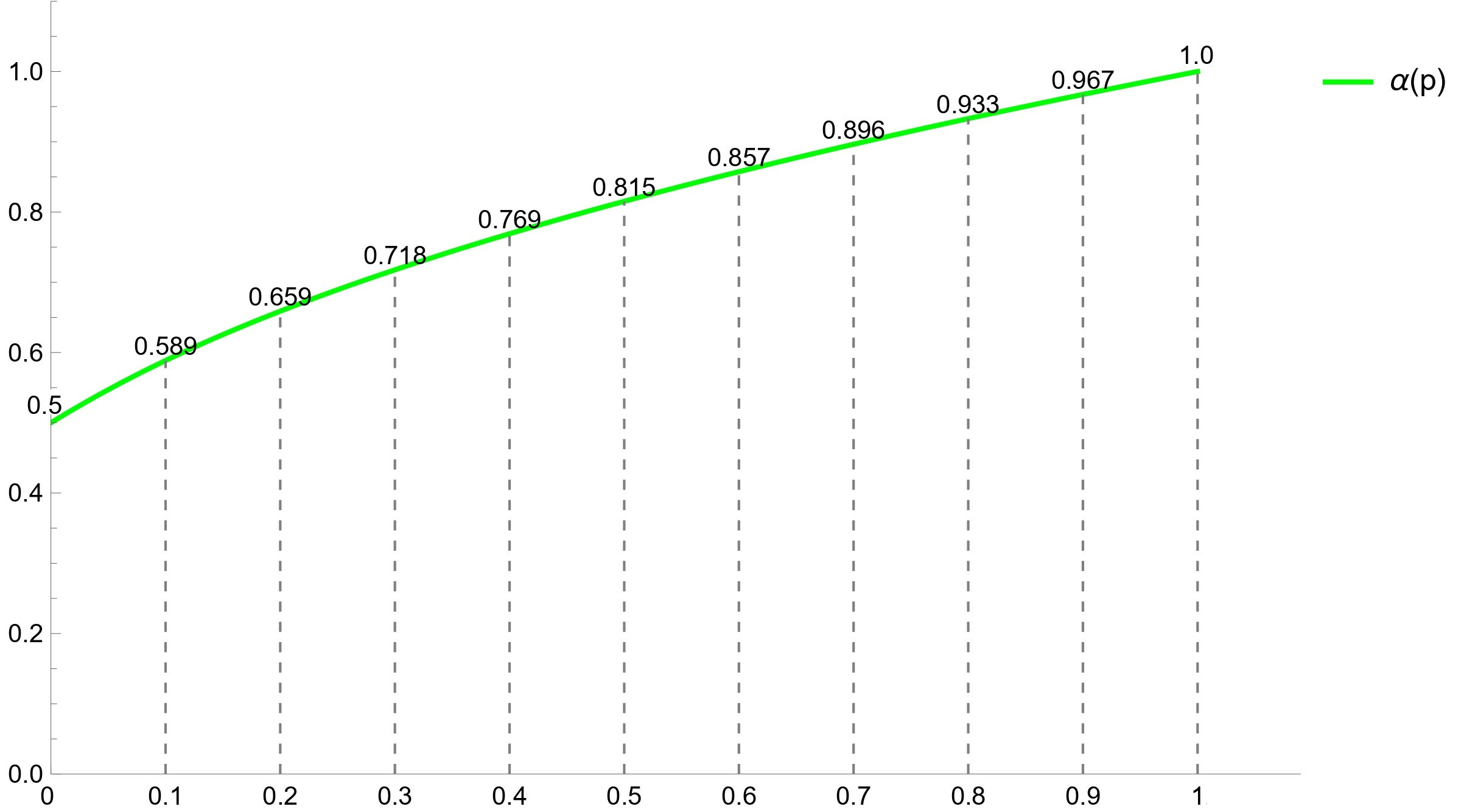}  
		\caption{The graph of the function $\alpha(p)$ from \cref{thm:main_permutations} for $p\in(0,1)$. The values of $\alpha(i/10)$ for $i=0,1,\dots,10$ are shown on top of the function, where $\alpha(0)=\alpha(0^+)$ and $\alpha(1)=\alpha(1^-)$.\label{fig:alpha_p_graph}}
	\end{center}
	\vspace{-3ex}
\end{figure}

We believe that the law of $X(p)$ is absolutely continuous with respect to the Lebesgue measure  and that for all $p \neq p'$, the laws of $X(p)$ and $X(p')$ are distinct -- although we do not prove this here. This belief is also supported by the numerical simulations in~\cref{sect:num-sim}.

We recall that the study of the longest increasing subsequence of Brownian separable permutons was initiated in \cite{bassino-lis} and the best previously known result on the behavior of $\lis(\sigma_n)$ was the following.

\begin{thm}[{\cite[Theorem 1.1]{bdsg-lis-perm}}]\label{thm:prev-bound}
	There exist two explicit functions $\alpha_* :(0,1)\to(1/2,1)$ and $\beta^* :(0,1)\to(1/2,1)$ such that for all $p\in(0,1)$:
	\begin{itemize}
		\item $1/2<\alpha_*(p)\leq\beta^*(p)<1$.
        \item For all $\eps>0$, it holds with probability tending to one as $n\to\infty$ that 
		\begin{equation*}
			n^{\alpha_*(p)-\eps} \leq \LIS(\sigma_n) \leq n^{\beta^*(p)+\eps}.
		\end{equation*}
	\end{itemize}	
\end{thm}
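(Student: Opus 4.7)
The plan is to translate the longest increasing subsequence into a statistic of the signed tree $T_n$ and then derive matching polynomial upper and lower bounds.

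\emph{Reformulation.} As illustrated in \cref{fig:patterns-trees}, an increasing subsequence of $\pi_n$ is in bijection with a subset $S$ of leaves of $T_n$ whose induced subtree (the minimal subtree of $T_n$ spanning $S$) has all of its internal nodes decorated $\oplus$; call such an $S$ a \emph{positive subtree}. Writing $M(\tau)$ for the maximum size of a positive subtree in a signed binary tree $\tau$, one has $\LIS(\sigma_n)=M(T_n)$, with the recursion
\begin{equation*}
M(v) = \begin{cases} M(v_1)+M(v_2), & \text{if } v \text{ is internal with sign } \oplus, \\ \max\bigl(M(v_1),M(v_2)\bigr), & \text{if } v \text{ is internal with sign } \ominus, \\ 1, & \text{if } v \text{ is a leaf,} \end{cases}
\end{equation*}
where $v_1,v_2$ are the two children of an internal node $v$.

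\emph{Upper bound.} For a well-chosen moment $s>0$ I would derive a recursive inequality for $a_n\coloneqq \mathbb{E}\bigl[M(T_n)^s\bigr]$ by conditioning on the sizes $(k,n-k)$ of the two root-subtrees of $T_n$, which under the Rémy construction follow an explicit (essentially Dirichlet) law. This produces an inequality of the form $a_n\leq F_p(a_1,\dots,a_{n-1})$ whose self-similar solutions grow like $n^{\beta^* s}$ for an explicit exponent $\beta^*(p)\in(1/2,1)$ determined as the smallest value consistent with the recursion. A Markov inequality then yields $\mathbb{P}\bigl[M(T_n)\geq n^{\beta^*+\varepsilon}\bigr]\to 0$.

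\emph{Lower bound.} I would construct a large positive subtree by a greedy top-down exploration of $T_n$: starting from the root, recurse into both children at each $\oplus$-node and into only one child at each $\ominus$-node (for instance the larger subtree, or a size-biased choice, whichever optimizes the exponent). The surviving leaves form a valid positive subtree whose size can be analyzed as a branching / Galton--Watson-type process with offspring distribution governed by $p$ and by the subtree-size splitting law of the Rémy tree. A first-moment computation, upgraded via a second-moment concentration argument to control the variance, gives $\mathbb{P}\bigl[M(T_n)\geq n^{\alpha_*-\varepsilon}\bigr]\to 1$ for an explicit $\alpha_*(p)\in(1/2,\beta^*(p)]$.

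\emph{Main obstacle.} The principal difficulty lies in the highly non-uniform structure of the Rémy tree: the root split is far from balanced, so both the greedy construction and the moment bound reduce to integrating power functions against a Dirichlet-type splitting law. The resulting transcendental equations for $\alpha_*$ and $\beta^*$ do not coincide, and indeed closing this gap -- which is the main contribution of the present paper and explains the appearance of the spectral zeta function $\zeta_{\mathbb{Z}}$ -- requires a substantially finer analysis. For the sandwich bound of \cref{thm:prev-bound}, however, the first-moment / greedy arguments sketched above are already sufficient.
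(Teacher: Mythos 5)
A preliminary remark: the paper does not prove \cref{thm:prev-bound} at all — it is quoted from \cite{bdsg-lis-perm} and used as an external input (e.g.\ to guarantee $\alpha\in(1/2,1)$ and that $X_n\to\infty$). So there is no internal proof to compare your argument against; what you are really sketching is a proof of the main theorem of that earlier paper. Your starting point is the correct one and matches the paper's viewpoint: $\LIS(\sigma_n)\stackrel{\mathrm{d}}{=}\LIS(T_n)$, the size of the largest positive subtree, with the recursion $M=M_1+M_2$ at $\oplus$ nodes and $M=\max(M_1,M_2)$ at $\ominus$ nodes.

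Beyond that, however, the two bounds are plans rather than arguments, and the lower bound as described would actually fail. The greedy construction that keeps both children at $\oplus$ nodes and the larger (or size-biased) child at $\ominus$ nodes leads, at the level of first moments and with the limiting $\mathrm{Beta}(1/2,1/2)$ root split $D$, to a self-similar exponent $a$ solving $1=2p\,\Ec{D^a}+(1-p)\,\Ec{\max(D,1-D)^a}$. At $a=1/2$ the right-hand side equals $\tfrac{4}{\pi\sqrt{2}}+p\left(\tfrac{4}{\pi}-\tfrac{4}{\pi\sqrt{2}}\right)\approx 0.90+0.37\,p$, which is strictly less than $1$ for $p\lesssim 0.27$; since the right-hand side is decreasing in $a$, the greedy subtree then has size $n^{a+o(1)}$ with $a<1/2$, so this route cannot deliver $\alpha_*(p)>1/2$ for all $p$, which is exactly the first bullet of the statement. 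Getting above $1/2$ requires exploiting the $\max$ at $\ominus$ nodes (comparing the subtrees' LIS values, not their sizes), which is the genuinely delicate part of \cite{bdsg-lis-perm}. In the same vein, the upper bound is only asserted: you never exhibit the moment $s$, the recursive inequality over the Dirichlet split, or the verification that the resulting fixed-point exponent $\beta^*$ is strictly below $1$; and the ``second-moment concentration'' upgrading the expectation of the greedy size to a with-high-probability bound is claimed without checking the integrability conditions of the multiplicative cascade it implicitly relies on. So the proposal is a reasonable road map in the spirit of the cited work, but it does not establish the statement, and its lower-bound strategy needs to be replaced rather than merely completed.
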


We emphasize that, prior to the present work, even the existence of the exponent $\alpha(p)$ was an open problem. The graphs of $\alpha_*(p),$ $\alpha(p)$ and $\beta^*(p)$ are shown on the left-hand side of~\cref{fig:alpha_beta_p_graph} for comparison.
We refer to \cite[Remark 1.2]{bdsg-lis-perm} for the exact equations characterizing $\alpha_*(p)$ and $\beta^*(p)$.
It is interesting to note that the previously known lower bound $\alpha_*(p)$ for the exponent $\alpha(p)$ is very close to the exact value for all $p\in(0,1)$, even though $\alpha_*(p)<\alpha(p)$ for all $p\in(0,1)$ as shown  on the right-hand side of~\cref{fig:alpha_beta_p_graph}. We also note that the values of $\alpha(p)$ given in~\cref{fig:alpha_p_graph} are consistent with the numerical simulations in \cite[Appendix A]{bdsg-lis-perm}.

\begin{figure}
	\begin{center}
		\includegraphics[width=0.5\textwidth]{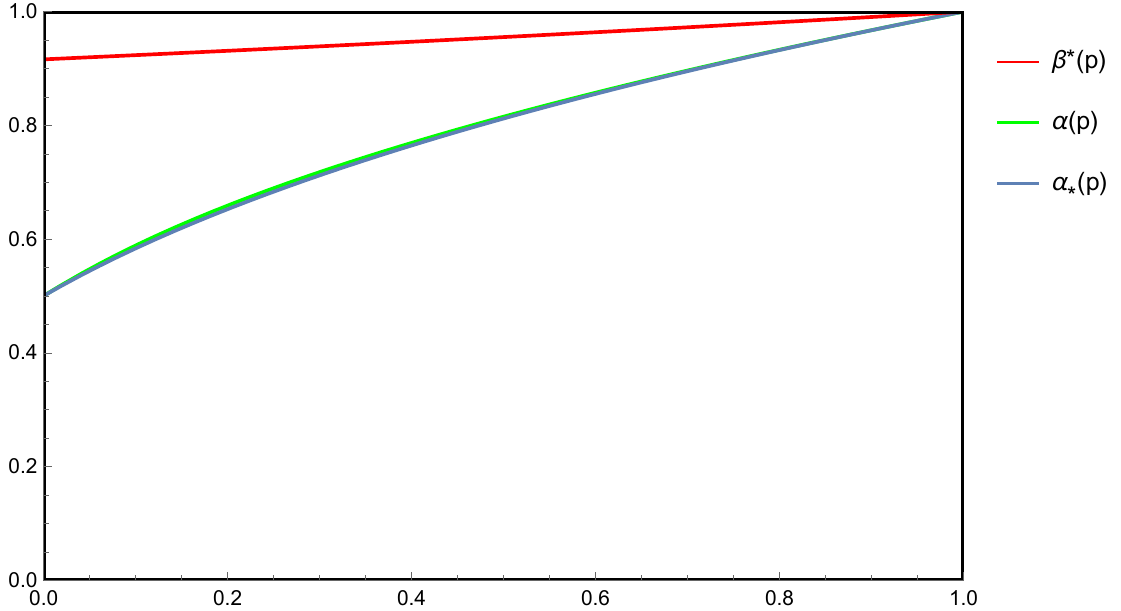} 
        \includegraphics[width=0.46\textwidth]{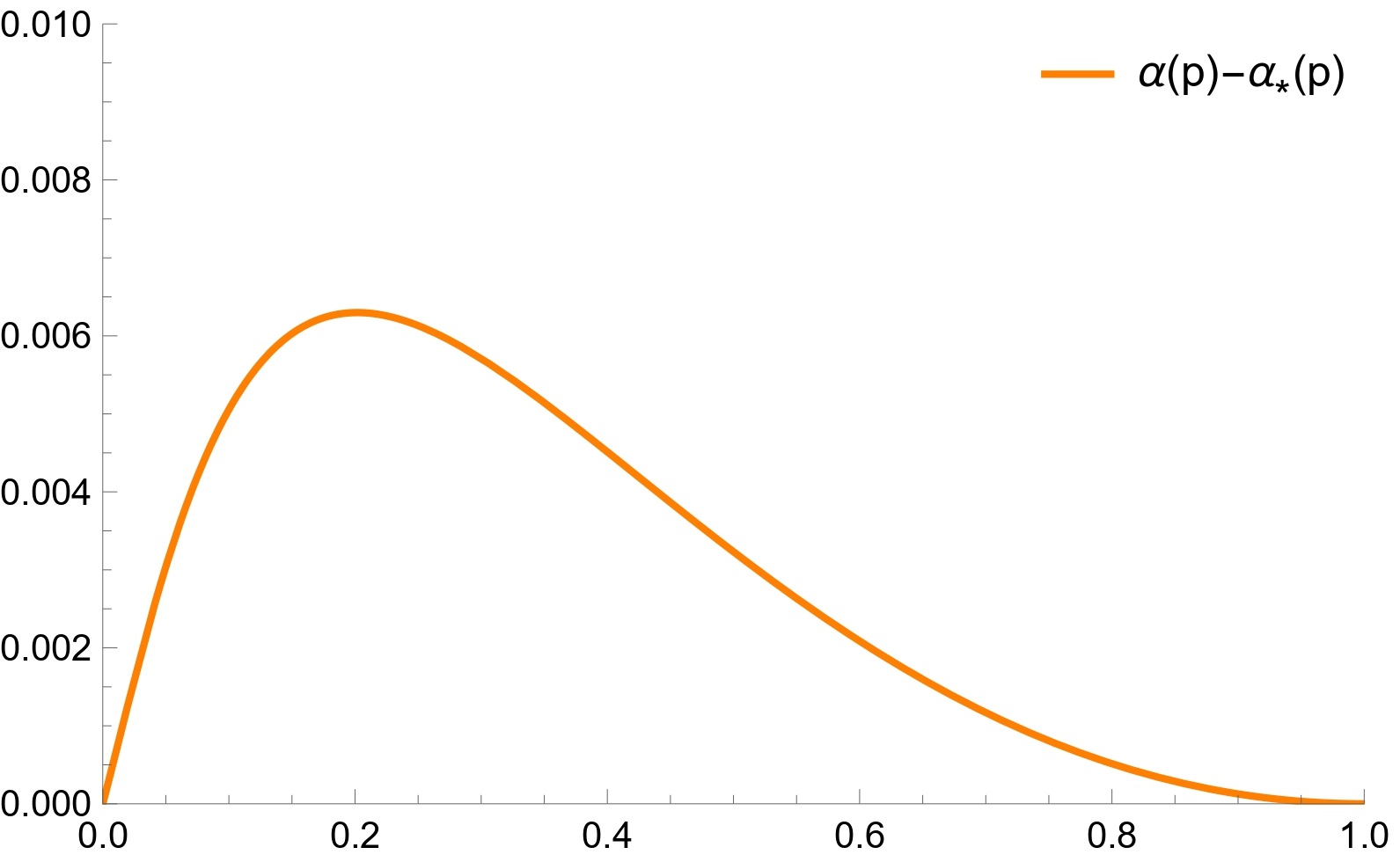} 
		\caption{\textbf{Left:} The graph of the functions $\alpha_*(p),$ $\alpha(p)$ and $\beta^*(p)$ from \cref{thm:main_permutations} and \cref{thm:prev-bound}. \textbf{Right:} The graphs of $\alpha(p)-\alpha_*(p)$. 
        The maximum is reached at $p\approx0.202$ where it takes the value $\alpha(p)-\alpha_*(p)\approx0.0063$. 
        Moreover, $\alpha(p)-\alpha_*(p)>0$ for all $p\in(0,1)$. \label{fig:alpha_beta_p_graph}}
	\end{center}
	\vspace{-3ex}
\end{figure}

\paragraph{Uniform separable permutations vs.\ permutations sampled from the permuton.}\label{rmk:non-uniform}
For the reader familiar with the notion of separable permutation (a beautiful introduction, including connections to algebraic geometry and Kontsevich's polynomial interchange problem, can be found in Ghys's book~\cite{ghys2016singular}), we stress that $\sigma_n(p)$ is a.s.\ a separable permutation~\cite[Definition 5.1]{bbfgp-universal} for all $p\in(0,1)$, but is \emph{not} a uniform separable permutation, even when $p=1/2$. Indeed, there are multiple rooted binary trees with $n$ leaves decorated with $\oplus/\ominus$ signs on their nodes that are mapped to the same separable permutation by the construction described below~\eqref{eq:finite_perm_brownian} and in \cref{fig:patterns-trees}. Nevertheless, when $p=1/2$, the random permutations $\sigma_n$ have \emph{the same} permuton limit as uniform separable permutations~\cite{bassino-separable-permuton}, and the two models are conjectured to display exactly the same LIS behavior; see \cref{conj:sep-and-cograph} below and \cite[Conjecture 1.6]{bdsg-lis-perm}.

Moreover, for all $p\in(0,1)$, the permutation $\sigma_n(p)$ defined as in \eqref{eq:finite_perm_brownian} has the following consistency property: for $k<n$, the permutation induced 
by $k$ uniform indices in $\sigma_n$ has the same law as $\sigma_k$; see \cite[Proposition 2.9]{bbfgp-universal}. In addition, the model $\sigma_n(1/2)$ can be viewed as the unique model in the universality class of uniform separable permutations that satisfies this consistency property, which makes it perhaps more natural than the uniform model.

\paragraph{Largest positive subtrees: local convergence and scaling limit.}
Due to the connection presented in \cref{fig:patterns-trees} between trees and permutations, \cref{thm:main_permutations} has a natural translation in terms of the $p$-signed uniform binary tree $T_n$. With a slight abuse of notation, let $\lis(T_n)$ denote the maximal number of leaves of a subtree\footnote{See \cref{sect:positive-subtrees} for the precise notion of subtrees.} of $T_n$ whose pairwise highest common ancestors all carry $\oplus$ signs. We will refer to this as the size of the ``largest positive subtree'' of $T_n$.
Then, by construction of the permutation $\pi_n$ from $T_n$, we have $\lis(T_n)=\lis(\pi_n)\stackrel{\text{d}}{=}\lis(\sigma_n)$, so that \cref{thm:main_permutations} may be interpreted as a scaling limit result for $\lis(T_n)$. This is precisely the point of view we adopt to prove \cref{thm:main_permutations}, as explained in the proof strategy outlined in Section~\ref{sect:proof-strat}. As a key intermediate step, we will prove in \cref{thm:localconv} the local convergence of a largest positive subtree in a variant of the model $T_n$. As a by-product of our arguments, we will also obtain in Theorem~\ref{thm:fragmentation_tree} a description of the limiting \emph{shape} of a largest positive subtree in the same variant of $T_n$. See \cref{sect:dynamical-picture} for a more detailed discussion.

\subsection{Scaling limit for the size of the largest clique and independent set}\label{sect:graph-main}

A \textbf{graphon} is an equivalence class of symmetric measurable functions $W : [0, 1]^2\to\{0, 1\} $ (\emph{i.e.}\ $ W ( x , y ) = W ( y , x ) $ for all $ x , y \in [0,1] $) under the equivalence relation $\sim$, where $W\sim U$ if there exists a measurable, invertible, Lebesgue measure preserving function $\phi:[0, 1]\to[0, 1]$ such that $W( \phi(x) ,  \phi(y) ) = U( x, y )$ for almost every $x,y\in[0, 1].$ A graphon can be thought of as a continuous analog of the adjacency matrix of a graph, viewed up to relabeling its continuous vertex set. Graphons have been used for many different purposes, with the original goal of having a notion of scaling limit for dense graphs~\cite{lovasz2012large}.

The \textbf{Brownian cographon} $\bm{W}_p$ of parameter $p\in(0,1)$ is a \emph{random} universal graphon describing, for instance, the graphon limit of uniform random cographs when the number of vertices tends to infinity
\cite{bassino2022random,stufler2021graphon} -- in this case $p=1/2$. In \cite{lenoir2023graph, lenoir2023subgraph}, the author proves that $\bm{W}_p$ is also the graphon limit of many other natural families of graphs, showing the universality of $\bm{W}_p$.

Let $(U_i)_{i\geq 1}$ be i.i.d.\ uniform random variables on $[0, 1]$. Given a graphon $W$, the random graph $\Graph[W,n]$ induced by $W$ of size $n$ is defined as follows: consider the first $n$ random variables $(U_i)_{i\leq n}$ and consider $n$ vertices
$\{ v_1 ,  v_2 , \dots , v_n \}$.
We connect the vertices $v_i$ and $v_j$ with an edge if and only if $W(U_i,U_j)=1$.
This definition can be naturally extended to the case of random graphons (see \cite[Section 3.2]{bassino2022random} for further details) and gives a coupling for the graphs $\Graph[W,n]$ for different values of $n$.

An \textbf{independent set} of a graph $G$ is a subset of its vertices such that any two distinct vertices in the subset are not adjacent, and a \textbf{clique} of a graph $G$ is a subset of its vertices such that every two distinct vertices in the subset are adjacent.

We are interested in the size of the largest independent set $\LIN(G_n)$ and the size of the largest clique\footnote{These two quantities are usually denoted by $\alpha(\cdot)$ and $\omega(\cdot)$ in the literature, but we preferred to adopt a different notation since it is more consistent with the one used for permutations.} $\LCL(G_n)$ of the graphs
\begin{equation}\label{eq:finite_graph_brownian}
    G_n=G_n(p)\coloneqq \Graph[\bm{W}_p,n]
\end{equation}
sampled from the \textbf{Brownian cographon} $\bm{W}_p$ of parameter $p\in[0,1]$.  
Our main motivation comes from connections with the \emph{Erd\H{o}s--Hajnal conjecture}, as explained in \cite[Section 1.2.2]{bdsg-lis-perm}.

As in the case of permutons, the exact law of $\bm{W}_p$ is not important for our results and is provided in \cref{sect:brow-perm-cograp} for completeness, but the law of $G_n$ can be described in terms of a uniform rooted binary tree $T_n$ with $n$ leaves, decorated with i.i.d.\ Bernoulli($p$) $\oplus/\ominus$ signs on its nodes. Specifically, numbering the leaves of $T_n$ from $1$ to $n$ from left to right, the law of $G_n$ is described as follows: there is an edge between the vertices $v_i$ and $v_j$ if and only if the highest common ancestor of the $i$-th and $j$-th leaves in  $T_n$ is decorated by a $\oplus$.
Note that by flipping the $\oplus/\ominus$ signs, $\LIN(G_n(p))$ has the same law as $\LCL(G_n(1-p))$ for all $p\in (0,1)$.

\begin{thm}[Scaling limit for the size of the largest clique and independent set of the Brownian cographon]\label{thm:main_graphs}

For all $p\in(0,1)$, let $G_n=\Graph[\bm{W}_p,n]$ be a random graph of size $n$ sampled from the Brownian cographon $\bm{W}_p$ of parameter $p$, as explained in \eqref{eq:finite_graph_brownian}. Recall that the graphs $G_n$ are naturally coupled for different values of $n$. We have the almost sure convergence
	\[ \frac{\LCL(G_n)}{n^{\alpha(p)}} \xlongrightarrow[n\rightarrow \infty]{a.s.} X(p) \qquad\text{and}\qquad \frac{\LIN(G_n)}{n^{\alpha(1-p)}} \xlongrightarrow[n\rightarrow \infty]{a.s.} X(1-p),  \]
where $\alpha(p)\in(1/2,1)$ and the law of $X(p)$ are as in~\cref{thm:main_permutations}. Moreover, $X(p)$ and $X(1-p)$ are deterministic measurable functions of the Brownian cographon $\bm{W}_p$.
\end{thm}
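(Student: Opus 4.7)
The plan is to reduce \cref{thm:main_graphs} to \cref{thm:main_permutations} via a tree-level identification of cliques with increasing subsequences. I would couple both models on a single probability space using the $p$-signed uniform binary tree $T_n$ generated by the R\'emy algorithm, as in \cref{sect:perm-main}. From the same $T_n$ one simultaneously builds the permutation $\pi_n$ (by swapping children at $\ominus$ nodes, as described below~\eqref{eq:finite_perm_brownian}) and the graph $H_n$ (by placing an edge between leaves $i$ and $j$ whenever their highest common ancestor carries $\oplus$). The two constructions yield, respectively, random objects distributed as $\sigma_n(p)$ and $G_n(p)$.

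The key combinatorial step is the deterministic identity
\begin{equation*}
\LCL(H_n) \;=\; \lis(\pi_n) \;=\; \lis(T_n),
\end{equation*}
valid for every sign-decorated tree. Indeed, two leaves with labels $a<b$ in $T_n$ produce an ascent in $\pi_n$ exactly when their highest common ancestor is $\oplus$, so an increasing subsequence in $\pi_n$ is exactly a collection of leaves whose pairwise highest common ancestors all carry $\oplus$; but this is also the defining condition of a clique in $H_n$. Under this coupling, the almost-sure convergence granted by \cref{thm:main_permutations} transfers at once to give $\LCL(H_n)/n^{\alpha(p)}\to X(p)$ almost surely.

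For the largest independent set, I would invoke the sign-flipping duality. Let $\widetilde{T}_n$ denote the tree obtained from $T_n$ by interchanging $\oplus$ and $\ominus$; then $\widetilde{T}_n$ is a $(1-p)$-signed uniform binary tree, and the graph built from $\widetilde{T}_n$ via the same rule is the complement $\overline{H_n}$. Since a clique in $\overline{H_n}$ is an independent set in $H_n$, we obtain
\begin{equation*}
\LIN(H_n)\;=\;\LCL(\overline{H_n})\;\eqD\;\LCL(G_n(1-p)),
\end{equation*}
and applying the clique statement with parameter $1-p$ yields the second half of \cref{thm:main_graphs}.

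The main obstacle is the measurability assertion: the random variables $X(p)$ and $X(1-p)$ must be deterministic measurable functions of $\bm{W}_p$ itself, not merely of the auxiliary signed tree. To handle this, I would realize the triple $(T_n, \bm{\mu}_p, \bm{W}_p)$ on a single probability space built from a signed Brownian excursion, so that both $\bm{\mu}_p$ and $\bm{W}_p$ become deterministic measurable functions of this excursion and the same i.i.d.\ uniforms on $[0,1]$ drive the sampling for both models. Measurability of $X(p)$ with respect to $\bm{\mu}_p$, granted by \cref{thm:main_permutations}, then implies measurability with respect to the underlying excursion. A Hewitt--Savage style argument, exploiting the exchangeability of the sampling sequence $(U_i)_{i\geq 1}$, finally shows that $\lim_{n\to\infty} \LCL(G_n)/n^{\alpha(p)}$ is invariant under finite permutations of the $U_i$'s, hence is a measurable function of $\bm{W}_p$ alone.
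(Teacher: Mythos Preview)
Your proposal is correct and matches the paper's approach: the paper likewise identifies $\LCL(G_n)=\lis(T_n)$ deterministically, invokes the sign-flipping duality for $\LIN$, and deduces both convergences from the tree-level scaling limit (\cref{thm:main3}). For measurability the paper reruns the excursion-based argument of \cref{sect:measurability}, using Kolmogorov's $0$-$1$ law on the tail $\sigma$-field of $(U_i)_{i\geq 1}$ rather than Hewitt--Savage on the exchangeable $\sigma$-field; either works and yields the same conclusion.
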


The study of the largest clique and independent sets of the Brownian cographons was initiated in~\cite{bassino-lis} and the best previously known result on the behavior of $\LCL(G_n)$ and $\LIN(G_n)$ was~\cite[Theorem 1.3]{bdsg-lis-perm}, which is the counterpart of \cref{thm:prev-bound} above. We note right away that just like for permutations, if $G_n$ is built from $T_n$ as in Figure~\ref{fig:patterns-trees}, then $\LCL(G_n)=\LIS(T_n)$ by construction. In particular, the convergence statements of Theorems~\ref{thm:main_permutations} and~\ref{thm:main_graphs} are equivalent.

\subsection{Some conjectures and additional comments}

In this final section, we outline a few conjectures and related comments that extend and contextualize our main results.

\paragraph{Uniform separable permutations and cographs.}
We believe that our results can be extended to the length of the longest increasing subsequence in uniform separable permutations and the size of the largest clique and independent set in uniform cographs (and, more generally, to most natural models known to converge to the Brownian separable permuton or the Brownian cographon).

\begin{conj}\label{conj:sep-and-cograph}
    Let $\alpha(1/2)$ and $X(1/2)$ be as in \cref{thm:main_permutations}. Let $\overline{\sigma}_n$ and $\overline{G}_n$ be a uniform separable permutation and a uniform separable cograph of size $n$, respectively. Then there is a deterministic constant $c>0$ such that we have the convergences in distribution
	\[ \frac{\LIS(\overline{\sigma}_n)}{n^{\alpha(1/2)}} \xlongrightarrow[n\rightarrow \infty]{\mathrm{d}} c \cdot X(1/2) \qquad\text{and}\qquad \frac{\LIN(\overline{G}_n)}{n^{\alpha(1/2)}} \xlongrightarrow[n\rightarrow \infty]{\mathrm{d}} c\cdot X(1/2). \]
\end{conj}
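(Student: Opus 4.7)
The plan is to reduce \cref{conj:sep-and-cograph} to \cref{thm:main_permutations,thm:main_graphs} by comparing the uniform and the permuton-sampled models through their common combinatorial encoding. As already noted above, the gap between $\overline\sigma_n$ and $\sigma_n(1/2)$ stems from the many-to-one nature of the map from $1/2$-signed planar binary trees to separable permutations (and the analogous map for cographs); the strategy is to quantify this map with enough precision to transport the scaling limit of \cref{thm:main_permutations} through it. First I would use the canonical tree encoding: every separable permutation (resp.\ cograph) of size $n$ is in bijection with a canonical signed Schröder tree -- planar (resp.\ unordered) with $n$ labeled leaves, each internal node of arity $\geq 2$ carrying a sign in $\{\oplus,\ominus\}$ and sharing no sign with its parent. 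Under this encoding, both $\lis(\overline\sigma_n)$ and $\lcl(\overline G_n)$ coincide with the size of the ``largest positive subtree'' of the canonical tree, in the sense of \cref{sect:positive-subtrees}.

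The key structural observation is that this statistic is invariant under expanding a sign-$s$ node of arity $k$ into a monochromatic sign-$s$ binary subtree on the same $k$ children. One can therefore lift a uniform canonical tree to a planar signed binary tree $T^{\mathrm{lift}}_n$ with $n$ leaves, by independently replacing each internal node of arity $k$ with a uniform planar binary tree on $k$ leaves whose internal nodes inherit the parent's sign. By construction, $\lis(\overline\sigma_n)$ equals the largest positive subtree of $T^{\mathrm{lift}}_n$. The law of $T^{\mathrm{lift}}_n$ differs from the law of the model $T_n(1/2)$ underlying $\sigma_n(1/2)$ by an explicit Radon--Nikodym factor given by a product of local terms over internal nodes, multiplied by a partition-function-type constant controlled by the asymptotics of the large Schröder numbers. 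Recasting the tilting in the simply-generated-tree framework, it amounts to a modification of the offspring distribution together with a modification of the sign Bernoulli parameter, and the tilted model should still admit a Brownian excursion as contour-function scaling limit.

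The main obstacle will be to establish enough control on this tilting to transfer the almost sure convergence of \cref{thm:main_permutations} into convergence in distribution for $T^{\mathrm{lift}}_n$. I would attack this by combining an exchangeability argument along a uniformly sampled spine of the tree with concentration for additive functionals of the arities and uniform integrability bounds for the largest positive subtree. The resulting limit is a multiple $c \cdot X(1/2)$, where $c$ encodes the effective deterministic time change induced by the tilting. For cographs, the identical reasoning applies via the identification $\lcl(G_n) = \lis(T_n)$ from \cref{sect:graph-main}, working now with unordered canonical trees; the delicate point of showing that the resulting constant equals the one for permutations should follow from the fact that the functional producing the largest positive subtree is insensitive to the planar-versus-unordered distinction, with only the Schröder prefactors changing.
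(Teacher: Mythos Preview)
The statement you are attempting to prove is a \emph{conjecture} in the paper, not a theorem: the paper explicitly leaves \cref{conj:sep-and-cograph} open and provides no proof, only numerical evidence (\cref{sect:num-sim}) suggesting $c\approx 0.901$. So there is no ``paper's own proof'' to compare against; your proposal is an attempt at an open problem.

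As for the proposal itself, the overall strategy of lifting the canonical Schröder tree to a signed binary tree is natural, but the claimed reduction has a genuine gap. When you expand a sign-$s$ node of arity $k$ into a monochromatic binary subtree, the resulting binary tree $T^{\mathrm{lift}}_n$ has \emph{correlated} signs: all $k-1$ internal nodes created from a single Schröder node carry the same sign. This is not an absolutely continuous perturbation of the i.i.d.\ Bernoulli$(1/2)$ sign model $T_n(1/2)$; the Radon--Nikodym derivative does not factor as a product of local terms over nodes, because the event ``these $k-1$ consecutive nodes all carry sign $s$'' has probability $2^{-(k-1)}$ under the i.i.d.\ model, and summing over all expansions does not recover a simply-generated tilting. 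In particular, the sign process along a spine of $T^{\mathrm{lift}}_n$ is a run-length-encoded alternating sequence rather than an i.i.d.\ sequence, so the machinery of \cref{thm:main_permutations} (which relies crucially on the i.i.d.\ sign structure through the Markov chain $Z^k$ of \cref{sect:loacal-lim}) does not transfer directly. A correct approach would likely require rerunning a substantial part of the paper's analysis for the correlated-sign model, not just a change of measure.
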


The numerical simulations in~\cref{sect:num-sim} suggest that $c\approx \symcst$. Note that \cref{conj:sep-and-cograph} is more precise than \cite[Conjecture 1.6]{bdsg-lis-perm}.

\smallskip

We also expect that some of the ideas in our paper could be adapted to study the length of the longest increasing subsequence in random recursive separable permutations~\cite{feray2023permuton}. We emphasize that these permutations have a random permuton limit that differs from the Brownian separable one.

\paragraph{The limiting random variable $X(p)$.}
In this work, we do not determine the exact law of the limiting random variable $X(p)$ appearing in \cref{thm:main_permutations}. It would be interesting to identify the distribution of $X(p)$ or at least establish further properties. For instance, three natural questions are whether the laws of $X(p)$ and $X(p')$ are distinct for all $p \neq p'$, what the expectation of $X(p)$ is, or what the tail behavior of $X(p)$ is. Some numerical simulations of these random variables are presented in~\cref{sect:num-sim}.

\paragraph{Hausdorff dimension of the largest increasing subset of the Browian separable permuton.}

The (closed) support of the Brownian separable permuton $\bm{\mu}_p$ is the (random) set $\text{supp} (\bm{\mu}_p)$ defined as the intersection of all closed sets $K\subset [0,1]^2$ with $\bm{\mu}_p(K) = 1$.
We say that a set $A\subset [0,1]^2$ is increasing if  
\[A\subset \left( [0,t] \times [0,s]\right) \cup \left( [t,1] \times [s,1] \right)\qquad \text{for all } (t,s) \in A .\] 
An example of an increasing set is the graph of a non-decreasing function $[0,1] \rta [0,1]$, or a subset of such a graph.
Let $\mcl{I}_p$ denote the collection of increasing subsets of $\text{supp}(\bm{\mu}_p)$. Then, we conjecture that for all $p \in (0,1)$, almost surely,
\[
\underset{A\in \mcl{I}_{p}}{\sup} \; \{\dim A \} = \alpha(p),
\]
where $\alpha(p)$ is defined as in \cref{thm:main_permutations}, and $\dim(\cdot)$ denotes the Hausdorff dimension. 
We also believe that the limiting random variable $X(p)$ of~\cref{thm:main_permutations} is connected to the maximal $\alpha(p)$-dimensional Hausdorff measure (with a potential gauge correction) of such a largest increasing subset.

\paragraph{A directed version of the Liouville quantum gravity metric.} Our results also describe the length of the longest directed path from the source to the sink in a natural model\footnote{As in the case of separable permutations, this natural model of series–parallel maps is the unique one within the universality class of uniform series–parallel maps that satisfies a certain consistency property; recall the discussion on uniform separable permutations on page~\pageref{rmk:non-uniform}. Nevertheless, we expect the asymptotic behavior of the longest directed path to be the same in both the uniform and non-uniform models.} of rooted series–parallel maps~\cite[Proposition 6]{bbf-bipolar-bijection}; see also~\cref{fig:patterns-trees}. 
Rooted series-parallel maps are graphs with two distinguished vertices called source and sink, formed recursively by two simple composition operations (\emph{parallel composition} and \emph{series composition}). 
They have been used to model series and parallel electric circuits, and are a natural model of last-passage percolation in random geometry. 
The scaling limit of different models of series-parallel maps (in the Gromov--Hausdorff sense) has been studied in~\cite{stufler2016limits, amankwah2025scaling}.
See the discussion in \cite[Section  1.3.1]{bdsg-lis-perm} for more connections between longest increasing subsequences in the skew Brownian permuton and models of last-passage percolation in random geometry, including Baxter permutations and bipolar orientations~\cite{bm-baxter-permutation}.

In the continuum,
skew Brownian permutons were constructed in \cite{borga-skew-permuton} using Liouville quantum gravity and a pair of Schramm--Loewner evolutions. This connection was
recently further strengthened in~\cite{borga2024reconstructing}. 
In this correspondence, it is conjectured that Brownian separable permutons can be constructed using $\text{SLE}_4/\text{CLE}_4$ and critical Liouville quantum gravity ($\gamma=2$). In connection to the aforementioned discrete picture, we expect the exponent $\alpha(p)^{-1}$ to coincide with the Hausdorff dimension of a directed version of the critical Liouville quantum gravity metric space; a space whose construction still represents a major challenge but which, at least in the critical regime, appears significantly more approachable in light of the results presented in this paper.

\section{Proof strategy}\label{sect:proof-strat}

We now explain how we prove \cref{thm:main_permutations} and \cref{thm:main_graphs}. Readers only interested in a rough idea of the proof strategy can simply browse through \cref{sect:positive-subtrees} and take a look at the diagram in \cref{fig:proof-diagram}. Reading through this entire section will provide a good overview of all main ideas that go into the proof of our main results.

\subsection{Main results for the size of the largest positive subtree of a $p$-signed uniform binary tree}\label{sect:positive-subtrees}

From the description of the law of $\sigma_n$ and $G_n$ in terms of a uniform rooted binary tree $T_n$ with i.i.d.\ Bernoulli($p$) $\oplus/\ominus$ signs on its internal vertices, the convergences of \cref{thm:main_permutations} and \cref{thm:main_graphs} immediately follow if we prove an analogous scaling limit result for the size of the ``largest positive subtree'' of $T_n$, when the trees $(T_n)_{n\geq 1}$ are coupled via the R\'emy algorithm. We are now going to precisely define the ``largest positive subtree'' and introduce the R\'emy algorithm. Then, we will state our main results on the ``largest positive subtree'' of $T_n$.

\begin{figure}[b!]
	\begin{center}
		\includegraphics[width=\textwidth]{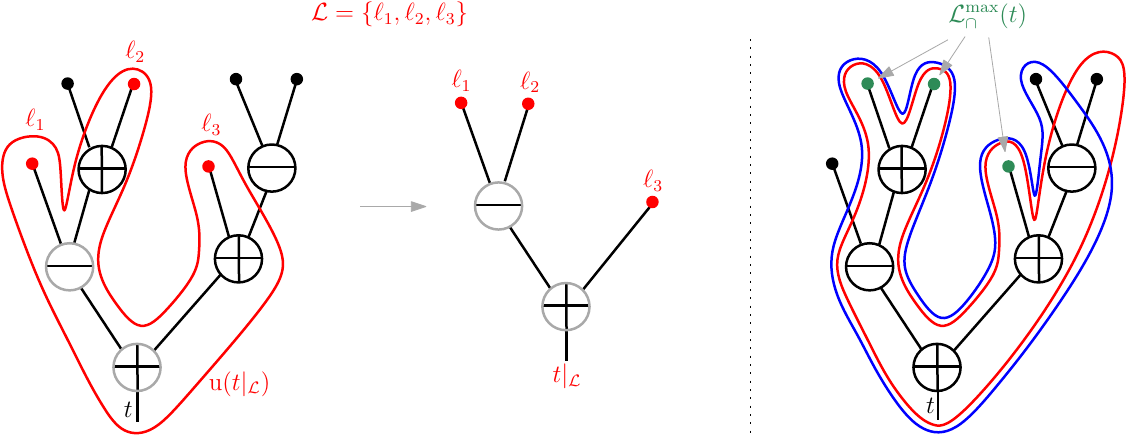}  
		\caption{\textbf{Left:} A sign-decorated binary tree $t$ with a subtree $t|_\mathcal{L}$ induced by the three leaves $\ell_1, \ell_2, \ell_3$. The uncontracted version $\unc(t|_\mathcal{L})$ of $t|_\mathcal{L}$ is highlighted by the red curve on $t$. \textbf{Right:} The same sign-decorated binary tree $t$ with all (\emph{i.e.}\ two) maximal positive (uncontracted) subtrees of $t$ highlighted in red and blue. We colored in green the leaves in  $\mathcal{L}^{\max}_{\cap}(t)$. 
    \label{fig:notation-subtree}}
	\end{center}
	\vspace{-5ex}
\end{figure}

\paragraph{Notation for trees.} We always consider planar rooted (planted) trees, drawn with the root at the bottom. By planted, we mean that the root has an additional edge extending below it, which is not connected to any other vertex; see \cref{fig:notation-subtree}. All trees in this paper are planted (sometimes without explicit mention). The \textbf{degree} $\deg(v)$ of a vertex $v$ is the number of edges incident to $v$ (in particular, provided the tree is not a single leaf, the root of a rooted (planted) binary tree has degree $3$). 
The vertices of a tree are of two types: the \textbf{leaves}, which are the vertices of degree one, and the \textbf{nodes}, which are all the other vertices (\emph{i.e.}\ the internal vertices). 
The \textbf{size} $|t|$ of a tree $t$ is equal to the number of leaves in $t$. 
Moreover, all the \textbf{subtrees} of any tree $t$ considered in this paper are obtained by first selecting a subset $\mathcal{L}$ of the leaves of $t$, then removing from $t$ all the vertices that do not have a descendant in $\mathcal{L}$, and finally contracting all the  vertices of degree 2 that may appear. 
We denote such a subtree by $t|_\mathcal{L}$. 
Moreover, we denote by $\unc(t|_\mathcal{L})$ the \textbf{uncontracted version} of $t|_\mathcal{L}$, \emph{i.e.}\ the tree obtained by only removing from $t$ all the vertices that do not have a descendant in $\mathcal{L}$. Both notions of subtrees are seen as planar rooted (planted) trees, where the root is given by the vertex with the lowest height in the original tree $t$.

A \textbf{sign-decorated tree} $t$ is a  planar rooted (planted) tree such that all nodes are decorated with a $\oplus$ or $\ominus$ sign.\footnote{The signs $\oplus$ or $\ominus$ will be the only signs that we  use to decorate the vertices of all trees considered in this paper, so we will often simply refer to the signs without explicitly saying the $\oplus$ or $\ominus$ signs.} Given a sign-decorated tree $t$ and one of its nodes $v$, we denote by $\sss(v)$ the sign decorating $v$. 
Nodes carrying $\oplus$ (resp.\ $\ominus$) signs are sometimes called positive (resp.\ negative).
A subtree of a sign-decorated tree $t$ is still a sign-decorated tree where all the signs are inherited from $t$.
A sign-decorated tree $t$ is \textbf{positive} if all the nodes are decorated with $\oplus$ signs.
For any finite sign-decorated tree $t$, we write $\lis(t)$ for the \textbf{maximal size} (\emph{i.e.}\ the maximal number of leaves) of a positive subtree of $t$.  A \textbf{maximal positive subtree} of $t$ is a subtree of $t$ of size $\lis(t)$. The notation $\lis(t)$ is motivated by the following fact:  $\lis(t)$ is equal to the size of the longest increasing subsequence of the  permutation obtained from $t$ using the procedure described below \eqref{eq:finite_perm_brownian} and in \cref{fig:patterns-trees}. Similarly, $\lis(t)$ is equal to the size of the largest clique of the  graph obtained from $t$ using the procedure described below \eqref{eq:finite_graph_brownian} and the length of the longest directed path from the source to the sink of the corresponding series-parallel planar map.
Finally, we denote by $\mathcal{L}^{\max}_{\cap}(t)$ the set of leaves in the intersection of \emph{all} maximal positive subtrees of $t$.

\paragraph{The R\'emy algorithm.}\label{sect:remy-algo}
We describe a coupling between the $p$-signed uniform binary trees $T_n=T_n(p)$ for all $n \geq 1$, which will be of fundamental importance in this paper. This coupling is a natural adaptation of the R\'emy algorithm~\cite{remy1985procede} to sign-decorated trees. To build this coupling, we define $T_n$ by induction on $n$ as follows (\emph{cf.}\ \cref{fig:remy}):
\begin{itemize}
    \item The tree $T_1$ is just a single leaf with an additional edge extending below it (recall that our trees are planted). 
    \item Conditionally on $T_1,\dots,T_n$, let $\uedge{n}$ be chosen uniformly at random among the $2n-1$ edges of $T_n$, and let $\usign{n}$ be an independent Bernoulli$(p)$ $\oplus/\ominus$ sign. We build $T_{n+1}$ by adding in the middle of $\uedge{n}$ a new vertex with sign $\usign{n}$, and grafting a new leaf on the left of this vertex with probability $1/2$ or on the right of this vertex with probability $1/2$. We see $T_{n+1}$ as a rooted (planted) tree: when $E_n$ is the special edge extending below the root of $T_n$, the root of $T_{n+1}$ is the new vertex; otherwise it is the same as that of $T_n$.
\end{itemize}
We also note that the transitions of the R\'emy algorithm are even simpler when it is run in reverse: in the above coupling, for any $1<m\leq n$, the law of $T_{m-1}$ conditionally on $\left( T_m, T_{m+1}, \dots, T_n \right)$ is obtained by removing a uniform random leaf $\uleaf{m}$  of $T_m$ and contracting the corresponding node.

\begin{figure}[b!]
	\begin{center}
		\includegraphics[width=\textwidth]{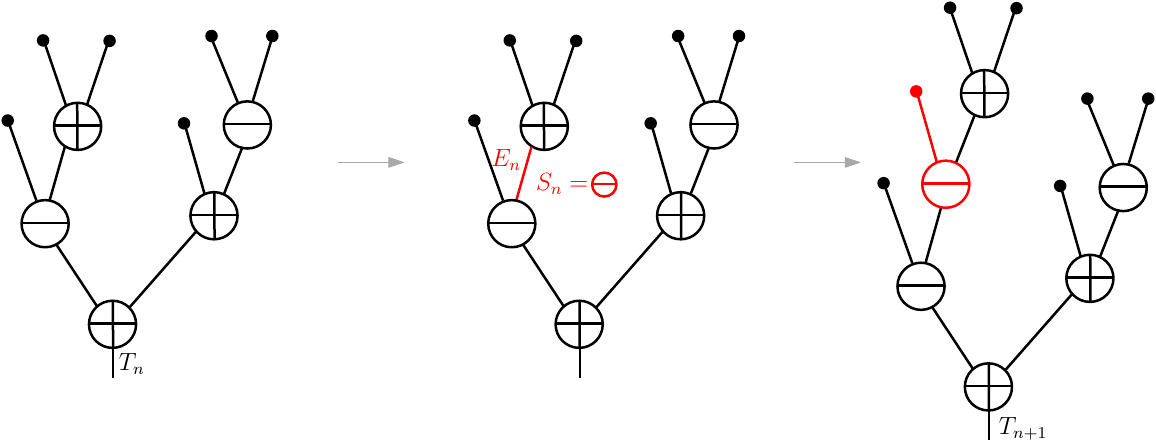}  
		\caption{Our version of the R\'emy algorithm for $p$-signed uniform binary trees. 
        In this realization, we start with a $p$-signed uniform binary  tree $T_n$ with $n$ leaves, then we select a uniform edge $\uedge{n}$, sample a $\ominus$ sign (this happens with probability $1-p$) and finally, we graft a new leaf to the \emph{left} (this happens with probability $1/2$) of the new vertex on $\uedge{n}$ decorated by $\ominus$. \label{fig:remy}}
	\end{center}
	\vspace{-3ex}
\end{figure}

\medskip

We are now ready to state our three main results on $\lis(T_n)$.

\begin{thm}[Existence of the exponent]\label{thm:main}
	For all $p\in(0,1)$, there exists $\alpha=\alpha(p)\in(1/2,1)$ such that we have the convergence in probability
	\begin{equation}\label{eqn:existence-exponent}
    \frac{\log \lis(T_n)}{\log n} \xlongrightarrow[n \to \infty]{\mathbb{P}} \alpha. 
    \end{equation}
    Moreover, 
    \begin{equation}\label{eqn:alpha_as_limsup}
    \alpha = \limsup_{n \to \infty} \frac{\log \Ec{\lis(T_n)}}{\log n}.
    \end{equation}
\end{thm}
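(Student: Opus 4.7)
My plan is to combine a concentration argument based on the R\'emy algorithm with a recursive analysis of $\Ec{\lis(T_n)}$, first defining $\alpha$ as a limsup and then promoting it to a genuine limit via self-similarity.

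The first ingredient is concentration. I would show that $\lis(T_n)$ is non-decreasing along the R\'emy process with increments in $\{0,1\}$. The upper bound $\lis(T_{n+1})\leq \lis(T_n)+1$ is immediate. For the lower bound, any maximal positive subtree $T_n|_\mathcal{L}$ lifts to the same subtree in $T_{n+1}$: the new sign-decorated node $v$ inserted by R\'emy either lies off every ancestral path between leaves of $\mathcal{L}$, or else has degree two in $\unc(T_{n+1}|_\mathcal{L})$ and is therefore contracted away (because the newly grafted leaf is not in $\mathcal{L}$). In either case $T_{n+1}|_\mathcal{L} = T_n|_\mathcal{L}$, so $\lis(T_{n+1})\geq \lis(T_n)$. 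Applying Azuma--Hoeffding to the Doob martingale of $\lis(T_n)$ in the R\'emy filtration then yields sub-Gaussian fluctuations of order $\sqrt{n}$; combined with the previously known lower bound $\Ec{\lis(T_n)}\geq n^{\alpha_*(p)-o(1)}$ from \cref{thm:prev-bound}, which ensures $\Ec{\lis(T_n)}\gg \sqrt{n}$, this yields the relative concentration $\lis(T_n)/\Ec{\lis(T_n)}\to 1$ in probability. In particular $\log \lis(T_n)/\log n$ differs from $\log \Ec{\lis(T_n)}/\log n$ by $o_{\mathbb{P}}(1)$.

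Next I would set $\alpha := \limsup_n \log \Ec{\lis(T_n)}/\log n$. Combining \cref{thm:prev-bound} with the concentration above gives $\alpha\in[\alpha_*(p),\beta^*(p)]\subset(1/2,1)$, so the second part of the theorem holds by construction. The heart of the proof is then the matching lower bound $\liminf_n \log \Ec{\lis(T_n)}/\log n \geq \alpha$, from which the convergence in probability~\eqref{eqn:existence-exponent} follows via the concentration step. My strategy would be a multi-scale argument exploiting the recursive self-similarity of $T_n$: conditional on the root sign being $\oplus$ (probability $p$) and on the root split $(K_n,n-K_n)$, one has the exact identity $\lis(T_n)=\lis(T_{K_n}^L)+\lis(T_{n-K_n}^R)$ with independent subtrees of the same law. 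Iterating this decomposition along a subsequence $(n_k)$ that realizes the limsup produces inside $T_N$ many approximately independent copies of $T_{n_k}$-like structures whose pairwise highest common ancestors are all positive; summing their maximal positive subtrees should yield $\Ec{\lis(T_N)}\geq N^{\alpha-o(1)}$.

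The main obstacle is this last multi-scale combination. The Catalan root-split distribution $\Pp{K_n=k}\asymp n^{3/2}/(k(n-k))^{3/2}$ is heavy-tailed near $0$ and $n$, so typical root splits are highly unbalanced, and conditioning on sufficiently balanced decompositions carries a polynomial probability cost that must be weighed carefully against the polynomial gain in $\lis$. A cleaner resolution, which I expect is the route the paper actually takes, is to first establish the local convergence statement \cref{thm:localconv} announced in the excerpt: once the neighborhood of a typical vertex in a maximal positive subtree is controlled in the scaling limit, the exponent $\alpha$ and the scaling $\lis(T_n)\sim X\cdot n^\alpha$ follow by an averaging argument, bypassing the direct recursive analysis altogether.
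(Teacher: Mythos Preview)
Your concentration step has a genuine gap. You correctly observe that $\lis(T_{k+1})-\lis(T_k)\in\{0,1\}$ along the R\'emy chain, but this is \emph{not} what Azuma--Hoeffding (or McDiarmid) requires. For the Doob martingale $M_k=\Ecsq{\lis(T_n)}{T_k}$ to have bounded differences, you would need that changing the $k$-th R\'emy move (which edge is split, which sign is placed) changes $\lis(T_n)$ by $O(1)$ after the remaining $n-k$ steps are run. Two trees $T_k$, $T_k'$ that agree on $T_{k-1}$ but differ in one R\'emy step have genuinely different edge sets, so the obvious coupling of the future uniform edge choices breaks down; there is no reason the final trees should have $\lis$ within $O(1)$ of each other. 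Without this, you do not get $\sqrt{n}$ fluctuations, and the reduction of \eqref{eqn:existence-exponent} to the deterministic statement about $\Ec{\lis(T_n)}$ collapses.

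Your fallback route through local convergence is circular in this paper: \cref{thm:localconv} is proved \emph{using} \cref{thm:main} (via the rough regularity estimates of \cref{prop:rough_regularity}, which need the characterization~\eqref{eqn:alpha_as_limsup} as an input to the Markov-inequality upper tail). So \cref{thm:main} must be established first, by an independent argument.

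The paper does not attempt concentration at all. The upper bound $\Pp{X_n\ge n^{\alpha+\eps}}\to 0$ is just Markov's inequality from the definition of $\alpha$ as a $\limsup$. For the lower bound, instead of the root split (whose heavy tails you correctly flagged), the paper decomposes $T_n$ by picking $m$ uniform leaves: the induced subtree has law $T_m$, and the $m$ regions hanging off the leaf-incident edges are independent copies of $T_{N_i}$ with $(N_i-1)_i$ Dirichlet--multinomial (Lemmas~\ref{lem:decomp-tree} and~\ref{lem:decomp-tree2}). This yields the superbranching domination $X_n\succeq\sum_{i=1}^{X_m}X_{N_i}^{(i)}$ (\cref{lem:superbranching_ineq}). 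After thresholding on $N_i\ge n/m$ and passing to a Dirichlet limit, one compares $(X_{n_0 m^k})_k$ with a supercritical Bienaym\'e--Galton--Watson process and obtains $\Pp{X_n\ge n^{\alpha-\eps}}\ge c>0$; a second use of the same decomposition then bootstraps this positive probability to $1-\eps$. The Dirichlet--multinomial structure is exactly what tames the balance problem you identified for root splits.
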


The fact that $\alpha(p)\in(1/2,1)$, provided the exponent exists, is a simple consequence of~\cref{thm:prev-bound}. The latter actually yields 
the better bounds $\alpha(p) \in \left(\alpha_*(p),\beta^*(p) \right)$, but the bounds $\alpha(p) \in \left(1/2,1 \right)$ are sufficient for our arguments. Another important note is that the description of $\alpha$ in~\eqref{eqn:alpha_as_limsup} and Markov's inequality provide a quantitative upper bound on the probability that $\LIS(T_n)$ exceeds $n^{\alpha}$ by a large amount. This fact will be helpful later to obtain certain important estimates (\cref{prop:rough_regularity}) used to prove the next main result (more explanations are given in \cref{sect:dynamical-picture}).

\begin{thm}[Determination of the exponent]\label{thm:main2}
	For all $p\in(0,1)$, let $\alpha=\alpha(p)\in(1/2,1)$ be as in \cref{thm:main}. Then  $\alpha$ is the unique solution in the interval $(1/2,1)$ to the equation in~\eqref{eq:eq-to-defn-alpha}, that is,
    \begin{equation*}
        \frac{1}{4^{\frac{1}{2\alpha}}\sqrt{\pi}}\,\frac{\Gamma\big(\tfrac{1}{2}-\tfrac{1}{2\alpha}\big)}{\Gamma\big(1-\tfrac{1}{2\alpha}\big)}=\frac{p}{p-1}.
    \end{equation*}
\end{thm}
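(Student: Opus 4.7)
My plan is to combine the recursive decomposition of $T_n$ at its root with the local-limit and fragmentation-tree descriptions of a maximal positive subtree provided by \cref{thm:localconv} and \cref{thm:fragmentation_tree}. Writing $T_n^L,T_n^R$ for the two subtrees hanging from the root of $T_n$ with respective sizes $K_n$ and $n-K_n$, the definition of $\lis$ yields
\begin{equation*}
\lis(T_n)=\begin{cases} \lis(T_n^L)+\lis(T_n^R) & \text{if the root of }T_n\text{ carries }\oplus,\\ \max\bigl(\lis(T_n^L),\lis(T_n^R)\bigr) & \text{if it carries }\ominus,\end{cases}
\end{equation*}
while the split $K_n/n$ converges in distribution to $U\sim\mathrm{Beta}(1/2,1/2)$. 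Combined with \cref{thm:main}, this produces a distributional fixed-point equation for the limit $X$ of $\lis(T_n)/n^{\alpha}$. The difficulty is that the $\max$ in the $\ominus$ case is non-linear, so a moment-type identity against $X$ does not immediately extract $\alpha$ alone.

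To get around this, I would invoke \cref{thm:fragmentation_tree}, which realizes the limiting shape of a maximal positive subtree as the genealogy of a self-similar branching process, so that $\alpha$ is the associated Malthusian exponent. Concretely, tagging a uniform leaf of the subtree and following its ancestry downwards produces a Markovian mass process whose multiplier at each step is $U^{\alpha}$ or $(1-U)^{\alpha}$, according to whether the current ancestor is $\oplus$ (both branches survive, probability $p$) or $\ominus$ (only the branch carrying the tagged leaf is kept, probability $1-p$). The Mellin transform of this multiplicative Markov chain satisfies a scalar linear renewal equation, and the condition that it admit a non-trivial bounded solution fixes $\alpha$. The role of \cref{thm:localconv} is to make this step rigorous: it guarantees that the non-local $\max$ at a typical $\ominus$ ancestor is asymptotically governed by an explicit \emph{local} rule that can be fed into the Mellin computation.

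The key analytic ingredient in the closing calculation is the identity
\begin{equation*}
\bbE[U^{-s}]=\frac{1}{\sqrt{\pi}}\,\frac{\Gamma(\tfrac12-s)}{\Gamma(1-s)}\qquad\text{for }s\in(-1/2,1/2),
\end{equation*}
extended by analytic continuation to the strip where the Malthusian exponent lives; this exhibits the spectral zeta function of $\mathbb{Z}$ as $\zeta_{\mathbb{Z}}(s)=4^{-s}\bbE[U^{-s}]$. The natural Mellin parameter turns out to be $s=1/(2\alpha)$ rather than $\alpha$, which reflects the Brownian scaling $\mathrm{size}\asymp\mathrm{depth}^{2}$ of the uniform binary tree combined with the inverse relation $n\asymp\lis(T_n)^{1/\alpha}$. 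Putting these pieces together should give exactly $\zeta_{\mathbb{Z}}(1/(2\alpha))=p/(p-1)$. Uniqueness of $\alpha\in(1/2,1)$ then follows from strict monotonicity of $s\mapsto\zeta_{\mathbb{Z}}(s)$ on $(1/2,1)$ -- easily verified from the Gamma-function expression -- together with the matching boundary behavior $\zeta_{\mathbb{Z}}(s)\to-\infty$ as $s\to 1/2^+$ and $\zeta_{\mathbb{Z}}(s)\to 0^-$ as $s\to 1^-$, which exhausts the range of $p\mapsto p/(p-1)$ on $(0,1)$. The main obstacle is the Mellin/linearization step of the previous paragraph; the analytic endgame described here is then essentially mechanical.
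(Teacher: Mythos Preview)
Your proposal has a genuine gap at precisely the point you yourself flag as ``the main obstacle'': the Mellin/linearization step. The description of the ancestral mass process is not correct as written. Following a tagged leaf of the leftmost maximal positive subtree down the spine, at a $\ominus$ ancestor the $\lis$ of the current subtree does \emph{not} change (only one side is kept, and by construction that side already carries the full $\lis$), so the multiplicative step there is $1$, not $U^{\alpha}$ or $(1-U)^{\alpha}$. At a $\oplus$ ancestor both sides are kept, but the split of $\lis$ between them is \emph{not} deterministically $(U^{\alpha},(1-U)^{\alpha})$: it involves two independent copies of the non-deterministic limiting variable $X$ of \cref{thm:main3}. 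Hence the ``scalar linear renewal equation'' you hope for does not arise; the Mellin transform of the relevant multiplier carries the unknown law of $X$, not just $\alpha$, and no clean equation in $\alpha$ alone drops out. There is also a circularity risk in invoking \cref{thm:fragmentation_tree}: its statement already presupposes the identification of $\alpha$, and its proof (in \cref{subsec:fragmentation_tree}) produces a fragmentation whose self-similarity index $1-\gamma$ and dislocation measure $\nu_{\gamma}$ are parametrized by the \emph{same} $\gamma$, so no additional constraint on $\alpha$ is encoded there.

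The paper's route is entirely different and avoids the limiting variable $X$ altogether. It works with the sequence $q(k)=\Pp{\lis(T)=k}$ and its recursion (\cref{lem:q_rel}), in which the $\max$ at a $\ominus$ root becomes a simple combinatorial term rather than a nonlinearity. Knowing from \cref{thm:good_regularity} that $q(k)=k^{-\gamma}\svfq(k)$ with $\svfq$ slowly varying, the value of $\gamma$ is pinned down by a comparison principle: for a test sequence $\hat q(k)=k^{-\beta}$, one computes the asymptotics of the right-hand side of the recursion against $\hat q$ (\cref{lem:asymptotics q hat q}) and checks whether $\hat q$ satisfies an inequality version~\eqref{eq:inequation hat q geq} or~\eqref{eq:inequation hat q leq} for large $k$, depending on which side of the critical value $\beta$ lies. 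A short induction (\cref{lem:comparison q hat q}) then forces $q(k)\le C\hat q(k)$ or $q(k)\ge c\hat q(k)$, contradicting the regular variation of $q$ unless $\gamma$ is exactly critical. Strict monotonicity (\cref{lem:monotonicity}) gives uniqueness. The Beta integral you wrote down does appear, but inside the asymptotic expansion of the convolution $\sum_i q(i)\hat q(k-i)$ via Abel summation, not as a Mellin transform of a mass multiplier.
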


Our third main result gives the scaling limit of $\lis(T_n)$. 
Recall that the trees $(T_n)_{n\geq 1}$ are coupled via the R\'emy algorithm.

\begin{thm}[Scaling limit of $\LIS(T_n)$]\label{thm:main3}
For all $p\in(0,1)$, we have the almost sure convergence
	\[ \frac{\lis(T_n)}{n^{\alpha}} \xlongrightarrow[n\rightarrow \infty]{a.s.} X,  \]
where the value $\alpha=\alpha(p)$  is as in Theorems~\ref{thm:main} and~\ref{thm:main2} and the limiting random variable $X=X(p)$ is non-deterministic and almost surely positive and finite. 
\end{thm}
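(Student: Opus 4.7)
The plan is to upgrade the convergence in probability from \cref{thm:main} to the almost sure convergence claimed here, combining monotonicity of $\lis(T_n)$ under the Rémy coupling with a fluctuation estimate, and then identifying $X$ as a measurable functional of the Brownian separable permuton via the branching self-similarity of the signed binary tree.

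The first observation is that, under the Rémy coupling, $n\mapsto \lis(T_n)$ is non-decreasing with increments in $\{0,1\}$. Indeed, inserting a new internal vertex on an edge and grafting a new leaf does not change the highest common ancestor of any pair of previously existing leaves, so every positive subtree of $T_n$ remains a positive subtree of $T_{n+1}$, possibly augmented by the new leaf. Writing $M_n := \lis(T_n)/n^\alpha$, this monotonicity yields the sandwich
\[
(m/m')^\alpha M_m \;\leq\; M_n \;\leq\; (m'/m)^\alpha M_{m'} \qquad (m \leq n \leq m'),
\]
so that almost sure convergence of $(M_{n_k})$ along any subsequence with $n_{k+1}/n_k \to 1$ propagates to full almost sure convergence of $(M_n)$ to the same limit.

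To establish almost sure convergence along a suitable subsequence $(n_k)$, the strategy is a two-scale fluctuation estimate
\[
\Pp{|M_{n_{k+1}} - M_{n_k}| > \varepsilon} \leq C(\varepsilon)\, a_k
\]
with $(a_k)$ summable, so that Borel--Cantelli gives almost sure Cauchyness. Such an estimate should follow from the branching decomposition of $T_n$: conditionally on the root sign and on the sizes $(N,n-N)$ of its two subtrees (which, conditionally on their sizes, are independent $p$-signed uniform binary trees),
\[
\lis(T_n) = \indicator{\oplus}\bigl(\lis(T_n^L) + \lis(T_n^R)\bigr) + \indicator{\ominus}\max\bigl(\lis(T_n^L), \lis(T_n^R)\bigr).
\]
Iterating this decomposition, together with the moment and tail control on $M_n$ coming from the characterization~\eqref{eqn:alpha_as_limsup} of $\alpha$, should yield a variance bound on $M_n$ sufficient to conclude via Chebyshev. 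The identification of $X$ is then carried out via the local convergence of $T_n$ around leaves of a largest positive subtree (the content of \cref{thm:localconv} in the paper), which expresses $X$ as an intrinsic quantity of the limiting local structure, itself a measurable functional of the signed CRT --- and hence of the Brownian separable permuton $\bm{\mu}_p$.

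Positivity of $X$ follows from \cref{thm:prev-bound} combined with a zero-one argument, finiteness from~\eqref{eqn:alpha_as_limsup} and Fatou, and non-determinism from the fact that a deterministic limit would be incompatible with the branching recursion at $\ominus$ nodes. The main obstacle in this program is the two-scale concentration bound. The $\lis$ recursion mixes additive behavior at $\oplus$ nodes with a maximum at $\ominus$ nodes, which blocks a direct martingale or linear self-similar argument, and correlations between the sub-problems arising from successive splits must be carefully tracked. Precisely this balance between addition and maximum is encoded in the Gamma-function equation~\eqref{eq:eq-to-defn-alpha} defining $\alpha$, so any proof of the concentration bound will unavoidably need to bring it out quantitatively.
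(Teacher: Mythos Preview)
Your proposal is not a proof: you outline a strategy whose central step --- the two-scale concentration estimate $\Pp{|M_{n_{k+1}} - M_{n_k}| > \varepsilon} \leq C(\varepsilon) a_k$ with summable $a_k$ --- is left open, and you acknowledge this yourself as ``the main obstacle''. Iterating the root decomposition to get a usable variance bound on $M_n$ is exactly the hard part: the $\ominus$-max / $\oplus$-sum mixture prevents any clean second-moment recursion, and nothing in your sketch explains how to close it.

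The paper's route is genuinely different and avoids this obstacle entirely. The key observation (which you do not use) is that under the \emph{backward} R\'emy algorithm, $X_{n-1} = X_n - \mathbbm{1}_{L_n \in \mathcal{L}^{\max}_\cap(T_n)}$, so that $\Ecsq{X_{n-1}}{\revF_n} = X_n - \#\mathcal{L}^{\max}_\cap(T_n)/n$. The quantitative law of large numbers $\mathbb{E}[|\#\mathcal{L}^{\max}_\cap(T^k)/k - \alpha|] = O(k^{-\varepsilon})$ (\cref{thm:quantitative_moments}, itself a consequence of the coupling behind \cref{thm:localconv} plus the good regularity of $q$) then shows that $c_n X_n$ with $c_n = \prod_{i\leq n}(1 - \alpha/i) \sim n^{-\alpha}/\Gamma(1-\alpha)$ is a backward martingale up to remainder terms $R_n$ that are almost surely summable. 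Backward (super)martingale convergence yields the almost sure limit directly, with no variance estimate on $M_n$ ever computed.

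Two further points. First, your closing claim that the proof ``unavoidably'' needs the Gamma-function equation is incorrect: the paper states explicitly that \cref{thm:main3} does not require the value of $\alpha$, only its existence and the fact $\alpha \in (1/2,1)$. Second, the identification of $X$ as a function of $\bm{\mu}_p$ is not done via local convergence in the paper, but via a Kolmogorov zero-one argument on the tail $\sigma$-field of the i.i.d.\ uniform points used to sample $\sigma_n$ (Section~\ref{sect:measurability}).
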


Note that Theorems~\ref{thm:main2}~and~\ref{thm:main3} immediately imply Theorems~\ref{thm:main_permutations}~and~\ref{thm:main_graphs} due to the description of the law of $\sigma_n$ and $G_n$. The only fact that does not immediately follow from the above results is that the random variable $X$ in \cref{thm:main_permutations} (resp.\ \cref{thm:main_graphs}) is a deterministic measurable function of the Brownian separable permuton (resp.\ the Brownian cographon). This fact will be justified in \cref{sect:measurability}.

\medskip

The rest of this section is organized as follows. We first explain our strategy to prove the first main result, that is,  the existence of the exponent $\alpha$ in~\cref{thm:main}. This is done via a supermultiplicativity argument explained in~\cref{sect:supermultiplicativity-expl}.
Then, in a second phase, we determine the exact value of $\alpha(p)$ by showing that $\alpha(p)$ is the unique solution to the equation in~\eqref{eq:eq-to-defn-alpha} in the interval $(1/2,1)$. The latter step is the most involved part of the paper and will be explained in Sections~\ref{sect:heuristic-calculation}~and~\ref{sect:dynamical-picture}.
At the end of~\cref{sect:dynamical-picture}, we will also explain how we prove our last main result, that is, the scaling limit in~\cref{thm:main3}.

We invite the reader to compare the explanations in the next sections with the diagram in~\cref{fig:proof-diagram} which should be helpful to visualize the structure of the full proof.

\medskip

\emph{Important note}. From now on, we consider $p\in (0,1)$ as a fixed parameter and omit its dependency in our notation. We note that our constants may still depend on $p$.

\begin{figure}[p]
	\begin{center}
		\includegraphics[width=\textwidth]{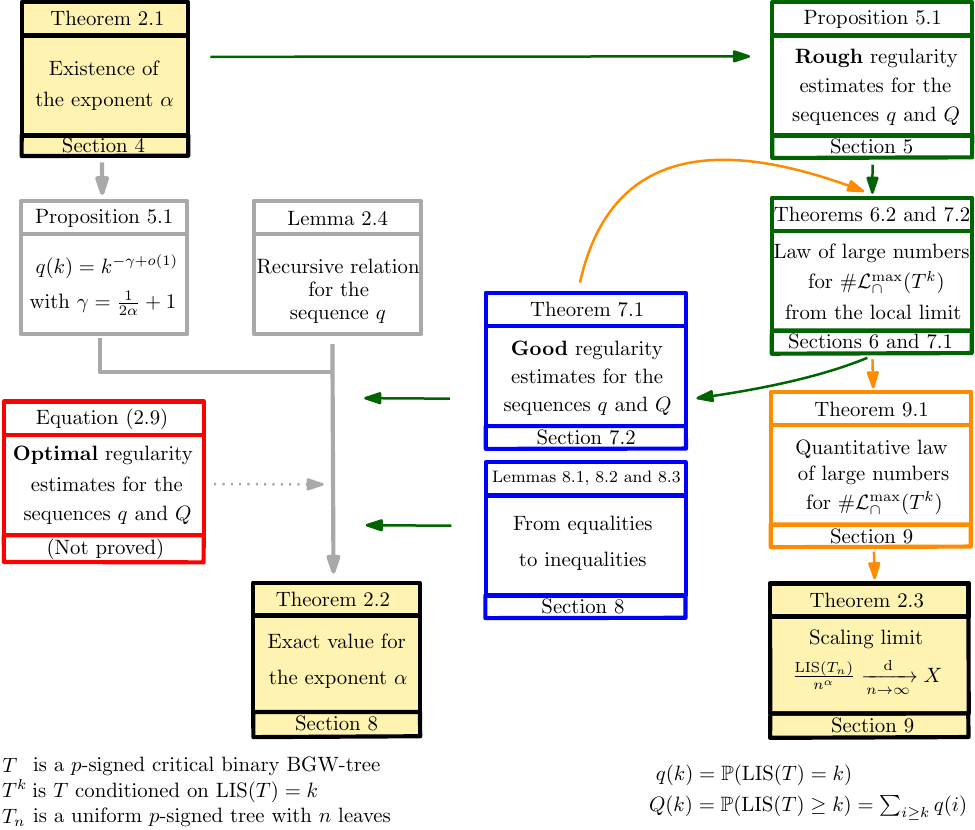} 
		\caption{
        A diagram for the strategy of the proof of the three main results of this paper, \emph{i.e.}\ Theorems~\ref{thm:main},~\ref{thm:main2}~and~\ref{thm:main3}. The existence of the exponent $\alpha$ (\cref{thm:main}) is proved in \cref{sect:supermultiplicativity} via a supermultiplicativity argument. 
        \textbf{The gray path:} It is relatively simple to deduce from the existence of $\alpha$ that $q(k)=\Pp{\LIS(T)=k}=k^{-\frac{1}{2\alpha}-1+o(1)}$ (\cref{prop:rough_regularity}). To determine the exact value of $\alpha$ (\cref{thm:main2}), we first find an appropriate recursive relation (\cref{lem:q_rel}) for the sequence $q(k)$. 
        Assuming certain optimal regularity estimates~\eqref{eq:dreamed asymptotics}, one can determine the value of $\alpha$ from the recursive relation for $q(k)$; but unfortunately such optimal regularity estimates are out of reach with our methods. \textbf{The green path:} To circumvent this issue, in \cref{subsec:rough regularity}, we first deduce from the existence of the exponent $\alpha$ certain rough regularity estimates (\cref{prop:rough_regularity}). Then, from these estimates, we obtain in \cref{sect:loacal-lim} a local convergence result for a maximal positive subtree seen from a uniformly chosen leaf in the related tree model $T^k$ (\cref{thm:localconv}) and in \cref{sect-lln-int} a corresponding law of large numbers for $\# \mathcal{L}^{\max}_{\cap}(T^k)$ (\cref{thm:convergence_size_intersection}). Finally, in \cref{sect-bett-reg-q-Q}, we obtain from this law of large numbers certain good regularity estimates (\cref{thm:good_regularity}). 
        The latter estimates, combined with a new technique which converts the recursive relation for $q(k)$ into certain auxiliary inequalities (Lemmas~\ref{lem:comparison q hat q},~\ref{lem:asymptotics q hat q}~and~\ref{lem:monotonicity}), allow us to determine $\alpha$ in \cref{sec:value exponent}. \textbf{The orange path:} Starting from the good regularity estimates and running again part of the proof of the local convergence theorem, we  obtain in \cref{sec:sc_limit_Xn} a quantitative version of the law of large numbers for $\# \mathcal{L}^{\max}_{\cap}(T^k)$ (\cref{thm:quantitative_moments}) and finally deduce  our scaling limit result for $X_n=\LIS(T_n)$ (\cref{thm:main3}). 
        \label{fig:proof-diagram}}
	\end{center}
	\vspace{-3ex}
\end{figure}

\subsection{Existence of the exponent $\alpha$ via supermultiplicativity}\label{sect:supermultiplicativity-expl}

\medskip

We introduce the compact notation 
\begin{equation} \label{eq:def_X_X_m}
    X_n\coloneqq\lis(T_n).
\end{equation}
Our first objective will be to show the existence of the exponent $\alpha$, proving \cref{thm:main}. 
This is done in \cref{sect:supermultiplicativity} using a supermultiplicativity argument. 
The basic idea here would be to decompose the trees $T_n$ in some subregions to obtain a supermultiplicativity equation for the random variables $X_n$ of the form 
\begin{equation}\label{eq:rec-rell-dom}
        X_{m \cdot n} \succeq \sum_{i=1}^{X_m}X_n^{(i)},
\end{equation}
where the $X_n^{(i)}$ are i.i.d.\ copies of $X_n$ and are independent of $X_m$. 
Here $\succeq$ denotes stochastic domination (see \cref{sect:not-proba} for further details if necessary). This domination could be interpreted as a ``super-branching property'' for the sequence $(X_{2^k})_{k\geq 0}$. Comparison with branching processes could then be applied to get that $(X_{2^k})^{\frac{1}{k}}$ converges in probability to some constant, and hence by monotonicity we would have
\[ 
\frac{\log X_n}{\log n }
\xrightarrow[n \to +\infty]{\mathbb{P}} \alpha. \]
The issue is that we will not be able to write an equation as simple as~\eqref{eq:rec-rell-dom} because most sensible ways to decompose uniform binary trees into several regions result in regions with \emph{random} and \emph{different} sizes (\cref{lem:decomp-tree2}). This will have the consequence that our supermultiplicativity equation (see \eqref{eqn:superbranching_v3}) will be more complicated than~\eqref{eq:rec-rell-dom}. However, we can exactly describe the joint law of such random regions (\cref{lem:decomp-tree2}). This will enable us to compare our supermultiplicativity equation with a more standard one (see~\eqref{eqn:stochastic_domination2}) and to prove \cref{thm:main}.

\subsection{The sequences $q$ and $Q$ and a heuristic derivation of the exponent $\alpha$}\label{sect:heuristic-calculation}

The next step is to determine the exact value of $\alpha$ by showing that $\alpha$ is the unique solution to the equation~\eqref{eq:eq-to-defn-alpha} in the interval $(1/2,1)$. 
For the remainder of~\cref{sect:proof-strat}, we omit most technical details and make frequent use of the symbol $\approx$ to indicate that two quantities behave similarly, in a deliberately imprecise sense.

\subsubsection{The sequences $q$ and $Q$}

Here, our first idea is to randomize the size of $T_n$ by introducing a critical binary Bienaymé--Galton--Watson tree $T$ with i.i.d.\ Bernoulli($p$) signs on its nodes (sampled independently of the tree), so that $T$ conditioned to have size $n$ is distributed as $T_n$.
We introduce the two fundamental sequences $q=(q(k))_{k\geq 1}$ and $Q=(Q(k))_{k\geq 1}$, defined for all $k \geq 1$ by
\begin{equation} \label{eq:def_q_Q}
    q(k)\coloneqq\Pp{\lis(T)=k} \qquad \mbox{and} \qquad Q(k)\coloneqq\Pp{\lis(T) \geq k}=\sum_{i \geq k} q(i).
\end{equation} 
We first explain intuitively why the sequences $q$ and $Q$ are related to the exponent $\alpha$ that we aim to compute. 
Write $Q(k)=\sum_{n\geq 1} \Pp{|T|=n} \cdot \Pp{\LIS(T_n)\geq k}$. 
Using Theorem~\ref{thm:main}, we get that $\Pp{\LIS(T_n)\geq k}$ is very small if $k\gg n^\alpha$ and very close to $1$ if $k\ll n^\alpha$. 
This suggests that 
\[Q(k)\approx \Pp{|T|\geq k^{\frac{1}{\alpha}}} \approx k^{-\frac{1}{2\alpha}},\] 
using the standard fact that $\Pp{|T|\geq n}\approx n^{-\frac{1}{2}}$ (\cref{lem:prelim_GW}). 
A rigorous version of this argument (\cref{prop:rough_regularity}) yields
\begin{equation}\label{eq:bigQrel}
        q(k)=k^{-\frac{1}{2\alpha}-1+o(1)}\qquad \text{ and }\qquad Q(k)= k^{-\frac{1}{2\alpha} +o(1)} \qquad\text{as }k \to \infty.
\end{equation}
To simplify the notation, it will be convenient to set
\begin{equation}\label{eq:gamma-alpha-rel}
\gamma \coloneqq  \frac{1}{2\alpha}+1\in \Big(\frac{3}{2},2\Big),
\end{equation}
so that $q(k)=k^{-\gamma+o(1)}$ and $Q(k)=k^{-\gamma+1+o(1)}$ by \eqref{eq:bigQrel}. To determine $\gamma$ (or equivalently $\alpha$), we write a recursion for the sequence $q$ by decomposing the critical binary Bienaymé--Galton--Watson tree $T$ at the root. We obtain the following equation.

\begin{lem}[Recursive relation for the sequence $q$]\label{lem:q_rel}
Let $q(k)$ be as in \eqref{eq:def_q_Q}. Then for $k\geq 2$,
\begin{align}\label{eq:q_rel}
	q(k)
	&= (1-p) q(k) \sum_{i=1}^{k-1} q(i)  
	+p  \sum_{i=1}^{(k-1)/2} q(i) q(k-i) + \frac{1-p}{2} q(k)^2 +\mathds{1}_{\{k \,\mathrm{ even}\}}\frac p 2 q(k/2)^2.
\end{align}
\end{lem}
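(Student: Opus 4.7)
The plan is to derive the recursion by decomposing the critical binary Bienaymé--Galton--Watson tree $T$ at its root. The root is a leaf with probability $1/2$, in which case $\LIS(T)=1$, contributing nothing for $k\geq 2$. Otherwise, with probability $1/2$, the root is an internal node carrying an independent Bernoulli$(p)$ sign, and its two subtrees $T_L,T_R$ are independent copies of $T$, independent of the root sign.

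The first key observation is that, because the maximality condition on a positive subtree only concerns pairwise highest common ancestors, the value of $\LIS(T)$ factors cleanly according to the root sign:
\begin{itemize}
\item If the root is decorated $\oplus$, then any pair of leaves, one taken from a positive subtree of $T_L$ and one from a positive subtree of $T_R$, has the root as highest common ancestor, which carries $\oplus$. Hence the union of an optimal positive subtree of $T_L$ and an optimal positive subtree of $T_R$ is again a positive subtree of $T$, giving $\LIS(T)=\LIS(T_L)+\LIS(T_R)$.
\item If the root is decorated $\ominus$, then any positive subtree of $T$ can use leaves from at most one side, since two leaves lying on opposite sides would have the $\ominus$ root as highest common ancestor. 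Therefore $\LIS(T)=\max(\LIS(T_L),\LIS(T_R))$.
\end{itemize}

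Conditioning on these three cases and using that $\LIS(T_L),\LIS(T_R)$ are i.i.d.\ with law $q$, we obtain for $k\geq 2$
\[
q(k) = \tfrac{1}{2}\,p\,\Pp{\LIS(T_L)+\LIS(T_R)=k} + \tfrac{1}{2}\,(1-p)\,\Pp{\max(\LIS(T_L),\LIS(T_R))=k}.
\]
The convolution evaluates to $\sum_{i=1}^{k-1}q(i)q(k-i)$, and the maximum expands as
\[
\Pp{\max(\LIS(T_L),\LIS(T_R))=k} = 2\,q(k)\sum_{i=1}^{k-1}q(i) + q(k)^2,
\]
via the standard identity $\Pp{\max = k}=\Pp{\max\leq k}-\Pp{\max\leq k-1}$ (equivalently, splitting according to whether one or both sides attain the maximum).

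The only remaining step is a bookkeeping one: symmetrize the convolution by writing
\[
\sum_{i=1}^{k-1}q(i)q(k-i) = 2\sum_{i=1}^{\lfloor(k-1)/2\rfloor}q(i)q(k-i) + \mathds{1}_{\{k\,\mathrm{even}\}}\,q(k/2)^2,
\]
and substitute everything into the identity above; the factors of $1/2$ combine with the Bernoulli weights to produce the stated coefficients $p$, $(1-p)$, $\tfrac{1-p}{2}$, and $\tfrac{p}{2}$. I expect no real obstacle here: the proof is a one-step decomposition together with an elementary computation. The only point requiring mild care is handling the even case $i=k/2$ separately so as to reproduce exactly the form in \eqref{eq:q_rel}.
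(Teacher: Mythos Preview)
Your proof is correct and takes essentially the same approach as the paper: both decompose $T$ at the root, split according to the root sign, and use that $\LIS(T)=\LIS(T_L)+\LIS(T_R)$ when the sign is $\oplus$ and $\LIS(T)=\max(\LIS(T_L),\LIS(T_R))$ when it is $\ominus$. The paper's proof is in fact just a one-sentence sketch of exactly this decomposition, so your version is simply a fleshed-out form of it.
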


\begin{proof}
    Decompose the critical binary Bienaymé--Galton--Watson tree $T$ at the root. 
    The four summands correspond to the following four cases: the root (which is necessarily a node because $k\geq 2$)
    is decorated by a $\ominus/\oplus$ sign and the left and right subtrees have maximal positive subtrees of different/equal sizes.
\end{proof}

Note that one can explicitly compute that $q(1)=\Pp{\LIS(T)=1}=(1+\sqrt{p})^{-1}$ and then use \eqref{eq:q_rel} to numerically compute the first few values of the sequence $q$. 
Unfortunately, although the recursive relation in~\eqref{eq:q_rel} entirely determines the sequence $q$, it is very sensitive to initial conditions, see for instance~\cref{fig:sequences}. 
As a result, it seems quite hard to deduce anything accurate on the sequence $q$ directly from~\eqref{eq:q_rel}, without any additional input.

 \begin{figure}[b!]
	\begin{center}
		\includegraphics[width=0.6\textwidth]{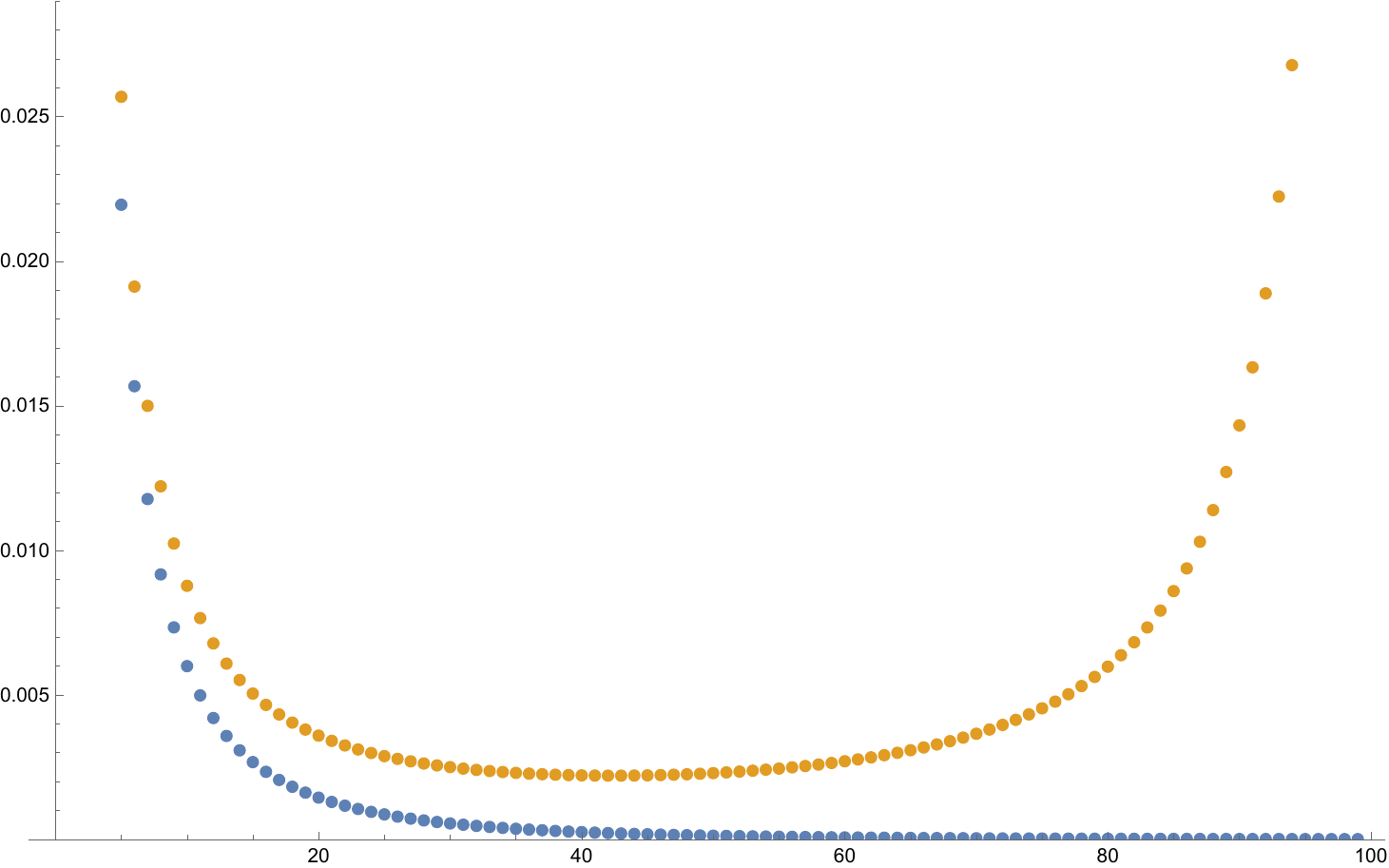}  
		\caption{We fix $p=1/2$. In blue, the first 100 values of the sequence $q(k)$ computed using the recursive relation in~\eqref{eq:q_rel} with initial condition $q(1)=2-\sqrt{2}$. In orange, the first 100 values of the sequence $\widetilde{q}(k)$ computed using the same recursive relation in~\eqref{eq:q_rel} with initial condition $\widetilde{q}(1)=2-\sqrt{2}+0.005$. The first values of each sequence are not shown because they are too large.} \label{fig:sequences}
	\end{center}
	\vspace{-3ex}
\end{figure}

\subsubsection{A heuristic calculation for the exponent $\alpha$ assuming optimal regularity estimates}

We now explain how one could determine the value of $\gamma$ assuming the following \textbf{optimal regularity estimates} about the sequences $q$ and $Q$ (which we will never be able to establish): For some constant $A>0$, assume
\begin{align}\label{eq:dreamed asymptotics}
	Q(k) \sim \frac{A}{\gamma -1}\cdot k^{-\gamma +1}, \qquad q(k)\sim A k ^{-\gamma}, \qquad q(k-1)- q(k) \sim \gamma A k^{-\gamma -1} \qquad \text{as } k\rightarrow \infty.
\end{align}
Rearranging the terms of~\eqref{eq:q_rel}, we get
\begin{align*}
	q(k) \left(1 + \frac{1-p}{p}Q(k)\right) = \left(\sum_{i=1}^{(k-1)/2} q(i) q(k-i) + \frac{1-p}{2 p} q(k)^2 +\mathds{1}_{\{k\text{ even}\}}\frac 1 2 q(k/2)^2\right),
\end{align*}
so that 
\begin{align}\label{eq:LHS=RHS q(k)}
	\frac{1-p}{p}Q(k) q(k)   =  \left(\sum_{i=1}^{(k-1)/2} q(i) q(k-i) \right) - q(k) + \left(\frac{1-p}{2 p} q(k)^2 +\mathds{1}_{\{k\text{ even}\}}\frac 1 2 q(k/2)^2\right).
\end{align}
Then we could determine the value of $\gamma$ by inserting the asymptotics~\eqref{eq:dreamed asymptotics} into the equation \eqref{eq:LHS=RHS q(k)} and equating their leading term as $k\rightarrow \infty$. 
First, the left-hand side would be
\begin{align*}
	\frac{1-p}{p}Q(k) q(k) =\frac{1-p}{p}\frac{A^2}{\gamma-1} k^{-2\gamma +1}  + o(k^{-2\gamma +1}).
\end{align*}
Note already that some terms on the right-hand side will not contribute to the leading term of order $k^{-2\gamma +1}$, because
\begin{align*}
	\frac{1-p}{2 p} q(k)^2 +\mathds{1}_{\{k\text{ even}\}}\frac 1 2 q(k/2)^2 = O(k^{-2\gamma}) =  o(k^{-2\gamma +1}).
\end{align*}
By Abel summation, the sum on the right-hand side of~\eqref{eq:LHS=RHS q(k)} can be rewritten as
\begin{align*}
	\sum_{i=1}^{(k-1)/2} q(i) q(k-i) 
	&=
	\sum_{i=1}^{(k-1)/2} Q(i) q(k-i) - \sum_{i=2}^{(k+1)/2} Q(i) q(k-i+1)\\
	&= q(k-1)
	-Q\Big(\Big\lfloor \frac{k+1}{2}\Big\rfloor\Big) q\Big(\Big\lceil \frac{k+1}{2}\Big\rceil\Big)  + \sum_{i=2}^{(k-1)/2} Q(i) (q(k-i)-q(k-i+1)).
\end{align*}
We use this to express the right-hand side of~\eqref{eq:LHS=RHS q(k)} and 
allow ourselves to replace any of the terms $q(k)$, $Q(k)$ by their asymptotics given in \eqref{eq:dreamed asymptotics} and neglect any term that would be $o(k^{-2\gamma +1})$.  
We get (the entire argument could be turned into an actual proof with some additional details that we spare to the reader)
\begin{align*}
	& \left( \sum_{i=1}^{(k-1)/2} q(i) q(k-i) \right) - q(k) \\
	&\approx q(k-1)- q(k) - \frac{A}{\gamma -1} \left(\frac{k}{2}\right)^{-\gamma +1} \cdot A \left(\frac{k}{2}\right)^{-\gamma}
	+ \sum_{i=2}^{(k-1)/2}  \frac{A}{\gamma -1} i^{-\gamma+1} \cdot \gamma A (k-i)^{-\gamma -1}\\ 
	&\approx - \frac{A^2 \cdot 2^{2\gamma -1}}{\gamma -1}\cdot k^{-2\gamma +1}+ \frac{\gamma A^2}{\gamma-1}\cdot k^{-2\gamma +1}\cdot \sum_{i=2}^{(k-1)/2}   \frac{1}{k}\cdot \left(\frac{i}{k}\right)^{-\gamma+1}  \left(1-\frac{i}{k}\right)^{-\gamma -1} \\
	&\approx \frac{A^2}{\gamma -1} \cdot k^{-2\gamma +1} \cdot \left(-2^{2\gamma-1} + \gamma \int_0^{\frac{1}{2}}x^{-\gamma +1} (1-x)^{-\gamma -1} \dd x \right),
\end{align*}
where the integral comes as the limit of a Riemann sum. Equating the leading term on the left-hand side and right-hand side of \eqref{eq:LHS=RHS q(k)} would ensure that 
\begin{align}\label{eq:relation gamma p}
	\frac{1-p}{p} = -2^{2\gamma -1} + \gamma\int_0^{\frac{1}{2}}x^{-\gamma +1} (1-x)^{-\gamma -1} \dd x,
\end{align}
which after the substitution $\alpha=\tfrac{1}{2(\gamma-1)}$ from \eqref{eq:gamma-alpha-rel} and some manipulations\footnote{We will formally do these computations in the second part of the proof of \cref{lem:monotonicity}~on~page~\pageref{eq:neg-deriv}.} would lead to the equation in~\eqref{eq:eq-to-defn-alpha}.

\begin{remark} 
	It is possible to run a different version of the above argument and identify the exponent $\gamma$ (and $\alpha$) from a set of assumptions that are weaker than \eqref{eq:dreamed asymptotics}, namely that there exists a slowly varying function $\svfq$ such that
	\begin{align*}
		q(k)=k^{-\gamma}\svfq(k) \qquad \text{and that} \qquad \frac{q(k+1)}{q(k)}=1 + O\left(\frac{1}{k}\right).
	\end{align*}
	We will be able to prove that the first asymptotic holds thanks to \cref{thm:good_regularity} below, but the second one will remain out of reach using our methods. Hence, we will need a different strategy explained in the next subsection.
\end{remark}

\subsection{Our proof strategy: good but not optimal regularity estimates}\label{sect:dynamical-picture}

We now describe the strategy used in this paper.
Most of the paper will be devoted to proving some \textbf{good regularity estimates} for the sequences $q$ and $Q$. Specifically,  we show that there exist slowly varying functions $\svfq$ and $\svfbQ$ (see \cref{sect:not-proba} for definitions) such that 
\begin{align}\label{eq:good regularity estimates}
        q(k)=k^{-\frac{1}{2\alpha}-1} \svfq(k)
        \qquad \text{and}\qquad 
        Q(k)=k^{-\frac{1}{2\alpha}} \svfbQ(k),
\end{align}
where $\alpha$ is the exponent of \cref{thm:main}. 
Although these estimates are not as precise  as~\eqref{eq:dreamed asymptotics}, they will still be sufficient to conclude with some additional effort.

\medskip

The rest of this section is dedicated to outlining the key ideas behind the structure of this paper. 

\paragraph{The dynamical picture.}
The control \eqref{eq:good regularity estimates} on the sequences $q$ and $Q$ will come from understanding a dynamical version of the problem. 
We will work on a probability space where the whole sequence $(T_n)_{n\geq 1}$ is coupled under Rémy's algorithm described in Section~\ref{sect:positive-subtrees}: 
under this construction, conditionally on $(T_i)_{i\geq n}$,
the tree $T_{n-1}$ can be obtained by removing a uniform leaf $\uleaf{n}$ from $T_{n}$ and contracting the corresponding node. 
This ensures that the corresponding sequence $(X_n)_{n\geq 1}$ defined in \eqref{eq:def_X_X_m} is non-decreasing and changes at each step by either $0$ or $1$.
We denote by $\sttm_k$ the first time that $X_n$ attains any number $k\geq 1$.  
Note that studying the asymptotic behavior of the process $(\sttm_k)_{k\geq 1}$ is equivalent to studying that of $(X_n)_{n\geq 1}$
since
 \begin{equation}\label{eq:sttm-X-rel}
     X_{n}=k\qquad \text{for all } n\in \intervalleentier{\sttm_{k}}{\sttm_{k+1}-1}.
 \end{equation}
It just happens that the sequences $q$ and $Q$ are more naturally controlled in terms of the stopping times $(\sttm_k)_{k\geq 1}$, as we can see below.

\paragraph{How the sequence $(\sttm_k)_{k\geq 1}$ is related to $q$ and $Q$.}
Let the $p$-signed critical binary Bienaymé--Galton--Watson tree $T$ be defined on the same probability space as, and independent of, the sequence $(T_n)_{n\geq 1}$.
Since $T$ conditioned to have size $n$ is distributed as $T_n$, we have that $\LIS(T) \overset{\dist}{=} X_{|T|}$. 
Combining this with the asymptotics $\Pp{|T| \geq n}\approx c n^{-\frac{1}{2}}$ that holds for some constant $c>0$ (\cref{lem:prelim_GW}), we get that
\begin{equation}\label{eq:Q-to-tau}
	Q(k) =\Pp{\LIS(T)\geq k}= \Pp{X_{|T|} \geq k} \stackrel{\eqref{eq:sttm-X-rel}}{=} \Pp{|T| \geq \sttm_k}
	 \approx c \cdot \Ec{\sttm_k^{-1/2}}.
\end{equation}
Similarly, we have $q(k)=Q(k)-Q(k+1) \approx c \cdot \Ec{\sttm_k^{-1/2} -\sttm_{k+1}^{-1/2}}$.

\paragraph{Regularity of $q$, $Q$ and $(\sttm_k)_{k\geq 1}$.}
By Karamata's characterization theorem~\cite[Theorem~1.4.1]{bingham1989regular} for slowly varying functions, in order to prove the estimates \eqref{eq:good regularity estimates}, it is enough to show that, for any $\svx \geq 1$,  we have $\frac{Q(\svx k)}{Q(k)}\to \svx^{-\frac{1}{2\alpha}}$ as $k\to \infty$ and the analogous estimate for $q$. 
Our objective is hence to control the ratios $\frac{q(\svx k)}{q(k)}$ and $\frac{Q(\svx k)}{Q(k)}$. 
From the previous paragraph, this could be obtained by showing that in some sense, when $k$ is large, we have
\begin{equation}\label{eq:goal-to-reach}
    \sttm_{\lceil \svx k \rceil } \approx \svx^{\frac{1}{\alpha}}\sttm_k.
\end{equation}
We start by controlling what happens between the two consecutive increase times $\tau_k$ and $\tau_{k+1}$.

\paragraph{Controlling the time $(\sttm_{k+1}-\sttm_k)$
between successive increases of $X$.}
First note that for a given $n\geq 2$, 
we have $X_{n-1}=X_{n}-1$ if, and only if, the leaf $L_n$ removed from $T_{n}$ to obtain $T_{n-1}$ belongs to the set $\mathcal{L}_{\cap}^{\max}(T_{n})$ of leaves that are in \emph{all} positive subtrees of $T_{n}$ of maximal size. 
Hence, we can write
\begin{align}\label{eq:increase of X in remy algo intro}
X_{n-1}	= X_{n} -  \indicator{L_{n}\in \mathcal{L}_{\cap}^{\max}(T_{n})},
\end{align}
where conditionally on $(T_{i})_{i\geq n}$, the probability of the event $\{L_{n}\in \mathcal{L}_{\cap}^{\max}(T_{n})\}$ is $\frac{1}{n} \# \mathcal{L}_{\cap}^{\max}(T_{n})$.

Now let us apply this argument in a cascading manner: we start at time $n = \tau_{k+1} - 1$, so that $X_n = k$, and remove one leaf after another from the tree until its $\LIS$ drops below $k$, that is, until we reach $\tau_k - 1$. 
Heuristically, we get for all $j \geq 0$,
\begin{align*}
\Ppsq{\tau_{k+1} - \tau_{k}> j}{T_{\tau_{k+1}}}
&= \Ppsq{L_{\tau_{k+1}-1} \notin \mathcal{L}_{\cap}^{\max}(T_{\tau_{k+1}-1}) ,\dots, L_{\tau_{k+1}-j} \notin \mathcal{L}_{\cap}^{\max}(T_{\tau_{k+1}-j})}{T_{\tau_{k+1}}}\\
&= \Ecsq{\left(1 - \frac{\# \mathcal{L}_{\cap}^{\max}(T_{\tau_{k+1}-1})}{\tau_{k+1}-1}\right) \cdot \ldots \cdot \left(1 - \frac{\# \mathcal{L}_{\cap}^{\max}(T_{\tau_{k+1}-j})}{\tau_{k+1}-j}\right) }{T_{\tau_{k+1}}}\\
&\approx\,\,\, \left(1 - \frac{\# \mathcal{L}_{\cap}^{\max}(T_{\tau_{k+1}-1})}{\tau_{k+1}}\right)^j,
\end{align*}
which suggests the approximation in distribution
\begin{equation} \label{eq:law_tau_k+1_k_geom}
\Law(\tau_{k+1} - \tau_{k} | T_{\tau_{k+1}}) \overset{\mathrm{d}}{\approx} \mathrm{Geom}\left(\frac{\# \mathcal{L}_{\cap}^{\max}(T_{\tau_{k+1}-1})}{\tau_{k+1}}\right).
\end{equation}

\paragraph{A key insight.}
The key insight (justified in the next paragraphs) underlying the strategy that we follow for the rest of the paper is that, in a suitable probabilistic sense, we have
\begin{equation}\label{eq:key insight}
	\#\mathcal{L}_{\cap}^{\max}(T_{n}) \approx \lambda X_{n} \qquad \text{and hence } \qquad	\# \mathcal{L}_{\cap}^{\max}(T_{\tau_{k+1}-1}) \approx \lambda k,
\end{equation}
for some \textit{a priori} unknown $\lambda \in (0,1)$ depending only on $p$.
Hence, from \eqref{eq:law_tau_k+1_k_geom} we can compare the law of $\sttm_{k+1}-\sttm_k$ conditional on $\sttm_{k+1}$ with a geometric random variable of parameter $\frac{\lambda k}{\sttm_{k+1}}$, which has expectation $\frac{\sttm_{k+1}}{\lambda k}$. 
From this, we can deduce that $\Ecsq{\tau_j}{\tau_{j+1}} \approx \tau_{j+1}\left(1-\frac{1}{\lambda j}\right)$. By the Markov property (and assuming some concentration of $\tau_k$ conditionally on $\tau_{\lceil \svx k \rceil}$), for $x \geq 1$, we get 
\begin{align}\label{eq:regularity of tau from key insight}
		\tau_k \approx \Ecsq{\tau_k}{\tau_{\lceil \svx k \rceil}} \approx \tau_{\lceil \svx k \rceil} \cdot \prod_{j=k}^{\lceil \svx k \rceil -1}\left(1-\frac{1}{\lambda j}\right) \approx  \tau_{\lceil \svx k \rceil} \cdot \svx^{-\frac{1}{\lambda}}.
\end{align}
Since from \eqref{eq:bigQrel} we expect that $\frac{Q(xk)}{Q(k)}\approx x^{-\frac{1}{2\alpha}}$ and from \eqref{eq:Q-to-tau} that $Q(k) \approx c \cdot \Ec{\sttm_k^{-1/2}}$, the last display implies in particular that the constant $\lambda$ is actually equal to $\alpha$ (see \cref{rmk:lambda=alpha}).
Moreover, recalling \eqref{eq:goal-to-reach}, this is exactly what we need to derive the regularity estimates for $q$ and $Q$. 
The justification for the key insight in \eqref{eq:key insight} will be developed in the next paragraphs.

\paragraph{A first rough estimate.}
It is possible to use the above line of reasoning using the inequality $\#\mathcal{L}_{\cap}^{\max}(T_{n}) \leq X_{n}$, valid deterministically, instead of the more precise \eqref{eq:key insight}.
This (together with a symmetric argument with time going forwards) 
leads to the following estimates that hold for some constant $C>0$, for all $k\geq 1$ and all $\svx\geq 1$:
\begin{align}\label{eq:rough regularity intro}
C \svx^{-C} \leq \frac{q(\svx k)}{q(k)} \leq C \qquad \text{and} \qquad  (1+o_{k\rightarrow\infty}(1)) \svx^{-C} \leq \frac{Q(\svx k)}{Q(k)} \leq 1. 
\end{align}
These, together with the asymptotics \eqref{eq:bigQrel}, are referred to as the \textbf{rough regularity estimates} and constitute the content of \cref{prop:rough_regularity}, proved in Section~\ref{subsec:rough regularity}.
These estimates are not precise enough to obtain \eqref{eq:good regularity estimates}, nor to extract the exponent $\gamma$ from \eqref{eq:q_rel}. 
However, they will still prove useful in the next part of the argument, which is itself used to prove \eqref{eq:key insight}.

\paragraph{A local convergence argument.} In order to get \eqref{eq:key insight}, we rely on a local convergence argument developed in \cref{sect:loacal-lim}.
There, we work with a slightly different tree model, denoted by $T^k$, which has the distribution of the $p$-signed critical binary Bienaymé--Galton--Watson tree $T$ conditioned on the event $\{\LIS(T)=k\}$.\footnote{Beware that $\TLis{k}$ and $T_n$ stand for two very different conditionings; to avoid any potential risk of confusion we shall always stick to $k$ for the conditioning on the LIS and $n$ for that on the number of leaves.}
More precisely, for any finite sign-decorated binary tree $t$, we write $t^{\lmax}$ for the \emph{leftmost} maximal positive subtree of $t$. This is the maximal positive subtree of $t$ informally defined by going left at any negative node $v$ where the left and right subtrees above $v$ have equal $\LIS$. We will study the local convergence of $T^k$ around a leaf $L^k$ chosen uniformly at random among the $k$ leaves of $(T^k)^{\lmax}$; see \cref{thm:localconv} and the discussion below it.

The main tool that we use is some \emph{exploration process} that records (among other things) the LIS of the subtree above node $v$, as $v$ travels along the spine of the tree from the root to the marked leaf $L^k$. 
It turns out that this process is a Markov chain and this is precisely our reason for studying the conditioned model $T^k$ with fixed LIS instead of $T_n$.
 
The proof of local convergence relies on the construction of a coupling between the exploration processes associated with $T^k$ and $T^{k'}$, for different large values of $k$ and $k'$, ensuring that the two processes coalesce very fast as $k,k'\to\infty$.
Since the transitions of the Markov chain are given in terms of the sequence $q(k)$, the rough regularity estimates \eqref{eq:rough regularity intro} play an important role in this proof.

\paragraph{Motivation for the local convergence.}
The point of the former result is to prove that the probability that the uniform leaf $L^k$ belongs to $\mathcal{L}_{\cap}^{\max}(T^k)$ tends to some deterministic limit $\lambda\in \intervalleoo{0}{1}$, which can be interpreted as the probability of some event happening in the local limit.
This in turn ensures that, as $k\rightarrow \infty$, 
\begin{align}\label{eq:convergence size intersection intro}
\frac{\#\mathcal{L}_{\cap}^{\max}(T^k)}{\LIS(T^k)} = \frac{\#\mathcal{L}_{\cap}^{\max}(T^k)}{k}  \overset{\bbP}{\longrightarrow} \lambda.
\end{align}
This is the content of Theorem~\ref{thm:convergence_size_intersection} proved in~\cref{sect-lln-int}.
With some additional work, it is possible to transfer this result to our original model $T_n$, which has the same distribution as $T$ conditionally on $\{|T|=n\}$. 
This ensures that for any large $n\geq 1$, we have 
\begin{align}\label{eq:convergence proportion of }
	\#\mathcal{L}_{\cap}^{\max}(T_{n})\approx \lambda \LIS(T_n) = \lambda X_n,
\end{align}
obtaining the key insight from \eqref{eq:key insight}. 
While the existence of the local limit is motivated by the concentration result in~\eqref{eq:convergence size intersection intro}, it also provides valuable insights into the landscape around a uniformly chosen leaf in a maximal positive subtree.

\paragraph{Obtaining the good regularity estimates and the exponent $\alpha$.}
To summarize, we used the rough regularity estimate \eqref{eq:rough regularity intro}, obtained earlier, to derive \eqref{eq:convergence proportion of }.
This, in turn, allows us to rigorously justify the arguments leading to
\eqref{eq:regularity of tau from key insight}, and thereby obtain the \textbf{good regularity estimates}~in~\eqref{eq:good regularity estimates} for the sequences $q$ and $Q$.
This last step actually turns out to be quite challenging and will
take up the whole of~\cref{sect-bett-reg-q-Q}. 

Once \eqref{eq:good regularity estimates} is established, we will need a new technique in \cref{sec:value exponent} to extract the value of the exponent~$\alpha$. 
Here the main idea is to 
compare the recursive relation for $q$ from \cref{lem:q_rel} with two related \emph{inequalities}, each satisfied by a more regular test sequence.
These test sequences, combined with \eqref{eq:good regularity estimates}, will then yield sharp enough upper and lower bounds for the sequence~$q$ to extract the exact exponent $\alpha$.

\paragraph{As a by-product: the scaling limit result.}
Using as an input the newly obtained good regularity estimates \eqref{eq:good regularity estimates}, we can revisit the local convergence argument and obtain in \cref{thm:quantitative_moments} a more quantitative version of \eqref{eq:convergence size intersection intro}. 
We find that there exists $\varepsilon>0$ such that for $k$ large we have 
\begin{align}\label{eq:convergence size intersection quantitative intro}
	\Ec{|\#\mathcal{L}_{\cap}^{\max}(T^k) - \alpha k|} \leq  k^{-\varepsilon}.
\end{align}
Going back to \eqref{eq:increase of X in remy algo intro} and assuming that \eqref{eq:convergence size intersection quantitative intro} can be leveraged to get some quantitative control over $|\#\mathcal{L}_{\cap}^{\max}(T_n) - \alpha X_n|$, we get that roughly 
\begin{align*}
	\Ecsq{X_{n-1}}{(T_{i})_{i\geq n}}= X_{n} - \frac{\#\mathcal{L}_{\cap}^{\max}(T_{n})}{n}
	 \approx X_{n} - \frac{\alpha X_{n}}{n} \approx \left(1-\frac{\alpha}{n}\right) X_{n}.
\end{align*}
If the last display held with an equal sign, it would be easy by some (backward) martingale argument to show that $n^{-\alpha} X_n$ converges almost surely to some random variable.
The proof of \cref{thm:main3} in \cref{sec:sc_limit_Xn} follows this idea, by writing $n^{-\alpha} X_n$ as some (backward) martingale part plus some error term, and showing that the estimates that we have are sufficient to control the error term. 
Some additional work will then be needed to ensure that all the properties of the limit $\lrv$ claimed in \cref{thm:main3} hold.

\paragraph{Another by-product: scaling limit of largest positive subtrees of $T^k$.}
Let us finally mention that another by-product of our proof is a scaling limit result for the shape of  
the leftmost maximal positive subtree of $T^k$, which complements the local convergence mentioned before (\cref{thm:localconv}).
If $t$ is a finite sign-decorated binary tree, we recall that $t^{\lmax}$ stands for its leftmost maximal positive subtree. We can equip it with the metric coming from the restriction to $t^{\lmax}$ of the graph distance on $t$ (equivalently, this is the graph distance on the uncontracted version of $t^{\lmax}$). If $a>0$ and $S$ is a metric space, we also denote by $a \cdot S$ the metric space obtained by multiplying the distances in $S$ by a factor $a$.
\begin{thm}[Scaling limit of the leftmost positive subtree of $T^k$]\label{thm:fragmentation_tree}
    Let $\alpha$ be the unique solution in the interval $(1/2,1)$ to the equation in~\eqref{eq:eq-to-defn-alpha} and recall that $\gamma-1\stackrel{\eqref{eq:gamma-alpha-rel}}{=} \frac{1}{2\alpha}\in (1/2,1)$. 
    Recall also the slowly varying function $\varphi$ of \eqref{eq:good regularity estimates}.
    Then the following convergence in distribution holds for the Gromov--Hausdorff topology: 
    \[\frac{\varphi(k)}{k^{\gamma -1}} \cdot (T^k)^{\lmax} \xrightarrow[k \to \infty]{\mathrm{d}} \mathcal{T}_{\gamma},\]
    where $\mathcal{T}_{\gamma}$ is the self-similar fragmentation tree with self-similarity index $1-\gamma$ and with binary dislocation measure $\nu_{\gamma}$ given by
    \begin{equation}\label{eqn:dislocation_measure}
        \int f(s_1, s_2) \nu_{\gamma} (\mathrm{d}\mathbf{s}) = \int_0^{1/2} f(1-x,x) \, x^{-\gamma} (1-x)^{-\gamma} \, \mathrm{d}x,
    \end{equation}
    for any measurable function $f:[0,1]^2 \to \bbR_{>0}$.
\end{thm}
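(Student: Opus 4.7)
The plan is to identify $(T^k)^{\lmax}$ as a (contracted) discrete Markov branching tree in the sense of Haas--Miermont and to invoke the scaling-limit theorem for sequences of Markov branching trees that converge to a self-similar fragmentation tree. The key inputs will be the natural Markov branching decomposition of $T^k$ at the root, together with the good regularity estimates \eqref{eq:good regularity estimates} established earlier in the paper.

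\textbf{Markov branching structure.} Starting from the root of $T^k$, I would descend along the spine through a (possibly empty) chain of $\ominus$ nodes, at each step entering the child subtree that still has $\LIS=k$ (choosing the left one in case of tie, per the leftmost rule). This chain terminates either at a leaf (only when $k=1$) or at the first $\oplus$ node on the spine, which is the first branching of $(T^k)^{\lmax}$. At this first branching, the two sibling subtrees of $T^k$ have $\LIS$ values $(k_1,k_2)$ with $k_1+k_2=k$; since the two subtrees at any internal node of a critical binary BGW tree are i.i.d., conditionally on $(k_1,k_2)$ the two resulting fragments are independent and distributed respectively as $(T^{k_1})^{\lmax}$ and $(T^{k_2})^{\lmax}$. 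This yields the recursive Markov branching structure on $(T^k)^{\lmax}$, with root-spine edge length in $\unc((T^k)^{\lmax})$ equal to one plus the number of $\ominus$ nodes traversed.

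\textbf{Splitting distribution.} Decomposing $T^k$ at the root as in the proof of \cref{lem:q_rel}, the first-split probability $s_k(i)$ (that the first split of $(T^k)^{\lmax}$ is $(i,k-i)$) satisfies
\begin{equation*}
	q(k)\,s_k(i) = \tfrac{p}{2}\, q(i)\, q(k-i) + \tfrac{1-p}{2}\bigl(R(k)+R(k-1)\bigr)\, q(k)\, s_k(i),
\end{equation*}
where $R(k) := \sum_{j=1}^{k} q(j) \to 1$, so $s_k(i) \sim q(i) q(k-i)/(2\,q(k))$. Plugging in \eqref{eq:good regularity estimates} and using the uniform convergence theorem for slowly varying functions, for $i=\lfloor xk\rfloor$ with $x\in(0,1)$ bounded away from the endpoints one gets
\begin{equation*}
	k\cdot s_k(\lfloor xk\rfloor) \sim \frac{\varphi(k)}{2\,k^{\gamma-1}}\cdot x^{-\gamma}(1-x)^{-\gamma}.
\end{equation*}
This identifies the density $x^{-\gamma}(1-x)^{-\gamma}$ of the dislocation measure $\nu_\gamma$ in \eqref{eqn:dislocation_measure}, with the prefactor $\varphi(k)/k^{\gamma-1}$ exactly matching the rescaling factor in the statement.

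\textbf{Haas--Miermont conclusion and main obstacles.} Together with the Markov branching structure, the above splitting asymptotics feed into the Haas--Miermont scaling-limit theorem for Markov branching trees (in the version allowing slowly varying corrections in the scaling), producing the Gromov--Hausdorff convergence of $(\varphi(k)/k^{\gamma-1})\cdot(T^k)^{\lmax}$ to the self-similar fragmentation tree $\mathcal{T}_\gamma$. The hard part will be verifying the full set of Haas--Miermont hypotheses, in particular two points. First, uniform control of $q(i)q(k-i)/q(k)$ is required over \emph{all} $i\in\{1,\dots,k-1\}$, including the small-fragment regime $i\ll k$; here one must combine the rough regularity estimates \eqref{eq:rough regularity intro} with the integrability $\int_0^{1/2} x\cdot x^{-\gamma}(1-x)^{-\gamma}\,dx<\infty$ (which holds since $\gamma<2$), guaranteeing that $\nu_\gamma$ has finite ``mass-loss'' integral. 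Second, one must upgrade from the natural contracted metric on $(T^k)^{\lmax}$ to the graph metric inherited from $\unc((T^k)^{\lmax})$: the stopping probability $\rho_k := \Pp{\text{root of } T^k \text{ is } \oplus} = 1-(1-p)\bigl(R(k-1)+q(k)/2\bigr) \to p$ shows that spine edge lengths are geometric-like with $O(1)$ expectation, so this upgrade only affects a bounded multiplicative factor that is absorbed into the slowly varying prefactor $\varphi(k)$.
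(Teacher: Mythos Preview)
Your plan matches the paper's: show that $(T^k)^{\lmax}$ is a Markov branching tree, read off the splitting asymptotics from $q(k)=k^{-\gamma}\varphi(k)$, and invoke Theorem~5 of Haas--Miermont. The one real difference is that the paper does \emph{not} accelerate to the first $\oplus$ node. It uses the single-step root decomposition of Lemma~\ref{lem:proba-comp}: a $\ominus$ root produces the macroscopic split $(k,0)$ with probability
\[
q_k\bigl((k,0)\bigr)=(1-p)\Bigl(\tfrac{q(k)}{2}+\sum_{i=1}^{k-1}q(i)\Bigr)\longrightarrow 1-p,
\]
while a $\oplus$ root gives $q_k\bigl((k_1,k_2)\bigr)=p\,q(k_1)q(k_2)/q(k)$ for $k_1>k_2$. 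Each step of this Markov branching tree is then exactly one edge of $\unc\bigl((T^k)^{\lmax}\bigr)$, so the metric inherited from $T^k$ is built in automatically and your ``edge-length upgrade'' disappears: the $(k,0)$ atoms contribute $1-s_1=0$ in hypothesis~(H) and are harmless there.

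Your accelerated parametrization works as well, but the last sentence is not right. The function $\varphi$ is the specific one fixed by Theorem~\ref{thm:good_regularity} and cannot be redefined to absorb anything. The geometric edge lengths contribute an asymptotically \emph{constant} factor $\sim 1/p$ to the metric; this is exactly compensated by the extra factor $p$ sitting in the paper's non-trivial splitting probabilities, so the two parametrizations are consistent once the constants are tracked (scaling the dislocation measure by $p$ is the same as scaling the fragmentation tree by $1/p$). For the small-fragment control the paper does not use the rough estimates~\eqref{eq:rough regularity intro} but rather the Potter bounds of Lemma~\ref{lem:slow_variation}: split the Riemann sum at $\eps k$, apply uniform convergence of $\varphi$ on $[\eps k,k/2]$, and bound the contribution of $i<\eps k$ by $O(\eps^{2-\gamma-\delta})$ for small $\delta>0$.
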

Since this theorem is not needed anywhere in the proof of the main results, we omit precise definitions of continuous fragmentation trees and refer to~\cite{haasmiermont12fragmentation} instead. We will give a short proof of Theorem~\ref{thm:fragmentation_tree} in Section~\ref{subsec:fragmentation_tree} as an easy consequence of the good regularity estimates on $q$, together with a general convergence criterion established by Haas and Miermont in~\cite{haasmiermont12fragmentation}. Since Theorem~\ref{thm:fragmentation_tree} is the result that comes naturally as a by-product of our arguments, we do not try to show the analogous convergence for $(T_n)^{\lmax}$, which appears to be more challenging.

\section{Notation and preliminary results}\label{sec:def}

\subsection{Basic notation and definitions}\label{sect:not-proba}

We denote by $\N=\{0,1,2,\dots\}$ the set of non-negative integers, and for all real numbers $a < b$, we write the integer interval $[a,b]\cap\mathbb{Z}$ as $\intervalleentier{a}{b}$. 
The set of positive real numbers is denoted by $\bbR_{>0}$, while the set of non-negative real numbers is denoted by $\bbR_{\ge 0}$.
Moreover, we often write $a\wedge b$ for $\min(a,b)$ and $a \vee  b$ for $\max(a,b)$. 

For sums, we use the convention $\sum_{k=x}^y\coloneqq\sum_{k=\lceil x \rceil}^{\lfloor y \rfloor}$, for all $x<y\in\mathbb R_{\geq 0}$. The same convention is naturally extended to the sequences $q=(q(k))$ and $Q=(Q(k))$  defined in~\eqref{eq:def_q_Q} for which we write $Q(x)$ instead of $Q(\lceil x \rceil)$ when $x\ge 1$.

For two real-valued functions $f$ and $g$ with domain $D$, we write
$
f(x)=o(g(x))
$
if
$
\frac{f(x)}{g(x)}\xrightarrow[x\to\infty]{}0.
$
We write
$
f(x)=O(g(x))
$
if there exists a constant $C>0$ such that
$
f(x)\leq C\,g(x)
$
for all $x\in D$.
Moreover, given a parameter $K>0$, we write
$
f(x)=o_K(g(x))
$
if
$
\frac{f(x)}{g(x)}\xrightarrow[x\to\infty]{}0,
$
where the rate of convergence depends on $K$ (but not on $x$).
Similarly, given a parameter $K>0$, we write
$
f(x)=O_K(g(x))
$
if there exists a constant $C_K>0$ depending on $K$ (but not on $x$) such that
$
f(x)\leq C_K\,g(x)
$
for all $x\in D$.

We recall that a \textbf{slowly varying function} is a measurable function $\svfq : \bbR_{>0} \to \bbR_{>0}$ such that for all $\svx>0$, we have $\frac{\svfq(\svx z)}{\svfq(z)} \to 1$ as $z \to \infty$.
A measurable function $\svfq : \bbR_{>0} \to \bbR_{>0}$ is called \textbf{regularly varying} of index $\beta$ if for all $\svx>0$, we have $\frac{\svfq(\svx z)}{\svfq(z)} \to x^{\beta}$ as $z \to \infty$.
A comprehensive reference for the theory of regular variation is~\cite{bingham1989regular}. In particular, we will need the following standard results on slowly varying functions. The first item is the Uniform Convergence Theorem~\cite[Theorem 1.2.1]{bingham1989regular} and the second is a particular case of the Potter bounds~\cite[Theorem 1.5.6, item (ii)]{bingham1989regular}.

\begin{lem}\label{lem:slow_variation}
    Let $\varphi : \bbR_{>0}\to \bbR_{>0}$ be a slowly varying function.
    \begin{enumerate}
        \item\label{item:slow_variation1} For any constant $K>0$, the ratio $\frac{\varphi(a)}{\varphi(b)}$ converges to $1$ as $a,b \to \infty$ uniformly in $a,b$ as long as $K^{-1} \leq \frac{a}{b} \leq K$.
        \item\label{item:slow_variation2} For any $\delta>0$, there is an absolute constant $K=K(\delta)$ such that for all integers $1 \leq a<b$, we have $\frac{\varphi(a)}{\varphi(b)} \leq K \left( \frac{a}{b} \right)^{-\delta}$.
        \end{enumerate}
\end{lem}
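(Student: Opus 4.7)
\medskip

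\noindent\textbf{Proof proposal.} Both statements are classical results in the theory of regularly varying functions and will be quoted directly from~\cite{bingham1989regular}; the plan is simply to verify that the formulations given here fit the references.

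For item~\ref{item:slow_variation1}, the plan is to appeal to the Uniform Convergence Theorem~\cite[Theorem~1.2.1]{bingham1989regular}, which states that if $\varphi$ is slowly varying, then $\varphi(\lambda z)/\varphi(z)\to 1$ as $z\to\infty$ uniformly for $\lambda$ in any compact subset of $(0,\infty)$. Writing $\lambda=a/b$ and $z=b$, the hypothesis $K^{-1}\le a/b\le K$ places $\lambda$ in the compact set $[K^{-1},K]$, so the uniform convergence over $\lambda$ translates exactly into the claimed uniform convergence of $\varphi(a)/\varphi(b)$ to $1$ over pairs $(a,b)$ with bounded ratio.

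For item~\ref{item:slow_variation2}, the plan is to use the Potter bounds~\cite[Theorem~1.5.6(ii)]{bingham1989regular}: for any $\delta>0$ and $A>1$, there exists $X_0=X_0(\delta,A)$ such that for all $x,y\ge X_0$,
\[
\frac{\varphi(x)}{\varphi(y)}\le A\max\!\left(\left(\tfrac{x}{y}\right)^{\delta},\left(\tfrac{x}{y}\right)^{-\delta}\right).
\]
When $1\le a<b$ and both are $\ge X_0$, we have $a/b<1$, so the maximum is $(a/b)^{-\delta}$ and the bound follows. To cover the remaining regime where $a<X_0$ (so $a$ ranges over finitely many integer values), we observe that $\varphi$ is positive and measurable, hence bounded above and below on any finite set, so $\varphi(a)/\varphi(b)$ can be controlled by a constant times $(a/b)^{-\delta}$ once we enlarge the constant suitably; this absorption into the constant $K(\delta)$ is what allows the bound to hold for \emph{all} integers $1\le a<b$ rather than only for $a,b$ large.

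The only mildly delicate point in the argument is the last one, namely handling the small values of $a$ without a local boundedness hypothesis on $\varphi$. I expect this to be resolved cleanly either by noting that $\varphi$ takes only finitely many values on $\{1,\ldots,\lceil X_0\rceil\}$ (so a uniform positive lower bound on $\varphi(a)$ exists over that set), or, if one wishes to work with a general representative, by replacing $\varphi$ by an asymptotically equivalent continuous slowly varying function via~\cite[Theorem~1.3.3]{bingham1989regular}. Either way, no genuine obstacle arises and the lemma follows immediately from the cited results.
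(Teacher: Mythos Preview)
Your proposal is correct and matches the paper's approach exactly: the paper does not give a proof but simply cites the Uniform Convergence Theorem \cite[Theorem~1.2.1]{bingham1989regular} for item~\ref{item:slow_variation1} and the Potter bounds \cite[Theorem~1.5.6(ii)]{bingham1989regular} for item~\ref{item:slow_variation2}. Your additional care in handling the finitely many small integer values of $a$ (absorbing them into the constant $K(\delta)$) is a detail the paper leaves implicit.
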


For a sequence of real-valued random variables, we write $Y_n\xlongrightarrow[n\to \infty]{\mathbb P}Y$ to denote convergence in probability to the random variable $Y$, we write $Y_n\xlongrightarrow[n\to \infty]{\mathrm{a.s.}}Y$ to denote almost sure convergence, and $Y_n\xlongrightarrow[n\to \infty]{\mathrm{d}}Y$ to denote convergence in distribution. When two random variables $Y$ and $Z$ have the same law, we write $Y\stackrel{\mathrm{d}}{=}Z$.

We also recall that a real-valued random variable $X$ \textbf{stochastically dominates} another real-valued random variable $Y$ if there exists a coupling of $X$ and $Y$ such that $X\geq Y$ a.s., and we write $X \succeq Y$.

\medskip

A summary of the main notation used throughout this paper can be found in the index on page~\pageref{sect:Index-of-notation}.

\subsection{The Brownian separable permuton and the Brownian cographon}\label{sect:brow-perm-cograp}

For completeness, we describe the constructions of the Brownian separable permutons and the Brownian cographons. We refer the reader to \cite{maazoun-separable-permuton,bbfgp-universal,bassino2022random} for more details.

Under $\bbP$, we call \textbf{signed Brownian excursion} a triplet $(\efrak, \sfrak, p)$ consisting of a (normalized) Brownian excursion $\efrak$, together with an independent sequence $\sfrak$ of i.i.d.\ Bernoulli$(p)$ $\oplus/\ominus$ signs, \emph{i.e.}\ $\bbP(\oplus)=1-\bbP(\ominus)=p$. 
One should think of the sequence $\sfrak$ as being indexed by the local minima of $\efrak$.

We define the following random relation $\vartriangleleft_{\efrak, \sfrak, p}$ on $[0,1]$:
conditional on $\efrak$, if $x,y\in[0,1]$ with $x<y$, and if $\min_{[x,y]}\efrak$ is reached at a unique point which is a strict local minimum $m_{x,y}\in(x,y)$, then

\begin{equation}\label{eq:exc_to_perm}
	\begin{cases}
		x\vartriangleleft_{\efrak, \sfrak, p} y &\quad\text{if}\quad \sfrak(m_{x,y})=\oplus,\\
		y\vartriangleleft_{\efrak, \sfrak, p} x &\quad\text{if}\quad \sfrak(m_{x,y})=\ominus.\\
	\end{cases}
\end{equation}
Standard properties of the Brownian excursion (see, for instance, \cite[Lemma 2.1]{bdsg-lis-perm}) ensure the existence of a random subset $ \mcl R_\efrak \subset [0,1]$ such that: 
\begin{itemize}
    \item Almost surely, the complement of $\mcl R_\efrak$ has Hausdorff dimension $1/2$.
    \item For every $x,y\in  \mcl R_\efrak$ with $x<y$, the minimum $\min_{[x,y]}\efrak$ is reached at a unique point which is a strict local minimum.
\end{itemize} 
In particular, the restriction of the relation $\vartriangleleft_{\efrak, \sfrak, p}$ to $\mcl R_\efrak$ is a total order.
Introducing the function
\begin{equation}\label{eq:function_for_sep}
    \forall t\in[0,1], \quad
	\psi_{\efrak, \sfrak, p}(t)\coloneqq\Leb\left( \big\{x\in[0,1]\mid x \vartriangleleft_{\efrak, \sfrak, p} t\big\}\right),
\end{equation}
then the (biased) \textbf{Brownian separable permuton} of parameter $p\in(0,1)$ is the push-forward of the Lebesgue measure on $[0,1]$ via the mapping $(\mathbb{I},\psi_{\efrak, \sfrak, p})$, where $\mathbb{I}$ denotes the identity. That is,
\begin{equation}\label{eq:sep_perm}
	\bm{\mu}_p(\cdot)\coloneqq (\mathbb{I},\psi_{\efrak, \sfrak, p})_{*}\Leb(\cdot)=\Leb\left(\{t\in[0,1] \mid (t,\psi_{\efrak, \sfrak, p}(t))\in \cdot \,\}\right).
\end{equation}
Heuristically speaking, $\psi_{\efrak, \sfrak, p}$ is the \emph{continuum permutation} of the elements in the interval $[0,1]$ induced by the order $\vartriangleleft_{\efrak, \sfrak, p}$ and $ \bm{\mu}_p$ is the diagram of $\psi_{\efrak, \sfrak, p}$. We highlight that  $\bm{\mu}_p$ is a \emph{random} permuton.

\begin{remark}\label{remk:coupled-trees}
Recall from the discussion around \eqref{eq:finite_perm_brownian} that, given $\bm{\mu}_p$, one can sample a corresponding permutation $\sigma_n=\sigma_n(p)=\Perm[\bm{\mu}_p,n]$ of size $n$. We also described the law of $\sigma_n$ in terms of a $p$-signed uniform binary  tree $T_n=T_n(p)$. We now explain how to obtain $T_n$ directly from $\bm{\mu}_p$, or more precisely, how one can couple $T_n$ with the signed excursion $(\efrak, \sfrak, p)$ used in the construction of $\bm{\mu}_p$. As a corollary, we will see that the trees $T_n$ are coupled for different values of $n$ exactly as in the R\'emy algorithm from \cref{sect:remy-algo}.

For this, let $(U_i)_{i\geq 1}$ be a sequence of i.i.d.\ uniform random variables in $[0,1]$. For each $n\geq 1$, consider the first $n$ points $(U_i)_{i\leq n}$ in $[0,1]$ and define the tree $t(\efrak,\sfrak,p;(U_i)_{i\leq n})$ recursively as follows. Let $(\overline{U}_i)_{i\leq n}$ be the order statistic of $(U_i)_{i\leq n}$ and let $m_i$ be the minimum value of $\efrak$ on $[\overline{U}_i, \overline{U}_{i+1}]$ for all $i<n$. If $n = 1$, then $t(\efrak,\sfrak,p;U_1)$ is a leaf.
 Otherwise, let $m = \min_i\{m_i\}$ and denote by $j$ the index such that $m_j 
= m$. Then $t(\efrak,\sfrak,p;(U_i)_{i\leq n})$ is the tree whose root is decorated by $\sfrak(m)$ and whose children are $t(\efrak,\sfrak,p;(\overline{U}_i)_{i\leq j})$ and $t(\efrak,\sfrak,p;(\overline{U}_i)_{j+1\leq i\leq n})$.
Then $t(\efrak,\sfrak,p;(U_i)_{i\leq n})\stackrel{\mathrm{d}}{=}T_n(p)$ and the trees $t(\efrak,\sfrak,p;(U_i)_{i\leq n})$ are coupled for different values of $n$ exactly as in the R\'emy algorithm -- see for instance~\cite[Section 2.6]{legall-tree-survey}~or~\cite[Exercise 7.4.11]{pitman2006combinatorial}.
\end{remark}

\bigskip

We now turn to the Brownian cographon. Given the signed excursion $(\efrak, \sfrak,p)$, the \textbf{Brownian cographon} $\bm{W}_p$ of parameter $p\in[0,1]$ is defined (following \cite{bassino2022random}) as the graphon equivalence class of the random function
\begin{align}\label{eq:Brownian-cog}
	\bm{W}_p: &[0, 1]^2 \to \{0, 1\},\\
	&(x,y)\mapsto\,\,\,\mathds{1}_{\sfrak(m_{x,y})=\oplus}, \notag
\end{align}
where, as in \eqref{eq:exc_to_perm}, if $(x,y)\in\mcl R_\efrak^2$ and $x<y$, we denote by $m_{x,y}\in[x,y]$ the unique strict local minimum $\min_{[x,y]}\efrak$. If $x$ or $y$ is not in $\mcl R_\efrak$ then we arbitrarily set $\sfrak(m_{x,y})=\oplus$. This choice does not change the law of $\bm{W}_p$ because a.s.\ $\Leb([0,1]^2\setminus \mcl R_\efrak^2)=0$. We highlight that $\bm{W}_p$ is a \emph{random} graphon.

\subsection{A simple estimate for critical binary Bienaymé--Galton--Watson trees}

Recall that $T$ denotes a critical binary Bienaymé--Galton--Watson tree with i.i.d.\ Bernoulli($p$) signs on its nodes, so that $T$ conditioned to have size $n$ is distributed as $T_n$. We will often use the following simple estimate on the size of $T$.

\begin{lem}\label{lem:prelim_GW}
	For all $n \geq 1$, set $r(n)\coloneqq \Pp{|T|=n}$. Then
	\begin{equation} \label{eq:asympt r(m)}
		r(n)=\frac{1}{2^{2n-1}} \cdot \frac{1}{n} \binom{2n-2}{n-1} = \frac{1}{2\sqrt{\pi}} n^{-3/2}+O \left( n^{-5/2} \right). 
	\end{equation}
	In particular, $r(n)$ is decreasing in $n$.
\end{lem}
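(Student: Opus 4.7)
The plan is to prove the three assertions in turn, starting from the combinatorial identity. First I would identify the critical binary Bienaymé--Galton--Watson tree $T$ with a random \emph{full} planar binary tree in which each vertex independently has two children with probability $1/2$ and no children otherwise. A full binary planar tree with exactly $n$ leaves has $n-1$ internal nodes, hence $2n-1$ vertices in total, so the probability that $T$ equals any specific such tree is $(1/2)^{2n-1}$. Since the number of full planar binary trees with $n$ leaves is the Catalan number $C_{n-1}=\frac{1}{n}\binom{2n-2}{n-1}$, summing gives
\[
r(n)=C_{n-1}\cdot \left(\tfrac{1}{2}\right)^{2n-1}=\frac{1}{2^{2n-1}}\cdot \frac{1}{n}\binom{2n-2}{n-1},
\]
which is the first equality.

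Next I would derive the asymptotic from a refined Stirling expansion
\[
\binom{2m}{m}=\frac{4^{m}}{\sqrt{\pi m}}\left(1-\frac{1}{8m}+O(m^{-2})\right),
\]
applied at $m=n-1$. Substituting into the closed form and using $\sqrt{n-1}=\sqrt{n}\,(1-\tfrac{1}{2n}+O(n^{-2}))$ together with $\frac{1}{n}=\frac{1}{n}$ gives
\[
r(n)=\frac{1}{2n\sqrt{\pi(n-1)}}\left(1+O(n^{-1})\right)=\frac{1}{2\sqrt{\pi}}\,n^{-3/2}+O(n^{-5/2}),
\]
yielding the claimed expansion.

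Finally, for the monotonicity, I would just compute the ratio directly from the closed form. Using the identity $\binom{2n}{n}=\binom{2n-2}{n-1}\cdot \frac{2(2n-1)}{n}$, one finds
\[
\frac{r(n+1)}{r(n)}=\frac{1}{4}\cdot \frac{n}{n+1}\cdot \frac{2(2n-1)}{n}=\frac{2n-1}{2n+2}<1,
\]
so $r(n)$ is strictly decreasing in $n$.

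Since each of these steps reduces to a one-line computation, there is no real obstacle; the only minor care required is in tracking the subleading Stirling term to secure the $O(n^{-5/2})$ bound, as opposed to a weaker $o(n^{-3/2})$ statement that a naive application of Stirling would give.
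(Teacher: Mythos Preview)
Your proof is correct and follows essentially the same approach as the paper: Catalan number count for the closed form, Stirling's formula for the asymptotic, and a direct ratio computation for monotonicity. The paper's proof is just a terser version of yours.
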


\begin{proof}
	There are $\frac{1}{n} \binom{2n-2}{n-1}$ binary trees with $n$ leaves, \emph{i.e.}\ the $(n-1)$-th Catalan number. Moreover, each binary tree with $n$ leaves has $n-1$ nodes.
	Since $T$ is a critical binary Bienaymé--Galton--Watson tree, we immediately get the first equality in~\eqref{eq:asympt r(m)}. The second one is then deduced by a standard application of Stirling's formula.
	The fact that $r(n)$ is decreasing in $n$ follows by a simple direct calculation.
\end{proof}

\subsection{Decomposing $p$-signed uniform binary trees}\label{sect:decomp-binary}

Recall from \cref{sect:positive-subtrees} that given a subset $\mathcal{L}$ of the set of leaves of a sign-decorated binary tree $t$, we denote by $t|_\mathcal{L}$ the sign-decorated binary subtree with $\# \mathcal{L}$ leaves obtained from $t$ by removing all vertices that do not have a descendant in $\mathcal{L}$ and contracting all vertices of degree $2$ that may appear.

\begin{lem}\label{lem:decomp-tree}
Let $n \geq m \geq 1$ and let $T_n$ be a $p$-signed uniform binary tree with $n$ leaves. Conditionally on $T_n$, let $\mathcal{L}_m=\{\uleaf{1}, \dots, \uleaf{m} \}$ be a uniform random subset of the set of leaves of $T_n$ with cardinality $m$. 
Then $T_n|_{\mathcal{L}_m}$ has the same law as $T_m$. 
\end{lem}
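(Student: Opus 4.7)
The plan is to leverage the R\'emy algorithm coupling $(T_n)_{n \geq 1}$ described in Section~\ref{sect:remy-algo}. Recall that under this coupling, for every $k \geq 2$, the tree $T_{k-1}$ is obtained from $T_k$ by removing a uniformly chosen leaf $\uleaf{k}$ and contracting the corresponding degree-$2$ node (whose sign is discarded). Iterating this reverse step $n - m$ times starting from $T_n$ produces a tree whose distribution is precisely that of $T_m$; I will identify this tree with $T_n|_{\mathcal{L}_m}$.

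First, I would determine the random subset of leaves of $T_n$ that survive these $n - m$ reverse R\'emy steps, which I denote by $\mathcal{L}^*_m$. At each reverse step, the removed leaf is chosen uniformly among those still present, so by the classical construction of a uniform random permutation by sequential uniform sampling, the order in which leaves are removed is a uniformly random ordering of some subset of size $n - m$ of the leaves of $T_n$. Consequently, $\mathcal{L}^*_m$ is uniformly distributed among the $\binom{n}{m}$ subsets of size $m$ of the leaves of $T_n$, and thus has the same conditional law as $\mathcal{L}_m$ given $T_n$.

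Second, I would check that the tree produced after $n - m$ reverse R\'emy steps coincides exactly with $T_n|_{\mathcal{L}^*_m}$, together with its sign decoration. The key observation is that a single reverse R\'emy step, which removes a leaf $\ell$ and contracts its parent, agrees with the restriction operation $\cdot|_{\mathcal{L} \setminus \{\ell\}}$ from Section~\ref{sect:positive-subtrees}: both delete the vertices that no longer have a descendant in the new leaf set and contract the resulting degree-$2$ vertex, leaving all other sign-labeled nodes untouched. Iterating this identity along the $n-m$ removed leaves, and combining with the first step, yields $T_m \overset{\mathrm{d}}{=} T_n|_{\mathcal{L}^*_m} \overset{\mathrm{d}}{=} T_n|_{\mathcal{L}_m}$, as desired. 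I do not anticipate any serious obstacle here; the only point requiring a little care is verifying the compatibility between the reverse R\'emy contraction and the restriction operation, together with the observation that the signs on surviving nodes are inherited unchanged from $T_n$ and hence remain i.i.d.\ Bernoulli$(p)$ on the induced subtree.
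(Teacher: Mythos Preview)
Your proposal is correct and takes essentially the same approach as the paper: the paper's proof is a one-line reference to \cref{remk:coupled-trees}, which records precisely the R\'emy-algorithm consistency you are exploiting (that removing uniform leaves from $T_n$ yields $T_m$). Your write-up simply makes explicit the two observations implicit in that reference, namely that the surviving leaf set is uniform of size $m$ and that a reverse R\'emy step agrees with the restriction operation.
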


\begin{proof}
    This is an immediate consequence, for instance, of \cref{remk:coupled-trees}.
\end{proof}

We also note that $T_n|_{\mathcal{L}_m}$ has $m-1$ nodes
which can be seen as nodes of the larger tree $T_n$. There is a natural correspondence between, on the one hand, the connected components of the complement of those vertices in $T_n$ and, on the other hand, the edges of $T_n|_{\mathcal{L}_m}$. 
 For any edge $e$ of $T_n|_{\mathcal{L}_m}$, we denote by $T_n^e$ the corresponding connected component in $T_n$; \emph{cf.}\ \cref{fig:subtree-notation}.

 Let $e_1, \dots, e_{2m-1}$ be the edges of $T_n|_{\mathcal{L}_m}$, labelled so that $e_1, \dots, e_m$ are the edges incident to the leaves $\uleaf{1}, \dots, \uleaf{m}$. For all $i\in\{1,\dots, 2m-1\}$, we set
 \begin{equation*}
        N_i\coloneqq \text{the number of leaves of $T_n$ that belong to the region $T_n^{e_i}$},
 \end{equation*}
 and 
 \begin{alignat*}{3}
        &\widetilde{N}_i\coloneqq N_i-1 &&\quad\text{ if } i \in \{1,\dots, m\}, \\
        &\widetilde{N}_i\coloneqq N_i &&\quad\text{ if } i \in \{m+1,\dots,2m-1\}. 
 \end{alignat*}
 That is, $\widetilde{N}_i$ counts the number of leaves of $T_n$ in each region, excluding the leaves $\uleaf{1}, \dots, \uleaf{m}$.

 \begin{figure}[ht]
	\begin{center}
    \includegraphics[width=\textwidth]{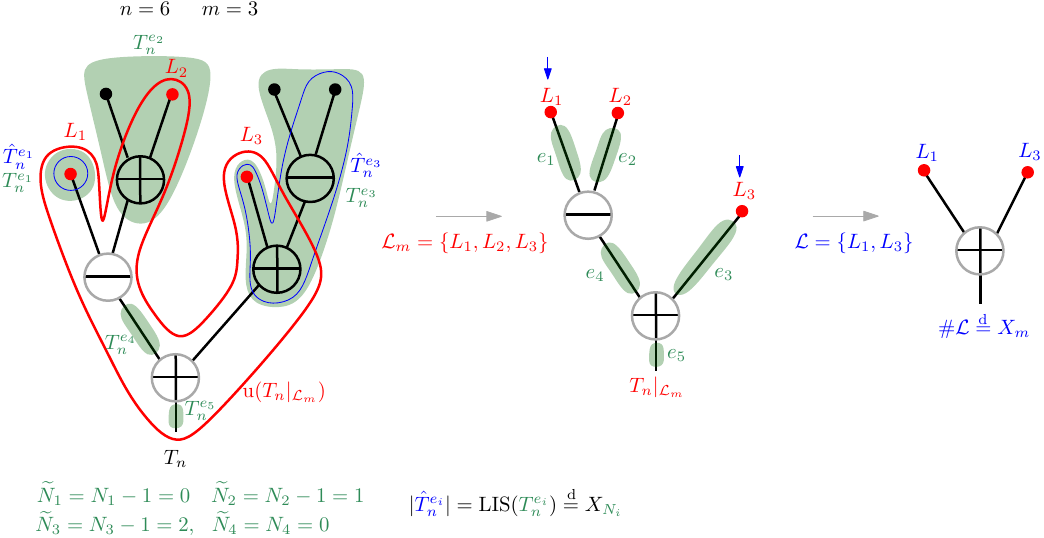}  
		\caption{A diagram for the notation introduced in \cref{sect:decomp-binary}. 
        The blue notation  
        refers to the proof of \cref{lem:superbranching_ineq}. 
        \textbf{Left:} The tree $T_n$, from which we select a subset of leaves $\mathcal{L}_m = \{\uleaf{1},\uleaf{2},\uleaf{3}\}$ and form the uncontracted tree $\text{u}(T_n|_{\mathcal{L}_m})$ highlighted in red.
        \textbf{Middle:} The tree $T_n|_{\mathcal{L}_m}$ with the two leaves in the set $\mathcal{L}$ corresponding to a maximal positive subtree marked by two blue arrows. To each edge $e$ of $T_n|_{\mathcal{L}_m}$ corresponds a region $T^e_n$ highlighted in green on the left part. We select a positive maximal subtree $\hat{T}^e_n$ in any such region adjacent to the subset $\mathcal{L}$.
        \textbf{Right:} The corresponding maximal positive subtree.
        \label{fig:subtree-notation}}
	\end{center}
	\vspace{-3ex}
\end{figure}

 \medskip
 
We recall the definition of the \emph{Dirichlet} distribution and of the \emph{Dirichlet-multinomial} distribution, which will describe the law of the vector $\big( \widetilde{N}_i \big)_{1 \leq i \leq 2m-1}$.

\begin{defn}\label{defn:Dirichlet-multinomial}
    Let $n \geq 0, k \geq 1$ be integers, let $\alpha_1, \dots, \alpha_k>0$ and $\alpha_0=\sum_{i=1}^k \alpha_i$. 
    \begin{itemize}
    \item 
    The Dirichlet distribution $\Dir \left( \alpha_1, \dots, \alpha_k \right)$ is the law on $k$-tuples of non-negative real numbers $(x_1, \dots, x_k)$ which sum up to $1$ given by the probability density function
     \[\frac{\Gamma(\alpha_0)}{\Gamma(\alpha_1)\cdots \Gamma(\alpha_k)} \prod_{i=1}^k x_i^{\alpha_i - 1}.\]

    \item The Dirichlet-multinomial distribution $\DirM \left( n; \alpha_1, \dots, \alpha_k \right)$ is the law on $k$-tuples of non-negative integers $(n_1, \dots, n_k)$ which sum up to $n$ given by the probability mass function
        \[ \frac{\Gamma(\alpha_0) \Gamma(n+1)}{\Gamma(n+\alpha_0)} \prod_{i=1}^k \frac{\Gamma(n_i+\alpha_i)}{\Gamma(\alpha_i) \Gamma(n_i+1)}. \]
    In other words, a $\DirM \left( n; \alpha_1, \dots, \alpha_k \right)$-distributed random variable $\mathbf{N}=\left( N_1, \dots, N_k \right)$ can be obtained as follows: let $\mathbf{D}=(D_1, \dots, D_k)$ follow the Dirichlet distribution $\Dir \left( \alpha_1, \dots, \alpha_k \right)$; then, conditionally on $\mathbf{D}$, sample the vector $\mathbf{N}$ as a multinomial vector with parameters $(n;\mathbf{D})$.
    \end{itemize}
\end{defn}

Note that, by the law of large numbers, we immediately obtain the following result, stated in a convenient form for later use.

\begin{lem}\label{lem:discrete-multinomial-distrd-rv}
    Fix an integer $m\geq1$. Let $\mathbf{N}=(N_i)_{i\leq 2m-1}$ be a $\DirM \left(n; \alpha_1, \dots, \alpha_{2m-1} \right)$-distributed random vector. Then
    \begin{equation*}
        \left(\frac{N_i}{n}\right)_{i\leq 2m-1}\xrightarrow[n\to \infty]{\mathrm{d}}(D^m_i)_{i\leq 2m-1},
    \end{equation*}
    where $(D^m_i)_{i\leq 2m-1}$ is a Dirichlet random vector with parameter $(\alpha_1, \dots, \alpha_{2m-1})$.
\end{lem}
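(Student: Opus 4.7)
The plan is to exploit the hierarchical sampling representation of the Dirichlet-multinomial distribution given explicitly at the end of Definition~\ref{defn:Dirichlet-multinomial}. Namely, I would realize $\mathbf{N}=(N_i)_{i\leq 2m-1}$ on a common probability space by first sampling once and for all a Dirichlet vector $\mathbf{D}=(D_i)_{i\leq 2m-1}\sim\Dir(\alpha_1,\dots,\alpha_{2m-1})$, and then, for each $n\geq 1$, sampling independently a vector $\mathbf{N}^{(n)}$ with law $\mathrm{Multinomial}(n;\mathbf{D})$ conditionally on $\mathbf{D}$. By Definition~\ref{defn:Dirichlet-multinomial}, $\mathbf{N}^{(n)}\stackrel{\mathrm{d}}{=}\mathbf{N}$, so it suffices to prove the convergence for this particular coupling.

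Next, I would condition on $\mathbf{D}$ and invoke the strong law of large numbers for the multinomial distribution. Concretely, $\mathbf{N}^{(n)}$ has the law of the empirical count vector of $n$ i.i.d.\ draws from a categorical distribution on $\{1,\dots,2m-1\}$ with probability weights $\mathbf{D}$. The strong law of large numbers applied coordinate by coordinate gives
\[
\frac{N_i^{(n)}}{n}\xrightarrow[n\to\infty]{\mathrm{a.s.}} D_i,\qquad i=1,\dots,2m-1,
\]
almost surely under the conditional measure $\bbP(\,\cdot\,|\,\mathbf{D})$ for a.e.\ realization of $\mathbf{D}$. Integrating out $\mathbf{D}$ yields the unconditional almost sure convergence of the vector $(N_i^{(n)}/n)_{i\leq 2m-1}$ to $\mathbf{D}$, which in turn implies the claimed convergence in distribution with $(D_i^m)_{i\leq 2m-1}\stackrel{\mathrm{d}}{=}\mathbf{D}$.

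This proof is entirely routine, so I do not anticipate any serious obstacle: the only point worth noting is that the hierarchical representation in Definition~\ref{defn:Dirichlet-multinomial} immediately reduces the problem to the standard multinomial law of large numbers, avoiding any direct manipulation of the ratios of Gamma functions in the Dirichlet-multinomial mass function. If one preferred an analytic proof, one could alternatively compute the limit of $\bbE[\prod_i (N_i/n)^{k_i}]$ using the closed form $\frac{\Gamma(\alpha_0)}{\Gamma(n+\alpha_0)}\prod_i\frac{\Gamma(n_i+\alpha_i)}{\Gamma(\alpha_i)}$ and Stirling's formula to recover the Dirichlet moments $\frac{\Gamma(\alpha_0)}{\Gamma(\alpha_0+\sum k_i)}\prod_i\frac{\Gamma(\alpha_i+k_i)}{\Gamma(\alpha_i)}$, but the probabilistic route above is cleaner.
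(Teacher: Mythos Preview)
Your proposal is correct and follows essentially the same approach as the paper, which simply states that the result is an immediate consequence of the law of large numbers via the hierarchical sampling representation of Definition~\ref{defn:Dirichlet-multinomial}. The paper's (commented-out) detailed argument likewise conditions on the Dirichlet vector and applies the strong law of large numbers to the multinomial counts, exactly as you do.
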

We are making explicit the dependence on $m$ of the random variable $D^m_i$ because later on we will use this result in the case when $(D^m_i)_{i\leq 2m-1}$ is a Dirichlet random vector with parameter $(\frac 1 2, \dots, \frac 1 2)$ and the dependence on $m$ will play a key role.

We also record a simple result about Dirichlet random variables.

\begin{lem}\label{lem:diric-distrd-rv}
    Let $(D_1,\dots D_{n}) \sim \Dir(1/2,\dots , 1/2)$. Then

    \begin{equation*}
    \left(\frac{n}{2} D_1, \frac{n}{2}D_2\right)\xrightarrow[n\to\infty]{\mathrm{d}}(G_1,G_2),
    \end{equation*}
    where
    $(G_1,G_2)$ is a pair of independent of $\mathrm{Gamma}\left(\frac{1}{2}\right)$-distributed 
    random variables.
\end{lem}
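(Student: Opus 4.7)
The plan is to invoke the classical Gamma representation of the Dirichlet distribution. First, on an auxiliary probability space, I would introduce an i.i.d.\ sequence $(Y_i)_{i\geq 1}$ of $\mathrm{Gamma}(1/2)$-distributed random variables (with shape parameter $1/2$ and rate $1$, so that $\mathbb{E}[Y_1]=1/2$), and set $S_n \coloneqq \sum_{i=1}^n Y_i$. A standard change-of-variables argument, applied to the map $(Y_1,\dots,Y_n) \mapsto (Y_1/S_n,\dots,Y_{n-1}/S_n,S_n)$, shows that the vector $(Y_1/S_n,\dots,Y_n/S_n)$ is independent of $S_n$ and distributed as $\mathrm{Dir}(1/2,\dots,1/2)$. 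This allows me to realize the Dirichlet vector from the statement by setting $D_i = Y_i/S_n$ for $i\leq n$, so that
\[
\left(\tfrac{n}{2} D_1,\, \tfrac{n}{2} D_2\right) \;=\; \frac{n}{2 S_n}\,(Y_1, Y_2).
\]

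Next, I would apply the strong law of large numbers to $(Y_i)_{i\geq 1}$: since $\mathbb{E}[Y_1]=1/2$, we have $S_n/n \to 1/2$ almost surely, and hence $\frac{n}{2 S_n} \to 1$ almost surely as $n\to\infty$. Because $(Y_1,Y_2)$ does not depend on $n$, this immediately yields that $(\frac{n}{2}D_1,\frac{n}{2}D_2)$ converges to $(Y_1,Y_2)$ almost surely, and in particular in distribution. Setting $(G_1,G_2) \coloneqq (Y_1,Y_2)$, which are independent $\mathrm{Gamma}(1/2)$ random variables by construction, completes the argument.

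There is no real obstacle here: the statement is a routine consequence of the Gamma--Dirichlet identity combined with the law of large numbers. The only minor subtlety worth stating explicitly is the convention on $\mathrm{Gamma}(1/2)$ (shape $1/2$, rate $1$, mean $1/2$), which is the one compatible with the Dirichlet--Gamma construction and which makes the normalization factor $n/2$ in the statement match $1/\mathbb{E}[Y_1]\cdot n/2 \cdot 1/n$ after dividing through by $S_n$.
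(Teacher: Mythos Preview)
Your proposal is correct and follows essentially the same approach as the paper: both use the Gamma representation of the Dirichlet distribution and then the law of large numbers applied to $S_n/n$. The only cosmetic difference is that you invoke the strong law and conclude almost sure convergence on the coupled space, whereas the paper phrases it via convergence in probability and Slutsky's lemma.
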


\begin{proof}
    Let $(G_i)_{i \geq 1}$ be independent $\mathrm{Gamma}\left(\frac{1}{2}\right)$-distributed random variables. For any $n \geq 1$, we have the equality in distribution \cite[Chapter XI.4]{devroye2006nonuniform}
    \begin{align*}
        \frac{1}{\sum_{i=1}^n G_i } \cdot (G_1,G_2,\dots,G_{n}) \overset{\mathrm{d}}{=} (D_1,\dots D_{n}).
    \end{align*}
    Since $\frac{n/2}{\sum_{i=1}^n G_i}$ converges to $1$ in probability by the law of large numbers, the claimed result follows from Slutzky's lemma.
\end{proof}

We finally state our last preliminary result on the decomposition of $p$-signed uniform binary  trees.

\begin{lem}\label{lem:decomp-tree2}
     Conditionally on $T_n|_{\mathcal{L}_m}$, the vector $\big( \widetilde{N}_i \big)_{1 \leq i \leq 2m-1}$ follows the Dirichlet-multinomial distribution $\DirM \left( n-m; \frac{1}{2}, \dots, \frac{1}{2} \right)$. 
     Moreover, conditionally on $T_n|_{\mathcal{L}_m}$ and $(\widetilde{N}_i)_{1 \leq i \leq 2m-1}$,
     for any $i$ with $1\leq i \leq m$, the region $T_n^{e_i}$ corresponding to edge $e_i$ (adjacent to the leaf $L_i$) has the same distribution as $T_{N_i}$, and the regions $(T_n^{e_i})_{1\leq i\leq m}$ are jointly independent.

     In particular, since the laws of  $\big( \widetilde{N}_i \big)_{1\leq i \leq 2m-1}$ and $(T_n^{e_i})_{1\leq i \leq m}$ conditionally on $T_n|_{\mathcal{L}_m}$ do not depend on $T_n|_{\mathcal{L}_m}$, then $\big( \widetilde{N}_i \big)_{1\leq i \leq 2m-1}$ and $(T_n^{e_i})_{1\leq i \leq m}$ are independent of $T_n|_{\mathcal{L}_m}$.
\end{lem}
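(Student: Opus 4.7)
The plan is to couple the pair $(T_n, \mathcal{L}_m)$ with the output of a forward R\'emy construction started from a copy of $T_m$. By \cref{lem:decomp-tree} together with the time-reversibility of R\'emy's algorithm, the joint law of $(T_n, \mathcal{L}_m)$ coincides with that of the pair $(\widehat T_n, \{L_1,\dots,L_m\})$ obtained by starting from a $p$-signed uniform binary tree $\widehat T_m$ with leaves $L_1,\dots,L_m$ and running $n-m$ forward R\'emy steps. Under this coupling, the $2m-1$ edges $e_1,\dots,e_{2m-1}$ of $\widehat T_m$ are in bijection with the $2m-1$ regions $T_n^{e_1},\dots,T_n^{e_{2m-1}}$ of the decomposition, and $\widehat T_m$ plays the role of $T_n|_{\mathcal{L}_m}$.

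The crucial edge-count identity that I would check first is that, after any number of forward R\'emy steps, a region $e_i$ which has received $\tilde n_i$ new leaves contains exactly $2\tilde n_i+1$ edges. This will hold both for leaf regions ($i\leq m$, where the region is itself a planted binary tree with $\tilde n_i+1$ leaves and hence has $2(\tilde n_i+1)-1=2\tilde n_i+1$ edges) and for internal regions ($i>m$, which start with a single edge and gain two edges per added leaf, since each R\'emy insertion replaces one edge by three). Consequently, at any intermediate step where the current tree has $k$ leaves, picking a uniform edge amounts to picking region $e_i$ with probability $(2\tilde n_i+1)/(2k-1)$, which is precisely the transition of a P\'olya urn with $2m-1$ colors, each starting with weight $1$ and incrementing by $2$ at each draw. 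A standard computation using the P\'olya urn / Dirichlet--multinomial correspondence then identifies the law of $(\widetilde{N}_i)_{1\leq i\leq 2m-1}$ after $n-m$ draws as $\DirM(n-m;\tfrac12,\dots,\tfrac12)$, giving the first claim.

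For the second claim, I would argue that conditionally on the sequence of regions selected at the successive R\'emy steps, the choice of which specific edge of a given region is split at each step is a uniform draw among the $2\tilde n_i+1$ edges of that region, combined with a fresh independent Bernoulli$(p)$ sign and an independent left/right placement. For a leaf region $e_i$ with $i\leq m$, this internal dynamics is exactly the forward R\'emy algorithm applied to the single planted leaf $L_i$ and run for $\widetilde{N}_i$ steps, so it produces a $p$-signed uniform binary tree with $N_i=\widetilde{N}_i+1$ leaves, i.e.\ a copy of $T_{N_i}$. Independence across the regions $(T_n^{e_i})_{1\leq i\leq m}$ will follow because the uniform edge choices and the signs used within different regions rely on disjoint, independent random inputs.

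The final independence from $T_n|_{\mathcal{L}_m}$ will be automatic from the construction, since the transitions of the P\'olya urn depend only on the edge-count vector and the internal R\'emy dynamics uses only independent fresh randomness, so the structure or sign decoration of $\widehat T_m=T_n|_{\mathcal{L}_m}$ plays no role in the evolution. The most delicate point to verify carefully is the uniform identity $\#\mathrm{edges}=2\tilde n_i+1$ across both region types, as this is exactly what makes the P\'olya urn start from the symmetric weight vector $(1,\dots,1)$ and pins the Dirichlet-multinomial parameters to $(\tfrac12,\dots,\tfrac12)$.
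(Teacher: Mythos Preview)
Your proposal is correct and is essentially the same approach the paper points to: the paper's proof merely cites \cite[Exercises 2.2.2 and 7.4.13]{pitman2006combinatorial}, and what you have written is precisely an unpacking of those exercises --- the forward R\'emy growth induces a P\'olya urn on the $2m-1$ edge-regions with initial weights $1$ and increment $2$, whence the $\DirM(n-m;\tfrac12,\dots,\tfrac12)$ law, and the within-region dynamics is again R\'emy. Your edge-count identity $2\tilde n_i+1$ is the right bookkeeping and your argument goes through without gaps.
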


\begin{proof}
    The result follows, for instance, by combining the two facts stated in \cite[Exercises 2.2.2 and 7.4.13]{pitman2006combinatorial}.
\end{proof}

\section{Existence of the exponent $\alpha$ via supermultiplicativity}\label{sect:supermultiplicativity}

Recall that $X_n=\lis(T_n)$ denotes the maximal size (number of leaves) of a positive subtree of the $p$-signed uniform binary  tree $T_n$ with $n$ leaves. The main goal of this section is to prove~\cref{thm:main}, \emph{i.e.}\ the existence of the exponent $\alpha$.

The proof of \cref{thm:main} consists of a supermultiplicativity argument. We first state a result of Dekking and Grimmett~\cite{DG88} which, although not directly applicable to our setting, has a similar flavor.

\begin{prop}[{\cite[Theorem 1]{DG88}}]\label{prop:dg88}
	Let $(Y_n)_{n \geq 1}$ be a sequence of random variables with values in the set of positive integers. We assume that for any $m,n \geq 1$, we have the stochastic domination
	\begin{equation}\label{eqn:superbranching_v1}
        Y_{m+n} \succeq \sum_{i=1}^{Y_m}Y_n^{(i)},
    \end{equation}
    where the $Y_n^{(i)}$ are i.i.d.\ copies of $Y_n$ and are independent of $Y_m$. Let $\lambda=\sup_{n \geq 1} \bbE[Y_n]^{1/n}$. If $\lambda<+\infty$, then we have the convergence in probability
    \[ Y_n^{1/n} \xrightarrow[n \to +\infty]{\mathbb{P}} \lambda. \]
\end{prop}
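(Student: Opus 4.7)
The plan is to establish $Y_n^{1/n}\to\lambda$ in probability in three steps: first identify $\lambda$ as $\lim_n \bbE[Y_n]^{1/n}$ via Fekete's lemma, then deduce the upper bound via Markov's inequality, and finally obtain the matching lower bound by coupling with a supercritical Galton--Watson process whose survival is guaranteed by the positivity of $Y_n$.

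Taking expectations in~\eqref{eqn:superbranching_v1} and using Wald's identity yields $\bbE[Y_{m+n}] \geq \bbE[Y_m]\,\bbE[Y_n]$, so $(\log\bbE[Y_n])_{n\geq 1}$ is superadditive and Fekete's lemma gives $\frac{1}{n}\log\bbE[Y_n] \to \log\lambda$ together with $\bbE[Y_n]\leq\lambda^n$ for all $n$; note that $\lambda\geq 1$ since $Y_n\geq 1$. For any $\varepsilon>0$, Markov's inequality then gives
\[ \Pp{Y_n \geq (\lambda+\varepsilon)^n} \leq \frac{\bbE[Y_n]}{(\lambda+\varepsilon)^n} \leq \left(\frac{\lambda}{\lambda+\varepsilon}\right)^n \xrightarrow[n\to\infty]{} 0, \]
which settles the upper bound $\limsup_n \frac{\log Y_n}{n} \leq \log\lambda$ in probability.

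For the lower bound, fix $\varepsilon>0$ small and choose $N$ so large that $\bbE[Y_N] \geq (\lambda-\varepsilon/2)^N$. Iterating~\eqref{eqn:superbranching_v1} produces a coupling in which $Y_{kN} \succeq Z_k$ for every $k\geq 1$, where $(Z_k)_{k\geq 0}$ is a Galton--Watson process started from $Z_0=1$ with offspring distribution equal to that of $Y_N$. The crucial observation is that, since $Y_n$ takes values in the positive integers, the offspring is almost surely at least $1$, so $(Z_k)_{k\geq 0}$ is non-decreasing and survives almost surely. The classical Seneta--Heyde normalization for supercritical branching, which holds without any $L\log L$ assumption, provides constants $c_k>0$ with $c_{k+1}/c_k \to \bbE[Y_N]$ such that $Z_k/c_k$ converges in distribution to a random variable that is positive on survival. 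Combined with almost sure survival, this yields $\frac{1}{k}\log Z_k \to \log\bbE[Y_N]$ in probability, hence
\[ \liminf_{k\to\infty}\frac{\log Y_{kN}}{kN} \;\geq\; \frac{\log\bbE[Y_N]}{N} \;\geq\; \log\!\left(\lambda-\varepsilon/2\right) \quad \text{in probability}. \]
To extend from the subsequence $n=kN$ to all $n$, I would invoke the stochastic monotonicity $Y_{n+1}\succeq Y_n$, which follows directly from~\eqref{eqn:superbranching_v1} with $m=1$ together with $Y_1\geq 1$; this yields $Y_n \succeq Y_{N\lfloor n/N\rfloor}$, and letting $\varepsilon\to 0$ completes the proof.

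The main obstacle I anticipate is the branching-process input: nothing in the hypotheses prevents $\bbE[Y_N^2]$ from being infinite, so one cannot rely on a direct $L^2$ concentration argument, and the Seneta--Heyde theorem (though classical) is a nontrivial ingredient. In~\cref{sect:supermultiplicativity} the authors in fact work with a more intricate supermultiplicativity equation -- because the decomposition of $T_n$ into sub-regions yields regions of \emph{random and unequal} sizes governed by a Dirichlet-multinomial law (\cref{lem:decomp-tree2}) -- and a direct analog of the argument above is not immediately applicable to $X_n$.
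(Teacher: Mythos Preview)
The paper does not prove this proposition: it is quoted verbatim from Dekking and Grimmett~\cite{DG88} as motivation and background (see the sentence preceding the statement in \cref{sect:supermultiplicativity}), and the paper immediately moves on to a different, more elaborate superbranching inequality~\eqref{eqn:superbranching_v3} tailored to the random-size decomposition of $T_n$. There is therefore nothing to compare against.

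Your argument is correct. The Fekete step and the Markov upper bound are standard, and the lower bound via comparison with a Galton--Watson process with offspring law $Y_N$ is exactly the intended mechanism; the almost sure survival (since $Y_N\geq 1$) is the key simplification. Two small remarks. First, you can sidestep Seneta--Heyde entirely by truncating: replace $Y_N$ by $Y_N\wedge K$ with $K$ large enough that $\bbE[Y_N\wedge K]>(\lambda-\varepsilon/2)^N$, so the comparison process has bounded offspring and the $L^2$ martingale convergence $\widetilde Z_k/\widetilde m^k\to W>0$ applies directly. This is the route the paper itself takes later (see the proof of \cref{thm:main}, where the same comparison-with-branching idea reappears and the Kesten--Stigum theorem is invoked). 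Second, your final paragraph is not about this proposition at all but about the paper's main argument for $X_n$; it is accurate as a description of \cref{sect:supermultiplicativity}, but orthogonal to what you were asked to prove.
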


In~\cite{DG88}, processes satisfying~\eqref{eqn:superbranching_v1} are called \textbf{superbranching processes}, by analogy with the branching property enjoyed by e.g.\ Bienaymé--Galton--Watson processes. Since we are interested in a quantity $X_n$ with polynomial growth, it would be natural to try and obtain a superbranching equation of the form
\begin{equation}\label{eqn:superbranching_v2}
    X_{m\cdot n} \succeq \sum_{i=1}^{X_m} X_n^{(i)}.
\end{equation}
This would mean breaking a tree of size $m\cdot n$ into $m$ subtrees of size $n$. However, the most natural ways to decompose uniform binary trees into several regions result in regions with \emph{random} and \emph{different} sizes (recall \cref{lem:decomp-tree2} in \cref{sect:decomp-binary}), which is why our supermultiplicativity equation \eqref{eqn:superbranching_v3} will be more complicated than~\eqref{eqn:superbranching_v2}.

Recall the definition of the \emph{Dirichlet-multinomial} distribution from \cref{defn:Dirichlet-multinomial} in \cref{sect:decomp-binary}.
We now state the superbranching-like equation that will be useful to us.

\begin{lem}[Supermultiplicativity equation for $X_n$]\label{lem:superbranching_ineq}
For all $n \geq m \geq 1$, we have the stochastic domination
    \begin{equation}\label{eqn:superbranching_v3}
        X_n \succeq \sum_{i=1}^{X_m} X_{N_i}^{(i)},
    \end{equation}
where:
\begin{itemize}
	\item The vector $(N_i-\mathds{1}_{i\leq m})_{1 \leq i \leq 2m-1}$ follows the $\DirM \left( n-m; \frac{1}{2}, \dots, \frac{1}{2} \right)$ distribution  and is independent of $X_m$.
    \item Conditionally on $(N_i)_{1 \leq i \leq 2m-1}$, the random variable $X_{N_i}^{(i)}$ has the law of $X_{N_i}$ for all $i\in \intervalleentier{1}{m}$ and the  
    $(X_{N_i}^{(i)})_{1 \leq i \leq m}$ are jointly independent and are independent of $X_m$. 
\end{itemize}
\end{lem}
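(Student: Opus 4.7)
The plan is to construct an explicit coupling realizing the stochastic domination, using the decomposition lemmas (\cref{lem:decomp-tree} and~\cref{lem:decomp-tree2}) to split $T_n$ into a skeleton tree $T_n|_{\mathcal{L}_m}$ and the regions hanging off its edges, and then assembling maximal positive subtrees of the skeleton and of the relevant regions into a positive subtree of $T_n$.

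More precisely, I would first sample $T_n$ together with a uniformly random subset $\mathcal{L}_m = \{L_1, \dots, L_m\}$ of its leaves, and consider the skeleton $T_n|_{\mathcal{L}_m}$ and the regions $(T_n^{e_i})_{1 \leq i \leq 2m-1}$. By \cref{lem:decomp-tree}, $T_n|_{\mathcal{L}_m} \stackrel{\mathrm{d}}{=} T_m$, so letting $\tau$ be (say) its leftmost maximal positive subtree, the number of leaves of $\tau$ has the law of $X_m$, and they correspond to a (random) subset $J \subset \{1, \dots, m\}$ with $|J| = X_m$. By \cref{lem:decomp-tree2}, the vector $(N_i - \mathds{1}_{i \leq m})_{1 \leq i \leq 2m-1}$ is $\DirM(n-m; \tfrac 1 2, \dots, \tfrac 1 2)$-distributed and independent of $T_n|_{\mathcal{L}_m}$, and conditionally on these quantities the regions $(T_n^{e_i})_{1 \leq i \leq m}$ are independent with $T_n^{e_i} \stackrel{\mathrm{d}}{=} T_{N_i}$. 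For each $i \in J$, let $Y_i$ denote the size of a maximal positive subtree $\hat{T}^{e_i}$ of $T_n^{e_i}$, so that $Y_i \stackrel{\mathrm{d}}{=} X_{N_i}$ conditionally on $N_i$.

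The geometric heart of the argument is then the claim that $\bigsqcup_{i \in J} \hat{T}^{e_i}$ induces a positive subtree of $T_n$. Indeed, pairwise highest common ancestors of leaves within a single $\hat{T}^{e_i}$ are positive by construction; and for leaves lying in distinct $\hat{T}^{e_i}, \hat{T}^{e_{j}}$ (with $i, j \in J$ distinct), the corresponding HCA in $T_n$ must be one of the branching vertices of $T_n$ that forms a node of the skeleton $T_n|_{\mathcal{L}_m}$, namely the HCA in $T_n|_{\mathcal{L}_m}$ of the leaves $\ell_i, \ell_j$ of $\tau$, which is positive since $\tau$ is a positive subtree. This gives the pointwise bound
\begin{equation*}
    X_n \geq \sum_{i \in J} Y_i.
\end{equation*}

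Finally, to match the distributional form of the right-hand side of~\eqref{eqn:superbranching_v3}, I would invoke the exchangeability built into \cref{lem:decomp-tree2}: the joint law of $\bigl((N_i, T_n^{e_i})\bigr)_{1 \leq i \leq m}$ is invariant under permutations of the indices $1, \dots, m$ and is independent of $T_n|_{\mathcal{L}_m}$ (hence of $X_m$ and $J$). Consequently, conditionally on $X_m = k$ one may define independent copies $X_{N_i}^{(i)}$ (for $i \notin J$ sampled as the LIS of an independent $T_{N_i}$, and for $i \in J$ taken to be the $Y_i$) so that the full vector $(N_i, X_{N_i}^{(i)})_{1 \leq i \leq m}$ has the distribution prescribed in the lemma, and the variable $\sum_{i \in J} Y_i$ has the same law as $\sum_{i=1}^{X_m} X_{N_i}^{(i)}$. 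The only subtlety — and the one point that requires some care — is verifying this exchangeability-based relabelling, but once it is in place the stochastic domination~\eqref{eqn:superbranching_v3} follows immediately.
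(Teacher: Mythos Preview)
Your proposal is correct and follows essentially the same route as the paper: decompose $T_n$ via a uniform $m$-leaf subset using \cref{lem:decomp-tree} and \cref{lem:decomp-tree2}, pick a maximal positive subtree of the skeleton and maximal positive subtrees of the attached leaf-regions, and observe that their union is a positive subtree of $T_n$. The paper is somewhat more terse on the two points you flag---it simply asserts that the assembled tree ``has only $\oplus$ nodes'' without spelling out the HCA argument, and it absorbs the exchangeability/relabelling step into a single sentence---so your write-up is, if anything, a little more careful on these details.
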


\begin{remark} 
    Note that since $X_m\leq m$, the random variables $N_i$ involved in  \eqref{eqn:superbranching_v3} are all among the first $m$ coordinates of the vector $(N_i-\mathds{1}_{i\leq m})_{1 \leq i \leq 2m-1}$. That is, when the indicator function is equal to 1.
\end{remark}

\begin{proof}
    Let $n \geq m \geq 1$. 
    We will build a subtree of $T_n$ with only $\oplus$ nodes, whose total number of leaves will have the same distribution as the right-hand side of~\eqref{eqn:superbranching_v3}.

    Conditionally on $T_n$, let $\mathcal{L}_m=\{\uleaf{1}, \dots, \uleaf{m} \}$ be a uniform random subset of the set of leaves of $T_n$ with size $m$. We consider the subtree $T_n|_{\mathcal{L}_m}$ as in \cref{lem:decomp-tree}. Then $T_n|_{\mathcal{L}_m}$ has the same law as $T_m$.
 
    Recall the notation introduced above \cref{lem:decomp-tree2} and in \cref{fig:subtree-notation}.  
    Then, thanks to \cref{lem:decomp-tree2}, the vector $( \widetilde{N}_i)_{1 \leq i \leq 2m-1}=(N_i-\mathds{1}_{i\leq m})_{1 \leq i \leq 2m-1}$ follows the $\DirM \left( n-m; \frac{1}{2}, \dots, \frac{1}{2} \right)$ distribution and is independent of $T_n|_{\mathcal{L}_m}$. Moreover, conditionally on $(N_i)_{1 \leq i \leq 2m-1}$, the regions $T_n^{e_i}$ for $1\leq i\leq m$, corresponding to the edges $e_i$, are independent copies of $T_{N_i}$ and are independent of $T_n|_{\mathcal{L}_m}$. 
    
    Now consider the largest positive subtree of $T_n|_{\mathcal{L}_m}$ (if there is more than one such maximal subtree, pick one arbitrarily), and denote by $\mathcal{L} \subset \mathcal{L}_m=\{ \uleaf{1}, \dots, \uleaf{m}\}$ the set of its leaves as on Figure~\ref{fig:subtree-notation}. Since $T_n|_{\mathcal{L}_m}$ has the same law as $T_m$, the size of $\mathcal{L}$ has the same law as $X_m=\lis(T_m)$. Moreover, for each $\uleaf{i} \in \mathcal{L}$, consider a maximal positive subtree $\widehat{T}^{e_i}_n$ of $T_n^{e_i}$.
    Since $T_n^{e_i}$ has the same law as $T_{N_i}$ by \cref{lem:decomp-tree2}, the size $\lis(T_n^{e_i})$ of $\widehat{T}^{e_i}_n$ has the same law as $X_{N_i}$.
    
    The subtree of $T_n$ induced by the leaves of $\bigcup_{\uleaf{i} \in \mathcal{L}} \widehat{T}_n^{e_i}$ has only $\oplus$ nodes and, thanks to the previous paragraph, its number of leaves has the same distribution as $\sum_{i=1}^{X_m} X_{N_i}^{(i)}$, where $N_i$ and the $X_{N_i}^{(i)}$ are as in the statement of the Lemma. This concludes the proof.
\end{proof}

With \cref{lem:superbranching_ineq} in our hands, we can turn to the proof of \cref{thm:main}.

\begin{proof}[Proof of \cref{thm:main}] 
    For $n\geq 1$, we set $\alpha_n \coloneqq  \frac{\log \bbE[X_n]}{\log n} $ so that 
    \begin{equation*}
        \bbE[X_n]=n^{\alpha_n}.
    \end{equation*}
    Let $\alpha=\limsup_{n \to \infty} \alpha_n$ as in~\eqref{eqn:alpha_as_limsup}.
    By Markov's inequality, it is immediate  that for any $\eps>0$, the probability that $X_n \geq n^{\alpha+\eps}$ goes to $0$ as $n \to \infty$. 
    Therefore, to prove \eqref{eqn:existence-exponent}, we only need to show that for any $\eps>0$,
    \begin{equation}\label{eq:expectation}
        \Pp{X_n \leq n^{\alpha-\eps}}\to 0 \quad \text{as }n \to \infty.
    \end{equation}
    Once we have this, the fact that $\alpha \in ( 1/2,1 )$ is an immediate consequence of~\cref{thm:prev-bound}. 
    We divide the proof of~\eqref{eq:expectation} into five main steps.

    \medskip
    
    \noindent\underline{\emph{Step 1:}} \emph{Turning~\eqref{eqn:superbranching_v3} into an equation closer to~\eqref{eqn:superbranching_v2}.} We first turn~\eqref{eqn:superbranching_v3} into an equation closer to~\eqref{eqn:superbranching_v2} in order to compare $X_n$ with a branching process. This goal is reached in \eqref{eqn:stochastic_domination2}. We fix $m\le n$. The precise choice of $m$ will be decided towards the end of the proof in Step 4.
    
    Roughly speaking, \cref{lem:discrete-multinomial-distrd-rv} ensures that the random variables $N_i$ for $1\le i\le 2m-1$ appearing in  \eqref{eqn:superbranching_v3} behave like $n \cdot D_i^{m}$, where $(D_1^{m}, D_2^{m}, \dots, D_{2m-1}^{m}) \sim \mathrm{Dir}\left(\frac{1}{2}, \dots , \frac{1}{2}\right)$, so that they are of order $\frac{n}{m}$ and are random at that scale. In order to obtain a version of~\eqref{eqn:superbranching_v3} with i.i.d.\ terms on the right-hand side, we will keep only the values of $i$ for which $N_i> \frac{n}{m}$. 

    With the notation of \cref{lem:superbranching_ineq}, we can write the stochastic domination 
    \begin{equation}\label{eqn:stochastic_domination}
    	X_n \succeq \sum_{i=1}^{X_m} X_{N_i}^{(i)} \succeq \sum_{i=1}^{X_m} X_{N_i}^{(i)} \mathbbm{1}_{\{N_i\geq  \frac{n}{m}\}}\succeq \sum_{i=1}^{\widetilde{Y}_m} X_{\lfloor\frac{n}{m}\rfloor}^{(i)},
    \end{equation}
    where
    \begin{equation*}
    \widetilde{Y}_m=\widetilde{Y}_m(n)\overset{\mathrm{\mathrm{d}}}{=} \sum_{i=1}^{X_m}\mathbbm{1}_{\{N_i\geq  \frac{n}{m}\}},
    \end{equation*}
    and the random variables $X_{\lfloor\frac{n}{m}\rfloor}^{(i)}$ are independent copies of $X_{\lfloor\frac{n}{m}\rfloor}$, also independent of $\widetilde{Y}_m$. Note that the last domination in~\eqref{eqn:stochastic_domination}  is possible because \cref{lem:superbranching_ineq} guarantees that, conditional on $(N_i)$, the $X_{N_i}^{(i)}$ are all independent and are also independent of $X_m$.

    The domination in~\eqref{eqn:stochastic_domination} is closer to~\eqref{eqn:superbranching_v2} but we also need to replace $\widetilde{Y}_m(n)$ with a random variable $Y_m$ that does not depend on $n$.
    By~\cref{lem:discrete-multinomial-distrd-rv}, we have the convergence in distribution 
    \begin{align*}
    \sum_{i=1}^{X_m}\mathbbm{1}_{\{N_i\geq  \frac{n}{m}\}}\xrightarrow[n\rightarrow \infty]{\mathrm{d}} \sum_{i=1}^{X_m}\mathbbm{1}_{\{D^m_i\geq \frac{1}{m}\}},
    \end{align*}
    where $(D^m_1,\dots D^m_{2m-1}) \sim \Dir(1/2,\dots , 1/2)$ and is independent of $X_m$. 
    We then introduce\footnote{Note the extra factor $2$ compared to the previous display.}
    \begin{align}\label{eq:defn-Ym}
    Y_m\coloneqq  \sum_{i=1}^{X_m}\mathbbm{1}_{\{D^m_i\geq \frac{2}{m}\}},
    \end{align}
    so that for any $k\in \intervalleentier{1}{m}$, we have 
    \begin{align*}
    \Pp{\widetilde{Y}_m(n)\geq k} = \Pp{\sum_{i=1}^{X_m}\mathbbm{1}_{\{N_i\geq  \frac{n}{m}\}} \geq k} \xrightarrow[n \rightarrow \infty]{} \Pp{\sum_{i=1}^{X_m}\mathbbm{1}_{\{D^m_i\geq \frac{1}{m}\}} \geq k} > \Pp{Y_m \geq k}.
    \end{align*}
    Now, because of the strict inequality in the last display, the probability $\Pp{\widetilde{Y}_m(n)\geq k}$ is eventually larger than $\Pp{Y_m \geq k}$ for all $k\in \intervalleentier{1}{m}$. Hence, for all $m\geq 1$, there exists $n_0=n_0(m)>0$ such that
    $\widetilde{Y}_m(n)\succeq Y_m$ for $n\geq n_0$. Summarizing, 
    \begin{equation}\label{eqn:stochastic_domination2}
    	X_n \succeq \sum_{i=1}^{Y_m} X_{\lfloor\frac{n}{m}\rfloor}^{(i)} \qquad\text{for all $m\geq 1$ and $n\geq n_0(m)$},
    \end{equation}
    where $Y_m$ is as in~\eqref{eq:defn-Ym}. This domination is now closer to~\eqref{eqn:superbranching_v2} as we want.
    
    \medskip

    \noindent\underline{\emph{Step 2:}} \emph{Lower bounding the expectation of $Y_m$.}
   Note that $Y_m$ has expectation 
    \begin{equation}\label{eq:exp_computations}
        \bbE[Y_m] =\bbE[X_m] \cdot \Pp{D^m_1 \geq \frac{2}{m}}=m^{\alpha_m}\cdot \Pp{D^m_1 \geq  \frac{2}{m}}, 
    \end{equation}
    since the vector $(D^m_i)_{1 \leq i \leq 2m-1}$ is independent of $X_m$.     
    Using~\cref{lem:diric-distrd-rv}, we can set
    \begin{equation}\label{eq:defn-b}
    b\coloneqq  \inf_{m\geq 1} \left\{\Pp{D^m_1 \geq \frac{2}{m}}\right\} >0,
    \end{equation}
    which gives
    \begin{equation}\label{eq:low_bound_offspring}
    \bbE[Y_m]\geq m^{\alpha_m}b
    \qquad\text{ for all }m\geq 1.
    \end{equation}

    \medskip

    \noindent\underline{\emph{Step 3:}} \emph{Lower bounding $X_n$ by comparing it with an appropriate branching process.}
    Let $n_0=n_0(m)$ be as in~\eqref{eqn:stochastic_domination2}. By iterating~\eqref{eqn:stochastic_domination2}, an induction on $k$ shows that the process $(X_{n_0 m^k})_{k \geq 0}$ stochastically dominates a Bienaymé--Galton--Watson process $(Z_k)_{k \geq 0}$ started with one particle and with offspring distribution given by the law of $Y_m$. Therefore, setting 
    \begin{equation}\label{eq:defn_k}
    k=k(m,n)\coloneqq\Big\lfloor\frac{\log (n/n_0)}{\log m}\Big\rfloor,
    \end{equation}
    so that $n_0 m^{k+1} > n \geq  n_0 m^{k}$, we get $X_n \succeq X_{n_0 \cdot m^{k}} \succeq Z_{k}$.    

    \smallskip

    We now claim that there exists a threshold $m_0>0$, such that for every $m\geq m_0$, there exist $c=c(m)>0$ and $n'_0=n'_0(m)>0$ such that for all $n \geq n'_0(m)$: 
    \begin{equation}\label{eq:pos_proba_event}
        \Pp{X_{n} \geq c\cdot (m^{\alpha_m}b)^k} \geq c,
    \end{equation}
    where  $k=k(m,n)$ is as in \eqref{eq:defn_k} (and so its value is fixed by $m, n$).
    Indeed, since $X_n \succeq Z_{k}$, for any choice of the constant $c>0$, we can write
    \begin{equation*}
        \Pp{X_{n} \geq c\cdot (m^{\alpha_m}b)^k}\geq  \Pp{Z_{k} \geq c\cdot (m^{\alpha_m}b)^k} \geq  \Pp{(\Ec{Y_m})^{-k} Z_{k} \geq c},
    \end{equation*}
    where in the last equality we used that the offspring distribution $Y_m$ of the branching process $(Z_i)_{i\geq 1}$ has expectation at least $m^{\alpha_m} b$ thanks to~\eqref{eq:low_bound_offspring}. 
    By standard results (see for instance~\cite{kesten1966limit}), $(\Ec{Y_m})^{-k} Z_k$ converges almost surely as $k\to \infty$ to some $Z_\infty(m)$, which is positive on the event of non-extinction. Now, since $\Ec{Y_m}>1$ for $m$ large enough, there exists $m_0>0$ such that the event of non-extinction has positive probability (which depends on $m$) for $m>m_0$.
    Therefore, we can fix $c=c(m)>0$ such that $\Pp{Z_\infty(m)\ge c}\ge 2c$. 
    This ensures that $\Pp{(\Ec{Y_m})^{-k} Z_k \geq c} \ge c$ for large enough $k$ (\emph{i.e.}\ for $n\geq n'_0(m)$), establishing \eqref{eq:pos_proba_event}.

    \smallskip

    In addition, when the event in~\eqref{eq:pos_proba_event} occurs, we have that
    \begin{align*}
    	X_n
    	&\,\,\,\geq\,\,\, c \cdot \exp \left( k (\alpha_m \log m+\log b)\right)\\ 
    	&\stackrel{\eqref{eq:defn_k}}{\geq} c  \cdot 
        \min\left\{\exp \left( \left( \frac{\log(n/n_0)}{\log m}-1 \right) ( \alpha_m \log m+\log b)\right), 
        \exp \left( \frac{\log(n/n_0)}{\log m} ( \alpha_m \log m+\log b)\right)\right\},
    \end{align*}
    where the minimum accounts for the fact that $ \alpha_m \log m+\log b$ could \textit{a priori} be negative. In any case,
    this implies that when the event in~\eqref{eq:pos_proba_event} occurs, we have
    \begin{equation}\label{eqn:comparison_with_branching}
        \frac{\log X_n}{\log n} \geq \alpha_m + \frac{\log b}{\log m} +O_m \left( \frac{1}{\log n} \right).
    \end{equation}
    
    \medskip

    \noindent\underline{\emph{Step 4:}} \emph{Getting the right lower bound for $X_n$ with positive probability.} We will now specify the value of $m$. Let $\eps>0$. Recalling that $b\in(0,1)$ and $\alpha=\limsup_{m \to \infty} \alpha_m$, we choose $m$ large enough so that $m\ge m_0$ and
    \begin{equation}
        \frac{\log b}{\log m} \ge -\eps/3
        \qquad\text{and}\qquad
        \alpha_m \geq \alpha-\eps/3.
    \end{equation}
    Now that $m$ is fixed, \eqref{eq:pos_proba_event}~and~\eqref{eqn:comparison_with_branching} imply that there is a constant $c>0$ such that for $n$ large enough, we have
    \begin{equation}\label{eqn:existence_exponent_with_positive_P}
        \Pp{\frac{\log X_n}{\log n} \geq \alpha-\eps}=\Pp{X_n \geq n^{\alpha-\eps}} \geq c.
    \end{equation}
    In order to prove~\eqref{eq:expectation}, it remains to turn the probability in~\eqref{eqn:existence_exponent_with_positive_P} from $c$ to $1-\eps$. This is the goal of the next (and last) step.

    \medskip

    \noindent\underline{\emph{Step 5:}} \emph{Getting the right lower bound for $X_n$ with high probability.}
    The idea will be to use~\eqref{eqn:stochastic_domination2} again and argue that if each term on the right-hand side of~\eqref{eqn:stochastic_domination2} has a positive probability to be large (thanks to~\eqref{eqn:existence_exponent_with_positive_P}), then the sum is very likely to be large, provided the number $Y_m$ of terms is high enough.
    For this, we first claim that we can fix $m'>0$ large enough so that
    \begin{equation}\label{eq:Y-bound}
        \Pp{Y_{m'}\leq \frac{1}{\eps c}} \leq \eps,
    \end{equation}
    where $c$ is as in~\eqref{eqn:existence_exponent_with_positive_P}.
    Indeed, recalling that $Y_m= \sum_{i=1}^{X_m}\mathbbm{1}_{\{D^m_i\geq \frac{2}{m}\}}$, we have
    \begin{equation*}
        \Pp{Y_{m}\leq \frac{1}{\eps c}} \leq \Pp{X_{m}\leq \sqrt{m}}+\Pp{\sum_{i=1}^{\sqrt{m}}\mathbbm{1}_{\{D^m_i\geq \frac{2}{m}\}}\leq \frac{1}{\eps c}}.
    \end{equation*}
    The first term on the right-hand side tends to zero as $m$ tends to infinity thanks to~\cref{thm:prev-bound}, so we focus on the second term. Setting $S_m\coloneqq \sum_{i=1}^{\sqrt{m}}\mathbbm{1}_{\{D^m_i\geq \frac{2}{m}\}}$ and noting that $\Ec{S_m}\geq \lfloor\sqrt{m}\rfloor b$ by definition of $b$ in \eqref{eq:defn-b}, we get that for $m$ large enough, 
     \begin{equation*}
        \Pp{S_m > \frac{1}{\eps c}}\geq \Pp{S_m > \frac{1}{\eps c \lfloor\sqrt{m}\rfloor b}\Ec{S_m}}\geq \Bigg(1-\frac{1}{\eps c \lfloor\sqrt{m}\rfloor b}\Bigg)^2 \,\,\frac{\Ec{S_m}^2}{\Ec{S_m^2}},
    \end{equation*}
    where the last inequality follows from the Paley–Zygmund inequality. The first factor on the right-hand side tends to one as $m$ tends to infinity, so we are left to show that $\frac{\Ec{S_m}^2}{\Ec{S_m^2}}\xrightarrow[m\to\infty]{}1$. Note that, computing the two expectations, we have that 
    \begin{equation*}
        \frac{\Ec{S_m}^2}{\Ec{S_m^2}}\geq \frac{\lfloor\sqrt{m}\rfloor^2 \Pp{D^m_1\geq \frac{2}{m}}^2}{\sqrt{m} \Pp{D^m_1\geq \frac{2}{m}}+m \Pp{D^m_1\geq \frac{2}{m},D^m_2\geq \frac{2}{m}}}.
    \end{equation*}
    Thanks to~\cref{lem:diric-distrd-rv}, we know that $(mD^m_1,mD^m_2)\xrightarrow[m\to\infty]{\mathrm{d}}(G_1,G_2)$,
    where $(G_1,G_2)$ is a pair of independent Gamma random variables of parameter 1/2. This implies that the right-hand side of the above equation tends to one as we wanted, completing the proof of~\eqref{eq:Y-bound}.
    
    With~\eqref{eq:Y-bound} in our hands we can now return to~\eqref{eqn:stochastic_domination2}. For $n$ large enough, we have
    \begin{align*}
        \Pp{X_n < \left( \frac{n}{m'}  \right)^{\alpha-\eps}} & \leq \Pp{Y_{m'} \leq \frac{1}{\eps c}} + \Pp{\forall 1 \leq i \leq \frac{1}{\eps c}, \, X^{(i)}_{\lfloor \frac{n}{m'} \rfloor} < \left( \frac{n}{m'} \right)^{\alpha-\eps}}\\
        &\leq \eps + \left( 1-c \right)^{1/\eps c}\\
        &\leq \eps+\exp(-\eps^{-1})\\
        &\leq 2\eps,
    \end{align*}
    where in the second inequality we used~\eqref{eq:Y-bound}~and~\eqref{eqn:existence_exponent_with_positive_P} with $\frac{n}{m'}$ in place of $n$. This proves~\eqref{eq:expectation}.
\end{proof}

\section{Rough regularity estimates for the sequences $q$ and $Q$}\label{subsec:rough regularity}

Recall the definition of the sequences $q$ and $Q$ from~\eqref{eq:def_q_Q}, and note 
from~\cref{sect:dynamical-picture} that one of the main objectives of this paper is to establish the good regularity estimates~\eqref{eq:good regularity estimates} for these sequences. This goal will be accomplished in~\cref{thm:good_regularity}. As explained in~\cref{sect:dynamical-picture} (see also the diagram in~\cref{fig:proof-diagram}), the proof of~\cref{thm:good_regularity} relies crucially on the local convergence result established in~\cref{sect:loacal-lim}. To obtain this result, we now prove the following \textit{a priori} rough regularity estimates for the sequences $q$ and $Q$.

\begin{prop}[Rough regularity estimates]\label{prop:rough_regularity}
	We have 
    \begin{equation*}
        q(k)=k^{-\frac{1}{2\alpha}-1+o(1)}\qquad \text{ and }\qquad Q(k)= k^{-\frac{1}{2\alpha} +o(1)} \qquad\text{as }k \to \infty,
    \end{equation*}
    where $\alpha \in ( 1/2,1 )$ is as in \cref{thm:main}. Moreover, there is a constant $C>0$ such that for any real $\svx \geq 1$, we have
	\begin{equation}\label{eqn:regularity_Q}
	(1+o(1))\, \svx^{-C} \leq \frac{Q(\svx k)}{Q(k)} \leq 1 \qquad\text{as }k \to \infty,
	\end{equation}
	and, for any $k \geq 1$,
	\begin{equation}\label{eqn:regularity_q}
    C \, \svx^{-C} \leq \frac{q(\svx k)}{q(k)} \leq C, 
    \end{equation}
    where we are using the conventions $Q(z)=Q(\lceil z \rceil)$ and $q(z)=q(\lceil z \rceil)$ for all $z\in\mathbb{R}_{> 0}$ as in \cref{sect:not-proba}.
\end{prop}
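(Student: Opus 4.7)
My plan is to implement the dynamical viewpoint of Section~\ref{sect:dynamical-picture}. Coupling the trees $(T_n)_{n \geq 1}$ via Rémy's algorithm and setting $\tau_k = \inf\{n : X_n \geq k\}$, I introduce an independent critical binary $p$-signed Bienaymé--Galton--Watson tree $T$, so that
\[
Q(k) = \Pp{|T| \geq \tau_k} = \bbE\big[\bar r(\tau_k)\big], \qquad \bar r(m) \coloneqq \Pp{|T| \geq m} \sim \tfrac{1}{\sqrt{\pi}}\, m^{-1/2}
\]
(by \cref{lem:prelim_GW}), and similarly $q(k) = \bbE[\bar r(\tau_k) - \bar r(\tau_{k+1})]$. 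To get the polynomial asymptotic of $Q$, I work from $Q(k) = \sum_n r(n)\Pp{X_n \geq k}$. The upper bound comes from Markov's inequality combined with \cref{thm:main}: for fixed $\eps > 0$, $\bbE[X_n] \leq n^{\alpha + \eps/2}$ eventually, so $\Pp{X_n \geq k} \leq n^{\alpha + \eps/2}/k$; splitting the sum at $n_0 = \lceil k^{1/(\alpha+\eps)}\rceil$ and using $r(n) \sim \tfrac{1}{2\sqrt{\pi}}n^{-3/2}$, both halves contribute $O(k^{-1/(2(\alpha+\eps))})$. The lower bound uses the convergence in probability from \cref{thm:main}: for $n \geq k^{1/(\alpha-\eps)}$, $\Pp{X_n \geq k} \to 1$, so $Q(k) \geq (1-o(1))\,\bar r(\lceil k^{1/(\alpha-\eps)}\rceil) \gtrsim k^{-1/(2(\alpha-\eps))}$. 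Sending $\eps \to 0$ then yields $Q(k) = k^{-1/(2\alpha)+o(1)}$.

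For the $Q$-ratio estimate, the upper bound $Q(xk)/Q(k) \leq 1$ is immediate from monotonicity. For the lower bound $(1+o(1))\, x^{-C} \leq Q(xk)/Q(k)$, I pass to the Rémy picture and seek to bound $\tau_{xk}/\tau_k$ from above. The deterministic inequality $\#\mathcal L_\cap^{\max}(T_n) \leq X_n$ controls the drop probability at each backward Rémy step by $X/(\mathrm{size})$; combined with a symmetric argument running the Rémy dynamics \emph{forward} (exploiting the time-reversal symmetry of the algorithm), this should yield a polynomial upper bound on $\tau_{xk}/\tau_k$ that holds with high probability, uniformly in $k$. Together with $\bar r(m) \asymp m^{-1/2}$ and a Jensen-type estimate, this gives the sought lower bound for some explicit $C > 0$.

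For the $q$-estimates, I use $|\bar r'(m)| \asymp m^{-3/2}$ and the representation $q(k) = \bbE[\bar r(\tau_k) - \bar r(\tau_{k+1})] \asymp \bbE[\tau_{k+1}^{-3/2}(\tau_{k+1} - \tau_k)]$. The backward Rémy estimate $\bbE[\tau_{k+1} - \tau_k \mid \tau_{k+1}] \gtrsim \tau_{k+1}/(k+1)$ (from $\# \leq X$) produces the lower bound $q(k) \gtrsim Q(k)/k$, while the $Q$-ratio estimate applied at $x = 1 + 1/k$ (giving $Q(k+1)/Q(k) \geq (1 + o(1))(1+1/k)^{-C}$) produces the matching upper bound $q(k) \lesssim Q(k)/k$. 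Combining with the first step gives $q(k) = k^{-1/(2\alpha) - 1 + o(1)}$. The ratio bounds $C x^{-C} \leq q(xk)/q(k) \leq C$ then follow from these asymptotics, with small-$k$ behaviour handled by the trivial bound $q(k) \leq 1$.

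The hard part will be the lower bound on $Q(xk)/Q(k)$ described above. The inequality $\#\mathcal L_\cap^{\max} \leq X$ alone, applied backward in the Rémy dynamics, yields only a stochastic \emph{lower} bound on $\tau_{j+1} - \tau_j$ — equivalently, an upper bound on $Q(xk)/Q(k)$ that is sharper than the trivial $\leq 1$ but in the \emph{opposite} direction from what is needed. Producing the required upper bound on $\tau_{xk}/\tau_k$ demands the `symmetric forward argument' hinted at in Section~\ref{sect:dynamical-picture}, combining the backward and forward Rémy dynamics and possibly invoking the super-branching inequality of \cref{lem:superbranching_ineq} to import the estimate in the correct direction.
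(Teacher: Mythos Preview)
Your overall strategy—relate $Q(k)$ to $\bbE[\tau_k^{-1/2}]$ via $|T|$, and use forward/backward R\'emy to control $\tau_{k+1}-\tau_k$—matches the paper's. But the \emph{order of implications} in your plan is inverted, and this creates two genuine gaps.

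\textbf{Gap 1: deducing the $q$-upper bound from the $Q$-ratio.} You propose to first prove the $Q$-ratio lower bound $(1+o(1))x^{-C}\le Q(xk)/Q(k)$, and then apply it at $x=1+1/k$ to extract $q(k)\lesssim Q(k)/k$. This does not work: the $(1+o(1))$ in~\eqref{eqn:regularity_Q} is for \emph{fixed} $x$ as $k\to\infty$, so at $x=1+1/k$ the error term swallows the gain $1-(1+1/k)^{-C}\approx C/k$. The paper goes the other way. The forward R\'emy observation is that on $\{X_n=k\}$, grafting a $\oplus$-leaf onto any of the $\geq 2k-1$ edges of a maximal positive (uncontracted) subtree forces $X_{n+1}=k+1$, so $\Ppsq{\tau_{k+1}=n+1}{\Ff_n}\ge p(2k-1)/(2n-1)$. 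This gives directly $\Ecsq{\tau_{k+1}-\tau_k}{\tau_k}\le(1+o(1))\tau_k/(pk)$ and hence $q(k)\le C\,Q(k)/k$ via the representation $q(k)\asymp\Ec{(\tau_{k+1}-\tau_k)\tau_k^{-3/2}}$. The $Q$-ratio lower bound is then a one-line telescoping of $Q(k+1)/Q(k)=1-q(k)/Q(k)\ge 1-C/k$. No separate argument bounding $\tau_{xk}/\tau_k$ is needed.

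\textbf{Gap 2: the $q$-ratio from the exponent alone.} You claim that~\eqref{eqn:regularity_q} ``follows from these asymptotics'', meaning from $q(k)=k^{-1/(2\alpha)-1+o(1)}$. It does not: the $o(1)$ in the exponent, evaluated at $k$ and at $xk$, need not cancel to a bounded constant uniformly in $k$ (consider $q(k)=k^{-\gamma}\exp(\sin(\log\log k)\log\log k)$, which is $k^{-\gamma+o(1)}$ but whose ratios $q(2k)/q(k)$ are not bounded). The paper instead uses the two-sided bound $q(k)\asymp Q(k)/k$ (obtained from the forward and backward R\'emy arguments combined), and then~\eqref{eqn:regularity_q} follows immediately from~\eqref{eqn:regularity_Q}.

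In short: use the forward R\'emy step to get $q(k)\le C\,Q(k)/k$ \emph{first}; everything else (the $Q$-ratio via telescoping, and the $q$-ratio via $q\asymp Q/k$) then falls out without the circular dependence or the attempt to bound $\tau_{xk}/\tau_k$ directly.
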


The proof relies on the coupling of the decorated trees $T_n$ between various values of $n$ obtained through R\'emy's algorithm introduced in  \cref{sect:remy-algo}. Recall that one can run R\'emy's algorithm both forwards and backwards. 
For $n \geq 0$, we denote by $\Ff_n$ the $\sigma$-algebra generated by $(T_i)_{1 \leq i \leq n}$ and by $\revF_n$ the $\sigma$-algebra generated by $(T_i)_{i \geq n}$. 

We recall that we set $X_n=\lis(T_n)$ for all $n \geq 1$. Note that under this coupling, for all $n \geq 1$, we have $X_{n+1}-1 \leq X_n \leq X_{n+1}$, and \cref{thm:prev-bound} guarantees that $X_n \to +\infty$ in probability. Therefore, for all $k \geq 1$, the stopping time
\begin{equation}\label{eq:defn-stp-time}
	\sttm_k \coloneqq  \min \{ n \geq 1 \mid X_n=k\}
\end{equation} 
is almost surely finite (note that we also have the trivial a.s.\ bound $\sttm_k\geq k$). 
In particular, let $\alpha$ be given by \cref{thm:main}. Since $X_n$ is non-decreasing in $n$, for all $\eps>0$ and $k \geq 1$, we have
\begin{equation}\label{eq:sttm-bounds}
    \Pp{\sttm_k> k^{1/\alpha+\eps}}
=
\Pp{X_{\lfloor k^{1/\alpha+\eps} \rfloor} < k}
\quad \text{and} \quad 
\Pp{\sttm_k\le k^{1/\alpha-\eps}}
=
\Pp{X_{\lfloor k^{1/\alpha-\eps} \rfloor}\ge k}.
\end{equation}
Hence, \cref{thm:main} implies $\sttm_k=k^{1/\alpha+o(1)}$ in probability.
We start by making this statement a bit more precise, which will be useful later. We will rely on the description of the exponent $\alpha$ as a limsup, as we established in~\eqref{eqn:alpha_as_limsup} of \cref{thm:main}.

\begin{lem}[Control on the negative moments of $\sttm_k$]\label{cor:lower_tail_sigma}
    Let $0<\xi < \alpha$. Then we have
    \begin{equation}\label{eq:tail_sigma-1}
        \Ec{\sttm_k^{-\xi}} = k^{-\frac{\xi}{\alpha}+o(1)} \qquad \text{as } k \to \infty.
    \end{equation} 
    In particular, we have
    \begin{equation}\label{eq:tail_sigma-2}
        \Ec{\sttm_k^{-1/2}} = k^{-\frac{1}{2\alpha}+o(1)} \qquad \mbox{ and } \qquad \Ec{\sttm_k^{-3/2}} = o \left( \frac{1}{k} \Ec{\sttm_k^{-1/2}} \right) \qquad \text{as } k \to \infty.
    \end{equation}
\end{lem}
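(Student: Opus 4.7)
The plan is to derive both bounds on $\Ec{\sttm_k^{-\xi}}$ from the two conclusions of~\cref{thm:main}: the limsup characterization $\alpha=\limsup_n \log\Ec{X_n}/\log n$ will control the lower tail of $\sttm_k$ via Markov's inequality, and the convergence in probability $\log X_n/\log n\xrightarrow{\bbP}\alpha$ will provide a matching upper tail bound. Matching these in the range $0<\xi<\alpha$ yields~\eqref{eq:tail_sigma-1}; the refined estimate in~\eqref{eq:tail_sigma-2} will then follow by a direct decomposition that crucially exploits $\alpha<1$.

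For the upper bound on $\Ec{\sttm_k^{-\xi}}$, I would start from the layer-cake identity
\begin{equation*}
    \Ec{\sttm_k^{-\xi}}\le \xi\int_k^\infty \Pp{\sttm_k\le s}\,s^{-\xi-1}\,ds,
\end{equation*}
which uses only the deterministic bound $\sttm_k\ge k$. Fix $\eps'>0$: the limsup characterization gives $\Ec{X_n}\le n^{\alpha+\eps'}$ for $n$ large, so by~\eqref{eq:sttm-bounds} and Markov's inequality,
\begin{equation*}
    \Pp{\sttm_k\le s}=\Pp{X_{\lfloor s\rfloor}\ge k}\le \min\left(1,\,\tfrac{s^{\alpha+\eps'}}{k}\right).
\end{equation*}
Splitting the integral at the crossover point $s_*=k^{1/(\alpha+\eps')}$, the outer piece on $[s_*,\infty)$ contributes exactly $s_*^{-\xi}=k^{-\xi/(\alpha+\eps')}$, while the inner piece on $[k,s_*]$ is an explicit integral which, in the regime $\xi<\alpha+\eps'$, also evaluates to $O(k^{-\xi/(\alpha+\eps')})$. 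Sending $\eps'\to 0$ yields $\Ec{\sttm_k^{-\xi}}\le k^{-\xi/\alpha+o(1)}$ whenever $\xi<\alpha$.

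The matching lower bound is easier: from the convergence in probability in~\cref{thm:main} and~\eqref{eq:sttm-bounds}, for any $\eps>0$, $\Pp{\sttm_k\le k^{1/(\alpha-\eps)}}\to 1$ as $k\to\infty$. Restricting the expectation to this event and using $\sttm_k^{-\xi}\ge k^{-\xi/(\alpha-\eps)}$ gives $\Ec{\sttm_k^{-\xi}}\ge(1-o(1))\,k^{-\xi/(\alpha-\eps)}$, and sending $\eps\to 0$ completes the proof of~\eqref{eq:tail_sigma-1}.

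The first assertion of~\eqref{eq:tail_sigma-2} is just~\eqref{eq:tail_sigma-1} with $\xi=1/2<\alpha$. The second assertion, which I expect to be the main technical point, concerns $\xi=3/2>\alpha$ and cannot be extracted from~\eqref{eq:tail_sigma-1} directly. I would handle it by splitting $\Ec{\sttm_k^{-3/2}}$ at the event $\{\sttm_k>k^{1+\delta}\}$ for small $\delta>0$: on this event, the monotone bound $\sttm_k^{-3/2}\le k^{-1-\delta}\sttm_k^{-1/2}$ applies; on its complement, the trivial bound $\sttm_k^{-3/2}\le k^{-3/2}$ combined with the Markov tail $\Pp{\sttm_k\le k^{1+\delta}}\le k^{(1+\delta)(\alpha+\eps')-1}$ gives a contribution of order $k^{-5/2+(1+\delta)(\alpha+\eps')}$. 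After dividing by $k^{-1}\Ec{\sttm_k^{-1/2}}=k^{-1-1/(2\alpha)+o(1)}$, the resulting ratio is bounded by $k^{-\delta}+k^{\alpha-3/2+1/(2\alpha)+o(1)}$. The first term is trivially $o(1)$; for the second, the exponent equals $(2\alpha^2-3\alpha+1)/(2\alpha)=(2\alpha-1)(\alpha-1)/(2\alpha)$, which is strictly negative for $\alpha\in(1/2,1)$. Verifying this sign condition---which crucially uses $\alpha<1$---is the step I expect to require the most care.
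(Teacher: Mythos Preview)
Your argument is correct and matches the paper's proof of~\eqref{eq:tail_sigma-1} essentially verbatim: both use the layer-cake formula together with the Markov bound $\Pp{\sttm_k\le s}\le s^{\alpha+\eps'}/k$ for the upper bound, and the convergence in probability $\sttm_k=k^{1/\alpha+o(1)}$ for the lower bound.

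For the second assertion in~\eqref{eq:tail_sigma-2} your two-piece split at $\{\sttm_k>k^{1+\delta}\}$ works, but the paper handles it more economically by a one-line interpolation: since $k/\sttm_k\le 1$ deterministically, for any small $\eps>0$ one has
\[
\Ec{\sttm_k^{-3/2}}=\frac{1}{k}\,\Ec{\Big(\frac{k}{\sttm_k}\Big)\sttm_k^{-1/2}}\le \frac{1}{k}\,\Ec{\Big(\frac{k}{\sttm_k}\Big)^{\eps}\sttm_k^{-1/2}}=k^{\eps-1}\,\Ec{\sttm_k^{-1/2-\eps}},
\]
and now~\eqref{eq:tail_sigma-1} applies directly with $\xi=\tfrac12+\eps<\alpha$. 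This avoids re-running a tail estimate and isolates the role of $\alpha<1$ in the single factor $k^{-(\frac{1}{\alpha}-1)\eps}$; your approach needs the same sign condition, just reached via the factorization $(2\alpha-1)(\alpha-1)<0$.
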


\begin{proof}
    The lower bound for~\eqref{eq:tail_sigma-1} is immediate using $\sttm_k=k^{1/\alpha+o(1)}$ in probability: for all $\eps>0$,
    \[
    \Ec{\sttm_k^{-\xi}}
    \geq
    \Ec{\sttm_k^{-\xi} \indicator{\sttm_k \le k^{1/\alpha+\eps}}}
    \geq 
    k^{-\frac{\xi}{\alpha}-\eps\xi}\, \Pc{\sttm_k \le k^{1/\alpha+\eps}} 
    \sim 
    k^{-\frac{\xi}{\alpha}-\eps\xi} \quad \text{as } k\to\infty.
    \]
    
    \noindent For the upper bound, let $\eps>0$ and let $k \geq 1$. We note that $\sttm_k \geq k$, so $\sttm_k^{-\xi} \leq k^{-\xi}$. For any $0<x\leq k^{-\xi}$, using Markov's inequality and the description~\eqref{eqn:alpha_as_limsup} of $\alpha$ as a limsup, we have 
    \[ \Pp{\sttm_k^{-\xi} \geq x} 
    = \Pp{\sttm_k \leq x^{-1/\xi}}
    =\Pp{X_{\lfloor x^{-1/\xi} \rfloor} \geq k} 
    \leq \frac{1}{k} \Ec{X_{\lfloor x^{-1/\xi} \rfloor}} 
    \leq \frac{1}{k} \lfloor x^{-1/\xi} \rfloor^{\alpha+\eps} \]
    if $k$ is large enough (which implies that $x^{-1/\xi}$ is large enough). Note that this last inequality still holds if $x>k^{-\xi}$, because then the left-hand side is $0$.
    By integrating over $x$, since $\frac{\alpha+\eps}{\xi}>1$, we find
    \begin{align*}
    \Ec{\sttm_k^{-\xi}}
    = \int_0^\infty \Pp{\sttm_k^{-\xi}\geq x}\dd x 
    \leq  \int_0^{\infty} 1 \wedge \left(\frac{1}{k} x^{-\frac{\alpha+\eps}{\xi}}\right) \mathrm{d}x 
    &\leq \int_0^{k^{-\frac{\xi}{\alpha+\epsilon}}} 1 \ \dd x + \frac{1}{k} \int_{k^{-\frac{\xi}{\alpha+\epsilon}}}^{\infty}  x^{-\frac{\alpha+\eps}{\xi}} \dd x \\
    &= O \left( k^{-\frac{\xi}{\alpha+\eps}} \right).
    \end{align*}
    Since this is true for any $\eps>0$, this proves~\eqref{eq:tail_sigma-1} and the first claim in~\eqref{eq:tail_sigma-2}. 
    
    For the second claim in~\eqref{eq:tail_sigma-2}, we fix $\eps>0$ small enough to have $\frac{1}{2}+\eps<\alpha$, which is possible because $\alpha>1/2$ by \cref{thm:main}. 
    We write, using the fact that $\sttm_k\geq k$ deterministically, 
    \[ \Ec{\sttm_k^{-3/2}} 
    = \frac{1}{k}\Ec{\frac{k}{\sttm_k} \cdot \sttm_k^{-1/2} }
    \leq \frac{1}{k} \Ec{\left( \frac{k}{\sttm_k} \right)^{\eps}\cdot \sttm_k^{-1/2} } 
    = k^{\eps-1} \Ec{\sttm_k^{-1/2-\eps}}.  \]
    Using \eqref{eq:tail_sigma-1} with $\xi=\frac{1}{2}+\eps$, we get $\Ec{\sttm_k^{-3/2}} 
    \leq k^{-\frac{1}{2\alpha}-1-\left(\frac{1}{\alpha}-1 \right)\eps+o(1)}$, whereas $\frac{1}{k}\Ec{\sttm_k^{-1/2}}=k^{-\frac{1}{2\alpha}-1+o(1)}$ again by~\eqref{eq:tail_sigma-1}. This proves~\eqref{eq:tail_sigma-2}.
\end{proof}

\cref{prop:rough_regularity} will be an easy consequence of the estimates in the last \cref{cor:lower_tail_sigma} and the following result, which relates the sequences $q$ and $Q$ to the stopping times $\sttm_k$ in order to compare them.

\begin{lem}[Relation between $q$, $Q$ and the stopping times $\sttm_k$]\label{lem:q_and_sigma}
	We have
	\begin{equation}\label{eqn:large_Q_and_sigma}
	Q(k) = \left( 1+o\left( \frac{1}{k} \right) \right) \, \frac{1}{\sqrt{\pi}} \, \Ec{\sttm_{k}^{-1/2}} \qquad\text{as } k \to \infty,
	\end{equation}
    and
    \begin{equation}\label{eqn:q_sigma_lower_and_upper}
	(1+o(1))\, \frac{1}{2\sqrt{\pi}} \, \frac{1}{k} \Ec{\sttm_{k+1}^{-1/2}} \leq q(k) \leq (1+o(1)) \,\frac{1}{2p \sqrt{\pi}} \, \frac{1}{k} \Ec{\sttm_k^{-1/2}} \qquad\text{as } k \to \infty.
	\end{equation}
    Moreover, there exists a constant $C>0$ such that for all $k\geq 1$, 
    \begin{equation}\label{eq:bound-kq/Q}
        \frac{1}{C} \, \frac{Q(k)}{k} \leq q(k) \leq C \, \frac{Q(k)}{k}.
    \end{equation}
    \end{lem}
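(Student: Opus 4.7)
The three statements all follow from relating $Q(k)$ and $q(k)$ to the stopping times $\sttm_k$ through the Rémy coupling: since $T$ conditioned on $|T|=n$ has the law of $T_n$, and $X_n=\LIS(T_n)$ is non-decreasing along the Rémy chain, we have $\{\LIS(T_n) \ge k\} = \{\sttm_k \le n\}$. For \eqref{eqn:large_Q_and_sigma}, I would swap the order of summation in $Q(k) = \sum_{n \ge 1} r(n)\Pp{\LIS(T_n)\ge k}$ to get $Q(k) = \Ec{\sum_{n \ge \sttm_k} r(n)}$. By \cref{lem:prelim_GW}, $\sum_{n \ge m} r(n) = \frac{1}{\sqrt\pi}\, m^{-1/2} + O(m^{-3/2})$, and since $\sttm_k \ge k$ deterministically, the $O(m^{-3/2})$ error term becomes $\Ec{O(\sttm_k^{-3/2})} = o\bigl(k^{-1}\Ec{\sttm_k^{-1/2}}\bigr)$ by \eqref{eq:tail_sigma-2} of \cref{cor:lower_tail_sigma}, yielding \eqref{eqn:large_Q_and_sigma}.

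For \eqref{eqn:q_sigma_lower_and_upper}, writing $q(k) = Q(k) - Q(k+1) = \Ec{\sum_{n=\sttm_k}^{\sttm_{k+1}-1} r(n)}$ and using that $r$ is decreasing sandwiches this sum between $r(\sttm_{k+1}-1)(\sttm_{k+1}-\sttm_k)$ and $r(\sttm_k)(\sttm_{k+1}-\sttm_k)$, so it remains to estimate $\Ecsq{\sttm_{k+1}-\sttm_k}{T_{\sttm_{k+1}}}$ and $\Ecsq{\sttm_{k+1}-\sttm_k}{T_{\sttm_k}}$. For the lower bound, run Rémy in reverse from $T_{\sttm_{k+1}}$: at step $i$ the $\LIS$ drops iff the removed leaf lies in $\mathcal{L}_{\cap}^{\max}(T_{\sttm_{k+1}-i})$, which deterministically has cardinality at most $k+1$; the telescoping product $\prod_{i=0}^{j-1}\bigl(1-(k+1)/(\sttm_{k+1}-i)\bigr)$ equals $\binom{\sttm_{k+1}-j}{k+1}/\binom{\sttm_{k+1}}{k+1}$, and the hockey-stick identity gives $\Ecsq{\sttm_{k+1}-\sttm_k}{T_{\sttm_{k+1}}} \ge (\sttm_{k+1}+1)/(k+2)$. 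For the upper bound, use the forward Markov property of Rémy: the key claim is that for any maximal positive subtree $S$ of $T_n$ with $|S|=k$, every edge of the uncontracted version $\unc(S)$ yields a ``good'' subdivision (regardless of side) that strictly increases the $\LIS$ when a $\oplus$ sign is attached. Since $\unc(S)$ has at least $2k-1$ edges, this gives $\Pp{X_{n+1}>X_n\mid T_n} \ge p(2k-1)/(2n-1)$, and log-exponentiating the resulting product and approximating by $\sum_j (1+j/\sttm_k)^{-p(2k-1)/2}$ produces $\Ecsq{\sttm_{k+1}-\sttm_k}{T_{\sttm_k}} \le \sttm_k/(pk)\cdot(1+o(1))$.

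Finally, \eqref{eq:bound-kq/Q} follows by combining the previous bounds: the upper half gives $q(k) \le (1+o(1))\, Q(k)/(2pk)$, while the lower half $q(k) \ge (1+o(1))\, Q(k+1)/(2k)$ together with $Q(k+1) = Q(k) - q(k)$ yields $q(k) \ge Q(k)/(3k)$ for $k$ large, and finitely many small values of $k$ are absorbed into the constant $C$. The main obstacle will be the edge-counting claim in the upper bound of \eqref{eqn:q_sigma_lower_and_upper}, which I would establish by a three-layer propagation analysis for the $+1$ $\LIS$ increment produced by the subdivision: at every degree-$3$ vertex of $\unc(S)$ (necessarily a node of $S$, hence carrying $\oplus$) the increment propagates through the $\oplus$-sum; at every degree-$2$ vertex of $\unc(S)$ the sign must be $\ominus$ (else the off-path subtree would have $\LIS=0$, a contradiction), and the side containing $\unc(S)$ dominates the other in $\LIS$ so the $+1$ survives the $\ominus$-max; above the root $r_S$ of $S$, every ancestor in $T$ is $\ominus$ with the side through $r_S$ having $\LIS$ exactly $k$ by maximality of $S$, again allowing $+1$ to propagate all the way to the root of $T$.
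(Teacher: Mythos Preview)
Your approach is correct and follows essentially the same route as the paper: both derive \eqref{eqn:large_Q_and_sigma} by summing $r(n)$ over $n\ge\sttm_k$, sandwich $q(k)$ between expectations involving $(\sttm_{k+1}-\sttm_k)\sttm_{k+1}^{-3/2}$ and $(\sttm_{k+1}-\sttm_k)\sttm_k^{-3/2}$, and then control $\sttm_{k+1}-\sttm_k$ via the backward and forward R\'emy dynamics respectively. Two differences are worth pointing out. For the lower bound, your hockey-stick computation is genuinely cleaner than the paper's: they also reach the product bound $\Ppsq{\sttm_k<\sttm_{k+1}-j}{\sttm_{k+1}}\ge\prod_{i=1}^j\bigl(1-k/(\sttm_{k+1}-i)\bigr)$, but instead of summing it exactly they truncate at $j\le K\sttm_{k+1}/k$, replace every factor by the smallest one, separately argue that the indicator of $\{\sttm_{k+1}\ge k/(1-K/k)\}$ costs only a factor $1+o(1)$, and finally send $K\to\infty$; your identity $\sum_{j\ge0}\binom{m-1-j}{k}\big/\binom{m-1}{k}=m/(k+1)$ bypasses all of that. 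Conversely, for the upper bound your three-layer propagation analysis is correct but heavier than needed: the paper simply observes that if the R\'emy edge lies in $\unc(T_n^{\max})$ and the new sign is $\oplus$, then the leaves of $T_n^{\max}$ together with the new leaf already form a positive subtree of $T_{n+1}$ of size $k+1$, since every pairwise highest common ancestor in this enlarged leaf set is either an old node of $T_n^{\max}$ (hence $\oplus$) or the newly inserted $\oplus$ vertex.
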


\begin{proof}
    We divide the proof into five main steps.

    \medskip

    \noindent\underline{\emph{Step 1:}} \emph{Proof of~\eqref{eqn:large_Q_and_sigma}.} For this, recall that we set  $r(n)=\Pp{|T|=n}$ for all $n \geq 1$ and, from \cref{lem:prelim_GW}, we know that 
    \begin{equation} \label{eq:rn-est}
    r(n) 
    =\frac{1}{2\sqrt{\pi}}  n^{-3/2} \left(1+O \left( \frac{1}{n} \right)\right)
    \quad \text{as } n\to\infty,
    \end{equation}
    and $r(n)$ is decreasing in $n$.
    Therefore, summing over all possible values of $|T|$ and noting that $ \Pp{X_n \geq k}= \Pp{\sttm_k \leq n}$, we can write as $k\to\infty$:
	\begin{multline*}
		Q(k) = \Pp{\lis(T) \geq k} 
        = \sum_{n \geq 1} r(n) \Pp{X_n \geq k}
		= \Ec{\sum_{n \geq \sttm_k} r(n)}
		=  \frac{1}{2\sqrt{\pi}}\Ec{\sum_{n \geq \sttm_k} \left( 1+O \left( \frac{1}{n} \right) \right) n^{-3/2}}\\ 
        =\frac{1}{\sqrt{\pi}} \Ec{\sttm_k^{-1/2}} +O \left( \Ec{\sttm_k^{-3/2}} \right)
        =\frac{1}{\sqrt{\pi}} \Ec{\sttm_k^{-1/2}} +o \left(\frac{1}{k} \Ec{\sttm_k^{-1/2}} \right),
	\end{multline*}
    where the last equality follows from the second item of \eqref{eq:tail_sigma-2} in \cref{cor:lower_tail_sigma}. Thus \eqref{eqn:large_Q_and_sigma} holds.

    \medskip

    \noindent\underline{\emph{Step 2:}} \emph{Preparation for the proof of~\eqref{eqn:q_sigma_lower_and_upper}.}
    The key idea will be to use Rémy's algorithm forwards for the upper bound and backwards for the lower bound.
    As above, we first write
    \begin{align}
		q(k) = \sum_{n \geq 1} r(n) \Pp{X_n=k}
        = \Ec{\sum_{n=\sttm_k}^{\sttm_{k+1}-1} r(n)}
        &= \frac{1}{2\sqrt{\pi}} \left( 1+ O \left( \frac{1}{k} \right) \right)\Ec{\sum_{n=\sttm_k}^{\sttm_{k+1}-1} n^{-3/2} },
        \label{eq:q(k)_sum_sigma(k)}
        \end{align}
        where for the last equality we used \eqref{eq:rn-est} and that $\sttm_k \geq k$. Bounding every term of the sum by either its largest or smallest term yields 
        \begin{align}
        \frac{1}{2\sqrt{\pi}} \left( 1+ O \left( \frac{1}{k} \right) \right) \Ec{\frac{\sttm_{k+1}-\sttm_k}{\sttm_{k+1}^{3/2}}} \leq q(k)\leq \frac{1}{2\sqrt{\pi}} \left( 1+ O \left( \frac{1}{k} \right) \right) \Ec{\frac{\sttm_{k+1}-\sttm_k}{\sttm_k^{3/2}}}.
        \label{eqn:q_upper_bound_with_sigma}
        \end{align}

    \medskip

    \noindent\underline{\emph{Step 3:}} \emph{Proof of the upper bound of~\eqref{eqn:q_sigma_lower_and_upper}.}
    Let $n \geq k \geq 1$, and let us condition on $X_{n}=k$. Recalling the terminology introduced at the beginning of \cref{sect:positive-subtrees}, let $T_n^{\max}$ be a positive uncontracted subtree of $T_n$ with $k$ leaves (if it is not unique, pick one arbitrarily). Let also $\uedge{n}$ be the edge of $T_n$ that is picked by the R\'emy algorithm run forwards at step $n$. Note that $T_n^{\max}$ is a tree with $k$ leaves and vertex degrees $1$, $2$ or $3$, so it has at least $2k-1$ edges (this would be an equality if we considered the contracted version of $T_n^{\max}$ since all the vertices would have degree $1$ or $3$). If the edge $\uedge{n}$ picked by the R\'emy algorithm belongs to $T_n^{\max}$ and $\usign{n}=\oplus$, then the union of $T^{\max}_n$ and the new leaf $\uleaf{n+1}$ forms a positive subtree of size $n+1$, so $\sttm_{k+1}=n+1$. It follows that on the event $\{X_n=k\}$,
    \begin{equation} \label{eq:lb_sigma_geom}
    \Ppsq{\sttm_{k+1}=n+1}{\Ff_n} \geq p \frac{2k-1}{2n-1} \geq p \frac{k-1}{n}, 
    \end{equation}
    since $\mathbb{P}(\usign{n}=\oplus)=p$.
    
    From here, we will obtain the upper bound of~\eqref{eqn:q_sigma_lower_and_upper} by comparing the increment $\sttm_{k+1}-\sttm_k$ with a geometric random variable of parameter $p \frac{k}{\sttm_k}$ conditionally on $\sttm_k$. More precisely, by an immediate induction, for all $j \geq 0$, we obtain from \eqref{eq:lb_sigma_geom} that
    \[
    \Ppsq{\sttm_{k+1}>\sttm_k+j}{\sttm_k} \leq \prod_{i=0}^{j-1} \left( 1-p\frac{k-1}{\sttm_k+i} \right)
    \leq \exp \bigg( -p(k-1) \sum_{i=0}^{j-1} \frac{1}{\sttm_k+i} \bigg),
    \]
    where, when $j=0$, we interpret the empty product as $1$.
    Moreover, since
    \[
    \sum_{i=0}^{j-1} \frac{1}{\sttm_k+i} \ge \int_{\sttm_k}^{\sttm_k+j} \frac{\mathrm{d}x}{x}
    =
    \log\left(1+\frac{j}{\sttm_k}\right),
    \] 
    we deduce that
    \begin{equation}\label{eqn:bound_tail_increment_sigma}
    \Ppsq{\sttm_{k+1}>\sttm_k+j}{\sttm_k} 
    \leq \exp \bigg(-p(k-1) \log\left(1+\frac{j}{\sttm_k}\right) \bigg) 
    = \left( 1+\frac{j}{\sttm_k} \right)^{-p(k-1)}. 
    \end{equation}
    Therefore, we have for $k$ large enough so that $p(k-1)>1$,
    \begin{align*}
        \Ecsq{\sttm_{k+1}-\sttm_k}{\sttm_k} = \sum_{j \geq 0} \Ppsq{\sttm_{k+1}>\sttm_k+j}{\sttm_k}
        \leq \sum_{j \geq 0} \left( 1+\frac{j}{\sttm_k} \right)^{-p(k-1)}
        &\leq 1+ \int_0^{\infty} \left( 1+\frac{t}{\sttm_k} \right)^{-p(k-1)} \mathrm{d}t\\
        &= \left( 1+O \left( \frac{1}{k}\right)\right) \frac{\sttm_k}{pk} +1,
    \end{align*}
    where the constant in the $O$ notation is deterministic.
    This entails that 
    \begin{align}\label{eq:for-fut-ref}
    \Ec{\frac{\sttm_{k+1}-\sttm_k}{\sttm_k^{3/2}}} 
    &= \left( 1+O \left( \frac{1}{k} \right)\right) \frac{1}{pk}\Ec{\sttm_k^{-1/2}} + \Ec{\sttm_k^{-3/2}}.
    \end{align}
    Plugging this back into
    \eqref{eqn:q_upper_bound_with_sigma}, we obtain
    \[ q(k) \leq \frac{1}{2p\sqrt{\pi}} \left( 1+O \left( \frac{1}{k} \right)\right) \frac{1}{k} \Ec{\sttm_k^{-1/2}} + O \left( \Ec{\sttm_k^{-3/2}}\right).  \]
Finally, the last error term is negligible by the right-hand side of \eqref{eq:tail_sigma-2} in \cref{cor:lower_tail_sigma}, which proves the upper bound in~\eqref{eqn:q_sigma_lower_and_upper}.

\medskip

    \noindent\underline{\emph{Step 4:}} \emph{Proof of the lower bound of~\eqref{eqn:q_sigma_lower_and_upper}.} 
    The argument will be similar to the upper bound, but this time we will rely on the lower bound of~\eqref{eqn:q_upper_bound_with_sigma} and use the R\'emy algorithm run backwards. 
    More precisely, let $n \geq k \geq 2$, and let us condition on $X_{n}=k$, so that $\sttm_k \leq n$. 
    We recall from \cref{sect:positive-subtrees} that, conditionally on $\revF_{n}$, the decorated tree $T_{n-1}$ is obtained by removing a uniform leaf $\uleaf{n}$ of $T_{n}$. 
    As before, let $T_{n}^{\max}$ be a positive uncontracted subtree of $T_{n}$ with maximal size, \emph{i.e.}\ with $k$ leaves. 
    If $\uleaf{n} \notin T_{n}^{\max}$ (which occurs with probability $1-\frac{k}{n}$), then $T_{n}^{\max} \subset T_{n-1}$, which implies $X_{n-1}=X_{n}=k$, and so $\sttm_k < n$. Therefore, on the event $\{X_n=k\}$, we have that
    \begin{equation}\label{eqn:cond_proba_hitting_sigmak}
        \Ppsq{\sttm_{k}<n}{\revF_{n}} \geq \Ppsq{\uleaf{n} \notin T_{n}^{\max}}{\revF_{n}} = 1-\frac{k}{n}. 
    \end{equation}
    We refer to the end of the present section for a discussion on why this last inequality is not an equality, and why this matters a lot for the rest of the paper.
    \noindent For all $0 \leq j \leq \sttm_{k+1} - k$, we get by induction from \eqref{eqn:cond_proba_hitting_sigmak} that
    \begin{equation}\label{eqn:reverse_Remy_geometric_comparison}
        \Ppsq{\sttm_{k}<\sttm_{k+1}-j}{\sttm_{k+1}} \geq \prod_{i=1}^{j} \left( 1-\frac{k}{\sttm_{k+1}-i} \right) \geq \left( 1-\frac{k}{\sttm_{k+1}-j} \right)^{j},
    \end{equation}
    where, when $j=0$, we interpret the empty product as $1$.
    Our upper bound argument (see~\eqref{eqn:bound_tail_increment_sigma}) suggests that the right order of magnitude for $j$ is $\frac{\sttm_{k+1}}{k}$. Therefore, we now fix a (large) constant $K>0$, and assume $k>2K$.
    By summing over $j$, we find that on the event $\{K\sttm_{k+1}/k \leq \sttm_{k+1}-k \}$, we have
    \begin{align*}
        \Ecsq{\sttm_{k+1}-\sttm_k}{\sttm_{k+1}} &= \sum_{j \geq 0} \Ppsq{\sttm_{k}<\sttm_{k+1}-j}{\sttm_{k+1}}\\
        &\geq \sum_{j=0}^{ \lfloor K\sttm_{k+1}/k \rfloor} \left( 1-\frac{k}{\sttm_{k+1}-\frac{K}{k} \sttm_{k+1}} \right)^j\\
        &= \left( 1-\frac{K}{k} \right) \frac{\sttm_{k+1}}{k} \left( 1- \left( 1-\frac{k}{(1-K/k)\sttm_{k+1}} \right)^{\lfloor K\sttm_{k+1}/k \rfloor +1} \right)\\
        &\geq \left( 1-\frac{K}{k} \right) \frac{\sttm_{k+1}}{k} \left( 1- \mathrm{e}^{-K} \right),
    \end{align*}
    where the last line just used that $1-K/k \leq 1$ and $\lfloor K\sttm_{k+1}/k \rfloor +1 \geq K\sttm_{k+1}/k$.
    It follows that
    \begin{align}
    \Ec{\frac{\sttm_{k+1}-\sttm_k}{\sttm_{k+1}^{3/2}}} \nonumber
    &\geq \Ec{\Ecsq{\sttm_{k+1}-\sttm_k}{\sttm_{k+1}} \, \sttm_{k+1}^{-3/2} \, \indicator{\{K\sttm_{k+1}/k \leq \sttm_{k+1}-k}\}} \nonumber \\
    &\geq \left( 1-\frac{K}{k} \right) \left( 1-\mathrm{e}^{-K} \right) \frac{1}{k}  \Ec{\sttm_{k+1}^{-1/2} \indicator{\{K\sttm_{k+1}/k \leq \sttm_{k+1}-k}\}}. \label{eqn:lower_bd_increment_sigma_with_indicator}
    \end{align}
    To conclude, we argue that removing the indicator only costs a factor $1+o(1)$. If the event in the indicator does not occur, then $\sttm_{k+1} < \frac{k}{1-K/k} \leq 2k$. Hence, using this with the trivial bound $\sttm_{k+1}\geq k+1$, and then Markov's inequality, we have
    \begin{align*}
    \Ec{\sttm_{k+1}^{-1/2}\indicator{\{K\sttm_{k+1}/k > \sttm_{k+1}-k}\}} &\leq (k+1)^{-\frac{1}{2}} \cdot \Pp{\sttm_{k+1} < 2k} \\
    &= (k+1)^{-\frac{1}{2}} \cdot \Pp{ X_{2k-1} \geq k+1}\\
    &\leq  (k+1)^{-\frac{3}{2}} \Ec{X_{2k-1}} = k^{-\frac{3}{2}+\alpha+o(1)},
    \end{align*}
    by using~\eqref{eqn:alpha_as_limsup} for the last inequality. On the other hand, by \cref{cor:lower_tail_sigma}, we have $\Ec{\sttm_{k+1}^{-1/2}}=k^{-\frac{1}{2\alpha}+o(1)}$. Moreover, by \cref{thm:main}, we have $\frac{1}{2}<\alpha<1$, which implies $-\frac{3}{2}+\alpha<-\frac{1}{2\alpha}$. Therefore, we have
    \[ \Ec{\sttm_{k+1}^{-1/2}\indicator{\{K\sttm_{k+1}/k \leq \sttm_{k+1}-k}\}} = \left( 1+o(1) \right) \Ec{\sttm_{k+1}^{-1/2}},\]
    so~\eqref{eqn:lower_bd_increment_sigma_with_indicator} becomes
    \[ \Ec{\frac{\sttm_{k+1}-\sttm_k}{\sttm_{k+1}^{3/2}}} \geq \left( 1-\frac{K}{k} \right) \left( 1-\mathrm{e}^{-K} \right) (1+o(1)) \frac{1}{k} \Ec{\sttm_{k+1}^{-1/2}} \]
    as $k \to \infty$. Since the above inequality is true for any constant $K>0$, combined with the lower bound of~\eqref{eqn:q_upper_bound_with_sigma}, this proves the lower bound of~\eqref{eqn:q_sigma_lower_and_upper}.

    \medskip

    \noindent\underline{\emph{Step 5:}} \emph{Proof of~\eqref{eq:bound-kq/Q}.}
    The upper bound on $q$ from \eqref{eqn:q_sigma_lower_and_upper} implies $q(k)=o(Q(k))$. Therefore $Q(k+1)=Q(k)-q(k)=(1+o(1))Q(k)$ and again by \eqref{eqn:q_sigma_lower_and_upper}, 
    \begin{equation}\label{eq:k-k1-equival}
        \Ec{\sttm_{k+1}^{-1/2}} = (1+o(1)) \Ec{\sttm_k^{-1/2}}\qquad \text{as $k \to \infty$.}
    \end{equation}
    Hence, we may replace $\Ec{\sttm_{k+1}^{-1/2}}$ by $\Ec{\sttm_{k}^{-1/2}}$ in the lower bound on $q$ of \cref{lem:q_and_sigma}. Therefore, there exists a constant $C>0$ such that $\frac{1}{C} \frac{Q(k)}{k} \leq q(k) \leq C \frac{Q(k)}{k}$ for all $k$, as we wanted.
    \end{proof}
    
    We can now complete the proof of \cref{prop:rough_regularity}.
    
    \begin{proof}[Proof of \cref{prop:rough_regularity}]
        The estimates $q(k)=k^{-\frac{1}{2\alpha}-1+o(1)}$ and $Q(k)= k^{-\frac{1}{2\alpha} +o(1)}$ are immediate by combining \cref{lem:q_and_sigma} with \cref{cor:lower_tail_sigma}. Moreover, by \cref{lem:q_and_sigma}, there exists a constant $C>0$ such that for any $k$, we have
        \begin{equation}\label{eq:proof-rough}
            \frac{Q(k+1)}{Q(k)}=1-\frac{q(k)}{Q(k)} \geq 1-\frac{C}{k}.
        \end{equation}
        The estimate on $Q$ then follows by writing a telescopic product: for all $\svx \ge 1$,
        \begin{multline*} 
        \frac{Q(\svx k)}{Q(k)} \geq \prod_{i=k}^{\svx k-1} \left( 1-\frac{C}{i} \right) = \exp \left( -\sum_{i=k}^{\svx k-1} \left(\frac{C}{i} + O \left( \frac{1}{i^2}\right) \right) \right) \\
        \geq \exp \left(-C \log \frac{\svx k}{k} +O \left( \frac{1}{k} \right) \right) = (1+o(1))\, \svx^{-C}.
        \end{multline*}
        Finally, it remains to prove the estimate~\eqref{eqn:regularity_q} on $q$. By \eqref{eq:bound-kq/Q} in \cref{lem:q_and_sigma}, there exists a constant $C>0$ such that $\frac{1}{C} \frac{Q(k)}{k} \leq q(k) \leq C \frac{Q(k)}{k}$ for all $k$,
        so we can deduce the estimate on $q$ from that on $Q$.
\end{proof}

\noindent\emph{What is still missing?} As can be seen in this last proof, understanding the polynomial decay of $Q$ and $q$ is closely related to understanding the limit of the ratio $\frac{k q(k)}{Q(k)}$, which will be an important part of the rest of the paper. Note that \cref{lem:q_and_sigma} provides both an upper and a lower bound for this ratio, but neither is tight. Our strategy will be to turn the proof of the \emph{lower} bound of \cref{lem:q_and_sigma} (Step 4 of the proof) into a more precise argument. We first notice that the ``greedy'' part of the argument is the discussion leading to~\eqref{eqn:cond_proba_hitting_sigmak}, where we pick \emph{one} specific maximal positive subtree $T_{n}^{\max}$. To avoid such a waste, we can notice that $\sttm_k=n$ if and only if the leaf $\uleaf{n}$ selected from $T_{n}$ by the backward Rémy's algorithm belongs to \emph{all} the maximal positive subtrees of $T_{n}$. Therefore, on the event $\{X_n=k\}$, we may write
\begin{equation}\label{eq:discussion-for-good-est}
    \Ppsq{\sttm_{k}<n}{\revF_{n}} = 1-\frac{\# \mathcal{L}^{\max}_{\cap} (T_n)}{n}, 
\end{equation}
or equivalently,
\begin{equation}\label{eq:discussion-for-good-est2}
\Ppsq{\sttm_k=n}{\revF_{n}}   = \frac{\# \mathcal{L}_{\cap}^{\max}(T_{n})}{n}\mathds{1}_{X_{n}=k},
\end{equation}
where we recall that $ \mathcal{L}^{\max}_{\cap}(t)$ stands for the set of leaves in the intersection of \emph{all} positive subtrees of $t$ with maximal size. In particular, \cref{sect-lln-int} will be devoted to proving that the ratio $\frac{\# \mathcal{L}^{\max}_{\cap}(T_n)}{X_n}$ is concentrated around a certain -- non-explicit for now -- deterministic constant $\lambda$ (this is the law of large numbers in~\cref{thm:convergence_size_intersection}). The constant $\lambda$ is exactly the multiplicative constant by which the lower bound in \eqref{eqn:q_sigma_lower_and_upper} of \cref{lem:q_and_sigma} is off. The proof of the law of large numbers will rely on the local convergence of a maximal positive subtree seen from a uniformly chosen leaf, resulting from \cref{sect:loacal-lim}. Finally, in \cref{sect-bett-reg-q-Q}, equipped with this concentration result, we will run again the proof of \cref{prop:rough_regularity} in a more precise way to prove \cref{thm:good_regularity}. We stress that the proof of the law of large numbers will not provide by itself an explicit value for the constant $\lambda$. However, once \cref{thm:good_regularity} is proved, we will be able to argue \emph{a posteriori} that $\lambda=\alpha$; see \cref{rmk:lambda=alpha}.

\medskip

\noindent\emph{Note.} The proofs of the local convergence and the law of large numbers presented in~\cref{sect:loacal-lim} and~\cref{sect-lln-int} are both intricate and rich in details. A reader encountering these sections for the first time may prefer to initially skip ahead, assuming the validity of the law of large numbers stated in~\cref{thm:convergence_size_intersection}, and proceed directly to the proof of~\cref{thm:good_regularity} in~\cref{sect-bett-reg-q-Q}. Following this approach also has the additional benefit of immediately seeing how the proof of~\cref{prop:rough_regularity} lays the ground for the more refined proof of~\cref{thm:good_regularity}. 
The reader is then encouraged to visit~\cref{sect:loacal-lim} and~\cref{sect-lln-int}, as these sections offer deep insights into the landscape around a uniformly chosen leaf in a maximal positive subtree.

 \section{The local convergence of the leftmost maximal positive subtree of $T^k$ around a uniformly chosen leaf}\label{sect:loacal-lim}

In the previous section, we proved the rough regularity estimates for the sequences $q$ and $Q$ stated in \cref{prop:rough_regularity}. 
Along the way, we realized that the problem of finding sharper estimates was closely related to the existence of the local limit of a maximal positive subtree. We have all freedom to decide which maximal positive subtree to consider: to fix ideas, we will look at the \emph{leftmost} maximal subtree, which is defined by going left at each negative node carrying two subtrees with equal $\lis$ (see \cref{sect:mark-desc} for a precise definition).
The main goal of this section is to establish this local convergence (\cref{thm:localconv}) for such leftmost maximal positive subtree around a uniformly chosen leaf. It turns out that the most natural model to state this local convergence result is the $p$-signed critical binary Bienaymé--Galton--Watson model $T$ conditioned on $\lis(T)=k$, denoted by $T^k$, in the regime where $k \to \infty$.
This result will be later used in \cref{sect:better-bounds} to improve the rough regularity estimates for the sequences $q$ and $Q$; completing the proof of \cref{thm:good_regularity}.
The proof of the local convergence hinges upon a key coupling argument which forms the bulk of this section, and essentially enables us to compare the situations when $\lis(T)=k$ and when $\lis(T)=k'$.

This section is structured as follows. In \cref{sect:prem-and-loc} we build the ground to state our local convergence result (\cref{thm:localconv}) by introducing a Markov chain that records the behavior of $T^k$ and its leftmost maximal positive subtree along the spine from the root to the chosen leaf. \cref{sect:trans-prob} is devoted to certain preliminary computations on this Markov chain that will be later used in \cref{sect:coupling-arg} to run the main coupling argument. 

\medskip

\noindent\emph{Note.} All the subtrees considered in \cref{sect:loacal-lim} are uncontracted (see \cref{sect:positive-subtrees}).

\subsection{Preliminary results and statement of the local convergence}\label{sect:prem-and-loc}

The main goal of this section is to state our local convergence result (\cref{thm:localconv}) in \cref{sect:local-lim-statement}. We first introduce some preliminary notation and results in \cref{sect:mark-desc}.

\subsubsection{Markovian description of the tree $T^k$ around a uniformly chosen leaf of the leftmost maximal positive subtree} 
\label{sect:mark-desc}

For any finite sign-decorated binary tree $t$, we denote by $t^{\lmax}$ the \textbf{leftmost maximal positive subtree} of $t$. More precisely, $t^{\lmax}$ can be constructed recursively in the following way (\emph{cf.}\ \cref{fig:intersection-max-trees}):
\begin{itemize}
	\item If $t$ consists of a single leaf, then $t^{\lmax}=t$.
	\item If $t$ consists of a root vertex connected to two trees $t_{\lfa}$ on the left and $t_{\rga}$ on the right, then:
        \begin{itemize}
            \item If the root of $t$ is decorated with a $\oplus$ sign, then $t^{\lmax}$ consists of a $\oplus$ node connected to the two trees $t_{\lfa}^{\lmax}$ and $t_{\rga}^{\lmax}$.
	       \item If the root of $t$ is decorated with a $\ominus$ sign and $\lis(t_{\lfa}) \geq  \lis(t_{\rga})$, then $t^{\lmax}=t_{\lfa}^{\lmax}$.
	       \item If the root of $t$ is decorated with a $\ominus$ sign and $\lis(t_{\lfa}) < \lis(t_{\rga})$, then $t^{\lmax}=t_{\rga}^{\lmax}$.
        \end{itemize}
\end{itemize}

Recall that $\TLis{k}$ denotes the $p$-signed critical binary Bienaymé--Galton--Watson tree $T$ conditioned on the event $\{\lis(T)=k\}$.
Conditionally on $\TLis{k}$, let $L^k$ be a leaf chosen uniformly at random from the $k$ leaves of the leftmost maximal positive subtree 
\begin{equation}\label{eq:defn-tklmax}
    T^{k,\lmax}\coloneqq(\TLis{k})^{\lmax}.
\end{equation}
We first prove that the exploration of $\TLis{k}$ along the spine directed towards $L^k$ satisfies a nice Markov property. 
We denote by $\rho_k$ the root of $\TLis{k}$ and by $\TLis{k}_{\lfa}$ and $\TLis{k}_{\rga}$ the left and right subtrees of $\rho_k$ in $\TLis{k}$, respectively. 
We also denote by $D \in \{\lfa,\rga\}$ the direction such that $L^k \in \TLis{k}_{D}$. For the unconditioned $p$-signed critical binary Bienaymé--Galton--Watson tree $T$, we analogously define the root $\rho$ and the left and right subtrees $T_{\lfa}$ and $T_{\rga}$ when $\deg(\rho)=3$. 

\begin{lem}[Decomposition of $\TLis{k}$ at the root]
\label{lem:proba-comp}
    Let $k \geq 1$. Then the law of $\left( \lis(\TLis{k}_{\lfa}), \lis(\TLis{k}_{\rga}), D \right)$ is described as follows:
    \begin{enumerate}
		\item \label{item:proba-comp1} With probability $\frac{1}{2 q(1)} \mathds{1}_{k=1}$, the tree $\TLis{k}$ consists of a single vertex, which is a leaf.
		\item \label{item:proba-comp2} For all $1\leq i < k$, with probability $\frac 1 2 (1-p) q(i)$, the root $\rho_k$ is negative, $\lis(\TLis{k}_{\lfa})=i$ and $\lis(\TLis{k}_{\rga})=k$. 
        Moreover, in this case $D=\rga$. 
		\item \label{item:proba-comp3} For all $1\leq i \leq k$, with probability $ \frac 1 2 (1-p) q(i)$, the root $\rho_k$ is negative, $\lis(\TLis{k}_{\lfa})=k$ and $\lis(\TLis{k}_{\rga})=i$. Moreover, in this case $D=\lfa$.
		\item \label{item:proba-comp4} For all $1 \leq i \leq k-1$, with probability $ \frac p 2  \frac{q(i) q(k-i)}{q(k)}$, the root $\rho_k$ is positive, $\lis(\TLis{k}_{\lfa})=i$ and $\lis(\TLis{k}_{\rga})=k-i$. Moreover, conditionally on this, the direction $D$ is $\lfa$ with probability $\frac{i}{k}$ and $\rga$ with probability $\frac{k-i}{k}$.
    \end{enumerate}
    Moreover, conditionally on $\{( \lis(\TLis{k}_{\lfa}), \lis(\TLis{k}_{\rga}), D )=(k_1, k_2, d)\}$, the trees $\TLis{k}_{\lfa}$ and $\TLis{k}_{\rga}$ are independent trees with respective laws $\TLis{k_1}$ and $\TLis{k_2}$, and $L^k$ is a uniform leaf of $(\TLis{k}_{d})^{\lmax}$.
\end{lem}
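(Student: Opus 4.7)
The plan is to carry out a direct case analysis on the root of $T$ and then condition on the event $\{\lis(T) = k\}$ via Bayes' rule. First, since the offspring distribution of the critical binary Bienaymé--Galton--Watson tree $T$ assigns mass $1/2$ to having $0$ children and $1/2$ to having $2$ children, the root $\rho$ is a leaf with probability $1/2$, in which case $\lis(T) = 1$. Dividing by $\Pp{\lis(T) = 1} = q(1)$ gives the conditional probability $\frac{1}{2q(1)}\mathds{1}_{k=1}$ stated in item 1.

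On the complementary event, I would decompose $T$ at its root into its sign $\sss(\rho)$, which is an independent Bernoulli$(p)$ $\oplus/\ominus$ variable, and its two subtrees $T_\lfa, T_\rga$, which are independent copies of $T$. From the recursive identities $\lis(T) = \lis(T_\lfa) + \lis(T_\rga)$ when $\sss(\rho) = \oplus$ and $\lis(T) = \max(\lis(T_\lfa), \lis(T_\rga))$ when $\sss(\rho) = \ominus$, the unconditional joint law reads
\[\Pp{\sss(\rho) = s,\ \lis(T_\lfa) = k_1,\ \lis(T_\rga) = k_2} = \tfrac{1}{2}\,\Pp{\sss(\rho) = s}\,q(k_1)\,q(k_2).\]
Dividing by $q(k) = \Pp{\lis(T) = k}$ and summing over the pairs $(k_1, k_2)$ compatible with $\lis(T) = k$ then yields items 2--4. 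The one delicate point is the tie-breaking when $\sss(\rho) = \ominus$: writing $\{\max(k_1, k_2) = k\}$ as the disjoint union $\{k_1 < k,\, k_2 = k\} \sqcup \{k_1 = k,\, k_2 \leq k\}$ puts the diagonal case $k_1 = k_2 = k$ in the second piece, which is exactly what is needed to be consistent with the leftmost convention defining $t^\lmax$.

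Next I would locate the marked leaf $L^k$. When $\sss(\rho_k) = \oplus$, the recursive definition of the leftmost maximal positive subtree gives that $T^{k,\lmax}$ consists of a $\oplus$ root joined to $T_\lfa^\lmax$ and $T_\rga^\lmax$, which have $k_1$ and $k-k_1$ leaves respectively; uniformity of $L^k$ among the $k$ leaves of $T^{k,\lmax}$ then yields $\Ppsq{D = \lfa}{k_1, k_2, \sss(\rho_k)=\oplus} = k_1/k$, as stated in item 4. When $\sss(\rho_k) = \ominus$, the leftmost rule forces $T^{k,\lmax} = T_\lfa^\lmax$ as soon as $\lis(T_\lfa) \geq \lis(T_\rga)$ (so $D = \lfa$), and $T^{k,\lmax} = T_\rga^\lmax$ otherwise (so $D = \rga$), which determines $D$ deterministically and matches items 2 and 3.

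Finally, for the joint conditional law of $(T_\lfa, T_\rga, L^k)$ given $(k_1, k_2, d)$: the independence of $T_\lfa$ and $T_\rga$ together with the product form $q(k_1)q(k_2)$ of their joint law is preserved under conditioning on $\{\lis(T_\lfa) = k_1,\, \lis(T_\rga) = k_2\}$, so conditionally they have respective laws $T^{k_1}$ and $T^{k_2}$; given in addition that $L^k \in T_d$, this leaf is uniform among the leaves of $T_d^\lmax$ by construction of the sampling mechanism. The whole argument is elementary bookkeeping, and the only (mild) obstacle is to handle the tie-breaking on the $\ominus$ side consistently with the leftmost convention throughout.
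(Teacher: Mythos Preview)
Your proposal is correct and follows essentially the same approach as the paper: both argue by decomposing $T$ at the root, using the branching property and Bayes' rule to compute the conditional probabilities, with the paper's proof being somewhat terser (it declares the final conditional independence claim ``immediate'' and does not spell out the tie-breaking or the direction $D$ as explicitly as you do).
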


\begin{proof}
    The first claim simply follows from  
    \[ \Pp{\TLis{k}= \{ \rho_k \} }
    =\frac{\Pp{T=\{\rho\},\lis(T)=k}}{\Pp{\lis(T)=k}}=
    \frac{\Pp{T=\{\rho\}}}{\Pp{\lis(T)=k}} \indicator{k=1}=
    \frac{1}{2 q(1)} \mathds{1}_{k=1}.
    \]
    
    For the second claim, fix $1\leq i < k$. We can rewrite $\Pp{\sss(\rho_k)=\ominus, \lis(\TLis{k}_{\lfa})=i, \lis(\TLis{k}_{\rga})=k}$ as
    \begin{multline*}
        \frac{\Pp{\deg(\rho)=2, \sss(\rho)=\ominus, \lis(T_{\lfa})=i, \lis(T_{\rga})=k, \lis(T)=k}}{\Pp{\lis(T)=k}} \\
        = \frac{\Pp{\deg(\rho)=2, \sss(\rho)=\ominus, \lis(T_{\lfa})=i, \lis(T_{\rga})=k}}{q(k)}.
    \end{multline*}
    By the branching property at the root vertex $\rho$ of $T$, we conclude that
    \[
    \Pp{\sss(\rho_k)=\ominus, \lis(\TLis{k}_{\lfa})=i, \lis(\TLis{k}_{\rga})=k}
    =
     \frac 1 2 (1-p)q(i) .
    \]
    Moreover, on the above event, we have $D=\rga$ since $\lis(\TLis{k}_{\rga})>\lis(\TLis{k}_{\lfa})$, so $L^k$ must be in $T_{\rga}$.

    The proof of the third claim is the same as the second one. Note that in this case $D=\lfa$ since $\lis(\TLis{k}_{\lfa})\ge \lis(\TLis{k}_{\rga})$ and we are considering the leftmost maximal subtree as defined at the beginning of \cref{sect:mark-desc}.

    For the fourth claim, fix $1\leq i \leq k-1$ so that in particular $k-1 \geq 1$. We rewrite the probability $\Pp{\sss(\rho_k)=\oplus, \lis(\TLis{k}_{\lfa})=i, \lis(\TLis{k}_{\rga})=k-i}$ as
    \begin{equation*}
        \frac{\Pp{\deg(\rho)=2, \sss(\rho)=\oplus, \lis(T_{\lfa})=i, \lis(T_{\rga})=k-i, \lis(T)=k}}{\Pp{\lis(T)=k}}= \frac{p}{2} \frac{q(i)q(k-i)}{q(k)},
    \end{equation*}
    where for the equality we used that $\lis(T)=\lis(T_{\lfa})+\lis(T_{\rga})=k$ is implied by all the other conditions. The claim about the conditional distribution of $D$ immediately follows.

    The very last claim about the conditional independence  and the conditional distribution of $L^k$ is immediate.
\end{proof}

By iterating this Markov property, we can introduce a Markov chain that describes the law of the exploration of $\TLis{k}$ and its leftmost maximal positive subtree along the spine directed towards $L^k$.

\begin{figure}[b!]
	\begin{center}
		\includegraphics[width=0.4\textwidth]{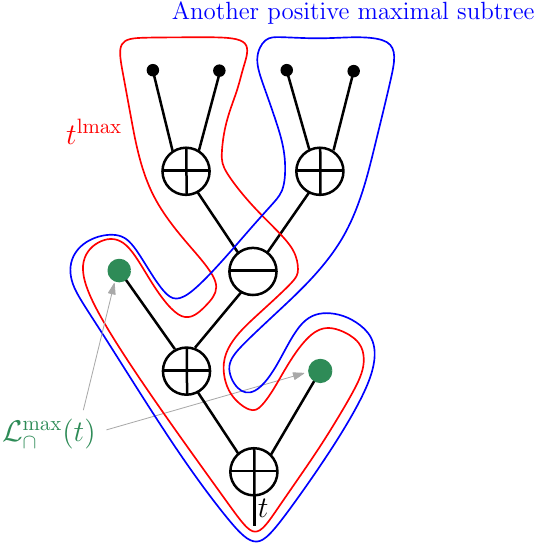}  
		\caption{A sign-decorated binary tree $t$ with two maximal positive subtrees highlighted in red and blue. The leftmost maximal positive subtree $t^{\lmax}$ is the red tree. The leaves in $\mathcal{L}_{\cap}^{\max}(t)$ are highlighted in green. \label{fig:intersection-max-trees}}
	\end{center}
	\vspace{-3ex}
\end{figure}

\subsubsection{Description of the Markov chain and statement of the local convergence}\label{sect:local-lim-statement}

For every $h\geq 0$, consider the vertex $v_h$ in the tree $\TLis{k}$ which is the ancestor of the marked leaf $L^k$ at height $h\geq 0$; \emph{cf.}\ \cref{fig:Markov-chain-notation}. 
In particular, $v_0= \rho_k$ is the root of the tree $\TLis{k}$ and, denoting by $\sth-1$ the height of $L^k$ in $\TLis{k}$, we have that $v_{\sth-1}=L^k$
and we can interpret $v_\sth$ as a fictitious vertex lying above $L^k$ playing the role of a \emph{cemetery}. Importantly, we consider $v_{\sth-1}$ as a leaf, but not $v_\sth$.

Given the sign-decorated binary tree $\TLis{k}$, we introduce\footnote{There are two reasons why the third component $\Mst{h}$ is marked with a star. First, it is the most important component of the process $Z^k$ since the transition probabilities of $Z^k$ will only depend on the value of $\Mst{h}$, as we will see in~\cref{sect:trans-prob2}. Second, starting from~\eqref{eq:decreasing-eq}, we will mainly focus on an accelerated version of $\Mst{}$ which we will simply denote by $\DMC$.} the process $Z^k$,
\begin{equation}\label{eq:def_MC_Z}
(Z^{k}_h)_{h\ge 0} \coloneqq (S_h,D_h,\Mst{h},W_h)_{h\ge 0},
\end{equation}
with values in $\{\ominus,\oplus\} \times \{{\lfa},{\rga}\} \times \N \times \N$, defined as follows (and illustrated on  \cref{fig:Markov-chain-notation}):
\begin{enumerate}
	\item Let $(S_0,D_0,\Mst{0},W_0) = (\ominus,{\lfa},k,0)$, by convention.
	\item For any $h\in\intervalleentier{1}{\sth -1}$:
	\begin{itemize}
		\item $S_h$ is the \textbf{sign} in $\{\ominus, \oplus\}$ of $v_{h-1}$.
		\item $D_h$ is the \textbf{direction} in $\{{\lfa}, {\rga}\}$ indicating whether $v_h$ is the left or the right child of $v_{h-1}$.
		\item $\Mst{h}$ is the number of leaves of $\TLis{k,\lmax}$ \emph{weakly}\footnote{Note that the \emph{weakly} condition is relevant only when $h=\eta-1$ so that $v_{\eta-1}$ is a leaf, \emph{i.e.}\ at the very end of the process.} above $v_h$ (the size of the \textbf{main} tree, which  contains the marked leaf).
		\item $W_h$ is the number of leaves in a maximal positive subtree of the subtree rooted at the sibling of $v_h$, \emph{i.e.}\ the child of $v_{h-1}$ which is not $v_h$ (we use the letter $W$ since it is $M$ upside down). Note that if  $S_h=\oplus$, then $W_h$ is the number of leaves of $\TLis{k,\lmax}$ \emph{weakly} above the sibling of $v_h$.
	\end{itemize}
        \item We use the convention that the chain is sent at time  $\sth$ to some \textbf{cemetery state} 
        \begin{equation}\label{eq:cemetery state}
            \dagger \coloneqq (\oplus,{\lfa},0,0)
        \end{equation}
        and stays there afterwards. 
        Note that defining the chain in this way allows us to write
        \[\sth= \inf\enstq{h\geq 0}{\Mst{h}=0}.\]
\end{enumerate}

\begin{figure}[ht]
	\begin{center}
		\includegraphics[width=0.85\textwidth]{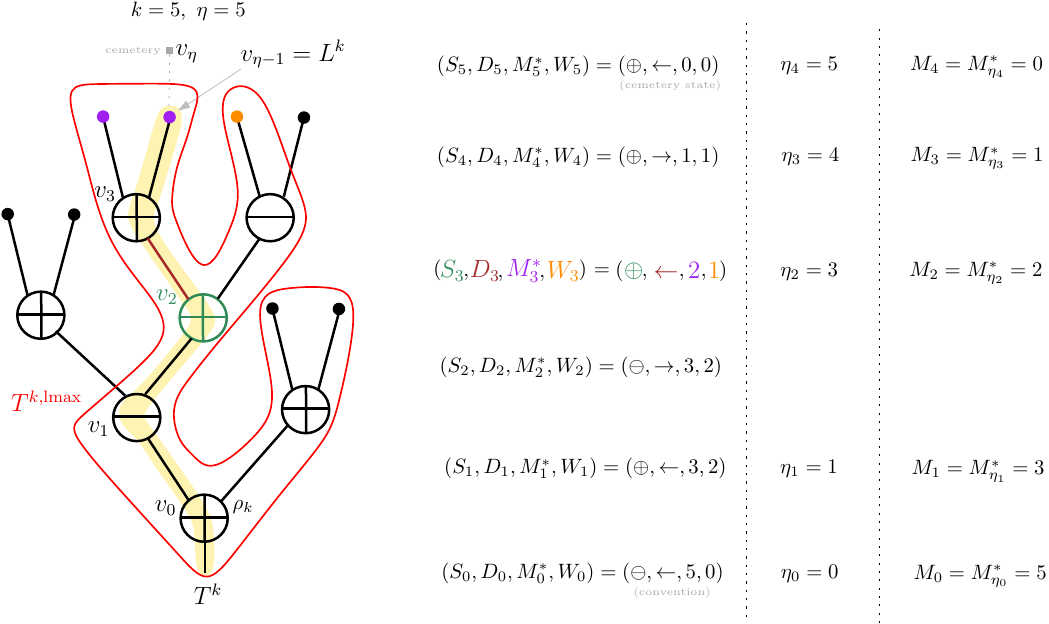}  
		\caption{A schematic representation of the notation introduced for the process $(Z^{k}_h)_{h\ge 0} = (S_h,D_h,\Mst{h},W_h)_{h\ge 0} $. Here $k=5$ and $\sth=5$. The spine from the root to the marked leaf $L^k$ is highlighted in yellow and $\TLis{k,\lmax}$ is marked in red. We used the other colors to highlight the elements of the tree $\TLis{k}$ which determine the values of $(S_3,D_3,M^*_3,W_3)$. The stopping times $\stht{t}$ and the decreasing Markov chain $\DMC_t=\Mst{\stht{t}}$ on the right will be introduced in \eqref{eq:defn-stopping-times} and \eqref{eq:decreasing-eq}. \label{fig:Markov-chain-notation}}
	\end{center}
	\vspace{-3ex}
\end{figure}
We will denote by $S$ the process $(S_h)_{h\ge 0}$ in the first component of $Z^k$. 
Similarly, we denote by $D$, $\Mst{}$, and $W$ the processes in the other components of $Z^k$. Since $Z^k$ is sent to the absorbing cemetery state $\dagger$ from time $\sth$ onward, we shall restrict the study of $Z^k$ to that of $(Z^k_h)_{0\le h \le \sth}$.

It follows from \cref{lem:proba-comp} that $Z^k$ is a Markov chain, whose transition probabilities will be given in \cref{sect:trans-prob}.
Recall that $\mathcal{L}_{\cap}^{\max}(t)$ denotes the set of leaves of $t$ contained in all the maximal positive subtrees of $t$.
Note that the only way for $T^k$ to have maximal positive subtrees that do not contain $L^k$ is to find some $h \geq 1$ such that the node $v_{h-1}$ is negative and $\Mst{h} = W_h$. Hence, we can express the event that $L^k$ belongs to $\mathcal{L}^{\max}_\cap(T^k)$ as a function of the trajectory of the chain, that is,
\begin{align}\label{eq:the target leaf belong to the intersection tree}
\left\{L^k\in \mathcal{L}^{\max}_\cap(T^k)\right\} =\Bigg\{ \sum_{h\geq 1} \mathds{1}\{S_h=\ominus \text{ and }\Mst{h}=W_h\} = 0\Bigg\}.
\end{align} 
This chain will be used later in \cref{thm:convergence_size_intersection} to establish, through \eqref{eq:the target leaf belong to the intersection tree} and the local convergence in~\cref{thm:localconv}, that 
\[ \frac{\# \mathcal{L}^{\max}_{\cap}(\TLis{k})}{k}  \to \lambda >0\] 
in probability as $k$ tends to infinity. This is why we consider this four-component Markov chain.

\medskip

We want to prove the convergence of the process $Z^k$ defined in~\eqref{eq:def_MC_Z} ``seen from the end'', \emph{i.e.}\ of the time-reversal of $Z^k$, because our interest lies in the behavior of $Z^k$ near the chosen leaf $L^k$.

\begin{thm}[Local convergence of the leftmost maximal positive subtree around a uniformly chosen leaf]\label{thm:localconv}
    The process $(Z^k_{\sth-h})_{0 \leq h \leq \sth}$ converges in distribution as $k\to\infty$ to a limiting process $(\overset{\leftarrow}{Z}_h)_{h \geq 0}$, in the sense of finite-dimensional distributions.
\end{thm}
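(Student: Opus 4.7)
The plan is to prove finite-dimensional convergence by showing the sequence of reversed laws is Cauchy, constructed via a pathwise coupling of $Z^k$ and $Z^{k'}$ for large $k,k'$, and then to identify the common limit. The starting observation is that, by the Markov property together with \cref{lem:proba-comp}, $(Z^k_h)_{h\geq 0}$ is, for every $k$, \emph{the same} Markov chain on $\{\ominus,\oplus\}\times\{\lfa,\rga\}\times\mathbb{N}\times\mathbb{N}$ (absorbed at $\dagger$); only the initial state $(\ominus,\lfa,k,0)$ depends on $k$. Hence the question reduces to understanding this fixed chain near its absorption time when started from arbitrarily large $M^*_0$.

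\textbf{Step 1: Forward kernel.} I would apply \cref{lem:proba-comp} at the generic node $v_h$ (whose subtree has LIS equal to $M^*_h=m$) to write down the one-step law of $(S_{h+1},D_{h+1},M^*_{h+1},W_{h+1})$. The transitions are expressible through $\tfrac12(1-p)q(i)$ and $\tfrac{p}{2}q(i)q(m-i)/q(m)$, together with the $i/m$ weighting of $D_{h+1}$ at positive nodes. Two structural remarks will be crucial: $M^*$ is non-increasing; it strictly decreases exactly at positive-sign steps, while at negative-sign steps it plateaus (and the chain may loiter for many steps at the same $M^*$-value). It is therefore natural to work with the accelerated chain obtained by recording only the positive-sign steps (the chain $\DMC_t=M^*_{\stht{t}}$ hinted at in the introduction), then re-insert the negative-sign plateau lengths as conditionally geometric gaps.

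\textbf{Step 2: Reverse kernel and candidate limit.} Next, I would compute the reverse-chain kernel via a Bayes/Doob transform: for admissible states $z,z'$, the probability $\mathbb{P}(Z^k_{\sth-h-1}=z' \mid Z^k_{\sth-h}=z)$ factors as the forward kernel $\mathbb{P}(z\to z')$ times the ratio of Green's functions/absorption weights at $z'$ and $z$, and by decomposing over the spine this ratio reduces to an explicit expression in ratios of $q$-values at nearby arguments. Critically, this reverse kernel from a \emph{finite-$m^*$} state \emph{does not depend on $k$} (the dependence on $k$ is entirely carried by how far the reversed trajectory has drifted after $h$ steps). This will let me define the candidate limit $(\overset{\leftarrow}{Z}_h)_{h\geq 0}$ as the Markov chain on the same state space, started at $\dagger$, whose transitions are the formal $k\to\infty$ limits of the above reverse kernel.

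\textbf{Step 3: Cauchy coupling.} Finally, fix $H\geq 1$ and build a pathwise coupling of the reversed chains for $k$ and $k'$ by maximal coupling at each reverse step. Coalescence on the first $H$ reverse steps holds provided the total-variation distance between the one-step reverse kernels for $k$ and for $k'$ from the current state is small. That TV distance is controlled by differences of ratios like $q(m+j)/q(m)$ and $q(i)q(m+j-i)/q(m+j)$ for $m$ large and $i,j$ bounded. By the rough regularity estimates of \cref{prop:rough_regularity}, these ratios converge as $m\to\infty$ and are uniformly bounded in the relevant regime, so the TV distance vanishes as $k,k'\to\infty$, yielding Cauchyness on the first $H$ reverse steps. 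Since $H$ is arbitrary, finite-dimensional convergence follows, and the common limit coincides with the chain from Step 2. The principal obstacle is non-locality of the reverse kernel: it involves $q(\cdot)$ evaluated at arguments that grow without bound as $h$ increases, so the coupling must be controlled uniformly not only in $k$ but also in the (random) $M^*$-value reached after $h$ reverse steps, which itself diverges with $h$. This is precisely where the polynomial scale of $q$ and the boundedness of $q(xm)/q(m)$ for $x\geq 1$ from \cref{prop:rough_regularity} are essential; a secondary technical nuisance is the negative-sign plateaus, which I would handle by working with $\DMC$ and re-inserting geometric waiting times at the end.
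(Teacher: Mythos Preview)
Your Step 1 is sound and matches the paper's setup, including the reduction to the accelerated decreasing chain $\DMC$. The gap is in Steps 2--3: the reverse kernel does \emph{not} reduce to an explicit expression in $q$-ratios.

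Concretely, for the strictly decreasing chain $\DMC$ (each state visited at most once), the time-reversed transition kernel is
\[
\hat\pi_k(m;m') \;=\; \frac{h_k(m')\,\pi(m';m)}{h_k(m)},
\qquad h_k(m)\coloneqq\Ppp{k}{M \text{ visits } m}.
\]
The forward kernel $\pi(m';m)$ is indeed explicit in $q$-values, but the factor $h_k(m')/h_k(m)$ is a global quantity built from the entire trajectory from $k$ down to $m$, not a local $q$-ratio. Your claim that ``this ratio reduces to an explicit expression in ratios of $q$-values at nearby arguments'' is where the argument breaks. Proving that $h_k(m)$ (or the ratios $h_k(m')/h_k(m)$) converge as $k\to\infty$ is a heavy-tailed renewal-type statement (the step law behaves like $q(i)=i^{-\gamma+o(1)}$ with $\gamma\in(3/2,2)$, so infinite mean), and it is essentially equivalent to the local convergence you are trying to prove. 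The rough regularity estimates of \cref{prop:rough_regularity} control $\pi(m';m)$ and ratios $q(xk)/q(k)$, but they say nothing about $h_k(m)$; in particular they do not imply the TV closeness of $\hat\pi_k(m;\cdot)$ and $\hat\pi_{k'}(m;\cdot)$ that your maximal-coupling step requires.

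The paper sidesteps this entirely. Rather than analyze the reverse kernel, it builds a \emph{forward} shift-coupling of $\DMC$ under $\bbP_k$ and $\DMC'$ under $\bbP_{k'}$ (the ``black--golden'' coupling of \cref{defn:black-golden coupling}): at each of infinitely many well-separated ``good scales'' $\ell$, both chains have a uniformly positive chance of landing in $\intervalleentier{6\ell}{7\ell}$, then following a common golden instruction into $\intervalleentier{\ell}{4\ell}$, after which they coincide forever. Iterating over scales forces the merging value $\xmer$ to be large with high probability (\cref{prop:coupling_merging}), so the last finitely many steps of the two forward chains agree, which is exactly Cauchyness of the reversed laws. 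The role of \cref{prop:rough_regularity} in the paper is to control the \emph{forward} transition probabilities uniformly across scales (\cref{lem:trans-bound}), which is what those estimates are actually suited for.
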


The proof of the theorem above will mainly rely on a coupling argument; see \cref{prop:coupling_merging} and \cref{cor:full_coupling}.

\medskip

We conclude this section by explaining why \cref{thm:localconv} can be interpreted as a local convergence result. Indeed, one can deduce\footnote{This proof is standard and omitted, as it is not strictly necessary for our purposes.} from \cref{thm:localconv} that the triplet $(\TLis{k}, \TLis{k,\lmax}, L^k)$, viewed as a tree $\TLis{k}$ marked at the leaf $L^k$ and equipped with the distinguished subtree $\TLis{k,\lmax}$, converges in the Benjamini--Schramm sense to a triplet
\[\left(\TLis{\infty},\TLis{\infty,\lmax},L^\infty\right).\]
This limit consists of an infinite tree $\TLis{\infty}$ with a marked leaf $L^\infty$ and a distinguished infinite “leftmost maximal” positive subtree $\TLis{\infty,\lmax}$. In particular, the triplet $\left(\TLis{\infty}, \TLis{\infty,\lmax}, L^\infty\right)$ is constructed as follows:

\begin{itemize}
    \item Start by letting $L^\infty$ be the marked leaf, with an infinite spine below it. Set $u_0=L^\infty$ and for all $h\geq 1$, let $u_h$ be the $h$-th vertex below $L^\infty$ along this infinite spine. The infinite tree $\TLis{\infty}$ should be regarded as rooted at the bottom of this spine, in line with our usual convention of rooting trees at their bottom vertex.
    \item Then, for all $h\geq 1$, conditioning on the value $\overset{\leftarrow}{Z}_h=(\overset{\leftarrow}{S}_h,\overset{\leftarrow}{D}_h,\overset{\leftarrow}{\Mst{h}},\overset{\leftarrow}{W}_h)$:
    \begin{itemize}
    \item Assign the sign $\overset{\leftarrow}{S}_h$ to $u_h$.
    \item Attach an independent tree $\TLis{\overset{\leftarrow}{W}_h}$ to $u_h$ on the left\footnote{Here left and right refer to the usual notions for the plane where the infinite spine is drawn.} (resp.\ right) side of the infinite spine
    if $\overset{\leftarrow}{D}_h={\rga}$ (resp.\ if $\overset{\leftarrow}{D}_h={\lfa}$).
    \end{itemize}
    \item Let $\TLis{\infty}$ be the tree obtained through the procedure above regarded as rooted at the bottom of this spine and with the marked leaf $L^\infty$. 
    \item Finally, $\TLis{\infty,\lmax}$ is the union of the spine $\{ u_h, \, h \geq 0\}$ and of the trees $\TLis{\overset{\leftarrow}{W}_h,\lmax}$ for $h \geq 0$ such that $\overset{\leftarrow}{S}_h = \oplus$. This is also an infinite tree regarded as rooted at the bottom of the spine.
\end{itemize}

\subsection{Transition probabilities and the associated decreasing Markov chain}\label{sect:trans-prob}

The first step in the proof of \cref{thm:localconv} is to compute the transition probabilities of the  Markov chain $(Z^{k}_{h})_{0 \leq h \leq \sth}$ introduced in \eqref{eq:def_MC_Z}. This is done in \cref{sect:trans-prob2}. Then, in \cref{sect:decreasing-chain} we will introduce a simpler (single-component) decreasing Markov chain $\DMC$ that is a reduced and time-changed version of $Z^k$.
The heart of \cref{thm:localconv} will then be to produce a coupling of $\DMC$ and $\DMC'$ started from two values $k$ and $k'$, in which the two chains merge fast enough as $k,k'\to \infty$. This  goal is achieved in \cref{sect:coupling-arg}.

\subsubsection{Transition probabilities of the Markov chain}
\label{sect:trans-prob2}

Recall that the process $(Z^k_{h})_{0 \leq h \leq \sth}$ defined in \eqref{eq:def_MC_Z} is a Markov chain. 
Let
\[
 \tpr\left((s,d,m,w);(s',d',m',w')\right)
\]
denote its transition probabilities. We start by noticing that by the very end of \cref{lem:proba-comp}, these transition probabilities do not depend on $s, d, w$.

We first determine these transition probabilities when $m\geq 2$. From \cref{item:proba-comp2,item:proba-comp3} of \cref{lem:proba-comp}, we have
\begin{equation} \label{eq:trans1}
    \tpr((*,*,m,*)  ; (\ominus, d, m ,w')) = \frac{1-p}{2} q(w')
\end{equation}
whenever either $d={\lfa},\,\, m\geq 2, \,\, 1\leq w'\leq m$, or $d={\rga},\,\, m\geq 2, \,\, 1\leq w'< m$.
Additionally, from \cref{item:proba-comp4} of \cref{lem:proba-comp}, we have
\begin{align}\label{eq:trans2}
	\tpr((*,*,m,*) ; (\oplus, d, m' ,m-m')) =\frac p 2 \frac{m'}{m} \frac{q(m') q(m-m')}{q(m)} \,\quad \text{if } d\in \{{\lfa},{\rga}\},\,\, m\geq 2,\,\, 1\leq m' \leq m-1.
\end{align}
Note that this indeed defines a Markov kernel, since
\begin{align} \label{eq:Markov_kernel}
	&\sum_{(s',d',m',w')}	\tpr((*,*,m,*); (s',d',m',w')) \notag\\
&=	\sum_{m'=1}^{m-1}\frac{1-p}{2}q(m') + 	\sum_{m'=1}^{m}\frac{1-p}{2}q(m') + \sum_{m'=1}^{m-1}\frac{p}{2} \frac{m'}{m} \frac{ q(m') q(m-m')}{q(m)} +\sum_{m'=1}^{m-1}\frac{p}{2}  \frac{m'}{m} \frac{ q(m') q(m-m')}{q(m)} \notag\\
	&= \frac{1}{q(m)}\cdot \left((1-p) q(m)\sum_{m'=1}^{m-1}q(m') + \frac{1}{2}(1-p) q(m)^2 + \frac{p}{2}\sum_{m'=1}^{m-1} \frac{m' + m-m'}{m}q(m') q(m-m') \right) \notag\\
	&= \frac{1}{q(m)}\cdot q(m)=1, 
\end{align}
where for the second-to-last equality we used the recursive relation for $q(m)$ introduced in \cref{lem:q_rel}.

For $m=1$, note that $\tpr((*,*,1,*);(\oplus, *, * ,*))=0$. Moreover, from \cref{item:proba-comp1} of \cref{lem:proba-comp}, we have probability 
\[\tpr((*,*,1,*);(\ominus, {\lfa}, 0 ,0))= \frac{1}{2q(1)}\] 
to jump to the cemetery state $(\oplus,{\lfa},0,0)$, and complementary probability 
\[\tpr((*,*,1,*);(\ominus, {\lfa}, 1 ,1))=1-\frac{1}{2q(1)}\]
to jump to $(\ominus, {\lfa}, 1 ,1)$.
Since the transition probabilities only depend on the third coordinate $m$, most of the rest of this section will be devoted to the study of the Markov chain $\Mst{}=(\Mst{h})_{0 \leq h \leq \sth}$.

\subsubsection{The associated decreasing Markov chain}\label{sect:decreasing-chain}

The Markov chain $\Mst{}$ is non-increasing, but it will be convenient for us to make it decreasing. For this,
we consider the stopping times $(\stht{t})_{t\geq 0}$, where $\stht{0}=0$ and for $t\geq 1$,
\begin{align}\label{eq:defn-stopping-times}
	\stht{t}\coloneqq\inf\enstq{h > \stht{t-1}}{\Mst{h} < \Mst{h-1}}
\end{align}
is the $t$-th decreasing ladder epoch; 
see the second column on the right-hand side of \cref{fig:Markov-chain-notation} for an example. 
This means that either $S_{\stht{t}}=\oplus$, so that $v_{\stht{t}-1}$ is the $t$-th vertex along the spine from the root to $L^k$ that carries a $\oplus$ sign, or that we reached the fictitious vertex $v_{\sth}$ 
and that $\Mst{\stht{t}}=0$. 
We let
\[\mathtt{T} \coloneqq \inf\enstq{t\geq 0}{\Mst{\stht{t}}=0}\]
be the total number of times that the chain $\Mst{}$ decreases during the process. Note that $\stht{\mathtt{T}}=\sth$. 

From now on, we focus our attention on the decreasing Markov chain $M$ defined by
\begin{equation}\label{eq:decreasing-eq}
    (\DMC_t)_{0\leq t \leq \mathtt{T}}\coloneqq\left(\Mst{\stht{t}}\right)_{0\leq t \leq \mathtt{T}}\,,
\end{equation}
see the third column on the right-hand side of \cref{fig:Markov-chain-notation} for an example.
Indeed, from the transitions of the chain, we can easily check that, conditionally on the trajectory $(\DMC_t)_{0\leq t \leq \mathtt{T}}$, the distribution of the trajectory of the whole process $Z^k$ is given as follows:

\begin{itemize}
\item The  waiting times $(\stht{t}-\stht{t-1})_{1\leq t\leq \mathtt{T}-1}$ are independent geometric random variables with respective parameters $\theta(\DMC_{t-1})$, where for $m \geq 2$ (note that $\DMC_{t-1}\geq 2$ when $1\leq t\leq \mathtt{T}-1$),
\begin{align} \label{eq:trans_Mtau}
\theta(m)= \Ppsq{S_{1} = \oplus}{\Mst{0}=m} &= \sum_{m'=1}^{m-1} \sum_{d'\in\{{\lfa},{\rga}\}}\tpr((*,*,m,*) ; (\oplus, d' , m' ,m-m')) \notag \\
&= 1-(1-p)\bigg(\sum_{m'=1}^{m-1}q(m')+\frac{q(m)}{2}\bigg)\notag\\
&= 1-(1-p)\bigg(1-Q(m)+\frac{Q(m)-Q(m+1)}{2}\bigg)\notag\\
&= p+(1-p)\frac{Q(m)+Q(m+1)}{2},
\end{align}
where in the third equality we used \eqref{eq:trans2} and the fact that we know that the second line of \eqref{eq:Markov_kernel} equals one. 
Furthermore, the final waiting time $\stht{\mathtt{T}}-\stht{\mathtt{T}-1}$ is another independent geometric random variable, but with parameter $\theta(1)= \frac{1}{2q(1)}$.

\item For any $t\in \intervalleentier{0}{\mathtt{T}-1}$ and  $h\in \intervalleentier{1}{\stht{t+1}-\stht{t}-1}$, 
we have 
\begin{equation} \label{eq:between_wait_times}
\Mst{\stht{t}+h}=\DMC_t\qquad\text{ and }\qquad S_{\stht{t}+h}=\ominus.
\end{equation}
Moreover, conditionally on $(\DMC_t)_{0\leq t\leq \mathtt{T}}$ and $(\stht{t})_{0\leq t\leq {\mathtt{T}}}$, the random variables $(D_{\stht{t}+{h}},W_{\stht{t}+{h}})$ are independent for all $t\in \intervalleentier{0}{\mathtt{T}-1}$ and  ${h}\in \intervalleentier{1}{\stht{t+1}-\stht{t}-1}$. Their conditional distribution is given as follows:
for $d={\lfa},1\leq w\leq m$, or for $d={\rga},1\leq w< m$,
\begin{align}
    \Ppsq{D_{\stht{t}+{h}}=d, W_{\stht{t}+{h}}=w }{ {\DMC_t}=m} 
    &=\Ppsq{D_1=d, W_1=w}{S_1=\ominus, {\Mst{0}}=m} \notag\\
    &= \frac{\tpr\left((*,*,m,*);(\ominus,d,m,w)\right)}{\Ppsq{S_1=\ominus}{{\Mst{0}}=m}} \notag\\
    &=\frac{1}{2}\frac{ q(w)}{\left(1-\frac{Q(m)+Q(m+1)}{2}\right)}, \label{eq:cond_law_D_P_between}
\end{align} 
where for the last equality we used \eqref{eq:trans1} and \eqref{eq:trans_Mtau}.

\item Finally, for all $t\in \intervalleentier{0}{\mathtt{T}-1}$, we have $S_{\stht{t}}=\oplus$,  
\begin{equation*}
    W_{\stht{t}}={\Mst{\stht{t}-1}-\Mst{\stht{t}}\stackrel{\eqref{eq:between_wait_times}}{=}\DMC_{t-1}-\DMC_{t}},
\end{equation*}
and 
\begin{equation} \label{eq:cond_law_D_P_DMC}
\Ppsq{D_{\stht{t}}={\lfa}}{S,{\Mst{}},W}=\Ppsq{D_{\stht{t}}={\rga}}{S,{\Mst{}},W}=\frac 1 2.
\end{equation}
\end{itemize}

\noindent {We now compute the transition probabilities of the decreasing Markov chain $M$.} By the branching property, we have
\[
\tpr(m ; m') \coloneqq \Ppsq{\DMC_{t+1}=m'}{\DMC_t=m}
	=\Ppsq{\Mst{\stht{1}}=m'}{\Mst{0}=m},
\]
so $\DMC$ is a time-homogeneous Markov chain.
Moreover, the transition probabilities $\tpr(\cdot \,;\, \cdot)$ can be expressed, for all $m\geq 2$ and $1 \leq m'<m$, as
\begin{align}\label{eq:transitions decreasing chain}
	\tpr(m ; m')
    &=\sum_{s=0}^\infty \Ppsq{\stht{1}\ge s, \Mst{s+1}=m'}{\Mst{0}=m}\notag \\
    &= \sum_{s=0}^\infty\bigg((1-p)\bigg(\sum_{j=1}^{m-1}q(j)+\frac{q(m)}{2}\bigg)\bigg)^s \cdot \Ppsq{\Mst{s+1} =m'}{\Mst{0} = m,\stht{1}\ge s} \qquad \text{ by \eqref{eq:trans_Mtau}}\notag \\
    &= \sum_{s=0}^\infty\bigg((1-p)\bigg(\sum_{j=1}^{m-1}q(j)+\frac{q(m)}{2}\bigg)\bigg)^s \cdot \Ppsq{\Mst{1} =m'}{\Mst{0} = m} \, \quad \text{ by the Markov property}\notag \\
    &= \sum_{s=0}^\infty\bigg((1-p)\bigg(\sum_{j=1}^{m-1}q(j)+\frac{q(m)}{2}\bigg)\bigg)^s \cdot p \cdot\frac{m'}{m}\cdot  \frac{q(m') q(m-m')}{q(m)} \,\,\qquad \text{ by \eqref{eq:trans2}} \notag \\
	&= \frac{1}{1+\frac{(1-p)}{p}\left(Q(m)-\frac{q(m)}{2}\right)} \cdot \frac{m'}{m}\cdot  \frac{q(m') q(m-m')}{q(m)}.
\end{align} 
Moreover $\tpr(1;0)=1$ and the chain is killed at $0$.

\medskip

\noindent\emph{Important discussion.} It can also be useful to write the transition probabilities in the following way:
\begin{align}\label{eq:transitions decreasing chain2}
	\tpr(m \,;\, m-w)&= \Ppsq{\DMC_t - \DMC_{t+1}=w}{\DMC_t=m} \notag \\
    &= \frac{1}{1+\frac{(1-p)}{p}\left(Q(m)-\frac{q(m)}{2}\right)} \cdot \frac{m-w}{m} \cdot \frac{q(m-w)}{q(m)} \cdot q(w),
\end{align}
where it is reasonable to expect that all the factors except $q(w)$ are close to $1$ when $w$ is fixed and $m$ is large.\footnote{Equation \eqref{eqn:regularity_q} in \cref{prop:rough_regularity} proves that $\frac{q(m-w)}{q(m)}$ is bounded from above and below by positive constants, but not that it is close to $1$.} 
This suggests  comparing the jumps of $\DMC$ with those of a decreasing random walk with step distribution $\mathcal Q$ given by $\left(q(i)\right)_{i\geq 1}$. Note from the rough regularity estimates in~\cref{prop:rough_regularity} that $q(i) = i^{-\gamma+o(1)}$ as $i\to\infty$, with $\gamma \in ( 3/2,2 )$ as in \eqref{eq:gamma-alpha-rel}. In particular, such a random walk has jumps with infinite mean.

\begin{remark}\label{rmk:start-notation}
    The decreasing process $(\DMC_t)_{0\le t\le \mathtt{T}}$ was defined in~\eqref{eq:decreasing-eq} as a time-change of $(\Mst{h})_{0 \leq h \leq \sth}$, which in turn is the third coordinate of the process $(Z^k_{h})_{0 \leq h \leq \sth}$ defined in \eqref{eq:def_MC_Z}. 
    We will use the notation $\mathbb P_k$ and $\mathbb E_k$ to denote the corresponding probability measures and expectations of the process $Z^k$. 
    In particular, by construction, under $\mathbb{P}_{k}$, the processes $(\DMC_t)_{0\le t\le \mathtt{T}}$ and $(\Mst{h})_{0 \leq h \leq \sth}$ are both started at $k$ because $Z^k_0=(S_0,D_0,\Mst{0},W_0) = (\ominus,{\lfa},k,0)$, by convention.
\end{remark}

\begin{remark}\label{rem:point-process}
It can be useful to think of the set of values taken by the Markov chain $\DMC$ under the measure $\bbP_k$ as a point process $\mathscr{P}^k=\{\DMC_t, \ t\ge 0\}$ on $\N$. Indeed, \cref{thm:localconv} restricted to $M$ may be recast as the (vague) convergence of $\mathscr{P}^k$, as $k\to\infty$, to a limiting point process $\mathscr{P}$. 
A nice feature of $\mathscr{P}^k$ is that conditioning on $\mathscr{P}^k\cap [m,\infty)$ allows for more flexibility than conditioning on the first steps of $\DMC$, as it allows to condition on partial information about one of the jumps. In particular, this ``spatial'' Markov property explains why we can expect independence across scales, even though one single step of $\DMC$ can in theory jump over several scales (see the ``spatial'' filtration introduced in \eqref{eq:def_Ff}). However, we will not explicitly use this point process representation.
\end{remark}

\subsection{The coupling argument}\label{sect:coupling-arg}

Let $\DMC$ and $\DMC'$ be two versions of the decreasing Markov chain introduced in \eqref{eq:decreasing-eq}, started from two large values $k$ and $k'$.
The goal of this section is to construct a coupled version of those chains (see \cref{defn:black-golden coupling}) such that, with high probability as $k$ and $k'$ tend to infinity, the last finite number of steps of the  two chains before extinction are the same (\cref{prop:coupling_merging}). We will then quickly explain how to upgrade this coupling to the full Markov chain $Z^k$ (\cref{cor:full_coupling}), and how to deduce Theorem~\ref{thm:localconv}.
The bulk of this section is devoted to the proof of \cref{prop:coupling_merging}, which will be completed in \cref{sect:proof-coupling}, building on the coupling estimates of \cref{sect:4tech-lem}.

\subsubsection{Introducing the coupling: the black and golden instructions} \label{sec:coupling_blue_red}

We assume for the moment that there exists an infinite deterministic set of good scales, denoted by $\mathrm{GoodScales}\subset \mathbb{N}$, that satisfy certain regularity conditions; see \cref{defn:good-scales} for a precise definition and \cref{lem:existence good scales} for the existence of such good scales. In particular, these good scales are \emph{well-separated}, in the sense that if $\ell,\ell'\in\mathrm{GoodScales}$ with $\ell'>\ell$ then $\ell'> 7\ell$.

Then in \cref{defn:black-golden coupling}, we construct a coupling $\mathbb{P}_{k,k'}$ of the two Markov chains $\DMC$ and $\DMC'$ in \eqref{eq:decreasing-eq}, with marginals $\mathbb{P}_k$ and $\mathbb{P}_{k'}$ respectively,
by using a set of rules for determining what step the chains $\DMC$ and $\DMC'$ take at every site of $\N$. The sort of coupling that we have in mind is what Aldous and Thorisson coined as \emph{shift-couplings} \cite{aldous1993shift}, since the two chains will \textit{a priori} merge at different times in their own evolutions.

First,  we introduce the \textbf{black instructions} $(B_m)_{m\geq 1}$ as independent random variables defined so that 
\begin{align}\label{eq:blue-distrib}
	\forall m'<m, \qquad \Pp{B_m = m'} = \tpr(m;m'),
\end{align}
where $\tpr(m;m')$ is as in \eqref{eq:transitions decreasing chain}.
Then, for every good scale $\ell\in \mathrm{GoodScales}$, we introduce the \textbf{golden instructions} $(G_m)_{5\ell \leq m \leq 7\ell}$ that are coupled together for a given good scale $\ell\in \mathrm{GoodScales}$, but independent between different good scales and independent of the black instructions $(B_m)_{m\geq 1}$, with marginal law
\begin{align}\label{eq:red-law}
    \forall m'  \in \intervalleentier{\ell}{4\ell}, \quad
	\Pp{G_m= m' } = \Ppsq{B_m = m'}{B_m  \in \intervalleentier{\ell}{4\ell}}.
\end{align}
The coupling of $(G_m)_{5\ell \leq m \leq 7\ell}$ will be chosen so that, with uniformly positive probability over all $\ell\in\mathrm{GoodScales}$, we have
\[ \forall a,a' \in \intervalleentier{5\ell}{7\ell}, \qquad G_a = G_{a'}. \]
The existence of such coupling of the golden instructions will be shown later in \cref{lem:prop-good-scales-item3} of \cref{lem:prop-good-scales}.

\begin{defn}[The black-golden coupling]\label{defn:black-golden coupling}
    We say that the decreasing chains $\DMC$ and $\DMC'$ have the \textbf{black-golden coupling} when the dynamics of $\DMC$ (and $\DMC'$) are described for all $t\geq 0$ as follows:\footnote{We stress that here we are using a \emph{unique} set of black and golden instructions for \emph{both} chains $\DMC$ and $\DMC'$.}
    \begin{equation}\label{eq:coupling X and X'}
    	\DMC_{t+1} = \left\lbrace \begin{aligned}
    	& G_{\DMC_t} \qquad \text{if } \DMC_t \in \intervalleentier{5\ell}{7\ell} \text{ for some good scale $\ell$  and } B_{\DMC_t}  \in \intervalleentier{\ell}{4\ell}; \\
    	&B_{\DMC_t} \qquad \text{otherwise.}
    \end{aligned}
	\right.
    \end{equation}
    Moreover, when two chains $\DMC$ and $\DMC'$ are started from $k$ and $k'$ respectively, we denote their joint law under this coupling by $\mathbb{P}_{k,k'}$.
\end{defn}

In more intuitive terms, the chains $\DMC$ and $\DMC'$ typically follow the black instructions, except in a specific situation: when a chain is instructed to jump -- via a black instruction -- from a position in $\intervalleentier{5\ell}{7\ell}$ to somewhere inside $\intervalleentier{\ell}{4\ell}$, it then follows the corresponding golden instruction instead.
It is clear from this definition that the marginals $M$ and $M'$ under $\mathbb{P}_{k,k'}$ are indeed Markov chains with transition probabilities $\pi$ as in \eqref{eq:transitions decreasing chain}.

For all $m \in \mathbb{N}$, the black instruction $B_m$ should be thought of as the instruction to take a \emph{small step} from $m$ to $B_m$, while for all $m \in \intervalleentier{5\ell}{7\ell}$ the golden instruction $G_m$ should be thought of as the instruction to take a \emph{large step} from $m \geq 5\ell$ to $G_m \in \intervalleentier{\ell}{4\ell}$. The reason why we consider golden instructions pointing towards $\intervalleentier{\ell}{4\ell}$ rather than $\intervalleentier{0}{4\ell}$ is that we will run a multiscale argument that requires the chains not to jump across several good scales at once. Furthermore, the reason for the terminology for colors is that when taking a golden instruction, the chains will merge with positive probability and then stick together,\footnote{We emphasize that this is not the only mechanism by which the two chains can merge. In fact, it is totally possible for them to merge through two black instructions that happen to land at the same value, causing the chains to stick together from that point onward. However, since our aim is to lower bound the probability that the two chains will merge fast enough, it suffices for us to focus on the case involving the golden instructions.} ending up in a win, as precisely described in the next result.

We will write $\xmer$ for the merging value of $\DMC$ and $\DMC'$, \emph{i.e.}\ the largest common value taken by the chains $\DMC$ and $\DMC'$. Let also $\stmer$ (resp.\ $\stmerpr$) be the time when $\DMC$ (resp.\ $\DMC'$) visits $\xmer$. Note that by definition of the black-golden coupling, 
    \begin{equation*}
        (\DMC_{\stmer +s})_{s\geq 0} = (\DMC'_{\stmerpr+s})_{s \geq 0}.
    \end{equation*}
    Finally, for $k \geq A \geq 1$, we denote by $s(A,k)$ the number of good scales $\ell \in \intervalleentier{A}{k}$ such that $7 \ell<k$.

\begin{prop}[Fast merging of $\DMC$ and $\DMC'$] \label{prop:coupling_merging}
    Let $\DMC$ and $\DMC'$ be defined as the decreasing chains started from $k$ and $k'$, respectively, under the black-golden coupling $\mathbb{P}_{k,k'}$ of~\cref{defn:black-golden coupling}. 
    Then, the following assertions hold:
    \begin{enumerate}
        \item\label{item:fast-merg-coupling}  There exists a constant $c \in (0,1)$ such that for all $k'\geq k\geq 1$ and all $A\geq 1$,
    \begin{equation*} 
        \Ppp{k,k'}{\xmer < A} \leq (1-c)^{s(A,k)}.
    \end{equation*}
        \item\label{item:inf-lim-t-mer-coupling} Furthermore, the total times $\mathtt{T} - \stmer$ and $\mathtt{T}' - \stmerpr$, representing the durations between merging and dying, tend to infinity in probability as $k, k' \to \infty$.
    \end{enumerate}
\end{prop}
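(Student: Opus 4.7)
The plan to prove item (1) is a multiscale argument: for each good scale $\ell$ with $7\ell<k$, I will show that the two chains $\DMC$ and $\DMC'$ merge at scale $\ell$ (so that $\xmer\in\intervalleentier{\ell}{4\ell}$) with conditional probability at least some constant $c>0$ depending only on $p$, given the history of the chains at larger scales and on the event that they have not yet merged. Chaining these estimates over the $s(A,k)$ good scales in $\intervalleentier{A}{k}$ will then yield
\[\Ppp{k,k'}{\xmer<A}\leq(1-c)^{s(A,k)}.\]

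The uniform lower bound $c>0$ will come from decomposing the merging event at scale $\ell$ into three stages suggested by the black-golden coupling: (a) both chains visit the window $\intervalleentier{5\ell}{7\ell}$, with $m$ and $m'$ denoting the last values they take there; (b) the black instructions $B_m$ and $B_{m'}$ both land in $\intervalleentier{\ell}{4\ell}$, so that both chains trigger the golden rule at their last step in the window; (c) the golden instructions $G_m$ and $G_{m'}$ coincide. Each stage has a conditional probability bounded below uniformly in $\ell$: (a) and (b) will follow from the defining properties of $\mathrm{GoodScales}$ to be established in a dedicated lemma (itself based on the rough regularity estimates of \cref{prop:rough_regularity}), while (c) is precisely the golden coupling property announced just before \cref{defn:black-golden coupling}. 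Independence across good scales comes from the fact that the black-golden dynamics above value $7\ell$ only use instructions at positions $>7\ell$, which are independent of the instructions in $\intervalleentier{5\ell}{7\ell}$; combined with the well-separation $\ell'>7\ell$ of consecutive good scales, this lets the conditional estimates chain multiplicatively.

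For item (2), I first observe that after merging at time $\stmer$ (resp.\ $\stmerpr$) and value $\xmer$, both chains apply the same sequence of instructions starting from the same value, so their remaining trajectories coincide. In particular $\mathtt{T}-\stmer=\mathtt{T}'-\stmerpr$ almost surely, and this common value is the extinction time of $\DMC$ started from $\xmer$. Item (1) implies $\xmer\to\infty$ in probability as $k,k'\to\infty$ (since $\mathrm{GoodScales}$ is infinite, so $s(A,k)\to\infty$ for every fixed $A$). It then suffices to show that the extinction time of the chain started from $v$ diverges in probability as $v\to\infty$. This follows from the uniform lower bound $\Pp{\DMC_1\geq v/2 \mid \DMC_0=v}\geq c'>0$, which can be read off the transition formula \eqref{eq:transitions decreasing chain2} together with \eqref{eqn:regularity_q}; iterating gives that the chain takes $\Omega(\log v)$ steps with high probability.

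The main difficulty will be the uniform control of stage (a): because a single jump of the chain can be arbitrarily large, there is a real risk that $\DMC$ or $\DMC'$ skips the window $\intervalleentier{5\ell}{7\ell}$ entirely. Handling this cleanly is what dictates the shape of $\mathrm{GoodScales}$: the definition to come presumably enforces, through the rough regularity estimates, a uniform lower bound both on the visit probability needed for (a) and on the probability of triggering the golden rule needed for (b).
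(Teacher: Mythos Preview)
Your plan for item~(1) matches the paper's closely: the paper's events $\evnt_i^1$, $\evnt_i^2$, $\evnt_i^3$ (land in $\intervalleentier{6\ell}{7\ell}$; exit $\intervalleentier{5\ell}{7\ell}$ by a jump to $\intervalleentier{\ell}{4\ell}$; all golden instructions agree) correspond to your stages (a), (b), (c), and the chaining over scales via the spatial Markov property is the same. One point you gloss over: within a fixed scale, $\DMC$ and $\DMC'$ share the \emph{same} black instructions, so the events ``$\DMC$ triggers the golden rule'' and ``$\DMC'$ triggers the golden rule'' are \emph{not} independent. The paper devotes a separate lemma (\cref{lem:cond_M_item_goodscale}) to showing that the event for $\DMC'$ still has uniformly positive probability conditionally on the event for $\DMC$; the argument splits into cases according to whether the two chains are likely to collide inside the window or not.

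For item~(2), your reduction is correct and arguably cleaner than the paper's: after merging the two chains coincide, so $\mathtt{T}-\stmer=\mathtt{T}'-\stmerpr$ equals the extinction time of the chain started from $\xmer$, and $\xmer\to\infty$ in probability by item~(1). But your last step has a gap. From $\Pp{\DMC_1\geq v/2\mid\DMC_0=v}\geq c'>0$ alone, iterating only yields $\Pp{\mathtt{T}>t\mid\DMC_0=v}\geq(c')^t$, which does not tend to~$1$; with only this bound the chain could, on the complementary event, drop straight to~$1$. What you actually need is the stronger fact $\Pp{\DMC_1<v/2\mid\DMC_0=v}\to 0$: by \cref{item:trans-bound0} of \cref{lem:trans-bound} one has $\pi(v;i)\leq C\tfrac{i}{v}q(i)$ for $i\leq v/2$, hence $\Pp{\DMC_1<v/2}\leq\tfrac{C}{v}\sum_{i\leq v/2}i\,q(i)=v^{1-\gamma+o(1)}\to 0$ using the exponent estimate $q(i)=i^{-\gamma+o(1)}$ from \cref{prop:rough_regularity} (this needs more than just \eqref{eqn:regularity_q}). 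With this upgrade your iteration does go through. The paper instead argues by pigeonhole over blocks of good scales: if $\DMC_{\mathtt{T}-t}>A$, the chain has at most $t$ values in $\intervalleentier{0}{A}$, so it must skip an entire block of $s$ consecutive good intervals below $A$, which by \cref{lem:prop-good-scales-item1} of \cref{lem:prop-good-scales} has probability at most $(t+1)(1-c)^s$.
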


The proof of \cref{prop:coupling_merging} will occupy most of the remainder of \cref{sect:loacal-lim}. Once this is proved, we easily obtain a similar result for the whole Markov chain $Z^k$. We note that with the notation of~\eqref{eq:defn-stopping-times}, the time where $M^*$ (resp.\ $(M^*)'$) first hits $\xmer$ is given by $\stht{\stmer}$ (resp.\ $\sthtpr{\stmerpr}$). We also recall that $\eta=\stht{\mathtt{T}}$ (resp.\ $\eta'=\sthtpr{\mathtt{T}'}$) is the hitting time of $0$ by $M^*$ (resp.\ $(M^*)'$).

\begin{cor}\label{cor:full_coupling}
    For any $k' \geq k \geq 1$, there exists a coupling $\widetilde{\mathbb{P}}_{k,k'}$ of the full Markov chains $Z^{k}$ and $Z^{k'}$ started from $k$ and $k'$ respectively, under which:
    \begin{itemize}
    \item The pair $(M,M')$ follows the black-golden coupling $\mathbb{P}_{k,k'}$ of~\cref{defn:black-golden coupling}. 
    \item For any $h \geq 1$, we have $Z^{k}_{\stht{\stmer}+h} = {Z}^{k'}_{\sthtpr{\stmerpr}+h}$.
    \end{itemize}
    In particular:
    \begin{enumerate}
    \item\label{item:full_coupling_quantitative}
    For all $A \geq 1$, we have
        \[\Pppt{k,k'}{\xmer < A} \leq (1-c)^{s(A,k)}. \] 
    \item\label{item:full_coupling_convergence}
    For all $A \geq 1$, we have    
        \[ \Pppt{k,k'}{\xmer \geq A} \to 1 \quad \text{as } k,k'\to \infty. \]
    \item\label{item:full_coupling_time}
    The durations $\eta-\stht{\stmer}$ and $\eta'-\sthtpr{\stmerpr}$ go to infinity in probability as $k,k' \to \infty$.
    \end{enumerate}
    \end{cor}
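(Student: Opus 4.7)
The plan is to lift the black-golden coupling of Proposition~\ref{prop:coupling_merging} from the decreasing chains $(M, M')$ to the full Markov chains $(Z^k, Z^{k'})$ by exploiting the explicit conditional description of $Z^k$ given its trajectory $M$ that was obtained in equations~\eqref{eq:trans_Mtau}--\eqref{eq:cond_law_D_P_DMC}. Once this lift is done, the three items of the corollary follow almost immediately from their counterparts in Proposition~\ref{prop:coupling_merging}. Since the hard work of constructing the shift-coupling and proving fast merging is already packaged in that proposition, no serious obstacle remains.

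To carry out the lift, I would enlarge the probability space carrying $\mathbb{P}_{k,k'}$ with two independent sources of auxiliary randomness. The first is a \emph{shared} family indexed by $s \geq 1$, consisting, for each $s$, of one $\mathrm{Geom}(\theta(M_{\stmer+s-1}))$ variable, one uniform sign in $\{\lfa,\rga\}$, and one i.i.d.\ sequence of $(D,W)$-marks distributed as in~\eqref{eq:cond_law_D_P_between} with parameter $M_{\stmer+s}$. The second is \emph{private} randomness generating the pre-merging portions $Z^k|_{[0,\stht{\stmer}]}$ and $Z^{k'}|_{[0,\sthtpr{\stmerpr}]}$ independently from the shared family and from each other, via the recipe~\eqref{eq:between_wait_times}--\eqref{eq:cond_law_D_P_DMC}. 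Feeding the shared family into both chains past their respective merging epochs yields $Z^k_{\stht{\stmer}+h} = Z^{k'}_{\sthtpr{\stmerpr}+h}$ for every $h \geq 1$, because from that moment on all transitions depend only on the current value of $M^*$, which is common to both chains by definition of $\xmer$. The identification fails at $h=0$: the decreasing epoch carries $W = M_{\stmer-1}-M_{\stmer}$ in $Z^k$ (respectively $M'_{\stmerpr-1}-M'_{\stmerpr}$ in $Z^{k'}$), and $M_{\stmer-1}$ generally differs from $M'_{\stmerpr-1}$. This is precisely what forces the $h \geq 1$ threshold in the statement. Denote the resulting coupling by $\widetilde{\mathbb{P}}_{k,k'}$.

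The three items are then short deductions. Item \cref{item:full_coupling_quantitative} is immediate because the $(M,M')$-marginal of $\widetilde{\mathbb{P}}_{k,k'}$ coincides with $\mathbb{P}_{k,k'}$, so Proposition~\ref{prop:coupling_merging}\,\cref{item:fast-merg-coupling} transfers verbatim. For item \cref{item:full_coupling_convergence}, I would fix $A$ and observe that the infinitude of $\mathrm{GoodScales}$ implies $s(A,k) \to \infty$ as $k \to \infty$, hence $(1-c)^{s(A,k)} \to 0$ by \cref{item:full_coupling_quantitative}. For item \cref{item:full_coupling_time}, the trivial bound
\[
\eta - \stht{\stmer} \;=\; \sum_{t=\stmer+1}^{\mathtt{T}} \bigl(\stht{t} - \stht{t-1}\bigr) \;\geq\; \mathtt{T} - \stmer,
\]
together with the analogous bound for $\eta'-\sthtpr{\stmerpr}$, reduces the claim to Proposition~\ref{prop:coupling_merging}\,\cref{item:inf-lim-t-mer-coupling}. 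The only genuinely delicate point in the whole argument is the bookkeeping of the time shift between $\stht{\stmer}$ and $\sthtpr{\stmerpr}$, which produces the $h \geq 1$ threshold mentioned above.
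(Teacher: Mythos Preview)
Your proposal is correct and follows essentially the same approach as the paper: lift the $(M,M')$ coupling to $(Z^k,Z^{k'})$ by sharing the post-merge decorations (waiting times, directions, $(D,W)$-marks) and keeping the pre-merge decorations independent, then read off the three items from Proposition~\ref{prop:coupling_merging} via the same inequalities $\eta-\stht{\stmer}\geq \mathtt{T}-\stmer$ and the infinitude of $\mathrm{GoodScales}$. Your explanation of why $h\geq 1$ is needed (the $W$-component at the merging epoch records $M_{\stmer-1}-M_{\stmer}$, and $M_{\stmer-1}\neq M'_{\stmerpr-1}$ in general) is a nice clarification that the paper leaves implicit.
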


\begin{proof}[Proof of Corollary~\ref{cor:full_coupling} given Proposition~\ref{prop:coupling_merging}]
    It is explained in Section~\ref{sect:decreasing-chain} how to reconstruct $Z^{k}$ from $M$ using independent decorations $\eta, S, D, W$, and similarly for $Z^{k'}$. Since $M$ and $M'$ merge at respective times $\stmer$ and $\stmerpr$, we can choose these decorations such that for all $t\geq 0$ and all $h\geq 1$,
    \[\stht{\stmer+t+1}-\stht{\stmer+t}=\sthtpr{\stmerpr+t+1}-\sthtpr{\stmerpr+t}\quad\text{and}\quad (S,D,W)_{\stht{\stmer}+h}=(S',D',W')_{\sthtpr{\stmerpr}+h}.\]
    This implies that $Z^{k}$ after time $\stht{\stmer}+1$ and $Z^{k'}$ after time $\sthtpr{\stmerpr}+1$ coincide, which shows the existence of $\widetilde{\mathbb{P}}_{k,k'}$. \cref{item:full_coupling_quantitative} then follows immediately from \cref{item:fast-merg-coupling} of Proposition~\ref{prop:coupling_merging}, and implies \cref{item:full_coupling_convergence} because our set of good scales is infinite. Finally, \cref{item:full_coupling_time} follows from \cref{item:inf-lim-t-mer-coupling} of Proposition~\ref{prop:coupling_merging} since $\eta-\stht{\stmer} \geq \mathtt{T}-\stmer$ and $\eta'-\sthtpr{\stmerpr} \geq \mathtt{T}'-\stmerpr$.
\end{proof}

\begin{proof}[Proof of Theorem~\ref{thm:localconv} given Corollary~\ref{cor:full_coupling}]
    We need to prove that there exists a process $(\overset{\leftarrow}{Z}_t)_{ t \ge 0}$ such that, for all $r\geq 1$,
    \begin{equation} \label{eq:TV_distance_DMC}
    d_{\mathrm{TV}}\left((Z^{k}_{\eta-t})_{t\leq r}, (\overset{\leftarrow}{Z}_{{t}})_{t\leq r}\right) \rightarrow 0 \quad \text{as } k\to \infty,
    \end{equation} 
    where we take the convention that $Z^{k}_{\eta-t} = (\ominus,{\lfa},k,0)$ when $t>\eta$.
    We prove \eqref{eq:TV_distance_DMC} by Cauchy characterization. Consider the two decreasing Markov chains $Z^{k}$ and $Z^{k'}$ started at $k$ and $k'$ respectively and having the coupling $\widetilde{\mathbb{P}}_{k,k'}$ of Corollary~\ref{cor:full_coupling}. 
    By standard coupling techniques (see e.g.\ \cite[Proposition 4.7]{levin2017markov}),
    we can write
    \[
    d_{\mathrm{TV}}\Big((Z^{k}_{\eta-{t}})_{t\leq r}, ({Z}^{k'}_{\eta'-{t}})_{t\leq r}\Big) 
    \le
    \Pppt{k,k'}{Z^{k}_{\eta-{t}} \neq {Z}^{k'}_{\eta'-{t}} \text{ for some $t\leq r$}} 
    \leq
    \Pppt{k,k'}{\eta-\stht{\stmer} \leq r}
    \]
    since, under the coupling, the two chains $Z^{k}$ and $Z^{k'}$ stick together after $\stht{\stmer}$ and $\sthtpr{\stmerpr}$.
   Then, \cref{item:full_coupling_time} of \cref{cor:full_coupling} guarantees that the probability on the right-hand side of the last display goes to $0$ as $k,k'\to\infty$, which proves the desired Cauchy characterization.
    
    We conclude that for any fixed $r$, the random vector $\left( Z^{k}_{\eta-t} \right)_{t \leq r}$ started from $k$ has a limit as $k\to\infty$ that we write $\big( \overset{\leftarrow}{Z}_t \big)_{t \leq r}$.
    Finally, an application of Kolmogorov's extension theorem shows that the limit defines a valid stochastic process $\overset{\leftarrow}{Z}$.
\end{proof}

All we have left to do to prove \cref{thm:localconv} is therefore to prove \cref{prop:coupling_merging}, which will be completed in \cref{sect:proof-coupling}. As it forms the most technical part of the paper, the reader may on a first read directly jump to \cref{sect:better-bounds}.

\medskip

The rest of this section is organized as follows: in \cref{sect:descript} we will give an informal description of the strategy for the proof of \cref{prop:coupling_merging}. Then, \cref{sect:good-scales} is devoted to the description of the good scales, \cref{sect:4tech-lem} to the proof of the main technical estimates, and finally \cref{sect:proof-coupling} to the conclusion of the proof of \cref{prop:coupling_merging}.

\subsubsection{Description of the strategy for the coupling argument}\label{sect:descript}

We now describe the strategy for the proof of~Item~\ref{item:fast-merg-coupling}~of~\cref{prop:coupling_merging}; the second item will follow as a simple consequence. 
We will show that there exists a constant $c\in(0,1)$ such that:
\begin{enumerate}
    \item From any starting point $k$, for each good scale $\ell$ such that $7\ell \leq k$, the chain $\DMC$ visits $\intervalleentier{6\ell}{7\ell}$ with probability at least $c$ (this will be a consequence of \cref{lem:prop-good-scales-item1} in \cref{lem:prop-good-scales}).

    \item If $\DMC$ is ever in $\intervalleentier{6\ell}{7\ell}$, it will try to jump below $4\ell$ from somewhere in $\intervalleentier{5\ell}{7\ell}$ with probability at least $c$, meaning it will use a golden instruction with probability at least $c$ (this is a consequence of \cref{lem:prop-good-scales-item2} in \cref{lem:prop-good-scales}). Moreover, conditional on this, the same facts hold for $\DMC'$ with probability at least $c/2$ (this is a consequence of \cref{lem:cond_M_item_goodscale}).

    \item For all good scales $\ell$, we can couple the $(G_m)_{5\ell \leq m \leq 7\ell}$ so that with probability at least $c$ they are all equal (this is \cref{lem:prop-good-scales-item3} in \cref{lem:prop-good-scales}).

    \item\label{it:iterate the argument at every scale} Since, for any good scale, \cref{lem:prop-good-scales-item1} in \cref{lem:prop-good-scales} holds conditionally on all the information above $7\ell$, we can iterate the argument at each scale.
\end{enumerate}

We recall from the discussion around \eqref{eq:transitions decreasing chain2} that $\DMC$ can be compared to a decreasing random walk with jump distribution $(q(i))_{i\ge 1}$, with $q(i) = i^{-\gamma+o(1)}$ as $i\to\infty$, and $\gamma \in ( 3/2,2 )$ {as in \eqref{eq:gamma-alpha-rel}. Informally speaking, the \emph{good scales} that we need in Steps $1$, $2$ and $3$ are scales where the behavior of this walk is reasonably close to that of a stable subordinator.\footnote{The convergence of such a walk to a $(\gamma-1)$-stable subordinator would be immediate given \cref{thm:good_regularity}, but unfortunately proving \cref{thm:good_regularity} is precisely our long-term goal.} 

If all the above is true (and using the fact that $M$ and $M'$ are essentially independent up until merging),
there is a uniformly positive probability at every good scale that both $\DMC$ and $\DMC'$ follow two golden instructions landing at the same height. 
Since we can iterate this attempt to land at the same height by successive conditionings above each good scale (thanks to Step~\ref{it:iterate the argument at every scale} above), with probability tending to one as the starting points approach infinity, the two chains will eventually follow two golden instructions landing at the same height. From this, we can conclude by construction of the black-golden coupling that the future trajectories of $\DMC$ and $\DMC'$ will stick together beyond this point, completing the proof of \cref{item:fast-merg-coupling}~of~\cref{prop:coupling_merging}.

\subsubsection{Definition and existence of good scales and additional preliminary estimates}\label{sect:good-scales}

In this whole section, the value of the constants $c$ and $C$ may change from line to line. 
Recall that, in \cref{subsec:rough regularity}, we proved the rough regularity estimates for the sequences $q$ and $Q$ stated in \cref{prop:rough_regularity}, which have the following immediate consequences.
\begin{cor}[Bounds on $q$ and $Q$]
\label{cor:bound ratio q and Q}
Fix  $r,R>0$. Then there exist constants $c,C>0$ such that:
\begin{enumerate}
\item For all $k\geq 1$, 
\begin{align}\label{eqn:regularity_q-part1}
            c\leq \frac{kq(k)}{Q(k)} \leq C.
\end{align}
 \item For all $1 \leq k' \leq k$,
\begin{equation}\label{eqn:regularity_q-part2}
\frac{q(k')}{q(k)}\geq c.
\end{equation}
\item For all $k,k'\geq 1$ with $r \leq \frac{k'}{k} \leq R$,
\begin{align}\label{eqn:regularity_q-part3}
c\leq \frac{q(k')}{q(k)} \leq C \qquad \text{and} \qquad c\leq \frac{Q(k')}{Q(k)} \leq C.
\end{align}
\end{enumerate}
\end{cor}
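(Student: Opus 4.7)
My plan is to derive all three items directly from the rough regularity estimates of Proposition~\ref{prop:rough_regularity} and equation~\eqref{eq:bound-kq/Q} in Lemma~\ref{lem:q_and_sigma}; there is no real obstacle, the whole argument amounts to a few lines of bookkeeping once the two-sided inequality \eqref{eqn:regularity_q} is in hand.

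Item~1 is literally equation~\eqref{eq:bound-kq/Q}, so I will simply invoke it. For item~2, I will apply the upper bound in \eqref{eqn:regularity_q}, namely $q(xk)/q(k) \le C$ for every real $x \ge 1$ and every $k \ge 1$. Given $1 \le k' \le k$, I set $x = k/k' \ge 1$ (with the base in \eqref{eqn:regularity_q} playing the role of $k'$), which yields $q(k)/q(k') \le C$, and inverting gives $q(k')/q(k) \ge 1/C$.

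For item~3, I will apply \eqref{eqn:regularity_q} in both directions. If $k \le k' \le Rk$, I take $x = k'/k \in [1, R]$: \eqref{eqn:regularity_q} immediately gives $C R^{-C} \le q(k')/q(k) \le C$. If $rk \le k' < k$, I swap the roles and take $x = k/k' \in [1, 1/r]$; the two-sided control on $q(k)/q(k')$ produced by \eqref{eqn:regularity_q} transfers after inversion to a two-sided control on $q(k')/q(k)$ with constants depending only on $r$ and $R$. Finally, the $Q$-bounds follow by combining item~1 (already established) with the $q$-bounds just proved: since $Q(k) \asymp k\, q(k)$ uniformly in $k$, I can write
\[
\frac{Q(k')}{Q(k)} \asymp \frac{k'}{k} \cdot \frac{q(k')}{q(k)},
\]
and both factors on the right are bounded above and below by positive constants depending only on $r$ and $R$.

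The only small subtlety worth flagging is that \eqref{eqn:regularity_Q} is only an asymptotic statement as $k \to \infty$, so I deliberately avoid using it directly to extract a bound uniform over all $k \ge 1$; routing the $Q$-bounds through item~1 and the already-proved $q$-bounds sidesteps this issue entirely.
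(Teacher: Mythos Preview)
Your proof is correct and follows essentially the same route as the paper's: item~1 is \eqref{eq:bound-kq/Q}, item~2 is the upper bound of \eqref{eqn:regularity_q} inverted, and item~3 for $q$ is \eqref{eqn:regularity_q} applied in both directions. The one small difference is that for the $Q$-bounds in item~3 the paper simply cites Proposition~\ref{prop:rough_regularity}, whereas you route through item~1 via $Q(k)\asymp k\,q(k)$; your observation that \eqref{eqn:regularity_Q} carries a $(1+o(1))$ factor and hence does not directly yield a bound uniform in $k\ge 1$ is well taken, and your workaround is the cleanest way to handle it.
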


\begin{proof}
The bounds in \eqref{eqn:regularity_q-part1} are exactly the ones in \cref {eq:bound-kq/Q} in \cref{lem:q_and_sigma}. 
The upper bound in \eqref{eqn:regularity_q} of  \cref{prop:rough_regularity} gives that $\frac{q(k)}{q(k')}\leq C$ for all $1 \leq k' \leq k$, which immediately implies \eqref{eqn:regularity_q-part2}.
Finally, \cref{prop:rough_regularity} ensures that as long as the ratio $\frac{k'}{k}$ is bounded above and below by some fixed constants,
 so are the ratios $\frac{q(k')}{q(k)}$ and $\frac{Q(k')}{Q(k)}$. This gives the
 bounds in \eqref{eqn:regularity_q-part3}. 
\end{proof}

In order to show the existence of large jumps, we will need an additional regularity result holding (at least) for an infinite number of integers  $\ell$.

\begin{lem}[Good scale condition]
\label{lem:existence good scales}
	There exists a constant $C>0$ such that there are infinitely many integers $\ell \geq 1$ satisfying the inequality
     \begin{align}\label{eqn:good_scale2}
        \sum_{i=1}^{\ell-1} i q(i) \leq C \ell Q(\ell).
    \end{align}
\end{lem}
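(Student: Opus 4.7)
The plan is to apply Abel summation to convert the desired inequality into a statement involving only $Q$, and then argue by contradiction using the rough regularity estimates from~\cref{prop:rough_regularity}.

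First, I would rewrite the left-hand side of~\eqref{eqn:good_scale2} using $q(i)=Q(i)-Q(i+1)$ and summation by parts:
\begin{equation*}
\sum_{i=1}^{\ell-1} i\, q(i) = \sum_{i=1}^{\ell-1} i\, (Q(i)-Q(i+1)) = \sum_{i=1}^{\ell-1} Q(i) - (\ell-1)\, Q(\ell).
\end{equation*}
Since $\sum_{i=1}^{\ell-1} Q(i) \geq (\ell-1) Q(\ell)$ trivially, \eqref{eqn:good_scale2} is equivalent (up to an additive adjustment of $C$) to the existence of infinitely many $\ell$ for which
\begin{equation*}
    f(\ell) \coloneqq \sum_{i=1}^{\ell-1} Q(i) \leq C\, \ell\, Q(\ell).
\end{equation*}

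Next, I would proceed by contradiction: assume that there exist $\ell_0\geq 1$ and a (large) constant $C>0$ such that $f(\ell) > C\, \ell\, Q(\ell)$ for all $\ell \geq \ell_0$. Since $f(\ell+1)-f(\ell)=Q(\ell)$, this assumption would yield $f(\ell+1) < f(\ell)\bigl(1+\tfrac{1}{C\ell}\bigr)$, and hence by iteration
\begin{equation*}
f(\ell) \leq f(\ell_0) \prod_{j=\ell_0}^{\ell-1}\Bigl(1+\tfrac{1}{Cj}\Bigr) = O\bigl(\ell^{1/C}\bigr).
\end{equation*}
On the other hand, the rough regularity estimate $Q(\ell)=\ell^{-\frac{1}{2\alpha}+o(1)}$ of~\cref{prop:rough_regularity} gives the opposite bound $f(\ell) \geq (\ell-1)\, Q(\ell) = \ell^{\,1-\frac{1}{2\alpha}+o(1)}$. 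Since~\cref{thm:main} guarantees $\alpha>1/2$, the exponent $1-\frac{1}{2\alpha}$ is strictly positive, and choosing $C$ large enough that $\tfrac{1}{C} < 1-\tfrac{1}{2\alpha}$ (i.e.\ $C > \tfrac{2\alpha}{2\alpha-1}$) would make the two bounds incompatible for $\ell$ large, giving the desired contradiction.

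The argument is essentially routine once Abel summation is used, and there is no genuine obstacle. The only point requiring a bit of care is the choice of threshold $C > \tfrac{2\alpha}{2\alpha-1}$, which is admissible because it is allowed to depend on $p$ through $\alpha$. The true significance of the lemma lies not in its proof but in its subsequent use in~\cref{sect:coupling-arg}, where the good scale condition~\eqref{eqn:good_scale2} will be leveraged to produce large jumps of the decreasing Markov chain $M$ with positive probability.
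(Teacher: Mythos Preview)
Your proof is correct and follows essentially the same route as the paper's: both reduce the claim to showing that $\sum_{i=1}^{\ell-1}Q(i)\le C'\,\ell\,Q(\ell)$ holds for infinitely many $\ell$, and both derive a contradiction from the recursion $f(\ell+1)<(1+\tfrac{1}{C\ell})f(\ell)$ against the growth $f(\ell)=\ell^{\,1-\frac{1}{2\alpha}+o(1)}$ coming from \cref{prop:rough_regularity}. The only cosmetic difference is that you reach the $Q$-formulation via the exact Abel summation identity $\sum_{i=1}^{\ell-1}i\,q(i)=\sum_{i=1}^{\ell-1}Q(i)-(\ell-1)Q(\ell)$, whereas the paper invokes the two-sided bound $q(k)\asymp Q(k)/k$ from~\eqref{eqn:regularity_q-part1}; your reduction is in fact slightly cleaner.
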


\begin{proof}
	We prove that there exists a constant $c>0$ such that for infinitely many values of $\ell$ we have
    \begin{equation}\label{eqn:good_scale}
		\ell Q(\ell) \geq c \sum_{i=1}^{\ell-1} Q(i).
	\end{equation}
    By the bounds in~\eqref{eqn:regularity_q-part1}, this would imply our claim.
    For all $\ell \geq 1$, we write $\mathbf{Q}(\ell)=\sum_{i=1}^{\ell-1} Q(i)$. 
    By \cref{prop:rough_regularity}, we know that $Q(i)= i^{-\frac{1}{2\alpha} +o(1)}$ where $\alpha\in(1/2,1)$. 
    This entails that 
    \begin{equation}\label{eq:for_contra}
        \mathbf{Q}(\ell)=\ell^{1-\frac{1}{2\alpha} +o(1)}.
    \end{equation}
    Now, assume for a contradiction that for any choice of the constant $c>0$, \eqref{eqn:good_scale} fails for all large enough $\ell$. 
    Then, for all large enough $\ell$, we have $\mathbf{Q}(\ell+1)-\mathbf{Q}(\ell)=Q(\ell) < \frac{c}{\ell} \mathbf{Q}(\ell)$, so that $\mathbf{Q}(\ell+1) < \left( 1+\frac{c}{\ell} \right) \mathbf{Q}(\ell)$. Hence, for all $\ell$ large enough, 
    \[\mathbf{Q}(\ell) \leq \ell^{c+o(1)}.\] 
    We get a contradiction with \eqref{eq:for_contra} if 
    $c<1-\frac{1}{2\alpha}$. Since $1-\frac{1}{2\alpha}>0$ as $\alpha\in(1/2,1)$, we conclude that there exists a constant $c>0$ such that \eqref{eqn:good_scale} holds for infinitely many values of $\ell$.
\end{proof}

We can now introduce the good scales that we mentioned at the beginning of~\cref{sec:coupling_blue_red}.

\begin{defn}[Good scales]\label{defn:good-scales}
Fix $C>0$ as in \cref{lem:existence good scales}. We say that an infinite set $\mathrm{GoodScales}\subset \N$ is a collection of \textbf{good scales} if the following two conditions hold: 
\begin{enumerate}
    \item\label{item:good-scale-condition} Every $\ell\in\mathrm{GoodScales}$ satisfies the condition $\sum_{i=1}^{\ell-1} i q(i) \leq C \ell Q(\ell)$.
    \item For all $\ell,\ell'\in\mathrm{GoodScales}$ with $\ell'>\ell$, we have that $\ell'>7\ell$. 
\end{enumerate}
If $\ell\in\mathrm{GoodScales}$, we say that the interval $\intervalleentier{\ell}{7\ell}$ is a \textbf{good interval}.
\end{defn}

\noindent \cref{lem:existence good scales} guarantees the existence of these collections of good scales and from now on we fix $\mathrm{GoodScales}$ to be one arbitrary, but fixed, such collection. Note that good intervals are disjoint by the second condition in the definition.

\begin{remark} \label{rk:all_scales_good}
Note that, assuming the good regularity estimates in~\cref{thm:good_regularity}, there exists $C>0$ such that any integer $\ell\ge 1$ satisfies the good scale condition in~\cref{item:good-scale-condition} of \cref{defn:good-scales}. Indeed, if $q(i)=i^{-\frac{1}{2\alpha}-1} \svfq(i)$ as in~\cref{thm:good_regularity}, then by Karamata's Theorem (see \cite[Theorem 1.5.11]{bingham1989regular}), we have $\sum_{i=1}^{\ell-1} i q(i)\sim\frac{\ell^2 q(\ell)}{1-\frac{1}{2\alpha}}$ as $\ell\to\infty$, and $\ell^2 q(\ell) \le C \ell Q(\ell)$ by the bounds in \eqref{eqn:regularity_q-part1}.
Therefore, provided we knew \cref{thm:good_regularity}, we could take $\mathrm{GoodScales}$ to be any collection of well-separated integers. This will be important later for the scaling limit result in \cref{sec:sc_limit_Xn}.
\end{remark}

The bounds in \cref{cor:bound ratio q and Q} and \cref{lem:existence good scales} help us control the transition probabilities $\tpr(m ; m')$ from \eqref{eq:transitions decreasing chain} of the decreasing Markov chain $\DMC$, as shown in the next lemma.

\begin{lem}[Bounds for the transition probabilities of $M$]
\label{lem:trans-bound}
    Fix two constants $c_1,c_2>0$ with $c_1<c_2$. Then there exist $c>0$ and $C>0$ such that the following is true:
    \begin{enumerate}
        \item\label{item:trans-bound0} If $c_1<1$, then for all $j\ge 1$,
        \begin{equation*}
            \forall i < j, \quad
            \tpr(j;i)\geq c  \frac{i}{j}  q(i)  
            \qquad\text{ and }\qquad 
            \forall i\leq c_1 j,
            \quad 
            \tpr(j;i)\leq C  \frac{i}{j}      q(i).
        \end{equation*}
        \item\label{item:trans-bound2} For  all $\ell\geq 1$, and all $j\geq 1$ and $x,y\geq 1$ such that $c_1 \ell \le x \le y \le c_2\ell < j$,
        \begin{equation*}
    		\Ppsq{\DMC_{t+1}\in \intervalleentier{x}{y}}{\DMC_t=j} \geq c \frac{y-x}{j}  Q(\ell).
        \end{equation*}
        \item\label{item:trans-bound1} If $c_1<1$, then for all $\ell\in \mathrm{GoodScales}$, and all $j\geq 1$ and $y\geq 1$ such that $y\le (c_1 j) \wedge (c_2 \ell)$,
        \begin{equation*}
            \Ppsq{\DMC_{t+1}\in \intervalleentier{1}{y}}{\DMC_t=j}
        \le
        C \frac{\ell}{j}  Q(\ell).
        \end{equation*}
        \item\label{item:trans-bound3} If $c_1>1$, then for all $\ell\in \mathrm{GoodScales}$ and $j\in \intervalleentier{c_1\ell}{c_2\ell}$, writing $\Delta \DMC_1 = \DMC_0 - \DMC_1$,
        \begin{equation*}
        \mathbb{E}_{j}\big[\Delta \DMC_1  \mathds{1}_{\Delta \DMC_1 \le \ell}\big] 
        \leq C \ell Q(\ell).
        \end{equation*}
    \end{enumerate}
\end{lem}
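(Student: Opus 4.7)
The starting point is the explicit expression \eqref{eq:transitions decreasing chain} for $\pi(j;i)$, together with the observation that the prefactor $\bigl[1+\tfrac{1-p}{p}(Q(j)-q(j)/2)\bigr]^{-1}$ lies in $[p,1]$ uniformly in $j$, since $Q(j)-q(j)/2\in[0,1]$. Hence it plays no role beyond providing multiplicative constants, and the proof reduces to controlling the factor $\frac{i}{j}\cdot\frac{q(i)q(j-i)}{q(j)}$ (and its analogue $\frac{j-w}{j}\cdot\frac{q(j-w)q(w)}{q(j)}$ from \eqref{eq:transitions decreasing chain2}) using the regularity estimates of \cref{cor:bound ratio q and Q} and the good scale condition of \cref{defn:good-scales}.

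For Item \ref{item:trans-bound0}, the lower bound follows because $\frac{q(j-i)}{q(j)}\geq c$ for all $i<j$ by \eqref{eqn:regularity_q-part2}, yielding $\pi(j;i)\geq c\frac{i}{j}q(i)$. For the upper bound, the assumption $i\leq c_1 j$ with $c_1<1$ forces $\frac{j-i}{j}\in[1-c_1,1]$, so \eqref{eqn:regularity_q-part3} gives $\frac{q(j-i)}{q(j)}\leq C$ and we obtain $\pi(j;i)\leq C\frac{i}{j}q(i)$.

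Items \ref{item:trans-bound2} and \ref{item:trans-bound1} then follow by summing Item \ref{item:trans-bound0} over the relevant indices. In Item \ref{item:trans-bound2}, for $i\in\intervalleentier{x}{y}\subset\intervalleentier{c_1\ell}{c_2\ell}$ we have $i\asymp\ell$, so $\frac{i}{j}\geq c\frac{\ell}{j}$, while \eqref{eqn:regularity_q-part1} combined with \eqref{eqn:regularity_q-part3} yields $q(i)\geq c\frac{Q(i)}{i}\geq c'\frac{Q(\ell)}{\ell}$; summing over roughly $(y-x)$ integers produces the claim. In Item \ref{item:trans-bound1}, the bound $i\leq y\leq c_1 j$ validates the upper bound from Item \ref{item:trans-bound0}, so
\[
\Ppsq{\DMC_{t+1}\in\intervalleentier{1}{y}}{\DMC_t=j}\leq \frac{C}{j}\sum_{i=1}^{\lfloor y\rfloor} iq(i) \leq \frac{C}{j}\sum_{i=1}^{\lceil c_2\ell\rceil} iq(i).
\]
Here the good scale condition (\cref{item:good-scale-condition} of \cref{defn:good-scales}) gives $\sum_{i=1}^{\ell-1}iq(i)\leq C\ell Q(\ell)$, and in the case $c_2>1$ the residual piece $\sum_{i=\ell}^{\lceil c_2\ell\rceil}iq(i)$ is bounded by $(c_2\ell+1)Q(\ell)$ using monotonicity of $Q$. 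Either way, the total is $O(\ell Q(\ell))$, as required.

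Item \ref{item:trans-bound3} is handled analogously, but using the symmetric form \eqref{eq:transitions decreasing chain2}. For $w\leq\ell$ and $j\in\intervalleentier{c_1\ell}{c_2\ell}$ with $c_1>1$, both $j$ and $j-w$ lie in $\intervalleentier{(c_1-1)\ell}{c_2\ell}$, hence \eqref{eqn:regularity_q-part3} gives $\frac{q(j-w)}{q(j)}\leq C$ and $\frac{j-w}{j}\leq 1$, so $\pi(j;j-w)\leq Cq(w)$. Summing yields
\[
\mathbb{E}_j\bigl[\Delta\DMC_1\,\mathds{1}_{\Delta\DMC_1\leq\ell}\bigr] \leq C\sum_{w=1}^{\lfloor\ell\rfloor} wq(w) \leq C'\ell Q(\ell),
\]
again by the good scale condition. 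No single step presents a genuine obstacle; the only care needed is bookkeeping of the range restrictions (notably the role of $c_1<1$ versus $c_1>1$) to ensure that each ratio $q(\cdot)/q(\cdot)$ is comparable to a constant via the appropriate item of \cref{cor:bound ratio q and Q}, and handling the minor case $c_2>1$ in Item \ref{item:trans-bound1} by splitting the sum at $\ell$.
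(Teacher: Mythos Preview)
Your proposal is correct and follows essentially the same approach as the paper's proof: both use the explicit formula \eqref{eq:transitions decreasing chain} with the observation that the prefactor is uniformly bounded, then apply the regularity estimates of \cref{cor:bound ratio q and Q} for Items~\ref{item:trans-bound0} and~\ref{item:trans-bound2}, and the good scale condition for Items~\ref{item:trans-bound1} and~\ref{item:trans-bound3}. The only cosmetic difference is in Item~\ref{item:trans-bound1}, where you bound the tail $\sum_{i=\ell}^{\lceil c_2\ell\rceil} iq(i)$ directly by $(c_2\ell+1)Q(\ell)$ using $\sum_{i\geq\ell}q(i)=Q(\ell)$, while the paper instead bounds it by $C\ell^2 q(\ell)\leq C\ell Q(\ell)$ via \eqref{eqn:regularity_q-part3} and \eqref{eqn:regularity_q-part1}; both routes are equally short.
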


\begin{proof}
The bounds in \cref{item:trans-bound0} follow from \cref{cor:bound ratio q and Q}. Indeed, by \eqref{eq:transitions decreasing chain}, we have
\begin{equation} \label{eq:pi(j,i)}
\forall i<j, \quad \tpr(j;i)=\frac{1}{1+\frac{(1-p)}{p}(Q(j) -\frac{q(j)}{2})} \cdot \frac{i}{j}\cdot  \frac{q(i) q(j-i)}{q(j)}.
\end{equation}
Now for all $j\geq 1$, the ratio $\frac{1}{1+\frac{(1-p)}{p}(Q(j)-\frac{q(j)}{2})}$ is bounded above and below by constants. 
Besides, on the one hand, by \eqref{eqn:regularity_q-part2}, 
\[
\forall i\le j, \qquad \frac{q(j-i)}{q(j)}\geq c.
\]
On the other hand, by the first item of \eqref{eqn:regularity_q-part3}, if $c_1<1$,
\[ 
    \forall i \le c_1 j, \qquad \frac{q(j-i)}{q(j)} \leq C.
\]
since
$1-c_1\leq1-\frac{i}{j}=\frac{j-i}{j}\leq 1$.
The bounds in  \cref{item:trans-bound0} immediately follow.

\medskip

We now move on to \cref{item:trans-bound2}. We recall from \cref{sect:not-proba} the notation for sums between real numbers, which will be extensively used in the rest of the proof. Let $\ell\geq 1$, and $j\geq 1$ and $x,y\geq 1$, such that $c_1 \ell \le x \le y \le c_2\ell < j$.
Allowing the value of the constant $c$ to change from line to line, we get by \cref{item:trans-bound0} that
	\begin{align}\label{eq:idwveivfuo}
		\Ppsq{\DMC_{t+1}\in \intervalleentier{x}{y}}{\DMC_t=j} = \sum_{i=x}^{y} \tpr(j;i) &\geq c \sum_{i=x}^{y} \frac{i}{j} \cdot q(i) \notag \\
		&\geq c \cdot \frac{\ell}{j} \sum_{i=x}^{y}q(i) \geq c \frac{\ell}{j} (y-x) q(\ell),
	\end{align}
where in the last inequality we used the first bound in \eqref{eqn:regularity_q-part3}, which holds since we are summing over $i \in \intervalleentier{x}{y} \subseteq \intervalleentier{c_1 \ell}{c_2 \ell}$. By the bounds in \eqref{eqn:regularity_q-part1}, this establishes \cref{item:trans-bound2}.

\medskip

We now assume that $c_1<1$ and  prove \cref{item:trans-bound1}. Fix $\ell\in \mathrm{GoodScales}$ and consider $j\geq 1$ and $y\geq 1$ such that $y \le (c_1 j) \wedge (c_2 \ell)$. Using the fact that we sum over $i\leq y\leq c_1 j$, together with the second bound in \cref{item:trans-bound0}, we get 
	\begin{align*}
	\Ppsq{\DMC_{t+1}\in \intervalleentier{1}{y}}{\DMC_t=j} = \sum_{i=1}^{y} \tpr(j;i) \leq C \sum_{i=1}^{y} \frac{i}{j}  q(i) 
     \leq \frac{C}{j} \bigg(\sum_{i=1}^{\ell-1} i q(i) + \sum_{i=\ell}^{y} i q(i) \bigg).
\end{align*}
Since $y\le c_2 \ell$, the latter sum can be handled again using first the bounds in \eqref{eqn:regularity_q-part3} and then in \eqref{eqn:regularity_q-part1}, yielding
\[
\sum_{i=\ell}^{y} i q(i) 
\le C \ell^2 q(\ell)
\le C \ell Q(\ell).
\]
The first one is also bounded by $C \ell Q(\ell)$ by the good scale condition of \cref{defn:good-scales}, which proves \cref{item:trans-bound1}.

It only remains to prove \cref{item:trans-bound3} for which we assume that $c_1>1$.  Taking $\ell\in \mathrm{GoodScales}$ and $j\in \intervalleentier{c_1\ell}{c_2\ell}$, we have
\[
\mathbb{E}_{j}\big[\Delta \DMC_1  \mathds{1}_{\Delta \DMC_1 \le \ell}\big]
=
\sum_{i=j-\ell}^{j-1} (j-i)\tpr(j;i).
\]
Any $i$ in this sum satisfies $(c_1-1)\ell \le i < j \leq c_2 \ell$ (with $c_1-1>0$).  Therefore, we have
\[
\tpr(j;i) \stackrel{\eqref{eq:pi(j,i)}}{\le} C\frac{i}{j}\cdot  \frac{q(i) q(j-i)}{q(j)} \stackrel{\eqref{eqn:regularity_q-part3}}{\le} C q(j-i).
\]
Recalling that $\ell$ is a good scale, we finally get by the good scale condition (\cref{defn:good-scales}):
\[
\mathbb{E}_{j}\big[\Delta \DMC_1  \mathds{1}_{\Delta \DMC_1 \le \ell}\big]
\le
C \sum_{i=j-\ell}^{j-1} (j-i)q(j-i)
=
C \sum_{i=1}^{\ell} i q(i)
\le
C \ell Q(\ell).
\] 
This concludes the proof of \cref{item:trans-bound3} and therefore establishes \cref{lem:trans-bound}.
\end{proof}

\subsubsection{Statements and proofs of the three main ingredients for the coupling}\label{sect:4tech-lem}

In this section, we consider two chains $\DMC$ and $\DMC'$ under the black-golden coupling (see \cref{defn:black-golden coupling}). Recall the notation $\bbP_{k,k'}$ for the probability measure of this coupling under which $\DMC$ starts at $k$ and $M'$ at $k'$. 
Whenever we work with events that only depend on $M$, we use $\bbP_{k}$ for shorter expressions.

We write $\stt_i$ for the first hitting time of $\intervalleentier{0}{i}$ by the chain $\DMC$, that is
\begin{equation}\label{eq:stt-defn}
    \stt_i\coloneqq\inf\left\{t\geq 1:\DMC_t\in\intervalleentier{0}{i}\right\}.
\end{equation}
We define $\stt_i'$ similarly for $\DMC'$.
We will often write $\DMC(\stt_i)$ instead of $\DMC_{\stt_i}$ to avoid notational clutter.
Note that for any $h \in \intervalleentier{0}{k}$ we have  
\begin{align*}
\DMC(H_h) &= \sup \left( \{\DMC_t, \ t\geq 0\} \cap \intervalleff{0}{h} \right), \\
\DMC(H_h-1) &= \inf \left( \{\DMC_t, \ t\geq 0\} \cap \intervallefo{h+1}{\infty} \right).
\end{align*}

Define for all $\ell\in\mathrm{GoodScales}$ the $\sigma$-algebra (\emph{cf.}\ \cref{rem:point-process})
\begin{equation} \label{eq:def_Ff}
\Gf_{\ell} \coloneqq  \sigma\left(G_m : m\in \bigcup_{\ell'>\ell}\intervalleentier{5\ell'}{7\ell'}\right)
\vee 
\sigma\Bigg(\intervallefo{7\ell+1}{+\infty} \cap \{\DMC_t, t \geq 0\}
,
\intervallefo{7\ell+1}{+\infty} \cap \{\DMC_t', t \geq 0\}
\Bigg).
\end{equation}
This $\sigma$-algebra records the values of the golden instructions for all golden intervals strictly above $7\ell$, as well as the trajectories of $\DMC$ and $\DMC'$ from their starting point up to the last time that they take a value strictly larger than $7\ell$, \emph{i.e.}\ $(\DMC_t)_{0\leq t \leq H_{7\ell}-1}$ and $(\DMC'_t)_{0\leq t \leq H'_{7\ell}-1}$.

\begin{lem}[Spatial Markov property at level $7\ell$]
\label{lem:spatial Markov property}
Fix $\ell\in\mathrm{GoodScales}$ and $k,k'\geq 7\ell+1$. Let $\DMC$ and $\DMC'$ be defined under the black-golden coupling $\mathbb{P}_{k,k'}$ of~\cref{defn:black-golden coupling}. Then the following assertions are true:
\begin{enumerate}
\item\label{lem:spatial markov property item1} 
For $m\in\intervalleentier{7\ell +1}{k}$ and  $m'\in\intervalleentier{7\ell +1}{k'}$, the $\Gf_\ell$-conditional distribution of $(\DMC(H_{7\ell}), \DMC'(H'_{7\ell}))$ on the event $\{\DMC(H_{7\ell}-1)=m, \DMC'(H'_{7\ell}-1)=m'\}$ is given by
\begin{align}
\Law\left((\DMC(H_{7\ell}), \DMC'(H'_{7\ell})) \ | \  \Gf_\ell \right) = \Law((B_m,B_{m'}) \ | \ B_m \leq 7\ell, B'_m \leq 7\ell).
\end{align}
\item\label{lem:spatial markov property item2} Conditionally on $\sigma(\Gf_{\ell}, (\DMC(H_{7\ell}),\DMC'(H'_{7\ell}))$, the chains $(\DMC(H_{7\ell}+t))_{t\geq 0}$ and $(\DMC'(H'_{7\ell}+t))_{t\geq 0}$ have the distribution of $\DMC$ and $\DMC'$ under the black-golden coupling $\bbP_{\DMC(H_{7\ell}),\DMC'(H'_{7\ell})}$.
\end{enumerate}
\end{lem}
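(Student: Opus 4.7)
The plan is to reduce the lemma to two structural observations about the black-golden coupling: first, that the crossing of level $7\ell$ from above can only occur through a black instruction; and second, a no-double-use property, ensuring that on the relevant event, the instructions $B_m$ and $B_{m'}$ have not been consumed elsewhere in the trajectories recorded by $\Gf_\ell$.

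For the first observation, I would note that a golden instruction $G_x$ is only triggered when the chain sits at some $x \in \intervalleentier{5\ell''}{7\ell''}$ for some good scale $\ell''$, and in that case $G_x \in \intervalleentier{\ell''}{4\ell''}$. For $x>7\ell$, the well-separation of $\mathrm{GoodScales}$ in \cref{defn:good-scales} forces $\ell''>\ell$ and thus $\ell''>7\ell$, so $G_x>7\ell$. Hence golden instructions cannot bring either chain into $\intervalleff{0}{7\ell}$, and the crossing transitions must read $\DMC(\stt_{7\ell})=B_m$ and $\DMC'(\stt'_{7\ell})=B_{m'}$. In particular, the event $\{\DMC(\stt_{7\ell}-1)=m,\ \DMC'(\stt'_{7\ell}-1)=m'\}$ already forces $B_m\le 7\ell$ and $B_{m'}\le 7\ell$.

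The second observation is the key technical point. Assuming $m\neq m'$, I would show that $\DMC$ never visits $m'$ and $\DMC'$ never visits $m$ on this event. Indeed, suppose $\DMC$ visited $m'$ at some time $t<\stt_{7\ell}$. Applying exactly the same case analysis as in the previous paragraph to the step $t\to t+1$ (according to whether $m'$ lies in a good interval and whether $B_{m'}$ falls into the corresponding trigger region), the requirement $\DMC_{t+1}>7\ell$ forces $B_{m'}>7\ell$, contradicting $B_{m'}\le 7\ell$. Combined with the strict monotonicity of each chain, this shows that on the event of interest $\Gf_\ell$ is measurable with respect to the $\sigma$-algebra generated by $\{B_x:x>7\ell,\ x\notin\{m,m'\}\}$ together with the golden instructions (which are independent of the black ones by construction). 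The joint independence built into the definition of the coupling then implies that $(B_m,B_{m'})$ is independent of $\Gf_\ell$, and conditioning on $\{\DMC(\stt_{7\ell}-1)=m,\ \DMC'(\stt'_{7\ell}-1)=m'\}$ only imposes $B_m\le 7\ell$ and $B_{m'}\le 7\ell$. This yields exactly the claimed conditional law and thus \cref{lem:spatial markov property item1}. The degenerate case $m=m'$ (where the chains merge at this very step) is handled separately: both sides of the identity reduce to conditioning a single $B_m$ on $B_m\le 7\ell$.

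For \cref{lem:spatial markov property item2}, I would unwind the definition \eqref{eq:coupling X and X'} of the coupling: after times $\stt_{7\ell}$ and $\stt'_{7\ell}$, both chains take values in $\intervalleff{0}{7\ell}$, and their future is a deterministic function of the pair $(\DMC(\stt_{7\ell}),\DMC'(\stt'_{7\ell}))$ together with the black instructions $\{B_x:x\le 7\ell\}$ and the golden instructions for good scales $\le\ell$. All of these are, by construction, jointly independent of the random variables appearing in $\Gf_\ell$ (which only involves instructions indexed by values $>7\ell$ or by golden intervals above $\ell$). Consequently, conditionally on $\sigma(\Gf_\ell,\DMC(\stt_{7\ell}),\DMC'(\stt'_{7\ell}))$, the continuation of the pair is precisely a fresh black-golden coupling under $\bbP_{\DMC(\stt_{7\ell}),\DMC'(\stt'_{7\ell})}$. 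The main obstacle is really the no-double-use argument of the previous paragraph; once that is in hand, both items follow from bookkeeping of the independence structure of the instructions.
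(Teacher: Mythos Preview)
Your proposal is correct and follows essentially the same approach as the paper: both arguments hinge on the observation that the crossing of level $7\ell$ must occur via a black instruction (since golden instructions from $x>7\ell$ land above $7\ell$ by well-separation of scales), and then use the independence structure of the instructions to split the past from the future. Your explicit ``no-double-use'' argument—that on the event in question $\DMC$ cannot have visited $m'$ earlier (else $B_{m'}>7\ell$, contradicting the crossing of $\DMC'$)—makes rigorous a step the paper's proof leaves implicit in passing from $\Law((B_m,B_{m'})\mid\Gf_\ell)$ to $\Law((B_m,B_{m'})\mid B_m\le 7\ell,\,B_{m'}\le 7\ell)$.
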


\begin{proof}
We first prove \cref{lem:spatial markov property item1}.
Fix any $m,m'\geq 7\ell +1$ and let us reason on the event $\{\DMC(H_{7\ell}-1)=m, \DMC'(H'_{7\ell}-1)=m'\}$.
We know by definition of $H_{7\ell}$ and $H'_{7\ell}$ that 
$\DMC(H_{7\ell})\leq 7\ell$ and $\DMC'(H'_{7\ell})\leq 7\ell$.
From the definition of the black-golden coupling, it is clear that we have 
$\DMC(H_{7\ell})=B_m$, because a golden instruction $G_m$ 
(if taken at all)
would land in some interval $\intervalleentier{\ell'}{4\ell'}$ with $\ell'>7\ell$, which would contradict the fact that $\DMC(H_{7\ell})\leq 7\ell$. 
We similarly have $\DMC'(H'_{7\ell})=B'_m$.
Now we have 
\begin{align*}
\Law((\DMC(H_{7\ell}),\DMC'(H'_{7\ell})) \ | \ \Gf_\ell) = \Law((B_m,B_{m'}) \ | \ \Gf_\ell) = \Law((B_m,B_{m'}) \ | \  B_m \leq 7\ell, B'_m \leq 7\ell),
\end{align*}
which proves \cref{lem:spatial markov property item1}.

We now turn to \cref{lem:spatial markov property item2}. 
Just note that by definition of the black-golden coupling, the future trajectories $(\DMC(H_{7\ell}+t))_{t\geq 0}$ and $(\DMC'(H'_{7\ell}+t))_{t\geq 0}$ are entirely determined by
the positions $(\DMC(H_{7\ell}),\DMC'(H'_{7\ell}))$
and
the values of the black and golden instructions below $7\ell$, namely $(B_m)_{1 \leq m \leq 7\ell}$ and $(G_m)_{5\ell' \leq m\leq 7\ell', \ell' \leq  \ell }$. 
Note also that these random variables are independent of 
$\Gf_\ell, \DMC(H_{7\ell}),\DMC'(H'_{7\ell})$, because those are determined from the value of the black and golden instructions \emph{above} $7\ell +1$.
This ensures that \cref{lem:spatial markov property item2} holds.
\end{proof}
}

The next lemma presents the three main ingredients for the proof of~\cref{prop:coupling_merging}, following the strategy outlined in \cref{sect:descript}. Then, in \cref{lem:cond_M_item_goodscale}, we will extend the second item to incorporate the coupling with $\DMC'$.

\begin{lem}\label{lem:prop-good-scales}
There exists a constant $c\in (0,1)$ such that the following three properties hold:
\begin{enumerate}
\item \label{lem:prop-good-scales-item1} 
For any $\ell\in\mathrm{GoodScales}$ and $m>7\ell$,
\[
\Ppsq{B_m\in \intervalleentier{6\ell}{7\ell} 
}{B_m\leq 7\ell} \geq c.
\]

\item \label{lem:prop-good-scales-item2}  For any  $\ell\in\mathrm{GoodScales}$ and $k\in \intervalleentier{6\ell}{7\ell}$,
\begin{align*}
	\Ppp{k}{\DMC(\stt_{4\ell}-1) \in \intervalleentier{5\ell}{7\ell}\,,\, \DMC(\stt_{4\ell}) \in \intervalleentier{\ell}{4\ell} }\geq c.
\end{align*}

\item \label{lem:prop-good-scales-item3} For any $\ell\in \mathrm{GoodScales}$, we can couple the golden instructions $(G_m)_{5\ell \leq m \leq 7\ell}$, with law given by \eqref{eq:red-law}, in such a way that 
	\begin{align*}
		\Pp{\forall a,a' \in \intervalleentier{5\ell}{7\ell}, \ G_a = G_{a'} }\geq c.
	\end{align*}
\end{enumerate}
\end{lem}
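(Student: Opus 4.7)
The plan is to handle the three items of the Lemma in turn, with the bulk of the work going into item~\ref{lem:prop-good-scales-item2}, which will follow from a Green's function decomposition combined with a martingale lower bound on a sojourn time.

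For item~\ref{lem:prop-good-scales-item1}, I would apply item~\ref{item:trans-bound2} of Lemma~\ref{lem:trans-bound} with $x=6\ell$, $y=7\ell$, $c_1=6$, $c_2=7$ (valid since $m>7\ell$) to obtain $\Pp{B_m\in\intervalleentier{6\ell}{7\ell}}\geq c\,\ell Q(\ell)/m$, and item~\ref{item:trans-bound1} of the same lemma with $y=6\ell-1$, $c_1=6/7<1$, $c_2=6$ (again valid for $m>7\ell$) to obtain $\Pp{B_m\leq 6\ell-1}\leq C\,\ell Q(\ell)/m$. Since the two estimates are of the same order $\ell Q(\ell)/m$, the ratio defining the conditional probability is bounded below by a positive constant. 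For item~\ref{lem:prop-good-scales-item3}, I would note that whenever $i\in\intervalleentier{\ell}{4\ell}$ and $m\in\intervalleentier{5\ell}{7\ell}$, the three arguments $i$, $m-i$, and $m$ all lie within bounded multiplicative factors of $\ell$, so that Corollary~\ref{cor:bound ratio q and Q} applied to the formula \eqref{eq:pi(j,i)} for $\tpr(m;i)$ yields $\tpr(m;i)\asymp \tpr(m';i)$ uniformly in such $m,m',i$. Hence $\Pp{G_m=i}\asymp \Pp{G_{m'}=i}$ for $m,m'\in\intervalleentier{5\ell}{7\ell}$, and a standard maximal coupling of the family of probability measures $(\Pp{G_m=\cdot})_m$ yields a joint coupling under which all the $G_m$ agree with probability bounded below by a positive constant.

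For the main item~\ref{lem:prop-good-scales-item2}, set $T\coloneqq\inf\{t\geq 1:\DMC_t<5\ell\}$. Since the chain decreases and $\DMC_0=k\leq 7\ell$, the event in the statement coincides with $\{\DMC_T\in\intervalleentier{\ell}{4\ell}\}$: indeed, when $\DMC_T\in\intervalleentier{\ell}{4\ell}$ one has $\stt_{4\ell}=T$, $\DMC_{\stt_{4\ell}}=\DMC_T$, and $\DMC_{\stt_{4\ell}-1}=\DMC_{T-1}\in\intervalleentier{5\ell}{k}\subseteq\intervalleentier{5\ell}{7\ell}$, and conversely. Decomposing over the last position of $\DMC$ before $T$ gives
\begin{equation*}
\Ppp{k}{\DMC_T\in\intervalleentier{\ell}{4\ell}}
\;=\;
\sum_{m\in\intervalleentier{5\ell}{k}}
\Ecp{k}{\#\{0\leq t<T:\DMC_t=m\}}\cdot p_C(m),
\end{equation*}
where $p_C(m)\coloneqq \Ppp{m}{\DMC_1\in\intervalleentier{\ell}{4\ell}}$. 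Item~\ref{item:trans-bound2} of Lemma~\ref{lem:trans-bound} (with $x=\ell$, $y=4\ell$, $c_2=4$) gives $p_C(m)\geq c\,Q(\ell)$ uniformly over $m\in\intervalleentier{5\ell}{7\ell}$, and the Green sum $\sum_m \Ecp{k}{\#\{0\leq t<T:\DMC_t=m\}}$ equals $\Ecp{k}{T}$. Hence item~\ref{lem:prop-good-scales-item2} will follow from the sojourn bound $\Ecp{k}{T}\geq c/Q(\ell)$.

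For this last bound I would use a simple martingale argument. Setting $g(m)\coloneqq \Ecp{m}{\DMC_0-\DMC_1}$, the process $\DMC_t+\sum_{s=1}^{t}g(\DMC_{s-1})$ is a martingale; since the chain strictly decreases, $T\leq k$ is a bounded stopping time, and optional stopping gives
\begin{equation*}
\Ecp{k}{\sum_{s=1}^{T}g(\DMC_{s-1})}\;=\;k-\Ecp{k}{\DMC_T}\;\geq\;k-(5\ell-1)\;\geq\;\ell+1.
\end{equation*}
Setting $Y\coloneqq \DMC_0-\DMC_1$, I would then bound $g(m)\leq C\ell Q(\ell)$ for all $m\in\intervalleentier{5\ell}{7\ell}$ by splitting $g(m)=\Ecp{m}{Y\mathds{1}_{Y\leq\ell}}+\Ecp{m}{Y\mathds{1}_{Y>\ell}}$: the first summand is controlled by item~\ref{item:trans-bound3} of Lemma~\ref{lem:trans-bound}, whose hypothesis is precisely the good scale condition of Definition~\ref{defn:good-scales}; the second is at most $m\cdot\Ppp{m}{\DMC_1\leq m-\ell-1}\leq C\ell Q(\ell)$ by item~\ref{item:trans-bound1} of the same lemma (applied with $y=m-\ell-1$, $c_1=6/7$, $c_2=6$). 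Since $\DMC_{s-1}\in\intervalleentier{5\ell}{7\ell}$ for every $s\in\intervalleentier{1}{T}$, this gives $\Ecp{k}{T}\cdot C\ell Q(\ell)\geq \ell+1$, hence the required $\Ecp{k}{T}\geq c/Q(\ell)$. The main obstacle is precisely this lower bound on $\Ecp{k}{T}$: a naive single-step analysis fails because for $m$ close to $5\ell$ the chain is likely to exit $\intervalleentier{5\ell}{7\ell}$ into the forbidden zone $\intervalleoo{4\ell}{5\ell}$, and only by averaging the good exit probability $p_C(m)$ over a sojourn of length of order $1/Q(\ell)$ (which the martingale argument produces, using critically the good scale condition to control $g$) do we recover a constant lower bound.
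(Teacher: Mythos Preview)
Your proof is correct. Items~\ref{lem:prop-good-scales-item1} and~\ref{lem:prop-good-scales-item3} match the paper's arguments essentially verbatim. For item~\ref{lem:prop-good-scales-item2} you take a genuinely different route.

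The paper proceeds constructively: it introduces a ``good event'' $E_\ell$ on which the chain makes at most $b=\lceil a/Q(\ell)\rceil$ jumps of size $\leq\ell$ whose total is at most $\ell$, followed by a single big jump of size in $[3\ell,4\ell]$; it then shows $\Ppp{k}{E_\ell}\geq c$ by iterating the Markov property step by step, bounding the probability of each small jump staying small via item~\ref{item:trans-bound1} of Lemma~\ref{lem:trans-bound}, and controlling the cumulative drift via Markov's inequality and item~\ref{item:trans-bound3}. This requires tuning the auxiliary parameter $a$ at the end. Your approach bypasses the explicit construction entirely: the last-exit (Green's function) decomposition reduces the problem to showing $\Ecp{k}{T}\geq c/Q(\ell)$ for the exit time $T$ from $[5\ell,7\ell]$, and the optional stopping identity $\Ecp{k}{\sum_{s=1}^{T}g(M_{s-1})}=k-\Ecp{k}{M_T}\geq \ell+1$ combined with the drift bound $g(m)\leq C\ell Q(\ell)$ yields this directly. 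Both arguments ultimately lean on the same estimates (items~\ref{item:trans-bound1},~\ref{item:trans-bound2},~\ref{item:trans-bound3} of Lemma~\ref{lem:trans-bound}, the last of which encodes the good scale condition), but your martingale argument is shorter and avoids the parameter tuning; the paper's construction, on the other hand, gives slightly more explicit control over the trajectory (e.g.\ the chain stays above $5\ell$ for a specific number of steps), which is not needed here but could be useful in finer analyses.
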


\noindent See \cref{fig-good-scales2} for a schematic representation of the quantities involved in \cref{lem:prop-good-scales} and some further explanations on the role of the three items. 

\begin{figure}[ht]
	\begin{center}
		\includegraphics[width=1\textwidth]{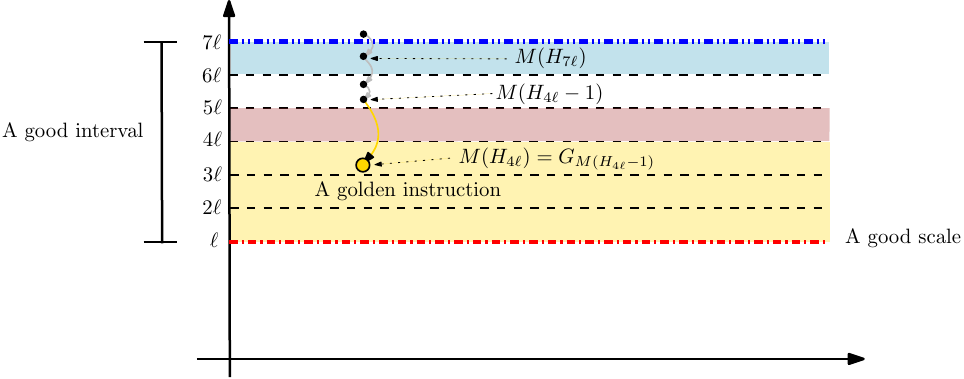}  
		\caption{A schematic representation for the statement of \cref{lem:prop-good-scales}. A good scale $\ell$ is shown as a dashed-dotted red line and the corresponding value of $7\ell$ is shown as a dashed-dotted blue line.  The good interval  $\intervalleentier{\ell}{7\ell}$ is divided into six sub-intervals by the dashed black lines, with the sub-intervals $\intervalleentier{6\ell}{7\ell}$, $\intervalleentier{4\ell}{5\ell}$ and $\intervalleentier{\ell}{4\ell}$ highlighted in blue, brown and gold, respectively. \cref{lem:prop-good-scales-item1,lem:prop-good-scales-item2} ensure that there is a uniformly positive probability among all good scales $\ell$ that $\DMC$ enters the blue sub-interval and then (possibly after a few steps) jumps across the brown sub-interval; this trajectory is schematically represented by the black dots. The jump across the brown good sub-interval corresponds to a golden instruction landing at the golden dot at height $G_{\DMC(\stt_{4\ell}-1)}$ inside the golden interval $\intervalleentier{\ell}{4\ell}$. Since \cref{lem:prop-good-scales-item3} allows us to couple all the golden instructions in such a way that they land at \emph{the same} place with positive probability, this (with the additional help of \cref{lem:cond_M_item_goodscale}) will allow us to argue that there is a uniformly positive probability over all good scales $\ell$ that $\DMC$ and $\DMC'$ meet. Since \cref{lem:prop-good-scales-item1} guarantees that this actually holds conditionally on the scales above, we will be able to conclude that $\DMC$ and $\DMC'$ meet with probability tending to one as their starting points tend to infinity.}
        \label{fig-good-scales2}
	\end{center}
	\vspace{-3ex}
\end{figure}

\medskip

We prove the three items of \cref{lem:prop-good-scales} sequentially.

\begin{proof}[Proof of \cref{lem:prop-good-scales-item1} of \cref{lem:prop-good-scales}] 
Let  $\ell$ be a good scale and $m>7\ell$.
We write
\begin{equation} \label{eq:cond_M<5ell_ratio}
\Ppsq{B_m \in \intervalleentier{6\ell}{7\ell}}{B_m\le 7\ell}
=
\frac{\Pp{B_m \in \intervalleentier{6\ell}{7\ell}}}{\Pp{B_m\le 7\ell}}
=
\frac{\Ppsq{M_{t+1} \in \intervalleentier{6\ell}{7\ell}}{M_t=m}}{\Ppsq{M_{t+1} \in \intervalleentier{1}{7\ell}}{M_t=m}},
\end{equation}
for any $t\geq 0$. 
Now by Item~\ref{item:trans-bound2}~in~\cref{lem:trans-bound} (with $c_1=6$, $c_2=7$, $x=6\ell$ and $y=7\ell$) there is a constant $c>0$ (independent of $\ell$) such that for all $m>7\ell$,
\[
\Ppsq{\DMC_{t+1}\in \intervalleentier{6\ell}{7\ell}}{\DMC_t=m}  \geq c \frac{\ell}{m} Q(\ell).
\]
Likewise, by Item~\ref{item:trans-bound1}~in~\cref{lem:trans-bound} (with $c_1=6/7$, $c_2=6$, and $y=6\ell-1$), there is a constant $C>0$ (independent of $\ell$) such that for all $m>7\ell$,
\[
\Ppsq{\DMC_{t+1}\in \intervalleentier{1}{6\ell-1}}{\DMC_t=m}
\le
C \frac{\ell}{m} Q(\ell).
\]
Combining the previous two displays, we find that for all $m>7\ell$,
\[
\Ppsq{B_m \in \intervalleentier{6\ell}{7\ell}}{B_m\le 7\ell} 
\ge \frac{c}{C+c}.
\]
This proves \cref{lem:prop-good-scales-item1} of \cref{lem:prop-good-scales}.
\end{proof}

\begin{proof}[Proof of \cref{lem:prop-good-scales-item2} of \cref{lem:prop-good-scales}]

We divide the proof into five steps.

 \bigskip

\noindent\emph{\underline{Step 1}: Introducing a good event $E_\ell$.}
For $a>0$ and $\ell\ge 1$, let $b=b(\ell) \coloneqq \left\lceil \frac{a}{Q(\ell)} \right\rceil,$ where $Q(\cdot)$ is as in \eqref{eq:def_q_Q}. Define the following event $E_\ell$: 
\begin{itemize}
\item[(i)]  $\DMC$ takes at most $b$ jumps smaller than $\ell$, and then a big jump of size between $3\ell$ and $4\ell$.
\item[(ii)]  The sum of the above small jumps is actually smaller than $\ell$.
\end{itemize}
Note that the event $E_{\ell}$ depends implicitly on $b$, and therefore on $a$. If the chain $\DMC$ starts from $k\in \intervalleentier{6\ell}{7\ell}$ and satisfies (i) and (ii) above, it first travels less than distance $\ell$ in less than $b$ units of time (so it stays in $\intervalleentier{5\ell}{7\ell}$), and then lands in $\intervalleentier{\ell}{4\ell}$ by a big jump, jumping over $\intervalleentier{4\ell}{5\ell}$. Therefore, 
\begin{equation} \label{eq:E_ell_DMC}
\forall k\in \intervalleentier{6\ell}{7\ell}, \quad
\Ppp{k}{\DMC(H_{4\ell}-1) \in \intervalleentier{5\ell}{7\ell}, \DMC(H_{4\ell}) \in \intervalleentier{\ell}{4\ell}}
\ge 
\Ppp{k}{E_\ell}.
\end{equation}

 We will prove in the next steps that we can find $a>0$ such that for some constant $c>0$ and all $\ell\in\mathrm{GoodScales}$,
\begin{equation} \label{eq:E_ell}
\forall k\in\intervalleentier{6\ell}{7\ell}, \quad
\Ppp{k}{E_\ell} \ge c.
\end{equation}
Assuming \eqref{eq:E_ell}, we note that \eqref{eq:E_ell_DMC} yields the claim in~\cref{lem:prop-good-scales-item2} of \cref{lem:prop-good-scales}. It remains to prove \eqref{eq:E_ell}.

\bigskip

\noindent\emph{\underline{Step 2}: Splitting the event $E_{\ell}$.}
We treat $a>0$ as a constant that will be determined at the very end of the proof (the constants $c, C$ appearing in the estimates will not depend on $a$). For convenience, set for the decreasing Markov chain $\DMC$,
\[\Delta \DMC_t \coloneqq \DMC_{t-1}-\DMC_{t},\qquad t\ge 1.\] 
Note that $\Delta \DMC_t\geq 1$ for all $t\ge 1$.
Let $\ell\in\mathrm{GoodScales}$ and $k\in\intervalleentier{6\ell}{7\ell}$. We first partition $E_\ell$ as
\begin{align}
\Ppp{k}{E_\ell}
&=
\Ppp{k}{\exists b'\leq b: \max\{\Delta \DMC_1,\dots, \Delta \DMC_{b'}\}\leq \ell, \Delta \DMC_{b'+1}\in \intervalleentier{3\ell}{4\ell}, k-\DMC_{b'}\leq \ell} \notag \\
&=
\sum_{b'=0}^b \Ppp{k}{\max\{\Delta \DMC_1,\dots, \Delta \DMC_{b'}\}\leq \ell, \Delta \DMC_{b'+1}\in \intervalleentier{3\ell}{4\ell}, k-\DMC_{b'}\leq\ell}.  \label{eq:E_ell_partition}
\end{align}
Note that even if the event $\{k-\DMC_{b'}\leq\ell\}$ is contained in $\{\max\{\Delta \DMC_1,\dots, \Delta \DMC_{b'}\}\leq \ell\}$, it will be convenient for us to keep both events explicit.
Fix $b'\in\intervalleentier{0}{b}$. Using the Markov property, 
\begin{align*}
&\Ppp{k}{\max\{\Delta \DMC_1,\dots, \Delta \DMC_{b'}\}\leq \ell, \Delta \DMC_{b'+1}\in \intervalleentier{3\ell}{4\ell}, k-\DMC_{b'}\leq\ell} \\
&=
\Ecp{k}{\mathds{1}\{\max\{\Delta \DMC_1,\dots, \Delta \DMC_{b'}\}\leq \ell\} \mathds{1}\{k-\DMC_{b'}\leq\ell\}\Ppp{\DMC_{b'}}{\Delta \DMC_{1}\in \intervalleentier{3\ell}{4\ell}}}.
\end{align*}
Recalling that $k\ge 6\ell$, on the event $\{k-\DMC_{b'}\leq\ell\}$, we have $\DMC_{b'}\ge 5\ell$. Moreover, $\DMC_{b'}\leq k \leq 7\ell$. Therefore, we can use \cref{item:trans-bound2} in \cref{lem:trans-bound} (with $c_1=1$, $c_2=4$, $j\in\intervalleentier{5\ell}{7\ell}$, $x=j-4\ell$ and $y=j-3\ell$)
to  bound, on the event $\{k-\DMC_{b'}\leq\ell\}$, the probability $\Ppp{\DMC_{b'}}{\Delta \DMC_{1}\in \intervalleentier{3\ell}{4\ell}}$ from below:
\[
\Ppp{\DMC_{b'}}{\Delta \DMC_{1} \in \intervalleentier{3\ell}{4\ell}}
=
\Ppp{\DMC_{b'}}{\DMC_1  \in \intervalleentier{\DMC_{b'}-4\ell}{\DMC_{b'}-3\ell}} 
\geq
c \frac{\ell}{\DMC_{b'}} Q(\ell)
\geq
c \, Q(\ell).
\]
This yields
\begin{multline} \label{eq:E_ell_b'}
\Ppp{k}{\max\{\Delta \DMC_1,\dots, \Delta \DMC_{b'}\}\leq \ell, \Delta \DMC_{b'+1}\in \intervalleentier{3\ell}{4\ell}, k-\DMC_{b'}\leq\ell} \\
\geq
c \, Q(\ell) \cdot  \Ppp{k}{\max\{\Delta \DMC_1,\dots, \Delta \DMC_{b'}\}\leq \ell, k-\DMC_{b'}\leq\ell}.
\end{multline}
For future purposes, we note that, assuming  $M_0=k$, we can rewrite the event $\{k-\DMC_{b'}\leq\ell\}$ as
\[
\{k-\DMC_{b'}\leq\ell\}  
= \bigg\{\sum_{i=1}^{b'}\Delta \DMC_i \mathds{1}_{\DMC_{i-1}\geq k-\ell}\leq\ell \bigg\}.
\]
Likewise, we will need to introduce a cutoff in order to later apply the estimates of \cref{lem:trans-bound}.
Therefore, we rewrite the event 
\begin{align*}
&\{\max\{\Delta \DMC_1,\dots, \Delta \DMC_{b'}\}\leq \ell, k-\DMC_{b'}\leq\ell\} \\
&= \{\max\{\Delta \DMC_1 \mathds{1}_{\DMC_{0}\geq k-\ell} ,\dots, \Delta \DMC_{b'} \mathds{1}_{\DMC_{b'-1}\geq k-\ell}\}\leq \ell, \  \sum_{i=1}^{b'}\Delta \DMC_i \mathds{1}_{\DMC_{i-1}\geq k-\ell} \leq\ell\} \\
&= \{\max\{\Delta \DMC_1 \mathds{1}_{\DMC_{0}\geq k-\ell} ,\dots, \Delta \DMC_{b'} \mathds{1}_{\DMC_{b'-1}\geq k-\ell}\}\leq \ell, \  \sum_{i=1}^{b'}\Delta \DMC_i \mathds{1}_{\Delta \DMC_i \le \ell}  \mathds{1}_{\DMC_{i-1}\geq k-\ell} \leq \ell\}.
\end{align*}
Hence, the probability on the right-hand side of \eqref{eq:E_ell_b'} is
\begin{align}\label{eq:E_ell_split_sum}
&\Ppp{k}{\max\{\Delta \DMC_1,\dots, \Delta \DMC_{b'}\}\leq \ell, k-\DMC_{b'}\leq\ell} \notag \\
&=\Ppp{k}{\max\{\Delta \DMC_1 \mathds{1}_{\DMC_{0}\geq k-\ell} ,\dots, \Delta \DMC_{b'} \mathds{1}_{\DMC_{b'-1}\geq k-\ell}\}\leq \ell, \  \sum_{i=1}^{b'}\Delta \DMC_i \mathds{1}_{\Delta \DMC_i \le \ell} \mathds{1}_{\DMC_{i-1}\geq k-\ell}  \leq\ell} \notag  \\
&\geq \Ppp{k}{\max\{\Delta \DMC_1 \mathds{1}_{\DMC_{0}\geq k-\ell} ,\dots, \Delta \DMC_{b'} \mathds{1}_{\DMC_{b'-1}\geq k-\ell}\}\leq \ell}  
 - \Ppp{k}{\sum_{i=1}^{b'}\Delta \DMC_i \mathds{1}_{\Delta \DMC_i \le \ell} \mathds{1}_{\DMC_{i-1}\geq k-\ell}  > \ell}.
\end{align}

\medskip

\noindent\emph{\underline{Step 3}: Lower bound for the first term in  \eqref{eq:E_ell_split_sum}.} We deal with the first term in  \eqref{eq:E_ell_split_sum}, bringing it down by successive applications of the Markov property to
\begin{equation} \label{eq:ma_DMC_1st_term_rec}
\Ppp{k}{\max\{\Delta \DMC_1 \mathds{1}_{\DMC_{0}\geq k-\ell} ,\dots, \Delta \DMC_{b'} \mathds{1}_{\DMC_{b'-1}\geq k-\ell}\}\leq \ell} 
\ge 
(1-C Q(\ell))^{b'}
\end{equation}
for some constant $C>0$.
Indeed, by the Markov property, 
\begin{multline}
\Ppp{k}{\max\{\Delta \DMC_1 \mathds{1}_{\DMC_{0}\geq k-\ell} ,\dots, \Delta \DMC_{b'} \mathds{1}_{\DMC_{b'-1}\geq k-\ell}\}\leq \ell} \\
=
\Ecp{k}{\mathds{1}\left\{\max\{ \Delta \DMC_1 \mathds{1}_{\DMC_{0}\geq k-\ell} ,\dots, \Delta \DMC_{b'-1} \mathds{1}_{\DMC_{b'-2}\geq k-\ell}\} \le \ell\right\}  \Ppsq{\Delta \DMC_{b'}\mathds{1}_{\DMC_{b'-1}\ge k-\ell}\le \ell}{\DMC_{b'-1}}}. \label{eq:rec_ma_DMC_step1}
\end{multline}
Now fix $j \leq 7\ell$. One can write 
\[
\Ppsq{\Delta \DMC_{b'}\mathds{1}_{\DMC_{b'-1}\ge k-\ell}\le \ell}{\DMC_{b'-1}=j} 
=
1-\Ppsq{\Delta \DMC_{b'}\mathds{1}_{\DMC_{b'-1} \ge k-\ell}> \ell}{\DMC_{b'-1}=j}.
\]
Since $k\ge 6\ell$, the last term is smaller than $\Ppsq{\Delta \DMC_{b'}\mathds{1}_{\DMC_{b'-1}\ge 5\ell}> \ell}{\DMC_{b'-1}=j}$. Besides, since $\ell\in\mathrm{GoodScales}$, we can use \cref{item:trans-bound1} in \cref{lem:trans-bound} (with $c_1=6/7$, $c_2=6$ and $y=j-\ell-1 $) to see that, for all $j\in \intervalleentier{5\ell}{7\ell}$, 
\[
\Ppsq{\Delta \DMC_{b'}\mathds{1}_{\DMC_{b'-1}\geq 5\ell}> \ell}{\DMC_{b'-1}=j}
=\Ppsq{\DMC_{1}\in\intervalleentier{1}{j-\ell-1}}{\DMC_{0}=j} 
\leq C Q(\ell). 
\]
The last bound actually holds for all $j\leq 7\ell$, since the probability on the left is zero if $j<5\ell$, so we proved that 
\[
\Ppsq{\Delta \DMC_{b'}\mathds{1}_{\DMC_{b'-1}\ge k-\ell}\le \ell}{\DMC_{b'-1}} 
\geq
1-CQ(\ell).
\]
Plugging this back into \eqref{eq:rec_ma_DMC_step1}, we obtain~\eqref{eq:ma_DMC_1st_term_rec} by induction on $b'$.

\medskip

\noindent\emph{\underline{Step 4}: Upper bound for the second term in  \eqref{eq:E_ell_split_sum}.} We now deal with the second term of \eqref{eq:E_ell_split_sum}. By Markov's inequality,
\[
\Ppp{k}{\sum_{i=1}^{b'}\Delta \DMC_i \mathds{1}_{\Delta \DMC_i \le \ell} \mathds{1}_{\DMC_{i-1}\geq k-\ell} > \ell}
\le 
\frac{1}{\ell} \sum_{i=1}^{b'} \Ecp{k}{\Delta \DMC_i \mathds{1}_{\Delta \DMC_i \le \ell} \mathds{1}_{\DMC_{i-1}\geq k-\ell}}.
\]
By the Markov property, each expectation is 
\[
    \Ecp{k}{\Delta \DMC_i \mathds{1}_{\Delta \DMC_i \le \ell} \mathds{1}_{\DMC_{i-1}\geq k-\ell}} 
=  
\Ecp{k}{\mathds{1}_{\DMC_{i-1}\geq k-\ell}\mathbb{E}_{\DMC_{i-1}}\big[\Delta \DMC_1  \mathds{1}_{\Delta \DMC_1 \le \ell}\big]}. 
\]
But now, on the event $\{\DMC_{i-1}\geq k-\ell\}$, we have $7\ell\ge k \geq \DMC_{i-1} \ge k-\ell \ge 5\ell$, \emph{i.e.}\ $\DMC_{i-1}\in\intervalleentier{5\ell}{7\ell}$. Hence, the inner expectation can be bounded using \cref{item:trans-bound3} in \cref{lem:trans-bound} as
\[
\mathbb{E}_{\DMC_{i-1}}\big[\Delta \DMC_1  \mathds{1}_{\Delta \DMC_1 \le \ell}\big]
\le  
C \ell Q(\ell).
\]

\noindent We deduce 
\[
\Ppp{k}{\sum_{i=1}^{b'}\Delta \DMC_i \mathds{1}_{\Delta \DMC_i \le \ell} \mathds{1}_{\DMC_{i-1}\geq k-\ell} > \ell}
\le
C b' Q(\ell).
\]

\medskip

\noindent\emph{\underline{Step 5}: Completing the proof of \eqref{eq:E_ell}.}
Going back to \eqref{eq:E_ell_split_sum}, the bounds from Steps 3 and 4 give
\[
\Ppp{k}{\max\{\Delta \DMC_1,\dots, \Delta \DMC_{b'}\}\leq \ell, k-\DMC_{b'}\leq\ell}
\geq
(1-CQ(\ell))^{b'} - Cb'Q(\ell).
\]
Then \eqref{eq:E_ell_partition}--\eqref{eq:E_ell_b'} provide, for some constants $c, C>0$,
\[
\Ppp{k}{E_\ell}
\geq
c \cdot Q(\ell) \cdot \bigg(\frac{1-(1-CQ(\ell))^{b+1}}{C Q(\ell)} - C b^2 Q(\ell)\bigg)\geq \frac{c}{C} \cdot\left(1-(1-CQ(\ell))^{b}-C^2(b Q(\ell))^2\right). 
\]
We recall that $b=b(\ell) \coloneqq \left\lceil \frac{a}{Q(\ell)} \right\rceil$.
To conclude, it remains to see that one can choose $a>0$ (small enough) so that the right-hand side is bounded from below by a positive constant, uniformly over $\ell$.  
For the first term, we note that
\[
1-(1-CQ(\ell))^{b}
\geq
1- \mathrm{e}^{-CbQ(\ell)}
\geq
1- \mathrm{e}^{-C a}.
\]
On the other hand, we can bound the second term by $C^2(a+Q(\ell))^2 \leq 2 C^2 a^2$ for $\ell$ large enough (where the threshold only depends on $a$). Overall we have proven the existence of two constants $c>0$ and $C>0$ such that for all $\ell \in \mathrm{GoodScales}$ large enough and $k\in\intervalleentier{6\ell}{7\ell}$,
\[
\Ppp{k}{E_\ell}
\geq
c  (1- \mathrm{e}^{-Ca} - 2C^2 a^2). 
\]
We can now take $a$ small enough so that this is positive, and remove the condition that $\ell$ is large enough by using $\mathbb{P}_k(E_{\ell})>0$ for all $\ell \in \mathrm{GoodScales}$ and $k\in\intervalleentier{6\ell}{7\ell}$. This concludes the proof of \eqref{eq:E_ell}.
\end{proof}

\begin{proof}[Proof of \cref{lem:prop-good-scales-item3} of \cref{lem:prop-good-scales}]
Let $\ell\in \mathrm{GoodScales}$. 
Recalling that $\Pp{B_{m}=m'}=\tpr(m;m')$ and using \cref{item:trans-bound0} in \cref{lem:trans-bound}, we get that for all $m\in \intervalleentier{5\ell}{7\ell}$,
\[
\forall m'\leq 4\ell, \quad 
\frac{c}{m} \cdot m' \cdot  q(m') \leq \Pp{B_{m}=m'}\leq \frac{C}{m} \cdot  m' \cdot  q(m'),
\]
and so
  \[\frac{c}{m} \sum_{m'=\ell}^{4\ell} m' \cdot  q(m') \leq \Pp{B_{m} \in \intervalleentier{\ell}{4\ell}}\leq \frac{C}{m} \sum_{m'=\ell}^{4\ell} m' \cdot  q(m').\]
As a consequence, there exists a constant $C>0$ such that
\begin{align}\label{eq:we-fuiebfo2}
\forall m\in \intervalleentier{5\ell}{7\ell}, \quad \forall m'  \in \intervalleentier{\ell}{4\ell}, \quad 
\frac{1}{C} \cdot \Pp{B_{5\ell}=m'}\leq \Pp{B_{m}=m'} \leq C \cdot \Pp{B_{5\ell}=m'},
\end{align}
and
\begin{align}\label{eq:we-fuiebfo}
\forall m\in \intervalleentier{5\ell}{7\ell}, \quad
\frac{1}{C} \cdot \Pp{B_{5\ell} \in \intervalleentier{\ell}{4\ell}}\leq \Pp{B_{m} \in \intervalleentier{\ell}{4\ell}} \leq C \cdot \Pp{B_{5\ell} \in \intervalleentier{\ell}{4\ell}}.
\end{align}
Combining \eqref{eq:we-fuiebfo2} and  \eqref{eq:we-fuiebfo}, we get that for all $m\in \intervalleentier{5\ell}{7\ell}$,
\begin{align*}
\forall m' \in \intervalleentier{\ell}{4\ell}, \quad
\Pp{G_m= m'} \stackrel{\eqref{eq:red-law}}{=} \frac{\Pp{B_m=m'}}{\Pp{B_m  \in \intervalleentier{\ell}{4\ell}}} \geq  \frac{1}{C^2} \cdot \frac{\Pp{B_{5\ell} = m'}}{ \Pp{B_{5\ell} \in \intervalleentier{\ell}{4\ell}}} = \frac{1}{C^2}\cdot \Pp{G_{5\ell} = m'}.
\end{align*}
This ensures that 
\begin{equation} \label{eq:coupling_sum_inf}
\sum_{m'=\ell}^{4\ell} \left(\inf_{m\in \intervalleentier{5\ell}{7\ell}} \Pp{G_m = m'}\right) \geq  c \sum_{m'=\ell}^{4\ell}  \Pp{G_{5\ell} = m'} = c,
\end{equation}
where in the last equality we used that the sum is one by definition \eqref{eq:red-law} of $G_{5\ell}$. By standard arguments (see for example \cite{prelov2022coupling}), this allows us to construct a coupling $G_m$, $m\in \intervalleentier{5\ell}{7\ell}$, of the golden instructions such that\footnote{Even if not strictly needed, we note that here we can take the same constant $c$ as in \eqref{eq:coupling_sum_inf}.}
    \[
    \Pp{\forall a, a'\in \intervalleentier{5\ell}{7\ell}, \ G_a = G_{a'}} \ge c.
    \]
This proves our claim.
\end{proof}

As explained in \cref{sect:descript}, we also need to show that the facts stated in \cref{lem:prop-good-scales-item2} of \cref{lem:prop-good-scales} for $\DMC$ also hold for $\DMC'$, conditional on them holding for $\DMC$, with uniformly positive probability across all good scales.

\begin{lem}\label{lem:cond_M_item_goodscale}
     Let $\DMC$ and $\DMC'$ be defined as the decreasing chains started from respectively $k$ and $k'$ under the black-golden coupling $\mathbb{P}_{k,k'}$ of~\cref{defn:black-golden coupling}.
     Let $c\in(0,1)$ be the constant in \cref{lem:prop-good-scales}. Then, for any  $\ell\in\mathrm{GoodScales}$ and for all $ k,k'\in \intervalleentier{6\ell}{7\ell}$, we have that
    \begin{equation*}
		\Pppsq{k,k'}{M'(H'_{4\ell} -1) \in \intervalleentier{5\ell}{7\ell}, M'(H'_{4\ell}) \in \intervalleentier{\ell}{4\ell} }{ M(H_{4\ell} -1) \in \intervalleentier{5\ell}{7\ell}, M(H_{4\ell}) \in \intervalleentier{\ell}{4\ell} } \ge \frac{c}{2}.
    \end{equation*}
\end{lem}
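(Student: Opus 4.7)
The strategy is to condition on the full trajectory of $M$ and argue that, on the event in the conditioning, the chain $M'$ (starting from $k'$) realizes its target event with conditional probability at least $c/2$. The underlying observation is that the shared black and golden instructions whose values are pinned down by the occurrence of $A_k \coloneqq \{M(H_{4\ell}-1) \in \intervalleentier{5\ell}{7\ell}, M(H_{4\ell}) \in \intervalleentier{\ell}{4\ell}\}$ are constrained to values that are consistent with, and even helpful for, the occurrence of the analogous event $A'_{k'}$ for $M'$. Thus the effect of the conditioning is benign, and the bound we seek should essentially match the marginal bound of \cref{lem:prop-good-scales-item2} of \cref{lem:prop-good-scales}, up to a factor of $1/2$ absorbing a small correction.

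Concretely, I would work inside the good event $E_\ell \subseteq A_k$ constructed in the proof of \cref{lem:prop-good-scales-item2}, which satisfies $\mathbb{P}_k(E_\ell) \geq c$ and on which the trajectory of $M$ inside $\intervalleentier{5\ell}{7\ell}$ is a finite sequence $\mathcal{V}(M) = \{v_0 = k, v_1, \ldots, v_J\}$ of cardinality at most $b+1 = O(1/Q(\ell))$, with total descent $\sum_{i<J}(v_i - v_{i+1}) \leq \ell$, and which then exits via the big jump $M(H_{4\ell}) = G_{v_J} = v_{J+1} \in \intervalleentier{\ell}{4\ell}$. Conditional on $\mathcal{F}_M$, all $B_m$ for $m \notin \mathcal{V}(M)$ and all $G_m$ for $m \in \intervalleentier{5\ell}{7\ell} \setminus \{v_J\}$ retain their unconditional joint distribution, while the pinned values give either small-step instructions ($B_{v_i} = v_{i+1}$ for $i<J$) or a big-step instruction triggering golden ($B_{v_J} \in \intervalleentier{\ell}{4\ell}$ with $G_{v_J} = v_{J+1}$). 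In particular, if $M'$ ever visits some $v_i \in \mathcal{V}(M)$ while still in $\intervalleentier{5\ell}{7\ell}$, the Markov property forces $M'$ to follow the deterministic trajectory $v_i, v_{i+1}, \ldots, v_J, v_{J+1}$, yielding $M'(H'_{4\ell}-1) = v_J \in \intervalleentier{5\ell}{7\ell}$ and $M'(H'_{4\ell}) = v_{J+1} \in \intervalleentier{\ell}{4\ell}$, so that $A'_{k'}$ holds automatically.

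To complete the argument, I would re-run the proof of \cref{lem:prop-good-scales-item2} for $M'$ under the conditional law $\mathbb{P}_{k,k'}(\cdot \mid \mathcal{F}_M)$: the only discrepancy compared to the unconditional setting is the pinning of the instructions at $\mathcal{V}(M)$, and by the previous paragraph, this pinning is benign (it either contributes a valid small jump consistent with the event $E'_{k'}$ built for $M'$, or deterministically realizes $A'_{k'}$ via the golden jump at $v_J$). The estimates in \cref{lem:trans-bound} used in the proof of \cref{lem:prop-good-scales-item2} continue to hold for the non-pinned instructions, and the contribution of a single pinned position $v_i$ to any of the probability estimates is at most $C \frac{v_i}{j} q(v_i) \leq C q(\ell)$ by \cref{item:trans-bound0} of \cref{lem:trans-bound}; summed over the at most $b+1 = O(1/Q(\ell))$ pinned positions, this produces a cumulative correction of order $(b+1)\, q(\ell) = O\!\left(\tfrac{1}{\ell}\right)$, which is negligible compared to the order-one lower bound obtained for $\mathbb{P}_{k'}(E'_{k'})$. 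Thus $\mathbb{P}_{k,k'}(A'_{k'} \mid \mathcal{F}_M) \geq c/2$ on the event $E_\ell \subseteq A_k$ (for $\ell$ large enough, and one can further shrink $c$ to absorb the finitely many smaller $\ell \in \mathrm{GoodScales}$), and averaging over $\mathcal{F}_M$ on $A_k$ gives the claimed bound $\mathbb{P}_{k,k'}(A'_{k'} \mid A_k) \geq c/2$.

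The main obstacle is the last paragraph: one must carefully verify that substituting deterministic pinned transitions for random ones at a small set of positions leaves the proof of \cref{lem:prop-good-scales-item2} essentially intact, and that the cumulative loss from excluding $O(1/Q(\ell))$ pinned positions remains bounded by the $1/2$ safety factor uniformly over $\ell \in \mathrm{GoodScales}$ and $k,k' \in \intervalleentier{6\ell}{7\ell}$. This is a bookkeeping issue rather than a conceptual one, as both cases (whether $M'$ hits $\mathcal{V}(M)$ or avoids it entirely) favor the realization of $A'_{k'}$.
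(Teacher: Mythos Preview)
Your overall strategy---condition on the trajectory $\vec m$ of $M$, observe that if $M'$ merges into $\vec m$ it automatically inherits $A'_{k'}$, and handle the remaining case via independence of the unpinned instructions---is the right one and is exactly what the paper does. But your execution has two concrete gaps.

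First, the final averaging step does not give what you claim. You establish $\mathbb{P}_{k,k'}(A'_{k'}\mid\mathcal{F}_M)\ge c/2$ only on the subevent $E_\ell\subsetneq A_k$, and ``averaging over $\mathcal{F}_M$ on $A_k$'' then yields at best
\[
\mathbb{P}_{k,k'}(A'_{k'}\mid A_k)\ \ge\ \frac{(c/2)\,\mathbb{P}_k(E_\ell)}{\mathbb{P}_k(A_k)}\ \ge\ \frac{c^2}{2},
\]
not $c/2$. (This weaker constant would still suffice downstream, but it does not prove the lemma as stated.)

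Second, the perturbation bound in your third paragraph misapplies \cref{item:trans-bound0} of \cref{lem:trans-bound}. The upper bound $\pi(j;v_i)\le C\tfrac{v_i}{j}q(v_i)$ requires $v_i\le c_1 j$ for some fixed $c_1<1$; here both $j$ and $v_i$ lie in $\intervalleentier{5\ell}{7\ell}$, so this fails whenever $v_i$ is close to $j$. For instance $\pi(j;j-1)\asymp q(1)=\Theta(1)$, not $O(q(\ell))$, so the ``cumulative correction of order $(b+1)\,q(\ell)=O(1/\ell)$'' is not justified.

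The paper sidesteps both issues with a clean two-case split that works for \emph{every} path $\vec m$ realizing $A_k$ (no restriction to $E_\ell$, no perturbation). Let $\mathcal{E}$ be the event that $M'$ merges into $\vec m$ before $H'_{4\ell}$, and let $\mathcal{E}^{\mathrm c}$ be the event that $M'$ realizes $A'_{k'}$ while its path up to $H'_{4\ell}-1$ stays disjoint from $\vec m$. Both events are measurable with respect to black instructions at positions \emph{disjoint} from $\vec m$, hence independent of the conditioning. On the conditioning, $\mathcal{E}\subset A'_{k'}$ (your second-paragraph observation); unconditionally $A'_{k'}\subset\mathcal{E}\cup\mathcal{E}^{\mathrm c}$, so $\mathbb{P}(\mathcal{E}^{\mathrm c})\ge\mathbb{P}(A'_{k'})-\mathbb{P}(\mathcal{E})\ge c-\mathbb{P}(\mathcal{E})$. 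Thus either $\mathbb{P}(\mathcal{E})\ge c/2$ or $\mathbb{P}(\mathcal{E}^{\mathrm c})\ge c/2$, and in both cases $\mathbb{P}(A'_{k'}\mid\text{cond})\ge c/2$. No bound on $|\vec m|$ is ever needed.
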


\begin{proof}
    Fix $\ell\in\mathrm{GoodScales}$ and $ k,k'\in \intervalleentier{6\ell}{7\ell}$, and define the finite set of paths
	\[
	\mathcal{M}_{k,\ell}\coloneqq\Big\{\vec{m}=(m_i)_{i\in\intervalleentier{0}{h-1}}:h>0,\,\,k=m_0 >m_1 > \ldots >m_{h-1},\,\, m_{h-1} \in \intervalleentier{5\ell}{7\ell}\Big\}.
	\]
	We then have
	\begin{multline*}
	\left\{M_0=k, M(H_{4\ell} -1) \in \intervalleentier{5\ell}{7\ell}, M(H_{4\ell}) \in \intervalleentier{\ell}{4\ell} )\right\} \\
	=
	\bigsqcup_{\vec{m}\in\mathcal{M}_{k,\ell}}\left\{(M_i)_{i\in\intervalleentier{0}{H_{4\ell}-1 }}=\vec{m}, B_{m_{h-1}}\in \intervalleentier{\ell}{4\ell}\right\},
	\end{multline*}
    where we recall that $B_{m_{h-1}}$ is the black instruction at height $m_{h-1}$.
    We now fix a path $\vec{m}=(m_i)_{i\in\intervalleentier{0}{h-1}}\in\mathcal{M}_{k,\ell}$ and set  $\leftevent\coloneqq \{M'(H'_{4\ell} -1) \in \intervalleentier{5\ell}{7\ell}, M'(H'_{4\ell}) \in \intervalleentier{\ell}{4\ell}\}$.  We will show that
	\begin{equation} \label{eq:conditionP}
		\Pppsq{k,k'}{\leftevent }{ (M_i)_{i\in\intervalleentier{0}{H_{4\ell}-1 }}=\vec{m}, B_{m_{h-1}}\in \intervalleentier{\ell}{4\ell}} \ge \frac{c}{2}.
	\end{equation}
    Note that this immediately implies the lemma statement.
    We need to introduce some new notation. Define, for all $t'>0$, the finite set of paths
	\begin{multline*}
     \mathcal{M}'_{k'}(\vec{m},t')\coloneqq\Big\{\vec{m}'=(m'_i)_{i\in\intervalleentier{0}{t'}}:  k'=m'_0 >m'_1 > \ldots >m'_{t'},\\
	 m'_{t'}=m_{t}\text{ for some $t\leq h-1$},\,\, m'_i\neq m_j \text{ for all $i< t'$ and $j< t$}\Big\},
	\end{multline*}
	that is, the set of all possible paths for $M'$ up to $t'$ that are merging into the path $\vec{m}$ at time $t'$. We also define the set
	\begin{multline*}
		{\mathcal{M}'^{\,\mathrm{c}}_{k'}}(\vec{m})\coloneqq\Big\{\vec{m}'=(m'_i)_{i\in\intervalleentier{0}{h'-1}}:h'>0, \,\, k'=m'_0 >m'_1 > \ldots >m'_{h'-1},\\ 
		m'_{h'-1} \in \intervalleentier{5\ell}{7\ell},\,\, m'_i\neq m_j \text{ for all $i \leq h'-1$ and $j \leq h-1$}\Big\},
	\end{multline*}
	that is, the set of all possible paths for $M'$ up to $\stt'_{4\ell}-1$ on the event $\leftevent$ that are not merging into the path $\vec{m}$ up to time $h-1$.
	We also introduce the events 
	\begin{align*}
        \mathcal{E}_{k',\vec{m}}&\coloneqq \bigsqcup_{t'>0} \left\{(M'_i)_{i\in\intervalleentier{0}{t'}}\in \mathcal{M}'_{k'}(\vec{m},t')\right\}, \\
        \mathcal{E}^{\mathrm{c}}_{k',\vec{m}}&\coloneqq\left\{(M'_i)_{i\in\intervalleentier{0}{H'_{4\ell}-1}}\in \mathcal{M}'^{\,\mathrm{c}}_{k'}(\vec{m})\, , \,B_{M'(H'_{4\ell}-1)}\in \intervalleentier{\ell}{4\ell}\right\}.
	\end{align*}
        Note that $\mathcal{E}^{\mathrm{c}}_{k',\vec{m}} \subset \leftevent$, and on the event $\left\{(M_i)_{i\in\intervalleentier{0}{H_{4\ell}-1 }}=\vec{m}, B_{m_{h-1}}\in \intervalleentier{\ell}{4\ell}\right\}$,
        \begin{equation}\label{eq:[artitioning}
            \leftevent=\mathcal{E}_{k',\vec{m}}\sqcup\mathcal{E}^{\mathrm{c}}_{k',\vec{m}}.
        \end{equation}
        
	We divide our analysis of \eqref{eq:conditionP} into two cases. 
	
	\medskip
	
	\noindent\emph{\underline{Case 1}:} We first assume that $\mathbb{P}_{k'}(\mathcal{E}_{k',\vec{m}}) \ge \frac{c}{2}$. Note that thanks to \eqref{eq:[artitioning},
	\begin{multline}\label{eq:conditionP10}
		\Pppsq{k,k'}{\leftevent }{(M_i)_{i\in\intervalleentier{0}{H_{4\ell}-1 }}=\vec{m}, B_{m_{h-1}}\in \intervalleentier{\ell}{4\ell}} 
		\ge 
		\Pppsq{k,k'}{\mathcal{E}_{k',\vec{m}}}{(M_i)_{i\in\intervalleentier{0}{H_{4\ell}-1 }}=\vec{m}, B_{m_{h-1}\in \intervalleentier{\ell}{4\ell}}}\\
        =\sum_{t'>0}\Pppsq{k,k'}{(M'_i)_{i\in\intervalleentier{0}{t'}}\in \mathcal{M}'_{k'}(\vec{m},t')}{(M_i)_{i\in\intervalleentier{0}{H_{4\ell}-1 }}=\vec{m}, B_{m_{h-1}\in \intervalleentier{\ell}{4\ell}}}.
	\end{multline}

    Now, fix $t'>0$. For any path $\vec{m}'=(m'_i)_{i\in\intervalleentier{0}{t'}}\in \mathcal{M}'_{k'}(\vec{m},t')$, the event $\left\{(M'_i)_{i\in\intervalleentier{0}{t'}}=\vec{m}'\right\}$ is measurable w.r.t.\ the black instructions  $(B_{m'_i})_{i\in\intervalleentier{0}{t'-1}}$, while $\left\{(M_i)_{i\in\intervalleentier{0}{H_{4\ell}-1}}=\vec{m},B_{m_{h-1}}\in \intervalleentier{\ell}{4\ell}\right\}$ is measurable w.r.t.\ the black instructions  $(B_{m_i})_{i\in\intervalleentier{0}{h-1}}$. 
    But, by the definition of $\mathcal{M}'_{k'}(\vec{m},t')$, we have that $m'_i\neq m_j$  for all $i< t'$ and $j< t$ (and so also for all $j \leq h-1$ since both paths are decreasing and $m_{t'}'=m_t$).
	Therefore, these two events are independent by the independence of the black instructions. As a result, we have
	\begin{equation}\label{eq:conditionP11}
	\Pppsq{k,k'}{\mathcal{E}_{k',\vec{m}}}{ (M_i)_{i\in\intervalleentier{0}{H_{4\ell}-1 }}=\vec{m}, B_{m_{h-1}}\in \intervalleentier{\ell}{4\ell}} = \Ppp{k,k'}{\mathcal{E}_{k',\vec{m}}} \ge \frac{c}{2}
	\end{equation}
    by our assumption that $\Ppp{k'}{\mathcal{E}_{k',\vec{m}}} \ge \frac{c}{2}$.
    Combining \eqref{eq:conditionP10} and \eqref{eq:conditionP11}, we get \eqref{eq:conditionP}.
	
	\medskip
	
	\noindent\emph{\underline{Case 2}:} We now assume that $\Ppp{k'}{\mathcal{E}_{k',\vec{m}}} < \frac{c}{2}$.  
    Because of \cref{eq:[artitioning}, we have the bound
	\begin{multline}\label{eq:eifbubefobew}
		\Pppsq{k,k'}{\leftevent }{(M_i)_{i\in\intervalleentier{0}{H_{4\ell}-1 }}=\vec{m}, B_{m_{h-1}}\in \intervalleentier{\ell}{4\ell}}\\
		\ge 
        \Pppsq{k,k'}{\mathcal{E}^{\mathrm{c}}_{k',\vec{m}}}{(M_i)_{i\in\intervalleentier{0}{H_{4\ell}-1 }}=\vec{m}, B_{m_{h-1}}\in \intervalleentier{\ell}{4\ell}}.
	\end{multline}
	Now, for any $\vec{m}'=(m'_i)_{i\in\intervalleentier{0}{h'-1}}\in \mathcal{M}'^{\,\mathrm{c}}_{k'}(\vec{m})$, the event $\left\{(M'_i)_{i\in\intervalleentier{0}{H'_{4\ell}-1}}=\vec{m}', B_{m'_{h'-1}}\in \intervalleentier{\ell}{4\ell}\right\}$ is measurable w.r.t.\ the black instructions  $(B_{m'_i})_{i\in\intervalleentier{0}{h'-1}}$, 
    while  $\left\{(M_i)_{i\in\intervalleentier{0}{H_{4\ell}-1 }}=\vec{m}, B_{m_{h-1}}\in \intervalleentier{\ell}{4\ell}\right\}$ is measurable w.r.t.\ the black instructions  $(B_{m_i})_{i\in\intervalleentier{0}{h-1}}$. 
    But, by definition of $\mathcal{M}'^{\,\mathrm{c}}_{k'}(\vec{m})$, we have that $m'_i\neq m_j$  for all $i\leq h'-1$ and $j\leq h$ and thus these two events are independent. 
    Combined with \eqref{eq:eifbubefobew}, this shows that
	\begin{equation}\label{eq:wefhvweifb}
		\Pppsq{k,k'}{\leftevent }{(M_i)_{i\in\intervalleentier{0}{H_{4\ell}-1 }}=\vec{m}, B_{m_{h-1}}\in \intervalleentier{\ell}{4\ell}}
		\ge  
        \Ppp{k,k'}{\mathcal{E}^{\mathrm{c}}_{k',\vec{m}}}.
	\end{equation}
	Now, since $\mathcal{E}^{\mathrm{c}}_{k',\vec{m}} \subset \leftevent \subset \mathcal{E}^{\mathrm{c}}_{k',\vec{m}} \cup \mathcal{E}_{k',\vec{m}}$, we can write
	\begin{equation*}
		\Ppp{k,k'}{\leftevent}
		= 
        \Ppp{k,k'}{\mathcal{E}_{k',\vec{m}}^{\mathrm{c}}}
		+ 
        \Pppsq{k,k'}{\leftevent}{ \mathcal{E}_{k',\vec{m}}}\cdot \Ppp{k,k'}{\mathcal{E}_{k',\vec{m}}}.
	\end{equation*}
	Thanks to \cref{lem:prop-good-scales-item2} in \cref{lem:prop-good-scales}, we have that $\Ppp{k,k'}{\leftevent} \geq c$. Since $\mathbb{P}_{k,k'}(\mathcal{E}_{k',\vec{m}}) < \frac{c}{2}$ by assumption, we immediately get that
	\begin{equation}\label{eq:unconditionedcons} 
        \Ppp{k,k'}{\mathcal{E}_{k',\vec{m}}^{\mathrm{c}}} > \frac{c}{2}. 
	\end{equation}
	Thus, we get \eqref{eq:conditionP} as a consequence of \eqref{eq:wefhvweifb} and \eqref{eq:unconditionedcons}.
	
	\medskip

    This concludes the proof of the lemma.
\end{proof}

\subsubsection{Completing the coupling argument: fast merging of the two walks}\label{sect:proof-coupling}

We are now ready to complete the proof of \cref{prop:coupling_merging}.

\begin{proof}[Proof of \cref{prop:coupling_merging}]
Most of the proof will be devoted to establishing~\cref{item:fast-merg-coupling}.
The gist of the proof is the following: by Items \ref{lem:prop-good-scales-item1} and \ref{lem:prop-good-scales-item2} of \cref{lem:prop-good-scales} (combined with \cref{lem:cond_M_item_goodscale}), at each good scale, there is positive probability that both $\DMC$ and $\DMC'$ follow a golden instruction, and therefore positive probability to merge thanks to \cref{lem:prop-good-scales-item3} of \cref{lem:prop-good-scales}. Moreover, by the specific conditioning in \cref{lem:prop-good-scales-item1} of \cref{lem:prop-good-scales}, we can iterate this argument at each good scale, so this will happen at a fair amount of scales. Since there are infinitely many good scales, it will happen with probability going to $1$ as the starting points $k$ and $k'$ of $\DMC$ and $\DMC'$ go to infinity. 

\smallskip

We now make this argument rigorous.
Without loss of generality, we assume that $k'\geq k$. 
Fix $A\geq 1$. For $k\ge A$, we recall that $s=s(A, k)$ is the number of good scales $\ell$ in $\intervalleentier{A}{k}$ such that $7\ell<k$.
We denote, for all $k\ge 1$, 
\begin{equation}\label{eq:enum-good-scales}
\ell^1=\ell^1(k)\coloneqq\max\{\ell'\in\mathrm{GoodScales}  \, :  \, 7\ell'<k\},
\end{equation}
with the convention $\ell^1(k)=0$ if no such scale exists, and then write $\ell^{i}=\ell^{i}(k)$ for the $i$-th good scale down from $\ell^1(k)$.

We start by noting that $\DMC$ and $\DMC'$ will meet above height $A$ if all the following events occur simultaneously at some scale $\ell^i$ with $1\le i\le s$ (recall \cref{fig-good-scales2}):
\begin{itemize}
    \item The chains $\DMC$ and $\DMC'$, at times $\stt_{7\ell^{i}}$ and $H'_{7\ell^{i}}$ respectively, land in the sub-interval $\intervalleentier{6\ell^{i}}{7\ell^{i}}$ (highlighted in blue in \cref{fig-good-scales2}).
    \item The chains $\DMC$ and $\DMC'$ both exit the interval $\intervalleentier{5\ell^{i}}{7\ell^{i}}$ by following a golden instruction, \emph{i.e.}\ landing in the golden sub-interval in \cref{fig-good-scales2}. 
    This means that $\DMC\big(\stt_{4\ell^i}-1\big) \in \intervalleentier{5\ell^i}{7\ell^i}$ and the black instruction $B_{\DMC(\stt_{4\ell^i}-1)}$ is inside the interval $\intervalleentier{\ell^i}{4\ell^i}$, and that the same holds for $\DMC'$.
    \item The golden instructions $G_{\DMC(\stt_{4\ell^{i}}-1)}$ and $G_{\DMC'(H'_{4\ell^{i}}-1)}$ are equal.
\end{itemize}

\noindent Therefore, for any $i \in \intervalleentier{1}{s}$ we introduce the event
\begin{equation} \label{eq:def_Ai_cap}
\evnt_i \coloneqq  \evnt_i^1 \cap (\evnt_i^1)' \cap \evnt_i^2 \cap (\evnt_i^2)' \cap \evnt_i^3,
\end{equation}
where 
\begin{equation*}
\left\lbrace
\begin{aligned}
\evnt_i^1
&=
\left\{\DMC(\stt_{7\ell^i})\in  \intervalleentier{6\ell^i}{7\ell^i} \right\}, \\
    \evnt_i^2
&=
\Big\{\DMC\big(\stt_{4\ell^i}-1\big) \in \intervalleentier{5\ell^i}{7\ell^i}, B_{\DMC(\stt_{4\ell^{i}}-1)}\in \intervalleentier{\ell^i}{4\ell^i}\Big\},\\
\evnt_i^3
&=
\left\{\forall a,a'\in \intervalleentier{5\ell^i}{7\ell^i}, \ G_{a}= G_{a'}\right\},
\end{aligned}
\right.
\end{equation*}
and $(\evnt_i^1)'$ and $(\evnt_i^2)'$ are constructed similarly to $\evnt_i^1$ and $\evnt_i^2$ but for the chain $\DMC'$. 
Recall the $\sigma$-field $\Gf_{\ell^i}$ of \eqref{eq:def_Ff}.
Note that for any $j<i$, the event $\evnt_{j}$ is $\Gf_{\ell^{i}}$-measurable.

The preceding discussion leads to
\begin{equation} \label{eq:bound_xmer_Ai}
\Ppp{k,k'}{\xmer < A}
\le
\Ppp{k,k'}{\bigcap_{i=1}^{s} \evnt_i^{\mathrm{c}}}.
\end{equation}

We start by proving that for any $i \in \intervalleentier{1}{s}$ we almost surely have 
\begin{align} \label{eq:lb_P(Ai)}
\Pppsq{k,k'}{\evnt_i}{\Gf_{\ell^i}}
\geq c^5/2,
\end{align}
where $c\in(0,1)$ is as in \cref{lem:prop-good-scales}.
We consider the events sequentially.

\medskip

\noindent\emph{\underline{Bound on $\Pppsq{k,k'}{\evnt_i^1 \cap (\evnt_i^1)'}{\Gf_{\ell^i}}$}}: 
By \cref{lem:spatial markov property item1} of \cref{lem:spatial Markov property} together with \cref{lem:prop-good-scales-item1} of \cref{lem:prop-good-scales}, 
we get that on the event $\{\DMC(H_{7\ell^i}-1)=m, \DMC'(H'_{7\ell^i}-1)=m'\}$ with  
$m\in\intervalleentier{7\ell^i +1}{k}$ and  $m'\in\intervalleentier{7\ell^i +1}{k'}$ such that $m \ne m'$,
we have 
\begin{align*}
\Pppsq{k,k'}{\evnt_i^1 \cap (\evnt_i^1)'}{\Gf_{\ell^i}} 
&= \Ppsq{B_m,B_{m'}\in \intervalleentier{6\ell}{7\ell} 
}{B_m\leq 7\ell, B_{m'}\leq 7\ell} \notag \\
&= \Ppsq{B_m\in \intervalleentier{6\ell}{7\ell} 
}{B_m\leq 7\ell} \cdot \Ppsq{B_{m'}\in \intervalleentier{6\ell}{7\ell} 
}{B_{m'}\leq 7\ell} \notag \\
&\geq c^2. 
\end{align*}
Moreover, if $m=m'$, the same computation gives a lower bound $c \geq c^2$, so the last inequality is true for every possible value $(m,m')$ that can be taken by the pair $(\DMC(H_{7\ell^i}-1),\DMC'(H'_{7\ell^i}-1))$. Therefore, we almost surely have
\begin{equation}\label{eq:bnd_evnt1_i}
    \Pppsq{k,k'}{\evnt_i^1\cap (\evnt_i^1)'}{\Gf_{\ell^i}} \geq c^2.
\end{equation}

\medskip

\noindent\emph{\underline{Bound on $\Pppsq{k,k'}{\evnt_i^1 \cap (\evnt_i^1)' \cap  \evnt_i^2 \cap (\evnt_i^2)' }{\Gf_{\ell^i}}$ }}: 
Let $j,j'\in \intervalleentier{6\ell^i}{7\ell^i}$.
Conditioning on $\evnt_i^2$, we have
\[
\Ppp{j,j'}{\evnt_i^2 \cap (\evnt_i^2)'}
=
\Ecp{j}{\mathds{1}_{\evnt_i^2} \cdot 
\Pppsq{j,j'}{(\evnt_i^2)'}{\evnt_i^2}}. 
\]
From there, we use \cref{lem:cond_M_item_goodscale} to get $\Pppsq{j,j'}{(\evnt_i^2)'}{\evnt_i^2} \geq c/2$ and then Item~\ref{lem:prop-good-scales-item2} in \cref{lem:prop-good-scales} to obtain
\begin{equation} \label{eq:bnd_evnt2_j}
\Ppp{j,j'}{\evnt_i^2 \cap (\evnt_i^2)'} \geq c^2/2.
\end{equation}
Note that the events $\evnt_i^2$ and $(\evnt_i^2)'$ are realized for the chains $\DMC$ and $\DMC'$ if and only if they are realized for the chains $(\DMC(H_{7\ell^i}+t))_{t\geq 0}$ and $(\DMC'(H'_{7\ell^i}+t))_{t\geq 0}$.
By \cref{lem:spatial markov property item2} of \cref{lem:spatial Markov property}, this ensures that
\begin{align*}
\Pppsq{k,k'}{\evnt_i^1 \cap (\evnt_i^1)' \cap  \evnt_i^2 \cap (\evnt_i^2)' }{\Gf_{\ell^i}} 
&= \Ecpsq{k,k'}{\mathds{1}_{\evnt_i^1 \cap (\evnt_i^1)'} \mathds{1}_{\evnt_i^2 \cap (\evnt_i^2)'} }{\Gf_{\ell^i}} \\
&=\Ecpsq{k,k'}{\mathds{1}_{\evnt_i^1\cap (\evnt_i^1)'} \Ppp{\DMC(H_{7\ell^i}),\DMC'(H'_{7\ell^i})}{\evnt_i^2\cap (\evnt_i^2)' }}{\Gf_{\ell^i}}\\
&\overset{\mathclap{\eqref{eq:bnd_evnt2_j}}}{\geq} \, \,  \frac{c^2}{2}\Ecpsq{k,k'}{\mathds{1}_{\evnt_i^1 \cap (\evnt_i^1)'} }{\Gf_{\ell^i}} 
\overset{\eqref{eq:bnd_evnt1_i}}{\geq} \frac{c^4}{2}
\end{align*}
almost surely.

\medskip

\noindent\emph{\underline{Bound on $\Pppsq{k,k'}{\evnt_i}{\Gf_{\ell^i}}$}}: 
Finally, by independence of black and golden instructions, and independence of golden instructions between different scales, the event $\evnt_i^3$ is independent of the chains up to the times $H_{4\ell^i}-1$ and $H'_{4\ell^i}-1$ and of $B_{\DMC(H_{4\ell^i}-1)}$ and $B_{\DMC'(H'_{4\ell^i}-1)}$, so it is independent of $\evnt_i^1 \cap (\evnt_i^1)' \cap  \evnt_i^2 \cap (\evnt_i^2)'$. 
Since it is also independent of $\Gf_{\ell^i}$, we have, using Item~\ref{lem:prop-good-scales-item3} of \cref{lem:prop-good-scales},
\begin{align*}
\Pppsq{k,k'}{\evnt_i^1 \cap (\evnt_i^1)' \cap  \evnt_i^2 \cap (\evnt_i^2)'\cap \evnt_i^3 }{\Gf_{\ell^i}}
= \Pp{\evnt_i^3} \Pppsq{k,k'}{\evnt_i^1 \cap (\evnt_i^1)' \cap  \evnt_i^2 \cap (\evnt_i^2)'}{\Gf_{\ell^i}} \geq c \cdot \frac{c^4}{2} \geq \frac{c^5}{2}
\end{align*}
almost surely, which proves \eqref{eq:lb_P(Ai)}.

\medskip

Now, using in cascade that $\evnt_1,\dots ,\evnt_i$ are $\Gf_{\ell^{i+1}}$-measurable together with the fact \eqref{eq:lb_P(Ai)} we get
\begin{align*}
\Ppp{k,k'}{\bigcap_{i=1}^{s} \evnt_i^{\mathrm{c}}} 
&=\Ecp{k,k'}{\mathds{1}_{\evnt_1^{\mathrm{c}}} \dots  \mathds{1}_{\evnt_{s-1}^{\mathrm{c}}}\Ppsq{\evnt_s^{\mathrm{c}}}{\Gf_{\ell^s}}}\\
&\leq \bigg(1-\frac{c^5}{2}\bigg) \cdot \Ecp{k,k'}{\mathds{1}_{\evnt_1^{\mathrm{c}}} \dots \mathds{1}_{\evnt_{s-1}^{\mathrm{c}}}} \\
&\leq \bigg(1-\frac{c^5}{2}\bigg)^s=\bigg(1-\frac{c^5}{2}\bigg)^{s(A, k)}.
\end{align*}
This proves~\cref{item:fast-merg-coupling}~of~\cref{prop:coupling_merging}.
\medskip

We now prove \cref{item:inf-lim-t-mer-coupling}, \emph{i.e.}\ the one about $\mathtt{T} - \stmer$ and $\mathtt{T}' - \stmerpr$, which will be a consequence of the first one. Recall from the statement of~\cref{prop:coupling_merging} that $\stmer$ and $\stmerpr$ are the respective first times when the two chains $\DMC$ and $\DMC'$ visit $\xmer$. By symmetry, we may focus on the chain $M$, say, and prove that $\mathtt{T}-\stmer$ goes to infinity in probability. For all $t> 0$, since the chain $\DMC$ is decreasing, we have
\[
\Ppp{k,k'}{\mathtt{T}-\stmer < t} 
=
\Ppp{k,k'}{\stmer > \mathtt{T} -t} 
=
\Ppp{k,k'}{\xmer < \DMC_{\mathtt{T}-t}}. 
\]
We now introduce another parameter $A>0$, that we will tune later, and split the above display as follows: 
    \begin{equation} \label{eq:bound_T-tmer_A}
    \Ppp{k,k'}{\mathtt{T}-\stmer < t}
    \le
    \Ppp{k,k'}{\xmer \le A} + \Ppp{k}{\DMC_{\mathtt{T}-t} > A}
    \le
    \Ppp{k,k'}{\xmer \le A} + \sup_{\textsf{k}}\Ppp{\textsf{k}}{\DMC_{\mathtt{T}-t} > A}.
    \end{equation}
    By~\cref{item:fast-merg-coupling}~of~\cref{prop:coupling_merging}, for any fixed $A>0$, the first term on the right-hand side goes to $0$ as $k,k'\to \infty$. It remains to prove that $\sup_{\mathsf{k}} \Ppp{\mathsf{k}}{\DMC_{\mathtt{T}-t} > A} \to 0$ as $A\to\infty$.
        To see this, we use a decomposition into blocks and \cref{lem:prop-good-scales-item1}~of~\cref{lem:prop-good-scales}.
        Let $s\ge 1$, and take $A$ large enough so that there are at least $s \cdot (t+1)$ good scales $\ell$ such that $\ell<A$. 
        We see $t+1$ as the number of blocks, each containing $s$ consecutive good scales. Then we note that, on the event $\{\DMC_{\mathtt{T}-t} > A\}$, the chain $\DMC$ only has at most $t$ units of time to visit $(t+1)$ blocks of good scales. As a consequence, on that event, at least one block of $s$ good scales is not visited by $\DMC$. By a union bound and iterating the bound of \cref{lem:prop-good-scales-item1} in \cref{lem:prop-good-scales}, this gives that for all $\mathsf{k}> A$,
        \[
        \Ppp{\mathsf{k}}{\DMC_{\mathtt{T}-t} > A}
        \le
        (t+1)(1-c)^s.
        \]
        The above estimate is actually uniform over $\mathsf{k}\ge 1$ since the probability is $0$ if $\mathsf{k}\le A$. This entails that $\sup_{\mathsf{k}} \Ppp{\mathsf{k}}{\DMC_{\mathtt{T}-t} > A} \to 0$ as $A\to\infty$, providing the missing estimate in \eqref{eq:bound_T-tmer_A}, and hence~\cref{item:inf-lim-t-mer-coupling}.
\end{proof}

\section{Good regularity estimates for the sequences $q$ and $Q$}\label{sect:better-bounds}

The main goal of this section is to prove the desired good regularity estimates.

\begin{thm}[Good regularity estimates: $q$ and $Q$ are regularly varying]
\label{thm:good_regularity}
	There exist two slowly varying functions $\svfq, \svfbQ$ such that for all $k \geq 1$,
    \begin{equation*}
        q(k)=k^{-\frac{1}{2\alpha}-1} \svfq(k)
        \qquad \text{and}\qquad 
        Q(k)=k^{-\frac{1}{2\alpha}} \svfbQ(k),
    \end{equation*}
    where $\alpha \in \left( 1/2,1 \right)$ is as in \cref{thm:main}.
\end{thm}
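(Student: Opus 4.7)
The plan is to combine the law of large numbers from Theorem~\ref{thm:convergence_size_intersection} with a refined version of the reverse R\'emy cascading argument from Step 4 of the proof of Lemma~\ref{lem:q_and_sigma}. The first task is to transfer the concentration $\#\mathcal{L}^{\max}_{\cap}(T^k)/k \to \lambda$ in probability from the conditioned tree $T^k$ to the tree $T_n$. Since $T_n$ conditioned on $\{X_n=k\}$ has the law of $T^k$ conditioned on $\{|T^k|=n\}$, and since $|T^k|$ should be concentrated around $k^{1/\alpha}$ at typical scales (using Corollary~\ref{cor:lower_tail_sigma} together with $T_n$ being $T$ conditioned on $\{|T|=n\}$), a decomposition summing over the typical values of $k=X_n$ should yield
\[
\frac{\#\mathcal{L}^{\max}_{\cap}(T_n)}{X_n} \xrightarrow[n\to\infty]{\mathbb{P}} \lambda.
\]

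Next, I would revisit the reverse R\'emy argument leading to Lemma~\ref{lem:q_and_sigma}. The crucial improvement is that, by the discussion around \eqref{eq:discussion-for-good-est}, on the event $\{X_n=k\}$ we have the \emph{exact} identity $\mathbb{P}[\sigma_k<n\mid \overleftarrow{\mathcal{F}}_n]=1-\#\mathcal{L}^{\max}_{\cap}(T_n)/n$, which by the previous step is $(1+o(1))(1-\lambda k/n)$ rather than merely bounded below by $1-k/n$. Iterating this at times $n=\sigma_{k+1}-j$ as in \eqref{eqn:reverse_Remy_geometric_comparison}, and summing the resulting geometric-like tail, I expect to obtain $\mathbb{E}[\sigma_{k+1}-\sigma_k\mid \sigma_{k+1}]=(1+o(1))\sigma_{k+1}/(\lambda k)$. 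Combined with the complementary forward R\'emy computation \eqref{eq:for-fut-ref} (analogously refined by replacing the lower bound $p(k-1)/n$ on $\mathbb{P}[\sigma_{k+1}=n+1\mid \mathcal{F}_n]$ with the asymptotic identity $\lambda k/n$), this should upgrade the two-sided bounds in \eqref{eqn:q_sigma_lower_and_upper} to the asymptotic equivalence
\[
q(k)=(1+o(1))\,\frac{1}{2\sqrt{\pi}\,\lambda k}\,\mathbb{E}[\sigma_k^{-1/2}]=(1+o(1))\,\frac{Q(k)}{2\lambda k}.
\]

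From here, the regular variation of $Q$ follows quickly. Since $Q(k+1)/Q(k)=1-q(k)/Q(k)=1-(1+o(1))/(2\lambda k)$, a telescoping argument mirroring the one in the proof of Proposition~\ref{prop:rough_regularity} gives, for every $x\geq 1$,
\[
\frac{Q(\lceil xk\rceil)}{Q(k)}
=\prod_{i=k}^{\lceil xk\rceil-1}\!\left(1-\frac{1+o(1)}{2\lambda i}\right)
\xrightarrow[k\to\infty]{}x^{-1/(2\lambda)}.
\]
Karamata's characterization theorem then implies that $Q$ is regularly varying with index $-1/(2\lambda)$. Comparing with the rough estimate $Q(k)=k^{-1/(2\alpha)+o(1)}$ of Proposition~\ref{prop:rough_regularity} forces $\lambda=\alpha$ (see also Remark~\ref{rmk:lambda=alpha}), so $Q(k)=k^{-1/(2\alpha)}\Phi(k)$ for a slowly varying $\Phi$. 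For $q$, the asymptotic $q(k)\sim Q(k)/(2\alpha k)$ immediately ensures that $\varphi(k)\coloneqq q(k)\,k^{1/(2\alpha)+1}$ is slowly varying as well, producing the stated form.

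The main obstacle I anticipate is the cascading step: in order to sum the small per-step errors $o(1/k)$ over order-$k$ steps without losing the logarithmic gain, one needs a quantitatively uniform version of the convergence $\#\mathcal{L}^{\max}_{\cap}(T_n)/X_n\to\lambda$, with estimates on its expectation and on the correction terms robust enough to be integrated against $\sigma_{k+1}^{-3/2}$. This will rely crucially on the rough regularity bounds of Proposition~\ref{prop:rough_regularity} (particularly \eqref{eqn:regularity_Q}) to tame the conditional distribution of $\sigma_{k+1}$, together with a careful exchange of limits with the Markovian structure of R\'emy's reverse dynamics.
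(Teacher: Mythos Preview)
Your proposal is essentially the paper's approach: derive the key asymptotic $q(k)\sim\frac{1}{2\sqrt{\pi}\lambda k}\mathbb{E}[\sttm_k^{-1/2}]$ (Lemma~\ref{lem:equiv_q(k)}) from the law of large numbers of Theorem~\ref{thm:convergence_size_intersection} via a refined backward R\'emy cascade, then telescope $Q(k+1)/Q(k)$ and invoke Karamata exactly as you describe.

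Two small points where the paper differs from what you wrote. First, the paper does \emph{not} refine the forward R\'emy step: the conditional probability $\mathbb{P}[\sttm_{k+1}=n+1\mid\mathcal{F}_n]$ on $\{X_n=k\}$ is \emph{not} asymptotic to $\lambda k/n$ (the event ``adding a leaf increases the LIS'' is not governed by $\mathcal{L}^{\max}_\cap$), so your proposed forward refinement would not work as stated. The paper uses the forward argument only crudely, to bound the truncated tail (Step~2 of the proof of Lemma~\ref{lem:equiv_q(k)}); both directions of the sharp asymptotic come from the backward identity $\mathbb{P}[\sttm_k<n\mid\revF_n]=1-\#\mathcal{L}^{\max}_\cap(T_n)/n$, which is exact and hence two-sided. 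Second, rather than first transferring concentration to $T_n$, the paper works directly with the ``bad'' set $\teps$ and the identity $\mathbb{E}\big[\sum_{n=\sttm_k}^{\sttm_{k+1}-1}r(n)\mathbbm{1}_{T_n\in\teps}\big]=o(q(k))$ (see~\eqref{eq:A_sum_sigma_k}); this device lets one integrate the per-step error against $r(\sttm_{k+1})$ without ever needing a standalone statement about $T_n$, and is precisely what resolves the cascading obstacle you flagged in your last paragraph.
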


Recall that $\TLis{k}$ denotes the $p$-signed critical binary Bienaymé--Galton--Watson tree $T$ conditioned on the event that $\lis(T)=k$. A key step is to first prove in \cref{sect-lln-int} the following law of large numbers for the number $\#\mathcal{L}^{\max}_{\cap}(\TLis{k})$ of leaves  contained in the intersection of all maximal positive subtrees of $\TLis{k}$. 
This will rely on the local convergence in \cref{thm:localconv} (or, rather, on the good merging properties of the coupling in \cref{cor:full_coupling}).

\begin{thm}[Law of large numbers for $\#\mathcal{L}^{\max}_{\cap}(\TLis{k})$]\label{thm:convergence_size_intersection}
	We have the convergence in probability
	\[ \frac{\# \mathcal{L}^{\max}_{\cap}(\TLis{k})}{k}  \xlongrightarrow[]{\mathbb{P}} \lambda  \quad \text{as } k\to \infty,  \]
	where $\lambda$ is a constant in $(0,1)$.
\end{thm}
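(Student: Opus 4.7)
}

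The plan is to establish this law of large numbers via a first-and-second-moment argument, leveraging the local convergence from \cref{thm:localconv} (more precisely, the merging coupling from \cref{cor:full_coupling}). The starting observation is the double-counting identity: conditionally on $T^k$, if $L^k$ is a uniform leaf of $T^{k,\lmax}$, then
\[
\Ec{\frac{\#\mathcal{L}^{\max}_{\cap}(T^k)}{k}} = \Pp{L^k \in \mathcal{L}^{\max}_\cap(T^k)}.
\]
Combined with the characterization in~\eqref{eq:the target leaf belong to the intersection tree}, this expresses the first moment as the probability of a tail event on the Markov chain $Z^k$ read from the leaf to the root. By \cref{thm:localconv}, the time-reversed chain $(Z^k_{\eta-h})_{0 \le h \le \eta}$ converges in finite-dimensional distributions to $(\overset{\leftarrow}{Z}_h)_{h \ge 0}$; together with \cref{item:full_coupling_time} of \cref{cor:full_coupling}, which ensures that the horizon $\eta$ grows to infinity, this yields
\[
\Pp{L^k \in \mathcal{L}^{\max}_\cap(T^k)} \xrightarrow[k \to \infty]{} \lambda \coloneqq \Pp{\forall h \ge 0, \ \overset{\leftarrow}{S}_h \neq \ominus \text{ or } \overset{\leftarrow}{\Mst{h}} \neq \overset{\leftarrow}{W}_h},
\]
so $\Ec{\# \mathcal{L}^{\max}_{\cap}(T^k)/k} \to \lambda \in [0,1]$.

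The second step is to verify that $\lambda \in (0,1)$. The lower bound $\lambda > 0$ will follow by exhibiting a finite trajectory of the chain with positive probability under which no forbidden configuration arises before extinction: for instance, requiring the root vertex $v_{\eta-1}$ to have sign $\oplus$ and examining a cylinder event of bounded depth gives a uniform lower bound valid along the subsequence ensuring convergence. For $\lambda < 1$, one just needs to observe that the limiting chain has a positive probability to produce at some time $h \ge 1$ a negative node whose sibling subtree has maximal size equal to the size of the main side (\emph{e.g.}~because $\Pp{\sss(\rho_k)=\ominus,\,\lis(T^k_{\rga})=k}$ from \cref{item:proba-comp3} of \cref{lem:proba-comp} has a nontrivial limit when read along the reversed chain).

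The main task, and the main obstacle, is the concentration step: to replace convergence of means by convergence in probability through a second-moment bound
\[
\Ec{\left(\frac{\#\mathcal{L}^{\max}_{\cap}(T^k)}{k}\right)^{\!2}} \xrightarrow[k \to \infty]{} \lambda^2.
\]
Equivalently, if $L^k_1, L^k_2$ are two i.i.d.\ uniform leaves of $T^{k,\lmax}$, one must show
\[
\Pp{L^k_1 \in \mathcal{L}^{\max}_\cap(T^k),\ L^k_2 \in \mathcal{L}^{\max}_\cap(T^k)} \longrightarrow \lambda^2.
\]
The strategy is to prove a two-point analogue of \cref{thm:localconv}: the pair of spines from $L^k_1, L^k_2$ to the root decomposes into a common piece ending at their highest common ancestor $v_\cap$ and two conditionally independent \emph{upper} pieces. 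Conditionally on the value of $\lis$ above $v_\cap$ along each side, the two subtrees containing $L^k_1$ and $L^k_2$ are independent tree samples with prescribed LIS, so the event on each side depends only on the corresponding upper portion of the chain $Z^k$. Provided the height of $v_\cap$ (measured from each leaf) tends to infinity in probability as $k \to \infty$, the merging coupling of \cref{cor:full_coupling} applied independently on the two sides will force the two local pictures to decouple and to each converge to the local limit, yielding the product $\lambda \cdot \lambda$. Verifying that the common ancestor is far from both leaves should follow from the same Markov chain analysis performed in \cref{sect:loacal-lim}: at each good scale the chain has positive probability to take a large downward jump, so running the two chains independently from the top, they land far apart below any fixed scale with high probability, which prevents $L^k_1$ and $L^k_2$ from sharing more than a negligible portion of their spines. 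With these two moment controls, Chebyshev's inequality delivers the stated convergence in probability.
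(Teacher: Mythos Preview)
Your overall architecture—first and second moment plus Chebyshev—is exactly the paper's, and the two-leaf decomposition for the second moment is also what the paper does. However, there is a genuine gap in your first-moment step, and it propagates.

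The event $\{L^k\in\mathcal{L}^{\max}_\cap(T^k)\}=\{F=0\}$, with $F=\sum_{h\ge 1}\mathds{1}\{S_h=\ominus,\ \Mst{h}=W_h\}$, is \emph{not} a finite-dimensional functional of the reversed chain: it depends on all $\eta$ coordinates. Finite-dimensional convergence of $(Z^k_{\eta-h})_h$ together with $\eta\to\infty$ does \emph{not} by itself give $\Ppp{k}{F=0}\to\lambda$; you need an additional tightness input guaranteeing that forks at large values of $\Mst{}$ (equivalently, near the root, far from the leaf in the reversed picture) are rare \emph{uniformly in $k$}. The paper isolates this as \cref{lem:forks_mmt}: writing $F_A$ for the number of forks with $\Mst{h}\ge A$, one has $\sup_k\Ecp{k}{F_A}\le C\,Q(A)\to 0$. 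This is a short computation from the transition probabilities and the summability of $q$, but it is the key analytic ingredient; without it neither your first-moment convergence nor your argument for $\lambda>0$ goes through. (For the latter the paper writes $\Ppp{k}{F=0}\ge \Ppp{k}{F_{A+1}=0}\cdot\inf_{1\le m\le A}\Ppp{m}{F=0}$ and uses the fork bound to make the first factor close to $1$; your ``exhibit a finite trajectory'' sketch does not produce a lower bound uniform in $k$.)

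Your second-moment reasoning is also off. The two spines are not ``independent from the top'': they coincide from the root down to the common ancestor and only then split. What you actually need is that $\Mst{\stdivv}$ and $W_{\stdivv}=\widetilde{M}^*_{\stdivv}$ are both large with high probability, so that after the split each side has enough room to apply the (now rigorous, via locality) first-moment convergence. The paper proves this directly in \cref{lem:divergence occurs quickly}: if both $\Mst{\stdivv}$ and $\Mst{\stdivv-1}=\Mst{\stdivv}+W_{\stdivv}$ were small, the two uniform leaves would land in a small common sub-subtree of $T^{k,\lmax}$, which has probability $O(A^2/k)$; if $\Mst{\stdivv}$ is small but $\Mst{\stdivv-1}$ is large, this forces an atypically low-landing jump of the chain $M$, controlled by \cref{lem:trans-bound}. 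This is not the same mechanism as ``running two chains independently from the top''.
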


Building on this law of large numbers, we will then obtain in~\cref{sect-bett-reg-q-Q} the good regularity estimates for the sequences $q$ and $Q$, proving \cref{thm:good_regularity}.

\subsection{The size of the intersection of all maximal positive subtrees}\label{sect-lln-int}

The goal of this section is to prove the law of large numbers in~\cref{thm:convergence_size_intersection} using first and second moment arguments. Controlling the latter moments is the purpose of the next two subsections. We will aim for rather precise estimates, as such refined control will prove useful later in~\cref{sec:sc_limit_Xn} for establishing the full scaling limit result of~\cref{thm:main3}.

\subsubsection{The first moment}

We begin with the first moment. Recall the Markov chain $(Z^{k}_h)_{h \geq 0} = (S_h,D_h,\Mst{h},W_h)_{h \geq 0}$ introduced in \eqref{eq:def_MC_Z}. From \eqref{eq:the target leaf belong to the intersection tree}, for a leaf $L^k$ chosen uniformly at random among the $k$ leaves of the leftmost maximal positive subtree $T^{k,\lmax}$, we have
\begin{align}\label{eq:the target leaf belong to the intersection tree2}
\left\{L^k\in \mathcal{L}^{\max}_\cap(T^k)\right\} =\Bigg\{ \sum_{h\geq 1} \mathds{1}\{S_h=\ominus \text{ and }\Mst{h}=W_h\} = 0\Bigg\}.
\end{align}
We also note that
\begin{equation}\label{eq:proba-exp}
\mathbb{E}\left[\frac{\#\mathcal{L}^{\max}_{\cap}(\TLis{k})}{k} \right]=\Pp{L^k\in \mathcal{L}^{\max}_\cap(T^k)}.
\end{equation}
Therefore, proving the convergence of the first moment of $\mathcal{L}^{\max}_\cap(T^k)$ will crucially rely on controlling the number of \textbf{forks}
\begin{align} \label{eq:fork_Fk}
F \coloneqq \sum_{m\geq 1}\sum_{h\geq 1} \mathds{1}\{S_h=\ominus \text{ and }\Mst{h}=W_h=m\}
\end{align}
along the spine towards the marked leaf $L^k$. We also introduce the version of $F$ truncated below $A$:
\begin{equation} \label{eq:fork_Fk_trunc}
F_A \coloneqq \sum_{m\geq A}\sum_{h\geq 1} \mathds{1}\{S_h=\ominus \text{ and }\Mst{h}=W_h=m\}.
\end{equation}
The following claim says that $F$ is \emph{local}, in the sense that $F_A$ is small when $A$ is large, and so $F$ is well-approximated by a random variable that only depends on the finite number of values taken by the non-increasing chain $\Mst{}$ before extinction. Recall from \cref{rmk:start-notation} that under $\mathbb{P}_k$, the chain $\Mst{}$ starts from $k$.  
\begin{lem}[The number of forks is local] \label{lem:forks_mmt}
There exists a constant $C>0$ such that, for all $A\ge 1$,
\begin{equation} \label{eq:bound_local_F}
\sup_{k\ge 1} \Ecp{k}{F_A}
\leq
C \cdot  Q(A),
\end{equation}
where $Q(\cdot)$ is as in \eqref{eq:def_q_Q}. 
Moreover, for all $k\geq 1$,
\begin{equation} \label{eq:c0_neq_1}
\Ppp{k}{F =0}
\le 
\frac{1}{2q(1)} < 1.
\end{equation}
\end{lem}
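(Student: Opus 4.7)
My plan is to condition on the trajectory of the decreasing Markov chain $(\DMC_t)_{0 \le t \le \mathtt{T}}$ from~\eqref{eq:decreasing-eq}. The key point is that between two consecutive ladder epochs $\stht{t}$ and $\stht{t+1}$, we have $\Mst{h} = \DMC_t$ and $S_h = \ominus$ for every $h \in \intervalleentier{\stht{t}+1}{\stht{t+1}-1}$ (while $S_{\stht{t+1}} = \oplus$ cannot contribute a fork), and on this range the $W_h$'s are conditionally i.i.d.\ with the law given by \eqref{eq:cond_law_D_P_between}. Reading off from that formula the probability that $W_h = \DMC_t$, one finds that it is bounded by $C \cdot q(\DMC_t)$ for a constant $C$ depending only on $p$ (using $Q(m)+Q(m+1) \le 2 - 2q(1) < 2$ when $m\ge 2$, and treating $m=1$ directly). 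Combined with the fact that the waiting time $\stht{t+1}-\stht{t}$ is geometric with parameter $\theta(\DMC_t) \ge p$ by \eqref{eq:trans_Mtau}, this gives
\[
\Ecsq{F_A}{\DMC} \le C' \sum_{t=0}^{\mathtt{T}-1} \mathds{1}\{\DMC_t \ge A\}\, q(\DMC_t)
\]
for some constant $C' > 0$ depending only on $p$.

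The crucial observation is then that $\DMC$ is \emph{strictly} decreasing, so it visits each integer at most once. Consequently the right-hand side above is \emph{deterministically} bounded by $C' \sum_{m \ge A} q(m) = C' Q(A)$, uniformly in the starting point $k$ of the chain, which proves \eqref{eq:bound_local_F} after taking expectations. What makes the bound uniform in $k$ is exactly this monotonicity: no quantitative estimate on the trajectory of $\DMC$ is needed.

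For \eqref{eq:c0_neq_1}, I will use that the transition probabilities \eqref{eq:transitions decreasing chain} satisfy $\tpr(m; 0) = 0$ for all $m \ge 2$ and $\tpr(1; 0) = 1$, forcing $\DMC_{\mathtt{T}-1} = 1$ deterministically. Every $h \in \intervalleentier{\stht{\mathtt{T}-1}+1}{\stht{\mathtt{T}}-1}$ then satisfies $S_h = \ominus$, $\Mst{h} = 1$ and $W_h = 1$ (since $1 \le W_h \le \Mst{h} = 1$), so each such $h$ already produces a fork. Hence $\{F = 0\} \subseteq \{\stht{\mathtt{T}}-\stht{\mathtt{T}-1} = 1\}$, and because the final waiting time is geometric with parameter $\theta(1) = \tfrac{1}{2q(1)}$, this yields $\Ppp{k}{F = 0} \le \tfrac{1}{2q(1)}$. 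The strict inequality $\tfrac{1}{2q(1)} < 1$ then follows from $q(1) = (1+\sqrt{p})^{-1} > \tfrac{1}{2}$ for $p \in (0,1)$. There is no real obstacle in this lemma; the only subtle step is correctly extracting the conditional probability $\Ppsq{W_h = m}{\Mst{h} = m, S_h = \ominus}$ from \eqref{eq:cond_law_D_P_between}, after which the deterministic $Q(A)$ bound drops out immediately.
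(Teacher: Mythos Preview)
Your proof is correct and follows essentially the same approach as the paper. Both arguments reduce to the identity $\Ecp{k}{F_A} \le C \sum_{m \ge A} q(m) = C\,Q(A)$ by exploiting that the time $\Mst{}$ spends at each level $m$ is (at most) geometric with parameter $\theta(m) \ge p$; you phrase this as ``$\DMC$ is strictly decreasing and the waiting time at each visited level has bounded mean,'' while the paper phrases it as ``$\Ecp{k}{\sum_{h \ge 0}\mathds{1}\{\Mst{h}=m\}} \le 1/\theta(m) \le C$,'' but these are the same observation. Your proof of~\eqref{eq:c0_neq_1} is identical to the paper's.
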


\begin{proof}
We have for all $A\ge 2$ and $k\ge 1$,
\begin{align*}
\Ecp{k}{F_A}
&=\sum_{m=A}^\infty \sum_{h\geq 1} \Ppp{k}{S_h=\ominus \text{ and }\Mst{h}=W_h=m}\\ 
&= \sum_{m=A}^\infty \sum_{h\geq 1} \Ppp{k}{\Mst{h-1}=m}\cdot \Pppsq{k}{S_h=\ominus, \Mst{h}=W_h=m}{\Mst{h-1}=m}
\end{align*}
since, on the event $\{S_h=\ominus, \Mst{h}=W_h=m\}$, we must have that $\Mst{h-1} = m$. By the Markov property and the expression of the transition probabilities \eqref{eq:trans1}, the above display boils down to
\[
\Ecp{k}{F_A}
=\sum_{m=A}^\infty \Ecp{k}{\sum_{h\geq 0}\mathds{1}\{\Mst{h}=m\}} \cdot \frac{1}{2} (1-p) q(m).
\]
Now for fixed $m\ge 2$, conditionally on $M$ ever hitting the value $m$, the random variable $\sum_{h\geq 0}\mathds{1}\{\Mst{h}=m\}$ is geometric with parameter $\theta(m)=p+(1-p)\frac{Q(m)+Q(m+1)}{2}$ as shown in \eqref{eq:trans_Mtau}, and is $0$ if $M$ does not hit $m$. Hence, for all $m\ge 2$, we get the upper bound
\[
\Ecp{k}{\sum_{h\geq 0}\mathds{1}\{\Mst{h}=m\}} \le \bigg(p+(1-p)\frac{Q(m)+Q(m+1)}{2}\bigg)^{-1}
\le C,
\]
where $C$ is a constant that is independent of $k$ and $m$. The same argument carries over to the case $m=1$ using that $\theta(1)=\frac{1}{2q(1)}$ (see the discussion following \eqref{eq:trans_Mtau}).
As a result,
\[
\Ecp{k}{F_A}
\leq
\frac{C}{2} (1-p) \sum_{m=A}^\infty  q(m)=\frac{C}{2} (1-p) Q(A).
\]
This bound is uniform in $k$, and we deduce \eqref{eq:bound_local_F}.

To prove \eqref{eq:c0_neq_1}, we follow similar ideas. Fix $k\ge 1$. By a crude bound, we may only look at the term $m=1$ in \eqref{eq:fork_Fk}:
\[
\Ppp{k}{F =0}
\le
\Ppp{k}{\sum_{h\geq 1} \mathds{1}\{S_h=\ominus \text{ and }\Mst{h}=W_h=1\} = 0}.
\]
Note that if $\stht{\tt T - 1} < h < \stht{\tt T}$, then $\Mst{h} = W_h = 1$ and $S_h = \ominus$ (recall \eqref{eq:between_wait_times}). Thus we have the inclusion of events: 
\[
\bigg\{\sum_{h\geq 1} \mathds{1}\{S_h=\ominus \text{ and }\Mst{h}=W_h=1\} = 0\bigg\}
\subset
\{ \stht{\tt T} - \stht{\tt T -1} = 1\}.
\]
But by the discussion following \eqref{eq:trans_Mtau}, we know that $\stht{\tt T} - \stht{\tt T -1}$ is geometric with parameter $\theta(1)=\frac{1}{2q(1)}$. This provides \eqref{eq:c0_neq_1}.
\end{proof}

From \cref{lem:forks_mmt} and the fast merging property of~\cref{cor:full_coupling}, we deduce the convergence of the first moment. Recall the notation $\xmer$ introduced in~\cref{prop:coupling_merging}.

\begin{lem}[The first moment converges]
\label{lem:cvg_first_mmt}
The following assertions are true:
\begin{enumerate}
    \item\label{item:for-later-1} For $k'\geq k$ and for all $A>0$,
\begin{equation}\label{eq:key-first-mom-bound}
\left\vert\mathbb{E}\left[\frac{\#\mathcal{L}^{\max}_{\cap}(\TLis{k})}{k} \right] - \mathbb{E}\left[\frac{\# \mathcal{L}^{\max}_{\cap}(\TLis{k'})}{k'} \right]\right\vert 
\leq
2 \sup_{\mathsf{k}\ge 1}\Ecp{\mathsf{k}}{F_A} + \Pppt{k,k'}{\xmer \le A}. 
\end{equation}
\item\label{item:cvg_first_mmt-2} The first moment converges:
\[
\mathbb{E}\left[\frac{\#\mathcal{L}^{\max}_{\cap}(\TLis{k})}{k} \right] \to \lambda  \quad \text{as } k\to \infty.
\]
\item\label{item:cvg_first_mmt-3} The limit $\lambda$ is in $(0,1)$.
\end{enumerate}
\end{lem}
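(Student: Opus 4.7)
The starting point is the identity $\mathbb{E}[\#\mathcal{L}^{\max}_{\cap}(T^k)/k] = \mathbb{P}_k(F=0)$, obtained by combining \eqref{eq:proba-exp} with \eqref{eq:the target leaf belong to the intersection tree2}. The whole lemma then reduces to studying $\mathbb{P}_k(F=0)$ as $k$ varies. For \cref{item:for-later-1}, I would split $F = F_A + (F-F_A)$, where $F-F_A$ counts only forks at main values $m < A$. Under the coupling $\tilde{\mathbb{P}}_{k,k'}$ of \cref{cor:full_coupling}, on the event $\{V_{\mathrm{mer}} > A\}$ the chains $Z^{k}$ and $Z^{k'}$ coincide from the first time either visits a value $\leq A$ onwards, so $F-F_A$ takes the same value on both samples. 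Using the decomposition $\{F=0\} = \{F_A=0\}\cap\{F-F_A=0\}$, Markov's inequality $\mathbb{P}_{\mathsf{k}}(F_A\geq 1)\leq \mathbb{E}_{\mathsf{k}}[F_A]$, and a triangle inequality then yield \eqref{eq:key-first-mom-bound}.

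For \cref{item:cvg_first_mmt-2}, I would use \eqref{eq:key-first-mom-bound} to check a Cauchy property: given $\varepsilon>0$, first pick $A$ large enough that $\sup_{\mathsf{k}}\mathbb{E}_{\mathsf{k}}[F_A]<\varepsilon/4$, using \eqref{eq:bound_local_F} together with $Q(A)\to 0$ from \cref{prop:rough_regularity}; then, for this fixed $A$, \cref{item:full_coupling_convergence} of \cref{cor:full_coupling} provides $\tilde{\mathbb{P}}_{k,k'}(V_{\mathrm{mer}}\leq A)<\varepsilon/2$ for $k,k'$ sufficiently large. Hence $\mathbb{E}[\#\mathcal{L}^{\max}_{\cap}(T^k)/k]$ is Cauchy and converges to some $\lambda \in [0,1]$.

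For \cref{item:cvg_first_mmt-3}, the upper bound $\lambda \leq 1/(2q(1)) < 1$ follows directly from \eqref{eq:c0_neq_1}. For the lower bound $\lambda > 0$, I would condition on the trajectory of the decreasing chain $M$ of \eqref{eq:decreasing-eq}, whose distinct values are denoted $V_0 > V_1 > \cdots > V_{\mathtt{T}} = 1$. By the Markov property, the fork events at different $V_i$'s are conditionally independent, and at each $V_i$ the number of consecutive $\ominus$-holding steps is geometric with parameter $\theta(V_i)$ from \eqref{eq:trans_Mtau}, with each such step being a fork independently with conditional probability $q(V_i)/(2-Q(V_i)-Q(V_i+1))$. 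A direct computation gives, at $m=1$, a probability $1/(2q(1))$ of no fork, and at each $m \geq 2$ a probability
\[
p_m = \frac{\theta(m)}{\theta(m)+(1-\theta(m))\,q(m)/(2-Q(m)-Q(m+1))},
\]
from which one deduces $1 - p_m \leq C\,q(m)$ for some constant $C=C(p)$ (using $2 - Q(m) - Q(m+1) \geq 2q(1) > 0$ and $\theta(m) \geq p$). Since $\sum_{m\geq 1} q(m) = 1$, the infinite product $\prod_{m \geq 2}(1 - C q(m))$ is strictly positive, yielding a uniform lower bound $\mathbb{P}_k(F=0) \geq c > 0$ and hence $\lambda > 0$.

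The main obstacle is the positivity $\lambda > 0$: while the other items follow almost mechanically from the coupling of \cref{cor:full_coupling} and the localness of $F$ proved in \cref{lem:forks_mmt}, establishing $\lambda > 0$ requires exploiting the Markov structure of $Z^k$ to decouple the fork events across the distinct scales visited by $M$, and then using only the summability of $(q(m))_{m\geq 1}$ to control the resulting infinite product uniformly in $k$.
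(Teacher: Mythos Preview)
Your proof of Items~\ref{item:for-later-1}, \ref{item:cvg_first_mmt-2}, and the upper bound $\lambda<1$ in Item~\ref{item:cvg_first_mmt-3} is correct and coincides with the paper's argument (the paper phrases the coupling step via $|\mathbbm{1}_{F=0}-\mathbbm{1}_{F'=0}|\leq 1\wedge|F-F'|$ rather than via the symmetric difference, but this is the same computation).

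For the positivity $\lambda>0$, your approach genuinely differs from the paper's. The paper argues indirectly: it splits $F = F_{A+1} + (F - F_{A+1})$, applies the strong Markov property at the hitting time $H_A$ of $\intervalleentier{0}{A}$ by $M$ to obtain
\[
\mathbb{P}_k(F=0)\;\geq\;\Big(\inf_{1\leq \mathsf{k}\leq A}\mathbb{P}_{\mathsf{k}}(F=0)\Big)\cdot\Big(1-\sup_{\mathsf{k}\geq 1}\Ecp{\mathsf{k}}{F_{A+1}}\Big),
\]
and then chooses $A$ large via~\eqref{eq:bound_local_F} so that the second factor is positive; the first factor is a finite infimum of strictly positive numbers. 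Your route is more explicit: conditioning on the trajectory of $M$ and using the conditional independence recorded in Section~\ref{sect:decreasing-chain}, you get $\mathbb{P}_k(F=0)=\mathbb{E}_k\big[\prod_{t=0}^{\mathtt{T}-1}p_{M_t}\big]\geq \prod_{m\geq 1}p_m$, which is positive because $\sum_m(1-p_m)\leq C\sum_m q(m)<\infty$. This yields an explicit uniform lower bound and avoids the two-scale decoupling; conversely, the paper's argument is shorter once~\eqref{eq:bound_local_F} is in hand and requires no computation of $p_m$.

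Two cosmetic remarks: (i) in the paper's conventions $M_{\mathtt{T}}=0$ is the cemetery and $M_{\mathtt{T}-1}=1$, so your ``$V_{\mathtt{T}}=1$'' is off by one; (ii) the product $\prod_{m\geq 2}(1-Cq(m))$ need not be positive if $Cq(m)\geq 1$ for some small $m$, so the conclusion should be stated as $\prod_{m\geq 2}p_m>0$ (from $p_m\in(0,1]$ and $\sum_m(1-p_m)<\infty$) rather than via the looser bound $p_m\geq 1-Cq(m)$.
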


The quantitative bound in \cref{item:for-later-1} will also be used later in the proof of \cref{thm:quantitative_moments}. 

\begin{proof}
We first prove the bound in \cref{item:for-later-1} using \cref{cor:full_coupling}.

Let now $F$ and $F'$ be the number of forks along the trajectories of $(Z_h^{k})$ and $(Z_h^{k'})$ respectively, as in \eqref{eq:fork_Fk}, and $F_A$ and $F_A'$ their truncated version, as in \eqref{eq:fork_Fk_trunc}.
Under the coupling $\widetilde{\mathbb{P}}_{k,k'}$ of \cref{cor:full_coupling}, we can write $F-F'$ as 
\begin{align}\label{eq:F-diif}
F-F' &= \sum_{m\geq 1}\sum_{h\geq 1} \mathds{1}\{S_h=\ominus \text{ and }\Mst{h}=W_h=m\} - \sum_{m\geq 1}\sum_{h\geq 1} \mathds{1}\{S_h'=\ominus \text{ and }(\Mst{h})'=W_h'=m\}\notag\\
&=  \sum_{m \geq \xmer}\sum_{h\geq 1} \big(\mathds{1}\{S_h=\ominus \text{ and }\Mst{h}=W_h=m\}-  \mathds{1}\{S_h'=\ominus \text{ and }(\Mst{h})'=W_h'=m\}\big),
\end{align}
where in the last equality we used that $Z^{k}$ and $Z^{k'}$ are identical as soon as $\Mst{}$ and $(\Mst{})'$ are both strictly smaller than $\xmer$.
To conclude, note from \eqref{eq:the target leaf belong to the intersection tree2}--\eqref{eq:fork_Fk} that 
\[
\left\vert\mathbb{E}\left[\frac{\#\mathcal{L}^{\max}_{\cap}(\TLis{k})}{k} \right] - \mathbb{E}\left[\frac{\# \mathcal{L}^{\max}_{\cap}(\TLis{k'})}{k'} \right]\right\vert =\left\vert\Ppp{k}{F=0} - \Ppp{k'}{F'=0}\right\vert,
\] 
and
\begin{align*}
    \left\vert\Ppp{k}{F=0} - \Ppp{k'}{F'=0}\right\vert
    &\leq \Ecpt{k,k'}{\left\vert\mathds{1}_{F=0} - \mathds{1}_{F'=0}\right\vert}\notag\\
    &\leq \Ecpt{k,k'}{1\wedge \left\vert F - F'\right\vert}\notag\\
    &= \Ecpt{k,k'}{(1\wedge|F-F'|) \mathds{1}_{\xmer} \geq A} 
    + \Ecpt{k,k'}{(1\wedge|F-F'|) \mathds{1}_{\xmer} < A} \notag\\
    &\le
    2 \sup_{\mathsf{k}\ge 1}\Ecp{\mathsf{k}}{F_A} + \Pppt{k,k'}{\xmer < A},
\end{align*}
where for the last inequality we used~\eqref{eq:F-diif}. This completes the proof of~\cref{item:for-later-1}.

\medskip

We now prove the convergence in \cref{item:cvg_first_mmt-2} by Cauchy characterization. 
The first term on the right-hand side of~\eqref{eq:key-first-mom-bound} is small as $A\rightarrow \infty$ because of \eqref{eq:bound_local_F} in \cref{lem:forks_mmt}, while the second term is small for fixed $A$ and $k,k'\rightarrow \infty$ as a result of~\cref{item:full_coupling_convergence} in~\cref{cor:full_coupling}.
This proves the Cauchy characterization and so \cref{item:cvg_first_mmt-2}.

\medskip

It remains to check~\cref{item:cvg_first_mmt-3}, \emph{i.e.}\ that the limit $\lambda$ is in $(0,1)$. The fact that  $\lambda$ is not $1$ is a straightforward consequence of \eqref{eq:c0_neq_1}. We now check that it is non-zero. We fix $A \geq 1$ and write 
\begin{align*}
F&=\sum_{m\geq 1}\sum_{h\geq 1} \mathds{1}\{S_h=\ominus \text{ and }\Mst{h}=W_h=m\}\\ 
&=F_{A+1} + \sum_{m=1}^A\sum_{h\geq 1} \mathds{1}\{S_h=\ominus \text{ and }\Mst{h}=W_h=m \}.
\end{align*}
We recall from \eqref{eq:stt-defn} the notation  $\stt_A= \inf\enstq{t\geq 1}{M_t \leq A}$. Using the strong Markov property of $Z$ together with the fact that the transitions only depend on $M$ (\cref{sect:trans-prob2}), we have
\[
\Ppp{k}{F=0} = \Ecp{k}{\mathds{1}_{F_{A+1}=0} \cdot \Ppp{M({\stt_A})}{F=0}}
\geq \inf_{1\leq \mathsf{k} \leq A}\Ppp{\mathsf{k}}{F=0} \cdot  \Ppp{k}{F_{A+1}=0},
\]
where for the inequality we used that $1 \leq M({\stt_A})\leq A$.
But by Markov's inequality
\[
\Ppp{k}{F_{A+1}=0} 
= 
1-\Ppp{k}{F_{A+1}\ge 1} 
\ge 
1 - \sup_{\mathsf{k}\ge 1} \Ecp{\mathsf{k}}{F_{A+1}},
\]
which is non-zero if $A$ is large enough thanks to \eqref{eq:bound_local_F}. Combining the last two displayed equations, we get a uniform lower bound for $\Ppp{k}{F=0}$, as we wanted.
\end{proof}

\subsubsection{The second moment}

We now turn to the second moment. 
First, let us recall some notation and introduce some more.
Recall from the beginning of \cref{sect:mark-desc} and \eqref{eq:def_MC_Z} that conditionally on $\TLis{k}$, we let $L^k$ be a leaf chosen uniformly at random among the $k$ leaves of the leftmost maximal positive subtree $\TLis{k,\lmax}$ and denote, for $h \geq 0$, by $v_h$ the ancestor (if it exists) of $L^k$ at height $h$; see~\cref{fig:Markov-chain-notation}. 
The process $Z^k=(Z^k_h)_{h \geq 0}=(S_h,D_h,\Mst{h},W_h)_{h \geq 0}$ defined in \eqref{eq:def_MC_Z} records some useful information about the subtrees above each of the vertices $(v_h)_{h\geq 0}$.
Now, pick $\widetilde{L}^k$ a second leaf with the same distribution as $L^k$, independently of $L^k$ conditionally on $T^k$.
We introduce the process $\widetilde{Z}^k=(\widetilde{Z}^k_h)_{h \geq 0}$ similarly to the process $Z^k$, recording information along the ancestral line of $\widetilde{L}^k$; 
and with $\dagger$ as in \eqref{eq:cemetery state}, we let
\begin{align}\label{eq:definition stdivv}
\stdivv \coloneqq \inf \enstq{h\geq 0}{Z_h^{k} \neq \widetilde{Z}_h^{k} \ \text{or}  \ Z_h^{k} =\widetilde{Z}_h^{k}= \dagger},
\end{align}
so that $v_{\stdivv-1}$ is the highest common ancestor of $L^k$ and $\widetilde{L}^k$.
Note that as soon as $L^k \neq \widetilde{L}^k$, then $\stdivv<\min(\eta,\widetilde{\eta})$.

We record some useful facts in a preliminary lemma. 
The proof would follow straightforwardly from the arguments presented in \cref{sect:trans-prob} so we omit it.

\begin{lem}[Following the paths leading to the two marked leaves]
\label{lem:double markov chain}
The following properties hold:
\begin{enumerate}
\item The two processes $Z^k$ and $\widetilde{Z}^k$ have the same law.
\item The process $(Z^k_h,\widetilde{Z}^k_h)_{h\geq 0}$ is  a Markov chain.
\item The divergence height $\stdivv$ is a stopping time for the Markov chain $(Z^k_h,\widetilde{Z}^k_h)_{h\geq 0}$. 
\item Whenever $L^k\neq \widetilde{L}^k$, the two leaves $L^k$ and $\widetilde{L}^k$ belong respectively to one and the other subtree above $v_{\stdivv-1}$ so that 
\begin{align*}
(\Mst{\stdivv},W_{\stdivv}) = (\widetilde{W}_{\stdivv},\widetilde{M}^*_{\stdivv}).
\end{align*}
\item Conditionally on $(Z^k_{h},\widetilde{Z}^k_{h})_{0\leq h \leq \stdivv}$, the two processes $(Z^k_{\stdivv+h})_{h\geq 0}$ and $(\widetilde{Z}^k_{\stdivv+h})_{h\geq 0}$ are independent. 
\end{enumerate}
\end{lem}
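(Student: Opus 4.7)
The plan is to obtain all five properties by iterating the root-decomposition of $T^k$ given in~\cref{lem:proba-comp}, enriched to keep track of both marked leaves $L^k$ and $\widetilde{L}^k$. Property~1 is immediate, since by construction $\widetilde{L}^k$ has the same conditional law as $L^k$ given $T^k$, so $\widetilde{Z}^k$ inherits the law of $Z^k$.

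For the joint Markov property (property~2), I would reprove the construction of~\cref{sect:local-lim-statement} while carrying two spines in parallel. Conditionally on $(Z^k_j, \widetilde{Z}^k_j)_{0 \leq j \leq h}$ with $h < \stdivv$, the ancestors of $L^k$ and $\widetilde{L}^k$ at height $h$ coincide with some vertex $v_h$, and by iterating~\cref{lem:proba-comp} the subtree above $v_h$ is distributed as $T^{\Mst{h}}$, with $L^k$ and $\widetilde{L}^k$ being two conditionally independent uniform leaves of its leftmost maximal positive subtree. Applying~\cref{lem:proba-comp} one more step, combined with the joint choice of directions at the root of this subtree, then determines $(Z^k_{h+1}, \widetilde{Z}^k_{h+1})$ through a transition kernel that depends only on the current state. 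When $h \geq \stdivv$, the two spines sit in two disjoint subtrees above $v_{\stdivv - 1}$; by the branching property of $T$ together with~\cref{lem:proba-comp}, conditionally on $(\Mst{\stdivv}, \widetilde{M}^*_{\stdivv})$ these two subtrees are independent, with respective laws $T^{\Mst{\stdivv}}$ and $T^{\widetilde{M}^*_{\stdivv}}$. Applying the one-spine Markov description of~\cref{sect:local-lim-statement} separately on each side then yields both the joint Markov transitions and the conditional independence claimed in property~5.

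Property~3 is routine: the event $\{\stdivv \leq h\}$ is directly $\sigma((Z^k_j, \widetilde{Z}^k_j)_{0 \leq j \leq h})$-measurable by comparing the two trajectories coordinate by coordinate. Property~4 is a bookkeeping statement at the splitting vertex $v_{\stdivv - 1}$: on the event $\{L^k \neq \widetilde{L}^k\}$, the vertex $v_{\stdivv}$ and the $\stdivv$-th ancestor $\widetilde{v}_{\stdivv}$ of $\widetilde{L}^k$ must be two distinct children of $v_{\stdivv - 1}$, so $v_{\stdivv}$ is the sibling of $\widetilde{v}_{\stdivv}$ and vice versa. Since $\Mst{\stdivv}$ and $\widetilde{W}_{\stdivv}$ both record the size of the leftmost maximal positive subtree weakly above $v_{\stdivv}$, they coincide, and the identity $W_{\stdivv} = \widetilde{M}^*_{\stdivv}$ is symmetric.

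The only real technical subtlety, and what I expect to be the main point to justify carefully, is that the pre-divergence joint transition kernel does factorize as a function of $(Z^k_h, \widetilde{Z}^k_h)$ alone and is consistent with picking two conditionally independent uniform leaves of the leftmost maximal positive subtree above $v_h$. This requires unpacking the recursive definition of $(\cdot)^{\lmax}$ from~\cref{sect:mark-desc}: the leaves of $\tau^{\lmax}$ partition across the two children of the root of $\tau$ with weights proportional to the sizes of their respective leftmost maximal positive subtrees, so that the joint directions chosen by $L^k$ and $\widetilde{L}^k$ at $v_h$ are two i.i.d.\ Bernoulli variables whose parameters match the conditional laws appearing in item~(4) of~\cref{lem:proba-comp}.
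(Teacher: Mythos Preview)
Your proposal is correct and follows exactly the approach the paper has in mind: the paper in fact omits the proof entirely, stating only that it ``would follow straightforwardly from the arguments presented in \cref{sect:trans-prob}'', and your outline is precisely the natural elaboration of those arguments via iterated applications of \cref{lem:proba-comp} with two spines. One small remark: in your treatment of property~4 you could also observe that $S_{\stdivv}=\oplus$ (since both leaves lie in $T^{k,\lmax}$, they can only diverge at a $\oplus$ node), which the paper uses explicitly later in the proof of \cref{lem:cvg_2nd_mmt}; this makes the identification $M^*_{\stdivv}=\widetilde{W}_{\stdivv}$ even more transparent via the remark following the definition of $W_h$ in \cref{sect:local-lim-statement}.
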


We then prove a first estimate, which roughly says that the explorations towards $L_k$ and $\widetilde{L}_k$ do not stick together for too long.
\begin{lem}[Divergence occurs quickly]
\label{lem:divergence occurs quickly}
There exists a constant $C>0$ such that for any $k\geq A > 1$,  
\begin{align*}
\Ppp{k}{\Mst{\stdivv}< A} \leq \frac{A^2}{k} + \frac{C}{A}.
\end{align*}
\end{lem}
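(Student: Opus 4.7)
The plan is to reduce the statement to an estimate on the decreasing chain $\DMC$ and then bound the overshoot. First I would derive a clean formula for $\Ppp{k}{\Mst{\stdivv} < A \mid \DMC}$. Conditional on $T^k$ and on $L^k$, the second leaf $\widetilde L^k$ is uniform over the $k$ leaves of $T^{k,\lmax}$, so at the $t$-th $\oplus$ split encountered along the spine to $L^k$ (which is exactly the $t$-th step of the chain $\DMC$), the probability that $\widetilde L^k$ follows $L^k$'s subtree is proportional to its leaf count, namely $\DMC_t/\DMC_{t-1}$. By telescoping,
\[
\Ppp{k}{t^* > t \mid \DMC, L^k} = \prod_{s=1}^{t} \frac{\DMC_s}{\DMC_{s-1}} = \frac{\DMC_t}{k},
\]
where $t^*$ is the step at which the two paths diverge; this also absorbs the event $\{L^k = \widetilde L^k\}$ as the fictitious terminal step. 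Taking differences gives $\Ppp{k}{t^*=t\mid\DMC,L^k} = (\DMC_{t-1}-\DMC_t)/k$, and since $\Mst{\stdivv} = \DMC_{t^*}$, summing over $t$ with $\DMC_t < A$ yields by another telescoping
\[
\Ppp{k}{\Mst{\stdivv}<A \mid \DMC} \;=\; \frac{\DMC_{\tau-1}}{k},\qquad \tau \coloneqq \inf\{t\ge 1 : \DMC_t < A\}.
\]
It remains to prove that $\Ec{\DMC_{\tau-1}}\le A^2 + Ck/A$ under $\Ppp{k}{\cdot}$.

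I would then split $\Ec{\DMC_{\tau-1}}$ according to whether the overshoot $\DMC_{\tau-1}$ is below or above $2A$. The small-overshoot contribution is at most $2A \le A^2$ for $A\ge 2$. For the large-overshoot contribution, the chain must make a single big jump from some level $j \geq 2A$ directly into $\intervalleentier{1}{A-1}$. Picking a good scale $\ell \in \mathrm{GoodScales}$ with $\ell \asymp A$, item \cref{item:trans-bound1} of \cref{lem:trans-bound} gives
\[
\Ppsq{\DMC_{t+1}<A}{\DMC_t = j} \;\le\; \frac{C\ell Q(\ell)}{j} \;\le\; \frac{C' A Q(A)}{j}\quad \text{for } j\ge 2A,
\]
and writing $V_j=\Ppp{k}{\DMC\text{ visits }j}$, one has
\[
\Ec{\DMC_{\tau-1};\,\DMC_{\tau-1}\ge 2A}\;=\; \sum_{j\ge 2A} j\,V_j\,\Ppsq{\DMC_{t+1}<A}{\DMC_t=j}.
\]
The key additional input is a control on $V_j$ obtained from the lower bound on the expected jump size, $\Ecsq{\DMC_t-\DMC_{t+1}}{\DMC_t = j}\gtrsim jQ(j)$ for large $j$ (coming from the transition formula \eqref{eq:transitions decreasing chain2} together with the good-scale bound on $\sum_{i\le j/2} i q(i)$), which plugged into a Wald-type identity for the decreasing chain starting at $k$ gives $\sum_{j\ge M} V_j \lesssim k/(MQ(M))$ for any $M\ge 1$.

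The main obstacle is the multi-scale accounting needed to actually produce the factor $1/A$ in the $Ck/A$ bound, rather than the too-weak $Ck$ that comes from a direct substitution. To squeeze out this factor, I would decompose the sum over $j\ge 2A$ into dyadic shells $j\in [2^m A, 2^{m+1}A]$ and apply the two displayed inequalities on each shell: on the shell at height $\sim 2^m A$, the per-visit probability of the big jump is $\lesssim Q(A)/2^m$, while the shell length together with the visit-count bound gives $\sum_{j\in[2^m A,2^{m+1}A]} V_j \lesssim k/(2^m A Q(2^m A))$. Multiplying and summing the geometric series in $m$, the shell contributions to $\Ec{\DMC_{\tau-1};\DMC_{\tau-1}\ge 2A}$ should combine into a convergent sum of order $k/A$; here the slow variation of $Q$ and the regularity $Q(2^m A)\asymp 2^{-m/(2\alpha)}Q(A)$ from \cref{prop:rough_regularity} are essential to ensure the series converges. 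Once the large-overshoot contribution is bounded by $Ck/A$ and combined with the $2A\le A^2$ from the small-overshoot, dividing by $k$ yields the claim $\Ppp{k}{\Mst{\stdivv}<A}\le A^2/k + C/A$.
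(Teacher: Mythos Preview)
Your opening identity is correct and quite elegant: conditioning on $Z^k$, the probability that $\widetilde L^k$ stays with $L^k$ through the first $t$ positive branchings telescopes to $\DMC_t/k$, so indeed
\[
\Ppp{k}{\Mst{\stdivv}<A}=\tfrac{1}{k}\,\Ecp{k}{\DMC_{\tau-1}},\qquad \tau=\inf\{t\ge 1:\DMC_t<A\}.
\]
This reformulation is cleaner than the paper's and is equivalent to its subtree-counting observation $\Pp{\widetilde L^k\in\mathcal L^{k,\lmax}(v^{B})\mid Z^k}=\DMC(\stt_B)/k$.

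The gap is in your large-overshoot bound. First, item~\ref{item:trans-bound1} of Lemma~\ref{lem:trans-bound} requires $\ell\in\mathrm{GoodScales}$, and your step ``pick $\ell\in\mathrm{GoodScales}$ with $\ell\asymp A$'' is unjustified: at this point in the paper, $\mathrm{GoodScales}$ is only known to be an infinite set with no density control (Definition~\ref{defn:good-scales}, Lemma~\ref{lem:existence good scales}), so for a given $A$ there need not be any good scale within a bounded ratio of $A$. Second, your dyadic summation invokes ``$Q(2^m A)\asymp 2^{-m/(2\alpha)}Q(A)$ from \cref{prop:rough_regularity}'', but that proposition only gives $Q(k)=k^{-1/(2\alpha)+o(1)}$; the quantitative bound \eqref{eqn:regularity_Q} is $Q(\svx k)/Q(k)\ge(1+o(1))\svx^{-C}$ with an unspecified exponent $C$ that may well exceed $1$, in which case your shell contributions scale like $2^{m(C-1)}$ and the series diverges. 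Both inputs you need (dense good scales, tight regular variation of $Q$) become available only \emph{after} Theorem~\ref{thm:good_regularity}, which itself relies on the present lemma---so this is genuinely circular.

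The paper avoids all of this by splitting at a single, higher threshold $A^2$ rather than $2A$: it decomposes according to whether $\Mst{\stdivv}+W_{\stdivv}$ is $\le A^2$ or $>A^2$. On the first event, $\widetilde L^k$ is forced into a subtree of $T^{k,\lmax}$ with at most $A^2$ leaves, giving $A^2/k$. On the second, the chain $\DMC$ makes one jump from above $A^2$ to below $A$, and the elementary transition bound $\pi(j;i)\le C\,\tfrac{i}{j}\,q(i)$ of item~\ref{item:trans-bound0} in Lemma~\ref{lem:trans-bound} (which needs \emph{no} good-scale hypothesis) already gives $\sum_{i<A}\pi(j;i)\le\tfrac{C}{A}\sum_{i<A}q(i)\le\tfrac{C}{A}$ uniformly over $j>A^2$, since $i/j<A/A^2=1/A$. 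The choice of $A^2$ instead of $2A$ is precisely the device that manufactures the factor $1/A$ in one line and makes the multi-scale accounting (and the Wald-type visit bound) unnecessary.
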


\begin{proof}
Let $k\geq A > 1$. 
We write 
\begin{align*}
\Ppp{k}{\Mst{\stdivv}< A} = \Ppp{k}{\Mst{\stdivv}< A, \Mst{\stdivv}+W_{\stdivv}\leq  A^2} +  \Ppp{k}{\Mst{\stdivv}< A, \Mst{\stdivv}+W_{\stdivv} > A^2}. 
\end{align*}
We show below that the first term is small because it would require picking the second leaf $\widetilde{L}^k$ quite close to $L^k$. 
For the second term, we show that such an event can only happen if the process $M$ has an unlikely trajectory.

Assume that $L^k \neq \widetilde{L}^k$, meaning $\stdivv<\eta$. Note that $S_{\stdivv}=\oplus$ because $L^k$ and $\widetilde{L}^k$ are leaves of the same maximal positive subtree.
Thus we have $\Mst{\stdivv}+W_{\stdivv}= \Mst{\stdivv-1}$.
Recall from \eqref{eq:decreasing-eq} the definition of the process $M$ as a time-changed version of $M^*$,
so that by the remark above $\Mst{\stdivv}+W_{\stdivv}$ and $\Mst{\stdivv}$ are two consecutive values along the trajectory $\enstq{M_t}{ t \ge 0}$.
Recall from \eqref{eq:stt-defn}, the random variable $H_m$ which is such that $M(H_m) = \sup (\intervalleentier{1}{m}\cap \enstq{M_t}{ t\geq 0})$. 
For any $m\ge 1$, 
we can associate a node $v^{m}$ in the original tree $T^k$, which corresponds to the lowest vertex on the spine from the root to the leaf $L^k$ such that $T^{k,\lmax}$ has at most $m$ leaves (weakly) above $v^{m}$. 
We denote by $\mathcal{L}^{k,\lmax}(v^{m})$ the set of leaves of $T^{k,\lmax}$ above vertex $v^{m}$, which has cardinality $\DMC(\stt_m)$, that is at most $m$. 

Consider the event $\{\Mst{\stdivv}< A, \, \Mst{\stdivv}+W_{\stdivv} \leq  A^2\}$.
By the above discussion, if this event is realized, then necessarily the leaf $\widetilde{L}^k$ 
belongs to $\mathcal{L}^{k,\lmax}(v^{A^2})$ (note that this is obvious when $L^k = \widetilde{L}^k$).
Since by definition this tree has $\DMC(\stt_{A^2}) \le A^2$ leaves, 
by reasoning conditionally on $Z^k$, we get 
\begin{align*}
\Ppp{k}{\Mst{\stdivv}< A, \, \Mst{\stdivv}+W_{\stdivv}\leq  A^2} 
\leq 
\Pp{\widetilde{L}^k \in \mathcal{L}^{k,\lmax}(v^{A^2})}
=
\Ec{\frac{\DMC(\stt_{A^2})}{k}}
\leq \frac{A^2}{k}.
\end{align*}
We now handle the second term. 
By a similar discussion, we have 
\begin{align*}
\Ppp{k}{\Mst{\stdivv}< A, \, \Mst{\stdivv}+W_{\stdivv} > A^2} \leq \Ppp{k}{M(H_{A^2}) <A }
&
=
\Ec{\sum_{i=1}^{A-1} \tpr(\DMC(\stt_{A^2}-1),i)}\\
&\leq \sum_{i=1}^{A-1} C \frac{A}{A^2}q(i) \leq \frac{C}{A}
\end{align*}
for some constant $C$, by the second bound in \cref{item:trans-bound0} of \cref{lem:trans-bound}.
\end{proof}

Recalling the first moment convergence provided by \cref{lem:cvg_first_mmt}, we now show that the second moment converges.
\begin{lem}[The second moment converges] \label{lem:cvg_2nd_mmt}
The following assertions are true:
\begin{enumerate}
\item\label{item:2nd_mmt_1} For any $k\geq A > 1$ we have 
\begin{align*}
\left|\mathbb{E}\left[\left(\frac{\#\mathcal{L}^{\max}_{\cap}(\TLis{k})}{k}\right)^2 \right] - \lambda^2  \right| \leq \mathbb{E}_k[F_A] + 3 \sup_{\mathsf{k}\geq A} \left|\Ec{\frac{\#\mathcal{L}^{\max}_{\cap}(\TLis{\mathsf{k}})}{\mathsf{k}}}-\lambda \right| + 7\left(\frac{A^2}{k} +\frac{C}{A}\right).
\end{align*}
\item\label{item:2nd_mmt_2} We have the convergence 
\end{enumerate}
\[
\mathbb{E}\left[\left(\frac{\#\mathcal{L}^{\max}_{\cap}(\TLis{k})}{k}\right)^2 \right] \to \lambda^2  \qquad \text{as } k\to \infty.
\]
\end{lem}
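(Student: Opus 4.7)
The plan is to reduce the second-moment computation to the first-moment convergence from \cref{lem:cvg_first_mmt} via the divergence height $\stdivv$ of the two spines leading to the independent marked leaves $L^k$ and $\widetilde{L}^k$. The starting identity is
\[
\mathbb{E}\left[\left(\frac{\#\mathcal{L}^{\max}_{\cap}(\TLis{k})}{k}\right)^2 \right]  = \Pp{F=0,\, \widetilde{F}=0},
\]
obtained by applying \eqref{eq:the target leaf belong to the intersection tree2} to both leaves, with $F,\widetilde{F}$ the respective numbers of forks defined as in \eqref{eq:fork_Fk}. I would then split $F = F^{<\stdivv} + F^{>\stdivv}$ (and similarly for $\widetilde{F}$), noting two crucial facts: the common-prefix forks coincide, $F^{<\stdivv} = \widetilde{F}^{<\stdivv}$, and height $\stdivv$ itself never contributes a fork, because $v_{\stdivv-1}$ is a positive node (it belongs to the positive subtree $\TLis{k,\lmax}$) and hence $S_\stdivv = \oplus$.

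Next, I would invoke the conditional independence provided by \cref{lem:double markov chain}: given the trajectory up to time $\stdivv$, the post-divergence chains are independent and, since the transitions of $Z^k$ depend only on the $\Mst{}$-coordinate, each reproduces a fresh copy of the fork process starting from $\Mst{\stdivv}$ (resp.\ $W_\stdivv = \widetilde{M}^*_\stdivv$). Setting $c_\infty(m) \coloneqq \Ppp{m}{F=0} = \Ec{\#\mathcal{L}^{\max}_{\cap}(\TLis{m})/m}$, this yields
\[
\mathbb{E}\left[\left(\frac{\#\mathcal{L}^{\max}_{\cap}(\TLis{k})}{k}\right)^2 \right] = \Ec{\mathds{1}_{F^{<\stdivv}=0}\, c_\infty(\Mst{\stdivv})\, c_\infty(W_\stdivv)},
\]
which should be close to $\lambda^2$ whenever $\Mst{\stdivv}$ and $W_\stdivv$ are both large, since $c_\infty(m) \to \lambda$ as $m\to\infty$ by \cref{lem:cvg_first_mmt}.

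To make this quantitative and obtain the bound in \cref{item:2nd_mmt_1}, I would truncate by the event $E_A \coloneqq \{\Mst{\stdivv} \geq A,\, W_\stdivv \geq A\}$. On $E_A$, a telescoping bound gives $|c_\infty(\Mst{\stdivv}) c_\infty(W_\stdivv) - \lambda^2| \leq 2 \sup_{\mathsf{k} \geq A} |c_\infty(\mathsf{k})-\lambda|$, and any fork in the common prefix satisfies $\Mst{h} \geq \Mst{\stdivv} \geq A$ by monotonicity of $\Mst{}$, hence is counted in $F_A$, so $\Pp{F^{<\stdivv}\geq 1,\, E_A} \leq \Ec{F_A}$. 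On $E_A^c$, the integrand is bounded by $1$, and using the exchangeability of $L^k$ and $\widetilde{L}^k$ (both uniform on the leaves of $\TLis{k,\lmax}$), $\Pp{W_\stdivv<A}=\Pp{\Mst{\stdivv}<A}$, so \cref{lem:divergence occurs quickly} yields $\Pp{E_A^c} \leq 2(A^2/k + C/A)$. Combining these contributions gives the stated bound (up to the exact numerical constants $3$ and $7$, which will fall out of the telescoping estimate and the $E_A^c$ bound). \cref{item:2nd_mmt_2} will then follow by sending $k\to\infty$ first with $A$ fixed, then $A\to\infty$, using $\sup_k \Ecp{k}{F_A} \leq C\cdot Q(A) \to 0$ from \cref{lem:forks_mmt}. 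The main delicate point will be the bookkeeping at $\stdivv$: one must verify that no fork contribution is lost or double-counted there, which relies on $S_\stdivv = \oplus$ together with the fact that the restart distribution of $(\Mst{}, W, S)$ after $\stdivv$ faithfully reproduces the law of $F^{>\stdivv}$ and $\widetilde{F}^{>\stdivv}$ through $\Mst{\stdivv}$ and $W_\stdivv$ alone.
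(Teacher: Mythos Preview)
Your proposal is correct and follows essentially the same approach as the paper's proof: the same starting identity, the same split of $F$ at the divergence height $\stdivv$ (the paper writes $F_{\leq \stdivv}$ rather than $F^{<\stdivv}$, but since $S_\stdivv=\oplus$ these coincide), the same conditional-independence restart yielding the product $c_\infty(\Mst{\stdivv})\,c_\infty(W_\stdivv)$ (the paper writes $(\lambda+\epsilon(\Mst{\stdivv}))(\lambda+\epsilon(W_\stdivv))$), and the same truncation on the event $\{\Mst{\stdivv}\geq A,\, W_\stdivv\geq A\}$ combined with \cref{lem:divergence occurs quickly} and the symmetry $\Mst{\stdivv}\overset{d}{=}W_\stdivv$. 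Your telescoping bound $|c_\infty(a)c_\infty(b)-\lambda^2|\leq 2\sup_{\mathsf{k}\geq A}|c_\infty(\mathsf{k})-\lambda|$ actually yields slightly sharper constants than the paper's full expansion into three $\epsilon$-terms, so the stated bound with $3$ and $7$ holds \emph{a fortiori}.
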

\noindent 
The precise form of the first item will be used to obtain quantitative bounds in \cref{thm:quantitative_moments}.

\begin{proof}
    Conditionally on $\TLis{k}$, let $L^k$ and $\widetilde{L}^k$ be two  (possibly equal) independent leaves  chosen uniformly at random among the $k$ leaves of the leftmost maximal positive subtree $\TLis{k,\lmax}$. 
    Recall from \eqref{eq:fork_Fk} the random variable 
    \[
    F = \sum_{m\geq 1}\sum_{h\geq 1} \mathds{1}\{S_h=\ominus \text{ and }\Mst{h}=W_h=m\}, 
    \]
    counting the number of forks along the spine towards the marked leaf $L^k$. 
    Denote by $\widetilde{F}$ the analogous quantity for the marked leaf $\widetilde{L}^k$. 
    We have 
    \begin{align*}
    \mathbb{E}\left[\left(\frac{\#\mathcal{L}^{\max}_{\cap}(\TLis{k})}{k}\right)^2 \right]=\Ppp{k}{F=0,\widetilde{F}=0}.
    \end{align*}
    Recall that $\stdivv$ 
    is one plus the height of the highest common ancestor of $L^k$ and $\widetilde{L}^k$
    and set
    \begin{align*}
    &F_{{\leq} \stdivv } \coloneqq \sum_{m\geq 1}\sum_{h=1}^{\stdivv} \mathds{1}\{S_h=\ominus \text{ and }\Mst{h}=W_h=m\},\\
    &F_{{>} \stdivv } \coloneqq F - F_{{\leq}\stdivv }
    \quad \text{ and }\quad
    \widetilde{F}_{{>} \stdivv } \coloneqq \widetilde{F} -  F_{{\leq}\stdivv },
    \end{align*}
    where we note that $F_{\leq \stdivv }$ is a function of $(Z^k_{h},\widetilde{Z}^k_{h})_{0\leq h \leq \stdivv}$ and $F_{> \stdivv }$ and $\widetilde{F}_{> \stdivv }$ are a function of $(Z^k_{h},\widetilde{Z}^k_{h})_{h \geq \stdivv+1}$. 
    With this notation, we get
    \begin{equation} \label{eq:Lmax^2_Fdiv}
    \mathbb{E}\left[\left(\frac{\#\mathcal{L}^{\max}_{\cap}(\TLis{k})}{k}\right)^2 \right]=\Ppp{k}{F_{\leq \stdivv }=0,F_{>\stdivv }=0,\widetilde{F}_{>\stdivv }=0}.
    \end{equation}
    By Lemma~\ref{lem:double markov chain},
    we get that conditionally on $(Z^k_{h},\widetilde{Z}^k_{h})_{0\leq h \leq \stdivv}$, the processes $(Z^k_{h})_{h \geq \stdivv+1}$ and $(\widetilde{Z}^k_{h})_{h \geq \stdivv+1}$ are distributed as two independent versions of $(Z_h)_{h\geq 1}$ 
    under the respective measures $\bbP_{\Mst{\stdivv}}$ and $\bbP_{W_{\stdivv}}$.\footnote{Note that here we are using that the Markov property of $(Z^k_{h})_h$ holds when conditioning either on the full state $Z^k_h=(S_h,D_h,\Mst{h},W_h)$ or just on $\Mst{h}$, since the transitions depend only on this third coordinate; see~\cref{sect:trans-prob2}.}
Introduce the notation 
\[\epsilon(k)\coloneqq  \Ppp{k}{F=0}-\lambda = \mathbb{E}\left[\frac{\#\mathcal{L}^{\max}_{\cap}(\TLis{k})}{k} \right]-\lambda.\] 
This ensures that we have 
\begin{align*}
\Pppsq{k}{F_{>\stdivv }=0,\widetilde{F}_{>\stdivv }=0}{(Z^k_{h},\widetilde{Z}^k_{h})_{0\leq h \leq \stdivv}} = (\lambda + \epsilon(\Mst{\stdivv})) \cdot(\lambda +\epsilon(W_{\stdivv})).
\end{align*}
Using the last display allows us to rewrite the right-hand side of \eqref{eq:Lmax^2_Fdiv} as
\begin{align}\label{eq:probability no fork}
    &\Ppp{k}{F_{\leq \stdivv }=0,F_{>\stdivv }=0,\widetilde{F}_{>\stdivv }=0} \notag\\
    &= \mathbb{E}_k\left[\mathds{1}_{F_{\leq \stdivv }=0}\cdot
     (\lambda + \epsilon(\Mst{\stdivv})) \cdot(\lambda +\epsilon(W_{\stdivv}))\right] \notag\\
    &= \lambda^2 - \lambda^2\Ppp{k}{F_{\leq \stdivv } \geq 1} 
    + \Ecp{k}{\mathds{1}_{F_{\leq \stdivv }=0} 
    \cdot (\lambda \epsilon(\Mst{\stdivv}) + \lambda \epsilon(W_{\stdivv}) + \epsilon(\Mst{\stdivv})\epsilon(W_{\stdivv}))}.
\end{align}

Let us first handle the term $\Ppp{k}{F_{\leq \stdivv }\geq 1}$ in \eqref{eq:probability no fork}.  
For all $A>1$, we can write
    \begin{equation*}
        \Ppp{k}{F_{\leq \stdivv }\geq 1}=\Ppp{k}{F_{\leq \stdivv }\geq 1,\Mst{\stdivv} \geq A}+\Ppp{k}{F_{\leq \stdivv}\geq 1 ,\Mst{\stdivv} < A }.
    \end{equation*}
Now on the event $\{\Mst{\stdivv} \geq A\}$, we have $F_{\leq \stdivv }\leq F_A$, where we recall that $F_A$ was introduced in \eqref{eq:fork_Fk_trunc}. 
Hence, by Markov's inequality,
    \begin{equation*}
        \Ppp{k}{F_{\leq \stdivv } \geq 1,\Mst{\stdivv} \geq A}
        \leq 
        \Ppp{k}{F_A\geq 1}
        \leq
        \mathbb{E}_k[F_A].
    \end{equation*}
We conclude that
\begin{equation*}
        \Ppp{k}{F_{\leq \stdivv } \geq 1} \leq \mathbb{E}_k[F_A] + \Ppp{k}{F_{\leq \stdivv} \geq 1,\Mst{\stdivv} < A } 
        \le \mathbb{E}_k[F_A] + \Ppp{k}{\Mst{\stdivv} < A}.
    \end{equation*}

Now let us control the last term in \eqref{eq:probability no fork}.
We have 
\begin{align*}
&\left|\Ecp{k}{\mathds{1}_{F_{\leq \stdivv }=0} 
    \cdot (\lambda \epsilon(\Mst{\stdivv}) + \lambda \epsilon(W_{\stdivv}) + \epsilon(\Mst{\stdivv})\epsilon(W_{\stdivv}))}
\right|\\ 
&\leq \Ecp{k}{
    \lambda |\epsilon(\Mst{\stdivv}) | + \lambda |\epsilon(W_{\stdivv})| 
    + |\epsilon(\Mst{\stdivv})\epsilon(W_{\stdivv})|}\\
& \leq \Ecp{k}{
\mathds{1}_{\{\Mst{\stdivv} \geq A,W_{\stdivv} \geq A\}} 
    \cdot (\lambda |\epsilon(\Mst{\stdivv}) | + \lambda |\epsilon(W_{\stdivv})| + |\epsilon(\Mst{\stdivv})\epsilon(W_{\stdivv}))|} \\
    &\qquad + 3  \Ppp{k}{\{\Mst{\stdivv} \geq A, W_{\stdivv} \geq A\}^{\mathrm{c}}} \\
    &\leq 3 \sup_{m\geq A} |\epsilon(m)| + 3  \Ppp{k}{\{\Mst{\stdivv} \geq A,W_{\stdivv} \geq A\}^{\mathrm{c}}},
\end{align*}
where we used the two bounds $\lambda\leq 1$ and $|\epsilon(\cdot )| \leq 1$. 
Finally, using a union bound and the fact that $\Mst{\stdivv}$ and $W_{\stdivv} = \widetilde{M}^*_{\stdivv}$ have the same distribution by symmetry of $L^k$ and $\widetilde{L}^k$, we can write 
\begin{align*}
\Ppp{k}{\{\Mst{\stdivv} \geq A,W_{\stdivv} \geq A\}^{\text{c}}} \leq 2 \Pp{\Mst{\stdivv} < A}. 
\end{align*}
Putting everything together and using Lemma~\ref{lem:divergence occurs quickly}, we get 
\begin{align*}
\left|\mathbb{E}\left[\left(\frac{\#\mathcal{L}^{\max}_{\cap}(\TLis{k})}{k}\right)^2 \right] - \lambda^2\right| 
&\leq \mathbb{E}_k[F_A] + 3 \sup_{m\geq A} |\epsilon(m)| + 7\Pp{\Mst{\stdivv} < A}\\
&\leq \mathbb{E}_k[F_A] + 3 \sup_{\mathsf{k}\geq A} \left|\Ec{\frac{\#\mathcal{L}^{\max}_{\cap}(\TLis{\mathsf{k}})}{\mathsf{k}}}-\lambda \right| + 7 \left(\frac{A^2}{k} +\frac{C}{A}\right).
\end{align*}
Hence, \cref{item:2nd_mmt_1} is proved.

The convergence in \cref{item:2nd_mmt_2} is a consequence of \cref{item:2nd_mmt_1}, together with \cref{lem:forks_mmt} and \cref{lem:cvg_first_mmt}, by taking $A$ going to infinity with $k$, with $A=o(\sqrt{k})$. 
\end{proof}

\cref{lem:cvg_first_mmt} and \cref{lem:cvg_2nd_mmt} entail our law of large numbers in \cref{thm:convergence_size_intersection}.
\begin{proof}[Proof of \cref{thm:convergence_size_intersection}] 
The result follows from Chebyshev's inequality since, for any $\delta>0$,
\[
\Pp{\bigg| \frac{\# \mathcal{L}^{\max}_{\cap}(\TLis{k})}{k} - \lambda \bigg| \ge \delta}
\le
\frac{1}{\delta^2} \Ec{\bigg( \frac{\# \mathcal{L}^{\max}_{\cap}(\TLis{k})}{k} - \lambda \bigg)^2}
\to 0 \qquad \text{as } k\to \infty,
\]
by \cref{lem:cvg_first_mmt} and \cref{lem:cvg_2nd_mmt}.
\end{proof}

\subsection{Good regularity estimates for the sequences $q$ and $Q$}\label{sect-bett-reg-q-Q}

With the law of large numbers of \cref{thm:convergence_size_intersection} in hand, we can now show the good regularity estimates for the sequences $q$ and $Q$ stated in \cref{thm:good_regularity}. 

Recall, from \eqref{eq:defn-stp-time}, the a.s.\ finite stopping times
\begin{equation*}
	\sttm_k \coloneqq  \min \{ n \geq 1 \mid X_n=k\}, \qquad k \geq 1.
\end{equation*}
We first state a preliminary lemma, which improves the estimates in \eqref{eqn:q_sigma_lower_and_upper} of \cref{lem:q_and_sigma} and constitutes the main input towards \cref{thm:good_regularity}. The constant $\lambda$ was introduced in \cref{thm:convergence_size_intersection}.

\begin{lem}[Asymptotics for the sequence $q(k)$] \label{lem:equiv_q(k)}
As $k\to\infty$, we have
\[
q(k) \sim \frac{1}{2\sqrt{\pi}\lambda k} \Ec{\sttm_k^{-1/2}}.
\]
\end{lem}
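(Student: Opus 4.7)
The plan is to use the exact identity (noted in the discussion after \eqref{eq:discussion-for-good-est2}) that $\Ppsq{\sttm_k = n}{\revF_n} = \mathds{1}_{X_n = k}\,\frac{\#\mathcal{L}^{\max}_\cap(T_n)}{n}$, which sharpens the one-sided inequality used in Step 4 of the proof of \cref{lem:q_and_sigma}. Summing over $n \geq 1$ immediately gives the clean formula
\[
\Ec{\sttm_k^{-1/2}} = \sum_{n \geq k} n^{-3/2}\, \Ec{\# \mathcal{L}^{\max}_\cap(T_n)\, \mathds{1}_{X_n = k}}.
\]
In parallel, since $\TLis{k}$ has the law of $T$ conditioned on $\lis(T) = k$, conditioning first on $|T|$ yields
\[
q(k)\, \Ec{\# \mathcal{L}^{\max}_\cap(\TLis{k})} = \sum_{n \geq k} r(n)\, \Ec{\# \mathcal{L}^{\max}_\cap(T_n)\, \mathds{1}_{X_n = k}},
\]
where $r(n) = \Pp{|T|=n}$. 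These two right-hand sides are almost proportional, because \cref{lem:prelim_GW} gives $r(n) = \tfrac{1}{2\sqrt{\pi}}(1 + O(1/n))\, n^{-3/2}$.

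The error introduced by replacing $r(n)$ by $\tfrac{1}{2\sqrt{\pi}} n^{-3/2}$ is controlled using the trivial deterministic bound $\# \mathcal{L}^{\max}_\cap(T_n) \leq k$ on the event $\{X_n = k\}$, which gives
\[
\bigg|\sum_n \Big(r(n) - \tfrac{1}{2\sqrt{\pi}} n^{-3/2}\Big) \Ec{\# \mathcal{L}^{\max}_\cap(T_n)\, \mathds{1}_{X_n = k}}\bigg| \leq O(k) \cdot \Ec{\sum_{n = \sttm_k}^{\sttm_{k+1}-1} n^{-5/2}}.
\]
A computation in the spirit of \eqref{eq:for-fut-ref} (using the same comparison with a geometric variable to bound $\Ecsq{\sttm_{k+1}-\sttm_k}{\sttm_k} = O(\sttm_k/k)$) shows that the last expectation is $O(\Ec{\sttm_k^{-3/2}}/k)$, and hence the total error is $O(\Ec{\sttm_k^{-3/2}})$. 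By the second estimate of \eqref{eq:tail_sigma-2} in \cref{cor:lower_tail_sigma}, this is $o(\Ec{\sttm_k^{-1/2}})$, so the two identities above combine into
\[
q(k)\, \Ec{\# \mathcal{L}^{\max}_\cap(\TLis{k})} = \frac{1 + o(1)}{2\sqrt{\pi}}\, \Ec{\sttm_k^{-1/2}}.
\]
Finally, \cref{lem:cvg_first_mmt} provides $\Ec{\# \mathcal{L}^{\max}_\cap(\TLis{k})} = \lambda k\,(1 + o(1))$ as $k\to\infty$, and dividing by $\lambda k$ yields the announced equivalent.

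The main conceptual step is to replace the one-sided comparison $\#\mathcal{L}^{\max}_\cap(T_n) \leq X_n$ used to obtain the rough estimates in \cref{prop:rough_regularity} by the sharp \emph{average} $\lambda\, X_n$, for which the convergence of the first moment in \cref{lem:cvg_first_mmt} is exactly what one needs. A pleasant feature is that the proof goes through even though $\#\mathcal{L}^{\max}_\cap(T_n)/X_n$ is only known to converge to $\lambda$ in a $T^k$-averaged sense and not uniformly in $n$: the crude deterministic bound $\#\mathcal{L}^{\max}_\cap(T_n) \leq k$ is enough to absorb the $O(1/n)$ correction in the Bienaymé--Galton--Watson denominator $r(n)$.
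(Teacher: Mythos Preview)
Your proof is correct and takes a genuinely different, considerably shorter route than the paper's. The paper works directly with the representation $q(k)=\Ec{\sum_{n=\sttm_k}^{\sttm_{k+1}-1} r(n)}$ and, conditioning on $\sttm_{k+1}$, shows that $\sttm_{k+1}-\sttm_k$ is approximately geometric with parameter $\lambda k/\sttm_{k+1}$; this requires a truncation at scale $K\sttm_{k+1}/k$, a careful telescopic product controlling $\Ppsq{\sttm_k<n-j}{\sttm_{k+1}=n}$ via the set $\teps$, and ultimately the full law of large numbers of \cref{thm:convergence_size_intersection} to show $\Pp{T_n\in\teps}$ is small. Your approach bypasses all of this by directly turning the identity $\Pp{\sttm_k=n}=\tfrac{1}{n}\Ec{\#\mathcal{L}^{\max}_\cap(T_n)\mathds{1}_{X_n=k}}$ into two parallel formulas for $\Ec{\sttm_k^{-1/2}}$ and $q(k)\,\Ec{\#\mathcal{L}^{\max}_\cap(T^k)}$ that differ only by the replacement $\tfrac{1}{2\sqrt{\pi}}n^{-3/2}\leftrightarrow r(n)$; the resulting $O(n^{-5/2})$ discrepancy is then absorbed using only the crude bound $\#\mathcal{L}^{\max}_\cap(T_n)\leq k$ and the existing moment estimate $\Ecsq{\sttm_{k+1}-\sttm_k}{\sttm_k}=O(\sttm_k/k)$. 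The main gain is that you need only the convergence of the \emph{first moment} (\cref{lem:cvg_first_mmt}), not the second moment or the LLN in probability, and you avoid all the $\eps$--$K$ truncation machinery. The paper's longer argument does, on the other hand, make the heuristic ``$\sttm_{k+1}-\sttm_k\approx\mathrm{Geom}(\lambda k/\sttm_{k+1})$'' fully rigorous, which is conceptually satisfying even though it is not needed for the lemma itself.
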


Before proving the lemma, we show how it implies \cref{thm:good_regularity}.

\begin{proof}[Proof of \cref{thm:good_regularity} given \cref{lem:equiv_q(k)}]
The proof is basically the same as the end of the proof of \cref{prop:rough_regularity} on page~\pageref{eq:proof-rough}. By \eqref{eqn:large_Q_and_sigma} in \cref{lem:q_and_sigma} and \cref{lem:equiv_q(k)},
\begin{equation} \label{eq:estim_Q_ratio}
\frac{Q(k+1)}{Q(k)} 
= 1 - \frac{q(k)}{Q(k)}
= 1 - \frac{1}{2\lambda k} + o\bigg(\frac{1}{k}\bigg)
= \exp \left( -\frac{1}{2\lambda k} + o \left( \frac{1}{k} \right) \right)
\quad \text{as } k\to\infty.
\end{equation}
We first show the claim on $Q$. Let $\svx \geq 1$. Multiplying the estimates \eqref{eq:estim_Q_ratio} over a telescopic product, we have as $k\to\infty$,
\begin{equation}\label{eq:cons-alpha-lambda}
    \frac{Q(\svx k)}{Q(k)}
=
\exp \left( \sum_{i=k}^{\svx k-1} \left( -\frac{1}{2\lambda i} + o\left(\frac{1}{i}\right)\right) \right)
=
\exp\left( -\frac{1}{2\lambda}\sum_{i=k}^{\svx k-1}\frac{1}{i} + o(1)\right) 
\to \svx^{-\frac{1}{2\lambda}}.
\end{equation} 
By Karamata's characterization theorem \cite[Theorem 1.4.1]{bingham1989regular}, we obtain the existence of a slowly varying function $\svfbQ$ such that $Q(k)=k^{-\frac{1}{2\lambda}} \svfbQ(k)$. In particular, \cref{prop:rough_regularity} implies $\lambda=\alpha$, which proves the part of \cref{thm:good_regularity} on $Q$.

Finally, by comparing \cref{lem:equiv_q(k)} with \eqref{eqn:large_Q_and_sigma} in \cref{lem:q_and_sigma}, we have
\[
q(k)
\sim \frac{1}{2\lambda k} Q(k)
= \frac{1}{2\alpha} \svfbQ(k) k^{-1-\frac{1}{2\alpha}}
\quad \text{as } k\to\infty,
\]
which concludes the proof.
\end{proof}

The rest of this section is devoted to the proof of \cref{lem:equiv_q(k)}.
\begin{proof}[Proof of \cref{lem:equiv_q(k)}]
We divide the proof into five steps. The core of the proof is Step 3, where we refine our proof of the lower bound on $q(k)$ in \cref{lem:q_and_sigma}. The improvement will come from the precise asymptotics for $\#\mathcal{L}^{\max}_{\cap}(\TLis{k})$ obtained in \cref{thm:convergence_size_intersection}.

\bigskip
\noindent\emph{\underline{Step 1}: Truncation.}
    We fix a constant $K \geq 1$ (to be thought of as a large constant to be tuned at the end of the proof in Step 5) and for the rest of the proof we assume that $k>K$. The proof starts in the same way as the proof of \cref{lem:q_and_sigma}: by the second equality in \eqref{eq:q(k)_sum_sigma(k)}, for all $k \geq 1$, we have
    \[ q(k) = \Ec{\sum_{n=\sttm_k}^{\sttm_{k+1}-1} r(n) }. \]
    In view of \cref{lem:q_and_sigma} and \eqref{eq:for-fut-ref} say, we expect $\sttm_{k+1}-\sttm_k$ to be typically of order $\frac{\sttm_{k+1}}{k}$. 
    Therefore, we split the sum in two, according as $n \geq \sttm_{k+1}- \lfloor K\frac{\sttm_{k+1}}{k} \rfloor$ or not. If $n \geq \sttm_{k+1}- \lfloor K\frac{\sttm_{k+1}}{k} \rfloor$, note that by the asymptotics in \cref{lem:prelim_GW}, 
    \[
    r(n)=\left( 1+O_K \left( \frac{1}{k} \right)\right) r(\sttm_{k+1}), 
    \]
    that is, there is a constant $C_K$ depending on $K$ (but not on $k$) such that $|r(n)-r(\sttm_{k+1})| \le \frac{C_K}{k}r(\sttm_{k+1})$. This implies 
    \begin{equation}\label{eqn:decom_qk_small_and_big}
        q(k) = \left( 1+O_K \left( \frac{1}{k}\right) \right) \Ec{ r(\sttm_{k+1}) \min\left( \sttm_{k+1}-\sttm_k, \Big\lfloor K \frac{\sttm_{k+1}}{k} \Big \rfloor \right) }  + \Ec{ \sum_{n \geq \sttm_k} \mathbbm{1}_{n < \sttm_{k+1}- \lfloor K\frac{\sttm_{k+1}}{k} \rfloor} \, r(n) }.
    \end{equation}
    Roughly speaking, we will estimate the first term by applying the strategy of the lower bound of \cref{lem:q_and_sigma} in a more accurate way, where the additional precision will come from \cref{thm:convergence_size_intersection}. The second term will be bounded crudely by the same proof as the upper bound of \cref{lem:q_and_sigma}. Our main goal will be to show that all the error terms that will appear are small relative to $q(k)$.

    \medskip
    
\noindent\emph{\underline{Step 2}: Neglecting the second term of \cref{eqn:decom_qk_small_and_big}.}
    Since it is easier, let us start with the second term.
    We note that if $n < \sttm_{k+1}-\lfloor K\frac{\sttm_{k+1}}{k} \rfloor$, then $\sttm_{k+1}>n+ \lfloor K\frac{\sttm_k}{k} \rfloor$. 
    Using the monotonicity of $r(\cdot)$ from~\cref{lem:prelim_GW}, we can bound $r(n)$ by $r(\sttm_k)$ and condition on $\sttm_k$ to get
    \begin{equation} \label{eq:2nd_term_cond_tau_k}
    \Ec{ \sum_{n \geq \sttm_k} \mathbbm{1}_{n < \sttm_{k+1}-\lfloor K\frac{\sttm_{k+1}}{k} \rfloor} \, r(n) } \leq \Ec{ r(\sttm_k) \sum_{n \geq \sttm_k} \Ppsq{\sttm_{k+1}>n+ \left\lfloor K\frac{\sttm_k}{k} \right\rfloor}{\sttm_k} }.
    \end{equation}
    Using the change of variables $j=n-\sttm_k+\lfloor K\frac{\sttm_k}{k} \rfloor$, we can now use~\eqref{eqn:bound_tail_increment_sigma} to obtain, almost surely,
    \begin{align*}
        \sum_{n \geq \sttm_k} \Ppsq{\sttm_{k+1}>n+ \Big\lfloor K\frac{\sttm_k}{k} \Big\rfloor}{\sttm_k} 
        &=
        \sum_{j \geq \lfloor K\frac{\sttm_k}{k}\rfloor} \Ppsq{\sttm_{k+1}>\sttm_k + j}{\sttm_k} \\
        &\stackrel{\mathclap{\eqref{eqn:bound_tail_increment_sigma}}}{\leq} \sum_{j \geq \lfloor K\frac{\sttm_k}{k}\rfloor} \left( 1+\frac{j}{\sttm_k} \right)^{-p(k-1)} \\
        &\leq \int_{\lfloor K \sttm_k/k \rfloor -1}^{+\infty} \left( 1+\frac{t}{\sttm_k} \right)^{-p(k-1)} \mathrm{d}t.
    \end{align*}
    Now for $k$ large enough so that the integral is finite, we find that
    \begin{align*}
    \int_{\lfloor K \sttm_k/k \rfloor -1}^{+\infty} \left( 1+\frac{t}{\sttm_k} \right)^{-p(k-1)} \mathrm{d}t
    &=
    \frac{\sttm_k}{p(k-1)-1} \left( 1+\frac{\lfloor K\frac{\sttm_k}{k} \rfloor-1}{\sttm_k} \right)^{1-p(k-1)} \\
    &\leq
    \frac{\sttm_k}{p(k-1)-1} \exp\left( \left( -pk+1+p \right) \log\left(1+\frac{K-2}{k}
    \right)  \right),
    \end{align*}
    where for the bound in the $\log$ we used that $\sttm_k\geq k$.
    We conclude that there exists a constant $C>0$ (independent of $K$) and a threshold $A_K$ such that for $k\ge A_K$, 
    \[
    \sum_{n \geq \sttm_k} \Ppsq{\sttm_{k+1}>n+ \Big\lfloor K\frac{\sttm_k}{k} \Big\rfloor}{\sttm_k} 
    \le
    C \frac{\sttm_k}{k} \mathrm{e}^{-pK}.
    \]
    By integrating in $\sttm_k$, we finally obtain that for $k\ge A_K$,
    \begin{equation}\label{eqn:q_accurate_bound_error_term}
        \Ec{ \sum_{n \geq \sttm_k} \mathbbm{1}_{n < \sttm_{k+1}-\lfloor K\frac{\sttm_{k+1}}{k}\rfloor}  \, r(n) } 
        \stackrel{\eqref{eq:2nd_term_cond_tau_k}}{\le} 
        C \frac{\mathrm{e}^{-pK}}{k}\Ec{r(\sttm_k)\sttm_k}
        \,\,
        \stackrel{\mathclap{\eqref{eq:asympt r(m)}}}{\le}
        \,\,
        C \frac{\mathrm{e}^{-pK}}{k} \Ec{\sttm_k^{-1/2}}
        \stackrel{\eqref{eqn:q_sigma_lower_and_upper}}{\le} C \mathrm{e}^{-p K}q(k),
    \end{equation}
    for some other constant $C$. This bounds the second term of~\eqref{eqn:decom_qk_small_and_big}.

\medskip

\noindent\emph{\underline{Step 3}: Estimating the first expectation in  \cref{eqn:decom_qk_small_and_big}.}
For the first expectation in~\eqref{eqn:decom_qk_small_and_big}, we will be more precise and rely on \cref{thm:convergence_size_intersection}. Recall the notation $\mathcal{L}_{\cap}^{\max}(t)$ for the set of leaves contained in all maximal positive subtrees of $t$. Let $\lambda\in (0,1)$ be the constant appearing in \cref{thm:convergence_size_intersection}. We now take $\eps \in (0,1)$ (to be thought of as a small constant to be tuned at the end of the proof in Step 5) and denote by $\teps$ the set of finite decorated trees $t$ such that
	\[ \left| \frac{\# \mathcal{L}_{\cap}^{\max}(t)}{\lis(t)}-\lambda \right| \geq \eps. \]
	In particular, \cref{thm:convergence_size_intersection} means that $\Ppsq{ T \in \teps}{\lis(T)=k} \to 0$ as $k \to \infty$. For future reference, observe that by definition of $\sttm_{k}$ and since $T$ conditioned to have size $n$ is distributed as $T_n$,  
    this implies that as $k\to\infty$,
	\begin{equation} \label{eq:A_sum_sigma_k}
	\Ec{\sum_{n={\sttm_k}}^{\sttm_{k+1}-1} r(n) \mathds{1}_{T_n \in \teps} }
    = \sum_{n\ge 1} r(n) \Pp{T_n \in \teps, X_n=k }
    =  \Pp{T \in \teps, \LIS(T)=k}
    = o(q(k)).
    \end{equation}
    Since our goal is to estimate the term 
    \[\Ec{ r(\sttm_{k+1}) \min\left( \sttm_{k+1}-\sttm_k, \Big\lfloor K \frac{\sttm_{k+1}}{k} \Big\rfloor \right) }=\Ec{ \Ecsq{r(\sttm_{k+1}) \min\left( \sttm_{k+1}-\sttm_k, \Big\lfloor K \frac{\sttm_{k+1}}{k} \Big\rfloor \right)}{\sttm_{k+1}}}\]
    in \cref{eqn:decom_qk_small_and_big}, we will first be interested in estimating 
    \begin{equation}\label{eq:goal-to-est}
        \Ppsq{\sttm_k < n- m}{\sttm_{k+1}=n},
    \end{equation}
    which will be achieved in \eqref{eq:final-est-proba}.
    For this, we mimic (in a more precise way) the proof of the lower bound on $q(k)$ in \cref{lem:q_and_sigma}. 

    For the rest of this step of the proof we fix $k \geq 2K/\eps$.
    We start by looking at the event we condition on in \eqref{eq:goal-to-est}. Let $n > k$.
    We recall from \cref{sect:positive-subtrees} that conditionally on $\revF_{n}$, the decorated tree $T_{n-1}$ is obtained by removing a uniformly chosen leaf $\uleaf{n}$ of $T_{n}$. 
    Therefore, if $X_{n}=k$, we have $\sttm_k=n$ if and only if $\uleaf{n}$ belongs to the intersection of all maximal positive subtrees of $T_{n}$, \emph{i.e.}\ almost surely, 
    \begin{align*} 
    \Ppsq{\sttm_k=n}{\revF_{n}}
    &= \frac{\# \mathcal{L}_{\cap}^{\max}(T_{n})}{n}\mathds{1}_{X_{n}=k}.  
    \end{align*}
    By distinguishing whether $T_n \in \teps$ or not, and in the case $T_n \in \teps$, using  the crude bound $0 \leq \# \mathcal{L}_{\cap}^{\max}(T_{n}) \le X_{n}$  and that $\lambda \in (0,1)$, we obtain for all $n>k$ and all $\eps \in (0,1)$, that
    \begin{equation} \label{eq:cond_sigma_k_A}
    \left| \Ppsq{\sttm_k=n}{\revF_{n}} - \lambda \frac{k}{n} \mathds{1}_{X_{n}=k} \right| 
    \le
    \frac{k}{n} \left(\eps + \mathbbm{1}_{T_{n}\in \teps}\right).
    \end{equation}

    We now turn to \eqref{eq:goal-to-est}. For all $i,n\geq 1$ such that $i \leq n-2$ and $n-i > k$, the events $\{ \sttm_{k+1}=n \}$ and $\{ \sttm_k \leq n-i \}$ belong to $\revF_{n-i}$ and have positive probability, so 
    \begin{align*}
        \Ppsq{\sttm_k<n-i}{\sttm_{k+1}=n, \sttm_k \leq n-i} 
        &= 1 - \Ecsq{\Ppsq{\sttm_k=n-i}{\revF_{n-i}}}{\sttm_{k+1}=n, \sttm_k \leq n-i}.
    \end{align*}
    In addition, note that on the event $\{\sttm_{k+1}=n, \sttm_k \leq n-i\}$, we have that $X_{n-i}=k$. Therefore, 
    using the bound in~\eqref{eq:cond_sigma_k_A} with $n-i$ in place of $n$, for all  $i, n\geq 1$ such that $i \leq n-2$ and $ n-i > k$,
    \begin{multline}
    \left|\Ppsq{\sttm_k<n-i}{\sttm_{k+1}=n, \sttm_k \leq n-i} - 1 + \lambda\frac{k}{n-i}\right| \nonumber \\
    \le \frac{k}{n-i} \Big( \eps +  \Ppsq{T_{n-i} \in \teps}{\sttm_{k+1}=n, \sttm_k \leq n-i} \Big).
    \end{multline}
    We now also assume $i \leq K \frac{n}{k}$ and define, for all  $i, n\geq 1$ such that $i \leq  K \frac{n}{k}$ and $ n-i > k$,
    \[ x_i^{k,n}\coloneqq 
    \Ppsq{\sttm_k<n-i}{\sttm_{k+1}=n, \sttm_k \leq n-i} - 1 + \lambda\frac{k}{n}.
    \] 
    Recalling that we fixed $k\geq 2K/\eps $, our last bound and the triangle inequality entail that if $1 \leq i \leq K \frac{n}{k}$ and $n-i > k$,
    \begin{align}
        |x_i^{k,n}| &\le
        \frac{k}{n-i} \left( \eps +  \Ppsq{T_{n-i} \in \teps}{\sttm_{k+1}=n, \sttm_k \leq n-i} \right)+\lambda\left|\frac{k}{n-i}-\frac{k}{n}\right|\notag\\
        &\leq
        \frac{3k}{n} \Big(\eps +  \Ppsq{T_{n-i} \in \teps}{\sttm_{k+1}=n, \sttm_k \leq n-i}\Big),\label{eqn:defn_x_i}
    \end{align}
    where for the last inequality we used, for the first term,  that $n-i\geq n(1-K/k)\geq n(1-\eps/2)\geq n/2$ since $\eps \in (0,1)$, and for the second term, that $\frac{k}{n-i}-\frac{k}{n}=\frac{k}{n}\frac{i}{n-i}\leq \frac{k}{n}\eps$ since $i\leq K \frac{n}{k}\leq \frac{\eps}{2}n$.
    By writing a telescopic product, for $j \geq 0$ and $ n-j > k$, we have 
    \begin{equation} \label{eq:telescopic_sigma_k}
    \Ppsq{\sttm_k < n-j}{\sttm_{k+1}=n} = \prod_{i=1}^{j} \left( 1-\lambda\frac{k}{n} + x_i^{k,n} \right) = \left( 1-\lambda\frac{k}{n} \right)^j \exp\left(\sum_{i=1}^{j} \log\left( 1+\frac{x_i^{k,n}}{1-\lambda k/n} \right) \right),
    \end{equation}
    where, when $j=0$, we interpret the empty product as $1$ and the empty sum as $0$.
    Therefore, for $0 \leq j \leq K \frac{n}{k}$ and $n-j>k$, we write
    \begin{equation}\label{eq:defn-y}
         y_j^{k,n} \coloneqq 
    \sum_{i=1}^{j}  \frac{|x_i^{k,n}|}{1-\lambda k/n}.
    \end{equation}
    For later convenience, we also set $y_j^{k,n} \coloneqq 0$ for all $0 \leq j \leq K \frac{n}{k}$ with $n-j \leq k $.
    
    Since $n>n-j>k$, the denominators $1-\lambda k/n$ are bounded away from $0$ (recall that $\lambda<1$). We deduce from \eqref{eqn:defn_x_i} that for $0 \leq j \leq K \frac{n}{k}$ and $n-j>k$, the sum $y_j^{k,n}$ is bounded by a constant that depends only on $K$, and thus so is the sum
    \[
    \bigg|\sum_{i=1}^{j} \log \bigg( 1+\frac{x_i^{k,n}}{1-\lambda k/n} \bigg)\bigg|
    \le
    y_j^{k,n}.
    \]
    Therefore, by the mean value inequality, we can find a constant $C_K>0$ that depends only on $K$ such that for all $0 \leq j \leq K \frac{n}{k}$ and $n-j>k$,
    \[
    \left|\exp\left(\sum_{i=1}^{j} \log\left( 1+\frac{x_i^{k,n}}{1-\lambda k/n} \right) \right)-1\right|
    \le C_K y_j^{k,n}
    \le C_K y_{\lfloor Kn/k \rfloor}^{k,n}.
    \]
    We conclude that for all $0 \leq j \leq K \frac{n}{k}$, 
    \begin{equation}\label{eq:final-est-proba}
        \Ppsq{\sttm_k < n-j}{\sttm_{k+1}=n} = \mathds{1}_{n-j > k}\left( 1-\lambda \frac{k}{n} \right)^j  \left( 1+O_K\left(y_{\lfloor Kn/k \rfloor}^{k,n}\right)\right),
    \end{equation}
    since the probability on the left-hand side is zero when $n-j \leq k$.
    The latter estimate gives us that for $k\geq 2K/\eps$, almost surely
    \begin{align*}
        \Ecsq{\min\left( \sttm_{k+1}-\sttm_k, \Big\lfloor K \frac{\sttm_{k+1}}{k} \Big\rfloor \right) }{\sttm_{k+1}}
        &=\sum_{j=0}^{\infty}\Ppsq{\min\left( \sttm_{k+1}-\sttm_k, \Big\lfloor K \frac{\sttm_{k+1}}{k} \Big\rfloor \right)>j}{\sttm_{k+1}}\\
        &=\sum_{j=0}^{\big\lfloor K \frac{\sttm_{k+1}}{k} \big\rfloor-1}
        \Ppsq{ \sttm_{k}<\sttm_{k+1}-j}{\sttm_{k+1}}
        \\
        &=\sum_{j=0}^{J_{K}-1}
       \left(\left( 1-\lambda \frac{k}{\sttm_{k+1}} \right)^j  \left( 1+O_K\left(y_{\lfloor K\sttm_{k+1}/k \rfloor}^{k,\sttm_{k+1}}\right)\right)\right),
    \end{align*}
    with $J_{K}\coloneqq \min\left(\Big\lfloor K \frac{\sttm_{k+1}}{k} \Big\rfloor,\sttm_{k+1}-k\right)$. 
    This is a geometric sum, so we end up with
    \[
    \Ecsq{\min\left( \sttm_{k+1}-\sttm_k, \Big\lfloor K \frac{\sttm_{k+1}}{k} \Big\rfloor \right) }{\sttm_{k+1}}
    =
    \frac{\sttm_{k+1}}{\lambda k} \left( 1- \left( 1-\lambda \frac{k}{\sttm_{k+1}} \right)^{J_{K}} \right) \left(1+O_K\left(y_{\lfloor K\sttm_{k+1}/k \rfloor}^{k,\sttm_{k+1}}\right)\right).
    \]
    We now simplify the exponent $J_K$ above. Notice that if $\tau_{k+1} -k < K \frac{\tau_{k+1}}{k} $, then  we must have that 
    $\tau_{k+1} < \frac{k}{1- \frac{K}{k}}$. Since $k \ge 2K/\eps$ and $\eps\in(0,1)$, this would imply $\tau_{k+1} < 2k$. Hence, if $\tau_{k+1} \ge 2k$, then $J_K = \Big\lfloor K \frac{\sttm_{k+1}}{k} \Big\rfloor$. Besides, if $J_K = \Big\lfloor K \frac{\sttm_{k+1}}{k} \Big\rfloor$, then 
    $\left( 1-\lambda \frac{k}{\sttm_{k+1}} \right)^{J_{K}} = O(\mathrm{e}^{-\lambda K}).$
    Therefore, we can combine these two observations into
    \begin{multline*}
        \Ecsq{\min\left( \sttm_{k+1}-\sttm_k, \Big\lfloor K \frac{\sttm_{k+1}}{k} \Big\rfloor \right)}{\sttm_{k+1}}  \mathds{1}_{\sttm_{k+1} \ge 2k}\\ 
        = \frac{\sttm_{k+1}}{\lambda k} \left( 1-O \left( \mathrm{e}^{-\lambda K} \right)\right)
        \left(1+O_K\left(y_{\lfloor K\sttm_{k+1}/k \rfloor}^{k,\sttm_{k+1}}\right)\right) \mathds{1}_{\sttm_{k+1} \ge 2k}.      
    \end{multline*}
    Integrating against $r(\tau_{k+1})$, we conclude that there is an absolute constant $C>0$ (that may change from one inequality to the next) and a constant $C_K>0$ depending on $K$ such that for all $ k\geq 2K/\eps$, 
    \begin{multline} \label{eq:bound_2nd_term_indicator}
    \left|\Ec{ r(\sttm_{k+1}) \min\left( \sttm_{k+1}-\sttm_k, \Big\lfloor K \frac{\sttm_{k+1}}{k} \Big\rfloor \right) \mathds{1}_{\sttm_{k+1} \ge 2k}} - \Ec{ r(\sttm_{k+1}) \frac{\sttm_{k+1}}{\lambda k} \mathds{1}_{\sttm_{k+1} \ge 2k}} \right| \\
    \le
    C \mathrm{e}^{-\lambda K} \Ec{ r(\sttm_{k+1}) \frac{\sttm_{k+1}}{\lambda k} \mathds{1}_{\sttm_{k+1} \ge 2k}}
    +
    C_K \Ec{ r(\sttm_{k+1}) \frac{\sttm_{k+1}}{\lambda k} y_{\lfloor K\sttm_{k+1}/k \rfloor}^{k,\sttm_{k+1}} \mathds{1}_{\sttm_{k+1} \ge 2k}} \\
    \stackrel{\eqref{eq:asympt r(m)}}{\le}
    C \frac{\mathrm{e}^{-\lambda K}}{\lambda k} \Ec{ \sttm_{k+1}^{-1/2}}
    +
    C_K \Ec{ r(\sttm_{k+1}) \frac{\sttm_{k+1}}{\lambda k} y_{\lfloor K\sttm_{k+1}/k \rfloor}^{k,\sttm_{k+1}} \mathds{1}_{\sttm_{k+1} \ge 2k}} \\
    \stackrel{\eqref{eqn:q_sigma_lower_and_upper}}{\le}
    C \mathrm{e}^{-\lambda K}q(k)
    +
    C_K \Ec{ r(\sttm_{k+1}) \frac{\sttm_{k+1}}{\lambda k} y_{\lfloor K\sttm_{k+1}/k \rfloor}^{k,\sttm_{k+1}} \mathds{1}_{\sttm_{k+1} \ge 2k}}.
    \end{multline}
    Although the indicator $\mathds{1}_{\sttm_{k+1} \geq 2k}$ is $1$ with high probability as $k \to \infty$, we decide to keep it, as it will be useful in the next step of the proof.
    
    We now estimate the first term on the right-hand side of~\eqref{eqn:decom_qk_small_and_big} using \eqref{eq:bound_2nd_term_indicator}: there is an absolute constant $C>0$ and a (possibly different) constant $C_K>0$ depending on $K$ such that for all $ k\geq 2K/\eps$,
    \begin{multline} \label{eq:2nd_term_error_interm}
    \left|\Ec{ r(\sttm_{k+1}) \min\left( \sttm_{k+1}-\sttm_k, \Big\lfloor K \frac{\sttm_{k+1}}{k} \Big\rfloor \right) } - \Ec{ r(\sttm_{k+1}) \frac{\sttm_{k+1}}{\lambda k} } \right| \\
    \le
    C \mathrm{e}^{-\lambda K}q(k)
    +
    C_K \Ec{ r(\sttm_{k+1}) \frac{\sttm_{k+1}}{\lambda k} y_{\lfloor K\sttm_{k+1}/k \rfloor}^{k,\sttm_{k+1}}\mathds{1}_{\sttm_{k+1} \ge 2k}} 
    + C_K\Ec{ r(\sttm_{k+1})  \frac{\sttm_{k+1}}{k} \mathds{1}_{\sttm_{k+1} < 2k}}.
    \end{multline}
    Finally, we may further bound the last term as follows:
    \[
    \Ec{ r(\sttm_{k+1})  \frac{\sttm_{k+1}}{k} \mathds{1}_{\sttm_{k+1} < 2k}}
    \stackrel{\eqref{eq:asympt r(m)}}{\le}
    C \Ec{ \frac{\sttm_{k+1}^{-1/2}}{k } \mathds{1}_{\sttm_{k+1} < 2k}}
    \le
    C \Ec{ \frac{\sttm_{k+1}^{-1/2}}{k } \frac{2k}{\sttm_{k+1}}}
    \le
    2C \Ec{ \sttm_{k+1}^{-3/2}}.
    \]
    By \eqref{eq:tail_sigma-2} and \eqref{eqn:q_sigma_lower_and_upper}, we deduce that the last term of \eqref{eq:2nd_term_error_interm} is $o_K(q(k))$ as $k \to\infty$. We have reached the following bound: for all $k \geq 2K/\eps$, 
    \begin{multline} \label{eq:2nd_term_error}
    \left|\Ec{ r(\sttm_{k+1}) \min\left( \sttm_{k+1}-\sttm_k, \Big\lfloor K \frac{\sttm_{k+1}}{k} \Big\rfloor \right) } - \Ec{ r(\sttm_{k+1}) \frac{\sttm_{k+1}}{\lambda k} } \right| \\
    \le
    C \mathrm{e}^{-\lambda K}q(k)
    + o_K(q(k)) +
    C_K \Ec{ r(\sttm_{k+1}) \frac{\sttm_{k+1}}{\lambda k} y_{\lfloor K\sttm_{k+1}/k \rfloor}^{k,\sttm_{k+1}}\mathds{1}_{\sttm_{k+1} \ge 2k}}.
    \end{multline}
    The next step will be to bound the final error term in the above expression.

\medskip

\noindent\emph{\underline{Step 4}: Estimating the final error term in \eqref{eq:2nd_term_error}.}
Most of the remaining work is to show that the last term above is small compared to $q(k)$. By definition of $y^{k,n}_j$ in \eqref{eq:defn-y} and the bound \eqref{eqn:defn_x_i}, this ultimately boils down to bounding the probability that $T_n\in \teps$ for various values of $n$. Therefore, we will rewrite this error term in such a way that we can apply~\eqref{eq:A_sum_sigma_k}.

We start by fixing $k \geq  2K/\eps$.
By definition of $y^{k,n}_j$ in \eqref{eq:defn-y}, recalling in particular that $y_j^{k,n} =0$ for all $0 \leq j \leq K \frac{n}{k}$ and $n-j \leq k$, the last term of~\eqref{eq:2nd_term_error} can be written 
\begin{align*}
\Ec{ r(\sttm_{k+1}) \frac{\sttm_{k+1}}{\lambda k} y_{\lfloor K\sttm_{k+1}/k \rfloor}^{k,\sttm_{k+1}} \mathds{1}_{\sttm_{k+1} \ge 2k}}  
&= \sum_{n=2k}^\infty \Pp{\sttm_{k+1}=n}r(n) \frac{n}{\lambda k} \frac{1}{1-\lambda k/n} \mathds{1}_{n-\lfloor Kn/k \rfloor > k}\sum_{i=1}^{\lfloor Kn/k \rfloor} |x_i^{k,n}| \\
&\leq \frac{1}{\lambda(1-\lambda)}\sum_{n=2k}^\infty \Pp{\sttm_{k+1}=n}r(n) \frac{n}{k} \mathds{1}_{n-\lfloor Kn/k \rfloor > k} \sum_{i=1}^{\lfloor Kn/k \rfloor} |x_i^{k,n}|.
\end{align*}
First, we note that the above indicator is always $1$. Indeed, using $K/k\leq \eps/2<1/2$ and $n\geq 2k$ we get $n-\lfloor Kn/k \rfloor > k$.
Now, using the upper bound \eqref{eqn:defn_x_i}, we get that 
\begin{multline}\label{eq:error_sum_x_i_2}
    \Ec{ r(\sttm_{k+1}) \frac{\sttm_{k+1}}{\lambda k} y_{\lfloor K\sttm_{k+1}/k \rfloor}^{k,\sttm_{k+1}} \mathds{1}_{\sttm_{k+1} \ge 2k}} 
    \leq\frac{3\eps K}{\lambda(1-\lambda)}
\sum_{n=2k}^\infty \Pp{\sttm_{k+1}=n}r(n) \frac{n}{k}\\
+
\frac{3}{\lambda(1-\lambda)}
\sum_{n=2k}^\infty \Pp{\sttm_{k+1}=n}r(n) \sum_{i=1}^{\lfloor Kn/k \rfloor} \Ppsq{T_{n-i} \in \teps}{\sttm_{k+1}=n, \sttm_k \leq n-i}.
\end{multline}
We now take care of the two sums separately.
For the first sum we have that
\begin{equation}\label{eq:error_1st_term_22}
\sum_{n=2k}^\infty \Pp{\sttm_{k+1}=n}r(n) \frac{n}{k} 
= \Ec{r(\sttm_{k+1}) \frac{\sttm_{k+1}}{k} \mathds{1}_{\sttm_{k+1}\geq 2k}} \stackrel{\eqref{eq:asympt r(m)}}{\le} 
\,\,
\frac{C}{k}\Ec{\sttm_{k+1}^{-1/2}}
\stackrel{\eqref{eqn:q_sigma_lower_and_upper}}{\le} 
C q(k)
\end{equation}
for some absolute constant $C>0$ that may change from one inequality to the next.

We now turn to the second sum in \eqref{eq:error_sum_x_i_2}.
We write it as
\begin{multline}
 \sum_{n=2k}^\infty \Pp{\sttm_{k+1}=n}r(n)  \sum_{i=1}^{\lfloor Kn/k \rfloor}  \Ppsq{T_{n-i} \in \teps}{\sttm_{k+1}=n, \sttm_k \leq n-i} \\ 
=\sum_{n=2k}^\infty r(n) \sum_{i=1}^{\lfloor Kn/k \rfloor} \frac{\Pp{T_{n-i} \in \teps, \sttm_k \leq n-i,\sttm_{k+1}=n}}{\Ppsq{\sttm_k \leq n-i}{\sttm_{k+1}=n}}. \label{eqn:decomposition_error_term_A_2}
\end{multline}
Let us now bound the denominators from below when $n \geq 2k$ 
and $1 \leq i \leq \lfloor Kn/k \rfloor$, recalling that we fixed $k \geq  2K/\eps$. These bounds guarantee that $\frac{k}{n-i} \leq \frac{1}{1-\tfrac{\eps}{2}} \frac{k}{n} < 1$. Therefore, we can use the crude bound~\eqref{eqn:reverse_Remy_geometric_comparison} to obtain
\[ \Ppsq{\sttm_k \leq n-i}{\sttm_{k+1}=n} \geq \left( 1-\frac{k}{n-i} \right)^i \geq \left( 1-\frac{1}{1-\tfrac{\eps}{2}} \frac{k}{n}\right)^i. \]
Using $\log(1-x)\geq -\frac{x}{1-x}$ for $0<x<1$, we obtain
\begin{align*}
    \Ppsq{\sttm_k \leq n-i}{\sttm_{k+1}=n} 
    \geq
    \exp\left(-\frac{i\tfrac{k}{n}}{(1-\tfrac{\eps}{2})-\tfrac{k}{n}}\right)
    \geq 
    \exp\left(-\frac{2K}{1-\eps}\right),
\end{align*}
where, in the last inequality, we used that $ i \leq \lfloor Kn/k \rfloor$ and $n\geq 2k$. 
Then, setting $C_{K,\eps} \coloneqq \exp\left(\frac{2K}{1-\eps}\right)$, \eqref{eqn:decomposition_error_term_A_2} yields 
\begin{align*}
& \sum_{n=2k}^\infty \Pp{\sttm_{k+1}=n}r(n) \sum_{i=1}^{\lfloor Kn/k \rfloor}  \Ppsq{T_{n-i} \in \teps}{\sttm_{k+1}=n, \sttm_k \leq n-i} \nonumber \\
&\leq  C_{K,\eps} \cdot \sum_{n=2k}^\infty r(n) \sum_{i=1}^{\lfloor Kn/k \rfloor} \Pp{T_{n-i} \in \teps, \sttm_k \leq n-i,\sttm_{k+1}=n}\\
&\leq C_{K,\eps} \cdot \Ec{\sum_{n=k+1}^\infty \indicator{\sttm_{k+1}=n} \, r(n) \sum_{i=1}^{\lfloor Kn/k \rfloor} \indicator{\sttm_k \leq n-i} \indicator{T_{n-i} \in \teps}}\\
&= C_{K,\eps} \cdot \Ec{ r(\sttm_{k+1}) \sum_{i=1}^{\lfloor K \sttm_{k+1}/k \rfloor} \indicator{\sttm_k \leq \sttm_{k+1}-i} \indicator{T_{\sttm_{k+1}-i} \in \teps}}\\
&\leq  C_{K,\eps}  \cdot \Ec{r(\sttm_{k+1})\sum_{j=\sttm_k}^{\sttm_{k+1}-1}  \indicator{T_{j} \in \teps}}.
\end{align*}
Since $r$ is decreasing by \cref{lem:prelim_GW}, the last expectation is $o_{\eps} \left( q(k) \right)$ as $k \to \infty$ by~\eqref{eq:A_sum_sigma_k}.
Hence, the second sum in \eqref{eq:error_sum_x_i_2} is $o_{K,\eps}(q(k))$ as $k \to \infty$. 
Since the first sum in \eqref{eq:error_sum_x_i_2} was bounded by $C q(k)$ 
in \eqref{eq:error_1st_term_22}, we conclude (taking into account the additional constants in \eqref{eq:error_sum_x_i_2}) that
\begin{equation} \label{eq:concl_step4}
\Ec{ r(\sttm_{k+1}) \frac{\sttm_{k+1}}{\lambda k} y_{\lfloor K\sttm_{k+1}/k \rfloor}^{k,\sttm_{k+1}} \mathds{1}_{\sttm_{k+1} \geq 2k} }
\leq
\eps C_K  q(k) + o_{K,\eps}(q(k)) 
\quad \text{as } k\to\infty.
\end{equation}

\medskip

\noindent\emph{\underline{Step 5}: Conclusion.}
Let us come back to the formula~\eqref{eqn:decom_qk_small_and_big} for $q(k)$. Recall the parameters $K\geq 1$ and $\eps\in (0,1)$ that we have introduced before. We bound the second term on the right-hand side of \eqref{eqn:decom_qk_small_and_big} using~\eqref{eqn:q_accurate_bound_error_term}, and estimate the first one with \eqref{eq:2nd_term_error} (with its last error term bounded by~\eqref{eq:concl_step4}). We obtain for $k$ large enough (where ``large enough'' depends on $K$ and $\eps$):
\begin{equation}
\label{eq:q(k)_error_E(eps,K,k)}
q(k) = \left( 1+O_K \left( \frac{1}{k}\right) \right) \Ec{ r(\sttm_{k+1}) \frac{\sttm_{k+1}}{\lambda k} } + \left( 1+O_K \left( \frac{1}{k}\right) \right) E(\eps,K,k),
\end{equation}
where the error term $E(\eps,K,k)$ is bounded by
\begin{equation} 
\label{eq:E(eps,K,k)_bound}
|E(\eps,K,k)|
\le
(C \mathrm{e}^{-CK} 
+ \eps C_K)   q(k)
+ o_{K,\eps}(q(k))
\end{equation}
for some absolute constant $C>0$ and some constant $C_K>0$ depending on $K$. 
We stress that the estimate \eqref{eq:q(k)_error_E(eps,K,k)} and the bound \eqref{eq:E(eps,K,k)_bound} hold for all $\eps\in (0,1)$ and $K\ge 1$.
We now tune the parameters $\eps$ and $K$. Let $\delta>0$. There exist $K=K(\delta)$ (large enough) and $\eps=\eps(\delta)\in(0,1)$ (small enough) such that the first term on the right-hand side of \eqref{eq:E(eps,K,k)_bound} is bounded by $\delta q(k)$. The values of $K$ and $\eps$ are now fixed (and only depend on $\delta$). These values being fixed, we can now take $k_0=k_0(\delta)$ large enough so that by  \eqref{eq:E(eps,K,k)_bound}, for all $k\geq k_0$,
\[
|E(\eps,K,k)|
\le
2 \delta q(k).
\]
This estimate holds for all $\delta>0$. Together with \eqref{eq:q(k)_error_E(eps,K,k)}, this implies that
\[
q(k) = \Ec{ r(\sttm_{k+1}) \frac{\sttm_{k+1}}{\lambda k} } + o(q(k))
\quad \text{as } k\to\infty,
\]
which means that $q(k) \sim \Ec{ r(\sttm_{k+1}) \frac{\sttm_{k+1}}{\lambda k} }$ as $k\to\infty$. 
We conclude, using once more \cref{lem:prelim_GW} and \eqref{eq:k-k1-equival}, that 
\[
q(k) \sim \frac{1}{2\sqrt{\pi}\lambda k} \Ec{\sttm_k^{-1/2}} 
\quad \text{as } k\to\infty,
\]
which establishes \cref{lem:equiv_q(k)}.
\end{proof}

\begin{remark}\label{rmk:lambda=alpha}
    From now on, we know that $\lambda=\alpha$ in~\cref{thm:convergence_size_intersection}, as identified in the proof of~\cref{thm:good_regularity} on page~\pageref{eq:cons-alpha-lambda}; see in particular the discussion below~\eqref{eq:cons-alpha-lambda}.
\end{remark}

 \section{The value of the exponent $\alpha$ from the good regularity estimates for $q$ and $Q$}
 \label{sec:value exponent}

The goal of this section is to complete the proof of~\cref{thm:main2}, determining the exact value of the exponent $\alpha$. We stress that so far $\alpha$ is only defined through \cref{thm:main}, but we also know that the good regularity estimates in~\cref{thm:good_regularity} hold for \emph{the same} exponent $\alpha$.

The latter estimates ensure that there are two slowly varying functions $\svfq, \svfbQ$ and an exponent 
\begin{equation}\label{eq:gamma-alpha-rel7}
\gamma \stackrel{\eqref{eq:gamma-alpha-rel}}{=}  \frac{1}{2\alpha}+1 \in \Big(\frac{3}{2},2\Big)
\end{equation}
such that
\begin{align}\label{eq:nec-estimates}
	q(k)=k^{-\gamma}\svfq(k) \qquad \text{ and } \qquad 
    Q(k)=k^{1-\gamma} \svfbQ(k).
\end{align}
We will use these estimates to deduce the relation of \cref{thm:main2} between the values of $\alpha$ and $p$.

\subsection{The exact value of the exponent $\alpha$}

To identify the exact value of $\alpha$, we first rewrite the recursive relation in \cref{lem:q_rel} as
\begin{align}\label{eq:equation q}
q(k) = \frac{1}{1+\frac{1-p}{p}Q(k)} \cdot \left(\sum_{i=1}^{(k-1)/2} q(i) q(k-i) + \frac{1-p}{2 p } q(k)^2 +\mathds{1}_{k\text{ even}}\frac 1 2 q(k/2)^2\right).
\end{align}
Because we are not able to tackle this equation on the sequence $q$ directly, we take interest in some related \emph{inequations} that can be satisfied for a test sequence $\hat{q}$, namely
\begin{align}\label{eq:inequation hat q geq}
	\hat{q}(k) \geq  \frac{1}{1+\frac{1-p}{p}Q(k)} \cdot \left(\sum_{i=1}^{(k-1)/2} q(i) \hat{q}(k-i) + \frac{1-p}{2 p } q(k)^2 +\mathds{1}_{k\text{ even}}\frac 1 2 q(k/2)^2\right),
\end{align}
 and 
 \begin{align}\label{eq:inequation hat q leq}
 	\hat{q}(k) \leq  \frac{1}{1+\frac{1-p}{p}Q(k)} \cdot \left(\sum_{i=1}^{(k-1)/2} q(i) \hat{q}(k-i)\right).
 \end{align}
We will deduce \cref{thm:main2} from the following two key ingredients:
 \begin{enumerate}
 \item A comparison lemma for test sequences (\cref{lem:comparison q hat q}) which ensures that if we can check that one of the inequalities \eqref{eq:inequation hat q geq} or \eqref{eq:inequation hat q leq} holds for $k$ large enough for some sequence $\hat{q}$, then we can compare the values of $q$ and $\hat{q}$. This ingredient is partially inspired by~\cite{auffinger2017minplus}.
 \item The good regularity estimates in~\cref{thm:good_regularity} (see~\eqref{eq:nec-estimates} for a closer reference) and their consequences in~\cref{lem:asymptotics q hat q} below.
 \end{enumerate} 

We start by stating our main tool to compare the values of $q$ and $\hat{q}$.

\begin{lem}[Comparison lemma for test sequences]\label{lem:comparison q hat q}
Let $(\hat{q}(k))_{k\geq 1}$ be a sequence of positive numbers.
\begin{enumerate}
	\item\label{it:lem:comparison q hat q:geq} If $\hat{q}(k)$ satisfies \eqref{eq:inequation hat q geq} for $k$ large enough, then there exists a constant $C>0$ such that
	\begin{align*}
		 \forall k\geq 1, \qquad  q(k)\leq C \hat{q}(k).
	\end{align*} 
	\item\label{it:lem:comparison q hat q:leq} If $\hat{q}(k)$ satisfies \eqref{eq:inequation hat q leq} for $k$ large enough, then there exists a constant $c>0$ such that 
	\begin{align*}
          \forall  k\geq 1, \qquad c \hat{q}(k) \leq  q(k).
	\end{align*} 
\end{enumerate}
\end{lem}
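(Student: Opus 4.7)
The plan is to prove both items by a single strong induction on $k$, leveraging the recursive identity \eqref{eq:equation q} satisfied by $q$ together with the appropriate test inequation satisfied by $\hat{q}$. The key structural observation is that the ``lower-order'' terms $\tfrac{1-p}{2p}q(k)^2$ and $\mathds{1}_{k\text{ even}}\tfrac{1}{2}q(k/2)^2$ appearing in \eqref{eq:equation q} and \eqref{eq:inequation hat q geq} have a favorable sign, so they either cancel outright or drop out cleanly when we compare $q$ with $\hat{q}$.

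For \cref{it:lem:comparison q hat q:geq}, I would fix $k_0$ beyond which \eqref{eq:inequation hat q geq} holds, and pick $C \geq 1$ large enough that $q(j) \leq C\,\hat{q}(j)$ for every $j \in \intervalleentier{1}{k_0-1}$ (which is possible since there are finitely many base cases and $\hat{q}$ is strictly positive). Subtracting \eqref{eq:inequation hat q geq} from \eqref{eq:equation q} makes the lower-order terms cancel, leaving
\[
(q(k) - \hat{q}(k))\Big(1 + \tfrac{1-p}{p} Q(k)\Big) \;\leq\; \sum_{i=1}^{(k-1)/2} q(i) \bigl(q(k-i) - \hat{q}(k-i)\bigr).
\]
Assuming inductively that $q(j) \leq C\hat{q}(j)$, equivalently $q(j) - \hat{q}(j) \leq (C-1)\hat{q}(j)$, for all $j < k$, and using $q(i) \geq 0$, the right-hand side is bounded by $(C-1)\sum q(i)\hat{q}(k-i)$. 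The inequation \eqref{eq:inequation hat q geq} itself gives $\sum q(i)\hat{q}(k-i) \leq \hat{q}(k)(1 + \tfrac{1-p}{p}Q(k))$, simply by dropping the two nonnegative correction terms. Dividing through by $1+\tfrac{1-p}{p}Q(k) > 0$ yields exactly $q(k) \leq C\hat{q}(k)$.

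For \cref{it:lem:comparison q hat q:leq}, the comparison is even more direct. I would fix $k_0$ beyond which \eqref{eq:inequation hat q leq} holds and set $c := \min\bigl(1,\, \min_{1 \leq j < k_0} q(j)/\hat{q}(j)\bigr) > 0$. Dropping the nonnegative lower-order contributions in \eqref{eq:equation q} yields
\[
q(k)\Big(1 + \tfrac{1-p}{p} Q(k)\Big) \;\geq\; \sum_{i=1}^{(k-1)/2} q(i)\, q(k-i).
\]
Assuming inductively that $q(k-i) \geq c\hat{q}(k-i)$ for all $1 \leq i \leq (k-1)/2$, the right-hand side is at least $c \sum q(i) \hat{q}(k-i)$, and \eqref{eq:inequation hat q leq} bounds this below by $c\,\hat{q}(k)(1+\tfrac{1-p}{p}Q(k))$. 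Dividing gives $q(k) \geq c\hat{q}(k)$, completing the induction.

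Both inductions are self-contained: the sum over $i \in \intervalleentier{1}{(k-1)/2}$ only involves indices $k-i \in \intervalleentier{(k+1)/2}{k-1}$, so no circularity arises, and the freedom in choosing $C$ large (resp.\ $c$ small) handles all base cases $j < k_0$ simultaneously. Consequently there is no genuine obstacle in proving the comparison lemma itself; the real difficulty will lie downstream, in exhibiting explicit regular test sequences $\hat{q}$ that verify \eqref{eq:inequation hat q geq} and \eqref{eq:inequation hat q leq}, which is where the good regularity estimates of \cref{thm:good_regularity} and the sharp asymptotics of \cref{lem:asymptotics q hat q} and \cref{lem:monotonicity} will finally be used to pin down the exponent $\alpha$.
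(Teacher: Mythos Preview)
Your proof is correct and follows essentially the same approach as the paper: a strong induction on $k$, with the constant $C$ (resp.\ $c$) chosen to absorb all base cases $k < k_0$, and the induction step driven by the recursive identity \eqref{eq:equation q} for $q$ together with the test inequation for $\hat{q}$. The only cosmetic difference is in how the lower-order terms $\tfrac{1-p}{2p}q(k)^2 + \mathds{1}_{k\text{ even}}\tfrac12 q(k/2)^2$ are handled in \cref{it:lem:comparison q hat q:geq}: you subtract the two relations so that these terms cancel, then bound $\sum q(i)\hat{q}(k-i)$ by reusing \eqref{eq:inequation hat q geq}; the paper instead multiplies \eqref{eq:inequation hat q geq} through by $C$ and uses $C \geq 1$ on those positive terms before substituting the induction hypothesis directly, which is marginally more streamlined but entirely equivalent.
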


We proceed by stating the consequences of the good regularity estimates that we need here. Recall the exponent $\gamma\in (3/2,2)$ related to $\alpha$ through \eqref{eq:gamma-alpha-rel7}, and the slowly varying function $\svfbQ$ appearing in~\eqref{eq:nec-estimates}.

\begin{lem}[Asymptotic expansion lemma for test sequences]\label{lem:asymptotics q hat q}
	For $\hat{q}(k)=k^{-\beta}$ with $\beta \in \intervalleoo{\frac32}{2}$, we have as $k\to\infty$,
	\begin{itemize}
		\item \(\displaystyle
			\left( \sum_{i=1}^{(k-1)/2} q(i) \hat q(k-i) \right) - \hat q(k) = k^{1-\gamma - \beta} \svfbQ(k) \left(-2^{\gamma + \beta -1} +  \beta \int_{0}^{1/2}t^{1-\gamma}(1-t)^{-1-\beta} \, \mathrm{d}t + o(1) \right). 
		\)
		\item \(\displaystyle
			\frac{1-p}{p} Q(k) \hat{q}(k) \sim  \frac{1-p}{p} k^{1-\gamma - \beta} \svfbQ(k).
		\)
		\item \(\displaystyle
			\frac{1-p}{2 p } q(k)^2 +\mathds{1}_{k \,\mathrm{even}}\frac 1 2 q(k/2)^2 = o(k^{1-\gamma - \beta} \svfbQ(k)).
		\)
	\end{itemize}
\end{lem}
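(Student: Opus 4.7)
\medskip

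My plan is to treat the three items separately, with the bulk of the work devoted to the first one.

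For Item 1, the key tool is Abel summation applied to $\sum_{i=1}^{(k-1)/2} q(i)\hat q(k-i)$. Writing $q(i) = Q(i)-Q(i+1)$ and performing Abel summation in $i$, I would obtain a decomposition
\[
\sum_{i=1}^{(k-1)/2} q(i) \hat{q}(k-i) = \hat q(k-1) + \sum_{i=2}^{(k-1)/2} Q(i)\bigl(\hat{q}(k-i) - \hat{q}(k-i+1)\bigr) - Q\bigl(\lfloor (k-1)/2\rfloor+1\bigr) \hat q\bigl(\lceil(k+1)/2\rceil\bigr),
\]
analogous to the one used in the heuristic computation in Section~\ref{sect:heuristic-calculation}. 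Then $\hat q(k-1)-\hat q(k)$ is of order $k^{-\beta-1}$, which is $o(k^{1-\gamma-\beta}\svfbQ(k))$ since $\gamma<2$ and $\svfbQ$ is slowly varying. The boundary term gives, using $Q(k)=k^{1-\gamma}\svfbQ(k)$ and the slow variation of $\svfbQ$,
\[
Q\bigl(\lfloor (k-1)/2\rfloor+1\bigr)\hat q\bigl(\lceil (k+1)/2\rceil\bigr) \sim (k/2)^{1-\gamma}\svfbQ(k/2)(k/2)^{-\beta} \sim 2^{\gamma+\beta-1}\,k^{1-\gamma-\beta}\svfbQ(k),
\]
which produces the leading negative term.

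The heart of the proof is the analysis of the sum $\sum_{i=2}^{(k-1)/2} Q(i)(\hat q(k-i)-\hat q(k-i+1))$, which I would recognize as a Riemann sum. Since $\hat q(j)-\hat q(j+1)\sim \beta j^{-\beta-1}$ uniformly for $j\geq k/2$, and $Q(i)=i^{1-\gamma}\svfbQ(i)$, this sum behaves like
\[
\beta \sum_{i=2}^{(k-1)/2} i^{1-\gamma}\svfbQ(i)\,(k-i)^{-\beta-1}(1+o(1)) = k^{1-\gamma-\beta}\svfbQ(k)\cdot \beta \int_0^{1/2} t^{1-\gamma}(1-t)^{-\beta-1}\,\mathrm{d}t\,(1+o(1)),
\]
where the $\svfbQ(i)$ weights are replaced by $\svfbQ(k)$ thanks to \cref{lem:slow_variation}. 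The main technical obstacle is controlling this substitution uniformly near $i=2$, where $\svfbQ(i)/\svfbQ(k)$ might be badly behaved: this is handled by splitting the sum at some level $k^\delta$ for small $\delta>0$, using the Potter bounds (\cref{lem:slow_variation}\ref{item:slow_variation2}) for the small-$i$ part to see that its contribution is negligible since $\int_0^{1/2} t^{1-\gamma}(1-t)^{-\beta-1}\mathrm{d}t$ converges at $0$ (as $\gamma<2$), and using the Uniform Convergence Theorem (\cref{lem:slow_variation}\ref{item:slow_variation1}) on the remaining bulk to get genuine convergence of the Riemann sum.

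For Item 2, this is immediate from the regularity estimate $Q(k)=k^{1-\gamma}\svfbQ(k)$ and the definition of $\hat q$. For Item 3, I use Karamata's theorem to observe that $q$ being regularly varying of index $-\gamma$ with $\gamma>1$ forces $\varphi(k)\sim(\gamma-1)\svfbQ(k)$, so $\svfq$ and $\svfbQ$ are comparable up to a constant. Then $q(k)^2$ and $q(k/2)^2$ are both of order $k^{-2\gamma}\svfbQ(k)^2$, and this is $o(k^{1-\gamma-\beta}\svfbQ(k))$ provided $-2\gamma<1-\gamma-\beta$, i.e., $\beta<\gamma+1$; since $\gamma>3/2$ and $\beta<2$, this inequality is automatic, concluding the proof.
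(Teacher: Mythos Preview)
Your proposal is correct and follows essentially the same approach as the paper: Abel summation for Item~1, Riemann-sum approximation with the slowly varying part handled via the Uniform Convergence Theorem and Potter bounds, and direct verification for Items~2 and~3. The only adjustment needed is to split the Riemann sum at $\eps k$ (for fixed small $\eps$, then send $\eps\to 0$) rather than at $k^\delta$, since the Uniform Convergence Theorem requires $i/k$ to lie in a fixed compact subset of $(0,\infty)$---this is exactly what the paper does.
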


We will also need the following simple technical fact.

\begin{lem}[Monotonicity]\label{lem:monotonicity}
 	The function $f:\gamma \mapsto \frac{4^{\gamma-1}\sqrt{\pi}\,\Gamma(2-\gamma)}{\Gamma(\frac32-\gamma)}$ is (strictly) decreasing on $(\frac{3}{2},2)$. Moreover, for any $x\in(3/2,2)$, we have $\frac{4^{x-1}\sqrt{\pi}\,\Gamma(2-x)}{\Gamma(\frac32-x)}=2 ^{2x-1} 
	- x  \cdot  \int_{0}^{1/2}t^{1-x}(1-t)^{-1-x} \, \mathrm{d}t$.
\end{lem}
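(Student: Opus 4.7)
My plan is to express $f$ as a signed product of factors whose monotonicity is transparent. Setting $y=x-1\in(\tfrac{1}{2},1)$ and using the reflection identities $\Gamma(y)\Gamma(1-y)=\pi/\sin(\pi y)$ and $\Gamma(\tfrac{1}{2}+y)\Gamma(\tfrac{1}{2}-y)=\pi/\cos(\pi y)$, a short computation will give
\[
f(x) \;=\; \sqrt{\pi}\cdot 4^y \cdot \cot(\pi y)\cdot \frac{\Gamma(y+\tfrac{1}{2})}{\Gamma(y)}.
\]
Since $\cot(\pi y)<0$ on $(\tfrac{1}{2},1)$, this will show $f<0$ on $(3/2,2)$, and it then suffices to prove that $-f$ is strictly increasing. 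But $-f$ is a product of three positive factors on $(\tfrac{1}{2},1)$: the function $4^y$ is trivially strictly increasing; $-\cot(\pi y)$ is strictly increasing because $\cot$ is strictly decreasing on $(\pi/2,\pi)$; and $\Gamma(y+\tfrac{1}{2})/\Gamma(y)$ has logarithmic derivative $\psi(y+\tfrac{1}{2})-\psi(y)>0$, as the digamma function is strictly increasing on $(0,\infty)$. A product of positive strictly increasing functions is strictly increasing, which will yield the monotonicity claim.

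\textbf{Identity in the auxiliary range $(0,1)$.} For the identity, my plan is to prove it first on the easier range $x\in(0,1)$ and then extend by analytic continuation. On that range, both
\[
I(x)=\int_0^{1/2}t^{1-x}(1-t)^{-x-1}\,\dd t, \qquad J(x)=\int_0^{1/2}[t(1-t)]^{-x}\,\dd t
\]
converge absolutely. The substitution $t=\sin^2(\theta/2)$ will turn $J(x)$ into $2^{2x-1}\int_0^{\pi/2}\sin^{1-2x}\theta\,\dd\theta$, and the classical Beta integral evaluation then gives $J(x)=\tfrac{2^{2x-2}\sqrt{\pi}\,\Gamma(1-x)}{\Gamma(\tfrac{3}{2}-x)}$. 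On the other hand, writing $(1-t)^{-x-1}=\tfrac{1}{x}\tfrac{\dd}{\dd t}(1-t)^{-x}$ and integrating by parts -- where the boundary term at $t=0$ vanishes because $t^{1-x}\to 0$ for $x<1$ -- will produce $x\,I(x)=2^{2x-1}+(x-1)J(x)$. Combining both formulas and using $\Gamma(2-x)=(1-x)\Gamma(1-x)$ yields, for $x\in(0,1)$,
\[
2^{2x-1} - x\,I(x) \;=\; (1-x)\,J(x) \;=\; \frac{2^{2x-2}\sqrt{\pi}\,\Gamma(2-x)}{\Gamma(\tfrac{3}{2}-x)} \;=\; f(x),
\]
which is exactly the desired identity on $(0,1)$.

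\textbf{Extension to $(3/2,2)$, and the main subtlety.} Both sides of the identity admit real-analytic continuations to $(0,2)$: $f$ is a ratio of Gamma factors that is holomorphic on $\{\mathrm{Re}(x)<2\}$ (since $\Gamma(2-x)$ is finite and $1/\Gamma(\tfrac{3}{2}-x)$ is entire there), while $I(x)$ is holomorphic on the same half-plane by a standard dominated-convergence / Morera argument, the integrand having only the $t^{1-x}$ singularity at $t=0$, integrable precisely when $\mathrm{Re}(x)<2$. Equality on the open subinterval $(0,1)$ will then propagate, by the identity theorem, to all of $(0,2)$, and in particular to $(3/2,2)$. The only real difficulty in the whole argument is that a direct integration by parts on $(3/2,2)$ would produce divergent boundary contributions of order $\varepsilon^{1-x}$ at $t=\varepsilon\to 0^+$; routing through $(0,1)$ and invoking analytic continuation is precisely what avoids having to track the delicate cancellation of those divergent pieces by hand.
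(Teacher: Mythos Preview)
Your proof is correct and takes a genuinely different route from the paper on both parts.

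\textbf{Monotonicity.} The paper computes $\frac{\partial}{\partial\gamma}\log(-f(\gamma))=\log 4-\psi(2-\gamma)+\psi(\tfrac32-\gamma)$ and then shows this is positive using the recursion $\psi(x)=\psi(x+1)-1/x$ together with the inequalities $\log x-1/x\le\psi(x)\le\log x-1/(2x)$, followed by a somewhat ad hoc analysis of the resulting elementary function. Your factorization $-f(x)=\sqrt{\pi}\cdot 4^y\cdot(-\cot(\pi y))\cdot\Gamma(y+\tfrac12)/\Gamma(y)$ with $y=x-1$ is cleaner: each factor is positive on $(\tfrac12,1)$ and strictly increasing by inspection (the only nontrivial one being $\Gamma(y+\tfrac12)/\Gamma(y)$, and there you need only the fact that $\psi$ is increasing, not any quantitative bound). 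This avoids the explicit inequalities entirely.

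\textbf{Identity.} The paper identifies the integral as the incomplete Beta function $B_{1/2}(2-x,-x)$ and applies the standard recursion $I_{1/2}(a,b)=I_{1/2}(a,b+1)-\frac{(1/2)^{a+b}}{bB(a,b)}$ twice to reach $B_{1/2}(2-x,2-x)=\tfrac12 B(2-x,2-x)$, then unwinds in terms of Gamma functions. Your route---prove the identity on $(0,1)$ via a single integration by parts against the auxiliary integral $J(x)$, evaluate $J$ by the trigonometric substitution and the Beta integral, and then invoke analytic continuation to reach $(\tfrac32,2)$---is essentially the same computation in disguise (your integration by parts is exactly one step of the incomplete-Beta recursion), but your framing through analyticity is a tidy way to sidestep the need to justify the recursion when the second parameter $b=-x$ is negative. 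Both arguments are of comparable length; yours is perhaps more self-contained for a reader unfamiliar with incomplete Beta identities.
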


We now prove \cref{thm:main2} from Lemmas~\ref{lem:comparison q hat q}, \ref{lem:asymptotics q hat q}~and~\ref{lem:monotonicity}.

\begin{proof}[Proof of \cref{thm:main2} (assuming Lemmas~\ref{lem:comparison q hat q}, \ref{lem:asymptotics q hat q}~and~\ref{lem:monotonicity})]
	For $\beta,\nu\in \intervalleoo{\frac{3}{2}}{2}$ we write 
	\begin{align*}
		g(\beta,\nu)= -2^{\nu + \beta -1} +  \beta \int_{0}^{1/2}t^{1-\nu}(1-t)^{-1-\beta} \, \mathrm{d}t.
		\end{align*}
  Note that $g(\beta,\beta)=-f(\beta)$ for $f$ defined in \cref{lem:monotonicity}. 
  Recall the exponent $\gamma\in (3/2,2)$ in \eqref{eq:gamma-alpha-rel7} and suppose, for the sake of contradiction, that $g(\gamma,\gamma)> \frac{1-p}{p}$. 
	Then by continuity, we also have $g(\beta,\gamma)>\frac{1-p}{p}$ for some $\beta \in (3/2, \gamma)$.
	By the asymptotics of \cref{lem:asymptotics q hat q}, taking $\hat{q}(k) = k^{-\beta}$ with this choice of $\beta$ we then have, for $k$ large enough,
	\begin{align*}
		 \frac{1-p}{p} Q(k) \hat{q}(k) \leq \left( \sum_{i=1}^{(k-1)/2} q(i) \hat q(k-i) \right) - \hat q(k), 
	\end{align*}
so that \eqref{eq:inequation hat q leq} is satisfied.

Thanks to \cref{it:lem:comparison q hat q:leq} in \cref{lem:comparison q hat q}, there exists $c>0$ such that $c k^{-\beta} = c \hat{q}(k) \leq q(k)$ for all $k\geq 1$. Yet by~\cref{thm:good_regularity}, $q(k)=k^{-\gamma}\svfq(k)$. 
 This is absurd because $\beta < \gamma$. 

Symmetrically, if $g(\gamma,\gamma)=-f(\gamma)<\frac{1-p}{p}$, then by continuity, we also have $g(\beta,\gamma)<\frac{1-p}{p}$ for some $\beta > \gamma$. 
The same argument as above shows that \eqref{eq:inequation hat q geq} is satisfied for $k$ large enough if $\hat{q}(k)=k^{-\beta}$, 
so that by \cref{lem:comparison q hat q}, we have $C k^{-\beta}= C \hat{q}(k) \geq  q(k)$ for all $k\geq 1$, for some constant $C>1$.
Because $\beta>\gamma$, this again contradicts the behavior of $q(k)=k^{-\gamma}\svfq(k)$ established in~\cref{thm:good_regularity}.

In the end, we proved that necessarily $g(\gamma,\gamma) =- f(\gamma)= \frac{1-p}{p}$. 
Since by \cref{lem:monotonicity} the function $f$ is strictly decreasing on $\intervalleoo{\frac32}{2}$, this ensures that this equality determines $\gamma$ uniquely as we wanted. Substituting $\alpha\stackrel{\eqref{eq:gamma-alpha-rel}}{=}\frac{1}{2(\gamma-1)}$, we get that the equivalent equation $1/f(\gamma)= \frac{p}{p-1}$ is exactly the equation  in~\eqref{eq:eq-to-defn-alpha} for $\alpha$.
\end{proof}

In the following subsections, we give the missing proofs of Lemmas~\ref{lem:comparison q hat q}, \ref{lem:asymptotics q hat q}~and~\ref{lem:monotonicity}.

\subsection{The comparison lemma for test sequences}

We give here the missing proof of \cref{lem:comparison q hat q}.

\begin{proof}[Proof of \cref{lem:comparison q hat q}]
	We start with the proof of \cref{it:lem:comparison q hat q:geq}. 
	Assume that a sequence of positive numbers $(\hat{q}(k))_{k\geq 1}$ satisfies \eqref{eq:inequation hat q geq} for $k$ larger than some value $k_0\geq 1$.
	Let $C>1$ be a constant chosen large enough so that 
	\begin{align}\label{eq:inequality geq to prove}
		C\hat{q}(k) \geq q(k)
	\end{align}
	 for all $1\leq k \leq k_0$, which exists because we assumed $\hat{q}$ to be positive. 
	We now prove by induction that this inequality also holds for all values of $k\geq 1$.
	Let $k\geq k_0+1$, and assume the bound $C \hat{q}(j) \geq q(j)$ holds for all $1\le j\leq k-1$. 
	Using sequentially the inequality \eqref{eq:inequation hat q geq} that holds for any $k> k_0$, the induction hypothesis, and the recursion equation \eqref{eq:equation q} for $q$, we get
	\begin{align*} 
	C\hat{q}(k) 
	 &\geq	\frac{1}{1+\frac{1-p}{p}Q(k)} \cdot \left(\sum_{i=1}^{(k-1)/2} q(i) \cdot C\hat{q}(k-i) + C \frac{1-p}{2 p } q(k)^2 + C\mathds{1}_{k\text{ even}}\frac 1 2 q(k/2)^2\right)\\
	&\geq 	\frac{1}{1+\frac{1-p}{p}Q(k)} \cdot \left(\sum_{i=1}^{(k-1)/2} q(i) q(k-i) + \frac{1-p}{2 p } q(k)^2 +\mathds{1}_{k\text{ even}}\frac 1 2 q(k/2)^2\right)= q(k).
	\end{align*}
Hence, the induction step is proved, so \eqref{eq:inequality geq to prove} holds for all $k\geq 1$, and \cref{it:lem:comparison q hat q:geq} is proved. 

\medskip

The proof of \cref{it:lem:comparison q hat q:leq} follows the exact same steps: Assume that a sequence of positive numbers $(\hat{q}(k))_{k\geq 1}$ satisfies \eqref{eq:inequation hat q leq} for $k$ larger than some value $k_0\geq 1$.
Let $c>0$ be a constant chosen small enough so that $c\hat{q}(k) \leq q(k)$ for all $1\leq k \leq k_0$, which exists because $q$ is positive.
We now prove by induction that this inequality in fact holds for all values of $k\geq 1$.
Let $k> k_0$, and assume the bound $c \hat{q}(j) \le q(j)$ holds for all $1\le j\leq k-1$. 
Using sequentially the inequality \eqref{eq:inequation hat q leq} that holds for any $k\geq k_0$, the induction hypothesis, and the recursion equation \eqref{eq:equation q} for $q$, we get
\begin{align*} 
	c\hat{q}(k) &\leq	 \frac{1}{1+\frac{1-p}{p}Q(k)} \cdot \left(\sum_{i=1}^{(k-1)/2} q(i) \cdot c 
      \hat{q}(k-i) \right)\\
	&\leq	\frac{1}{1+\frac{1-p}{p}Q(k)} \cdot \left(\sum_{i=1}^{(k-1)/2} q(i) \cdot c\hat{q}(k-i) + \frac{1-p}{2 p } q(k)^2 + \mathds{1}_{k\text{ even}}\frac 1 2 q(k/2)^2\right)\\
	&\leq 	\frac{1}{1+\frac{1-p}{p}Q(k)} \cdot \left(\sum_{i=1}^{(k-1)/2} q(i) q(k-i) + \frac{1-p}{2 p } q(k)^2 +\mathds{1}_{k\text{ even}}\frac 1 2 q(k/2)^2\right)= q(k).
\end{align*}
This proves \cref{it:lem:comparison q hat q:leq}.
\end{proof}

\subsection{The asymptotic expansion lemma for test sequences}\label{sect:proof-expansion-lemma}

We give here the missing proof of \cref{lem:asymptotics q hat q}.

\begin{proof}[Proof of \cref{lem:asymptotics q hat q}]
The second estimate is in fact an equality. 
The third one is immediate from the equality $q(k)^2=k^{-2\gamma}\svfq(k)^2$ shown in \cref{thm:good_regularity}, and the fact that $\gamma>\beta-1$ (since $\beta, \gamma \in (3/2,2)$). Therefore, the rest of the proof is devoted to the proof of the first estimate.

By Abel summation, the sum in the first estimate is
\begin{align*}
	&\sum_{i=1}^{(k-1)/2} q(i) \hat{q}(k-i) \\
	&=
	\sum_{i=1}^{(k-1)/2} Q(i) \hat{q}(k-i) - \sum_{i=2}^{(k+1)/2} Q(i) \hat{q}(k-i+1) \\
	&=
	Q(1) \hat{q}(k-1)
	-Q\Big(\Big\lfloor \frac{k+1}{2}\Big\rfloor\Big) \hat{q}\Big(\Big\lceil \frac{k+1}{2}\Big\rceil\Big)  + \sum_{i=2}^{(k-1)/2} Q(i) (\hat{q}(k-i)-\hat{q}(k-i+1)).
\end{align*}
Since $\hat{q}(k) = k^{-\beta}$, we have
\begin{equation*}
	\hat{q}(k-i)-\hat{q}(k-i+1) = \beta(k-i)^{-1-\beta}+O((k-i)^{-2-\beta}).
\end{equation*}
Using this estimate, together with the fact that $Q(1)=1$ and $Q \left( \lfloor \frac{k+1}{2} \rfloor \right) \sim  \left( \frac{k}{2} \right)^{1-\gamma} \svfbQ(k)$ thanks to \cref{thm:good_regularity},
we can rewrite the above sum as
\begin{multline}\label{eq:estim}
	\sum_{i=1}^{(k-1)/2} q(i) \hat{q}(k-i)
	= k^{-\beta}(1+O(k^{-1}))
	-
	\svfbQ(k)   \left(\frac{k}{2}\right)^{1-\gamma - \beta}(1+o(1))\\
	+ \beta  \svfbQ(k) k^{1-\gamma-\beta} \left(\frac{1}{k}\sum_{i=2}^{(k-1)/2} \frac{\svfbQ(i)}{\svfbQ(k)} \left(\frac{i}{k}\right)^{1-\gamma}  \left(1-\frac{i}{k}\right)^{-1-\beta} \right)+O(k^{-1-\beta}),
\end{multline}
where for the last error term we used that 
\[\sum_{i=2}^{(k-1)/2} Q(i) (k-i)^{-2-\beta} \leq \sum_{i=2}^{(k-1)/2} (k-i)^{-2-\beta}=O(k^{-1-\beta}).\]
We now focus on the sum on the right-hand side of 
\eqref{eq:estim}. 
We split it into two parts: one for small values of $i$ and one for large values of $i$. Fix $\eps>0$. For the values of $i\geq \eps k$, by the uniform convergence theorem recalled in \cref{item:slow_variation1} of Lemma~\ref{lem:slow_variation}, we have that $\frac{\svfbQ(i)}{\svfbQ(k)}=1+o(1)$ as $k\to\infty$, uniformly for all $i\in \intervalleentier{\eps k}{(k-1)/2}$. Hence, by Riemann sum approximation,
\begin{align*}
	\frac{1}{k}\sum_{i=\lfloor \eps k\rfloor }^{(k-1)/2} \frac{\svfbQ(i)}{\svfbQ(k)} \left(\frac{i}{k}\right)^{1-\gamma}  \left(1-\frac{i}{k}\right)^{-1-\beta} 
	&=(1+o(1))\frac{1}{k}\sum_{i=\lfloor \eps k\rfloor }^{(k-1)/2}  \left(\frac{i}{k}\right)^{1-\gamma}  \left(1-\frac{i}{k}\right)^{-1-\beta} \\
	&=(1+o(1))\int_{\eps}^{1/2}t^{1-\gamma}(1-t)^{-1-\beta} \, \mathrm{d}t.
\end{align*}
We now focus on the values of $i < \eps k$. Let $\delta>0$ be a small parameter to be fixed later.
Using the Potter bounds recalled in \cref{item:slow_variation2} of Lemma~\ref{lem:slow_variation}, there exists a constant $C>1$ such that 
\begin{align*}
    \frac{\svfbQ(i)}{\svfbQ(k)} \leq C\left(\frac{i}{k}\right)^{-\delta} \qquad\text{for all } i, k \geq 1 \text{ such that $i < \eps k$}.
\end{align*}
Therefore, again by Riemann sum approximation,
\begin{align*}
	\frac{1}{k}\sum_{i=2}^{\lfloor \eps k\rfloor-1} \frac{\svfbQ(i)}{\svfbQ(k)} \left(\frac{i}{k}\right)^{1-\gamma}  \left(1-\frac{i}{k}\right)^{-1-\beta} 
	&\leq C \frac{1}{k}\sum_{i=2}^{\lfloor \eps k\rfloor-1} \left(\frac{i}{k}\right)^{1-\gamma-\delta}  \left(1-\frac{i}{k}\right)^{-1-\beta} \\
	&= C (1+o(1)) 
	\int_0^{\eps} t^{1-\gamma-\delta}(1-t)^{-1-\beta}\, \mathrm{d}t.
\end{align*}
Since $\gamma<2$, we can always choose $\delta>0$ small enough so that $1-\gamma-\delta>-1$, ensuring that the integral above is finite.

Putting together the estimates for small and large values of $i$ and sending $\eps$ to zero, we get 
\begin{align*}
	\frac{1}{k}\sum_{i=2}^{(k-1)/2} \frac{\svfbQ(i)}{\svfbQ(k)} \left(\frac{i}{k}\right)^{1-\gamma}  \left(1-\frac{i}{k}\right)^{-1-\beta} 
	=
	(1+o(1))\int_{0}^{1/2}t^{1-\gamma}(1-t)^{-1-\beta} \, \mathrm{d}t.
\end{align*}
Substituting this last estimate in \eqref{eq:estim}, and noting that 
$O(k^{-1-\beta})=o(k^{1-\gamma-\beta} \svfbQ(k))$ since $\gamma<2$ and $k^{\gamma-2}=o(\svfbQ(k))$, we obtain the first estimate of \cref{lem:asymptotics q hat q}.
\end{proof}

\subsection{The monotonicity lemma}

We finally give the missing proof of \cref{lem:monotonicity}.

 \begin{proof}[Proof of \cref{lem:monotonicity}]
    We first prove that the function $f:\gamma \mapsto \frac{4^{\gamma-1}\sqrt{\pi}\,\Gamma(2-\gamma)}{\Gamma(\frac32-\gamma)}$ is strictly decreasing on $(\frac{3}{2},2)$.
 	Since $\frac{4^{\gamma-1}\sqrt{\pi}\,\Gamma(2-\gamma)}{\Gamma(3/2-\gamma)}<0$  for all $\gamma\in(3/2,2)$, it is enough to show that 
 	\begin{align}\label{eq:neg-deriv}
 		\frac{\partial}{\partial \gamma} \log\left( -f(\gamma)\right)>0\qquad \text{for all  $\gamma\in(3/2,2)$}.
 	\end{align}
 	We have that 
 	\begin{align*}
 		\frac{\partial}{\partial \gamma} \log\left( -f(\gamma)\right)
 		&=
 		\frac{\partial}{\partial \gamma} \left( \log(4^{\gamma -1})+\log(\sqrt{\pi})+\log(\Gamma(2-\gamma))-\log(-\Gamma(3/2-\gamma))\right)\\
 		&=\log(4)-\psi(2-\gamma)+\psi(3/2-\gamma),
 	\end{align*}
 	where $\psi(\cdot)$ denotes the Digamma function $\psi=\Gamma'/\Gamma$. Using that $\psi(x)=\psi(x+1)-1/x$ and that for all $x>0$, $\log(x)-1/x\leq \psi(x)\leq \log(x)-1/(2x)$ (see e.g.\ \cite[Chap.\ 1.7.1]{bateman1953higher} and \cite{alzer1997some} respectively), we get the bound
 	\[
 	\frac{\partial}{\partial \gamma} \log\left( -f(\gamma)\right)\geq
 	\log\left( \frac{4(5/2-\gamma)}{2-\gamma}\right)+\frac{1}{4-2\gamma}-\frac{1}{3/2-\gamma}-\frac{1}{5/2-\gamma}.
 	\]
 	Then a standard analysis of the function on the right-hand side of the above equation shows that \eqref{eq:neg-deriv} holds.

    Finally, the second part of the lemma statement can be directly proved using basic properties of incomplete beta functions. More precisely, the integral that we want to express is nothing but the incomplete beta function
    \[
    B_{1/2}(2-x,-x)
    \coloneqq 
    \int_0^{1/2} t^{1-x} (1-t)^{-1-x} \mathrm{d}t, \quad x<2.
    \]
    The integration by parts relations for incomplete beta functions (see e.g.\ \cite[Chap.\ 2.5.3]{bateman1953higher}) give
    \[
    I_{1/2}(a,b) = 
    I_{1/2}(a,b+1) - \frac{x^a (1-x)^b}{bB(a,b)},
    \]
    where $B$ denote the usual beta function and $I_{1/2}(a,b) \coloneqq  \frac{B_{1/2}(a,b)}{B(a,b)}$. Using twice these relations, we are able to express $B_{1/2}(2-x,-x)$ in terms of (usual) beta functions and $B_{1/2}(2-x,2-x)$. But by symmetry $B_{1/2}(a,a) = \frac12 B(a,a)$, so we are able to express $B_{1/2}(2-x,-x)$ in terms of beta functions, and hence ultimately of gamma functions. 
    The formula claimed in \cref{lem:monotonicity} then comes from standard bookkeeping.
\end{proof}

\section{Scaling limit results}\label{sec:sc_limit_Xn}

The main goal of this section is to establish the scaling limit result for $X_n = \lis(T_n)$ as $n\rightarrow \infty$, stated in~\cref{thm:main3}.
This will be carried out in Sections~\ref{sec:proof_remaind_rn_1},~\ref{sec:proof_remaind_rn_2}~and~\ref{sec:proof_scaling_Xn}. 
It is worth emphasizing that this result does not require any knowledge of the precise value of the exponent $\alpha$, which was determined in the previous section.
Then, in \cref{sect:measurability}, we present an argument that completes the proof of \cref{thm:main_permutations}, and in  \cref{subsec:fragmentation_tree}, we establish a scaling limit result for the tree $T^{k,\lmax}$ as $k \rightarrow \infty$, proving~\cref{thm:fragmentation_tree}.

\subsection{The scaling limit of the LIS: proof outline}
\label{sec:sc_limit_Xn_outline}

We start with an outline of the proof of \cref{thm:main3}.
Recall the notation $\revF_n=\sigma(T_m, \ m\geq n)$ for the $\sigma$-algebra generated by the tree process after time $n$, as defined by Rémy's algorithm. 
We recall that for every $n\geq 1$, the tree $T_n$ is obtained by removing a uniform leaf $L_{n+1}$ from $T_{n+1}$.
We write 
\[X_{n}=X_{n+1}- \mathds{1}_{L_{n+1} \in \mathcal{L}^{\max}_{\cap}(T_{n+1})},\] 
so that for $n\ge 1$,
\begin{align*}
\Ecsq{X_n}{\revF_{n+1}}
&= X_{n+1} - \frac{\#\mathcal{L}^{\max}_{\cap}(T_{n+1})}{n+1}\\ 
&= X_{n+1} \left(1 - \frac{\alpha}{n+1}\right) + \frac{1}{n+1} \left(\alpha X_{n+1} - \#\mathcal{L}^{\max}_{\cap}(T_{n+1}) \right),
\end{align*}
with the idea that the second term on the right-hand side of the last display should be negligible in some sense, as suggested by  \cref{thm:convergence_size_intersection}. 
From there, it is quite natural to introduce the sequence $(c_n)_{n\geq 1}$ defined as 
\begin{align}\label{eq:definition cn}
c_n := \prod_{i=1}^n \left(1-\frac{\alpha}{i}\right) = \prod_{i=1}^n \frac{i- \alpha}{i} = \frac{\Gamma(n+1-\alpha)}{\Gamma(n+1) \Gamma(1-\alpha)}, 
\end{align}
so that the rescaled process  
\begin{equation}\label{eq:resc-proc}
    \resproc_n := c_n X_n
\end{equation}
satisfies
\begin{equation}\label{eq:def_rn}
	\Ecsq{\resproc_n }{\revF_{n+1}} 
	= \resproc_{n+1} +R_{n+1},\quad\text{with}\quad
R_{n+1}:=\frac{c_{n}}{n+1}\left( \alpha X_{n+1} - \#\mathcal{L}^{\max}_{\cap}(T_{n+1})\right).
\end{equation}
Now, if the remainder terms $R_n$ were identically zero, then the process $(\resproc_n)_{n\geq 1}$ would be a backward martingale and would thus converge almost surely as $n\rightarrow \infty$.
Since, by properties of the Gamma function, it is easy to check that
\begin{align}\label{eq:asymptotic behaviour cn}
c_n\underset{n\rightarrow \infty}{\sim} \frac{1}{\Gamma(1-\alpha)}n^{-\alpha},
\end{align}
this would show that $n^{-\alpha}X_n$ converges almost surely, as stated in \cref{thm:main3}.

Following this intuition, the strategy of our proof consists of showing that the remainder terms $R_n$ are small in some quantifiable way.
For this, we rely on 
the quantitative version of  \cref{thm:convergence_size_intersection} stated below, where we recall that $T^k$ is a $p$-signed critical binary Bienaymé--Galton--Watson tree $T$ conditioned on the event $\{\LIS(T)=k\}$.
We recall that we identified at this point the constant $\lambda$ in \cref{thm:convergence_size_intersection} with the exponent $\alpha$ in \cref{thm:main}; see \cref{rmk:lambda=alpha}. 
\begin{thm}[Quantitative version of \cref{thm:convergence_size_intersection}] \label{thm:quantitative_moments}
There exists some $\eps>0$ such that
\begin{align}\label{eq:polynomial convergence to c0}
\mathbb{E}\left[ \left| \frac{\#\mathcal{L}^{\max}_{\cap}(\TLis{k})}{k} -  \alpha \right| \right]  
&=O(k^{-\eps}).
\end{align}
\end{thm}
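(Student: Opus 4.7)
The plan is to revisit the first- and second-moment arguments of \cref{sect-lln-int} and upgrade them to polynomial rates, using the good regularity estimates of \cref{thm:good_regularity} as the key new input; a standard $L^2\!\Rightarrow\! L^1$ comparison then yields the $L^1$ bound claimed in the statement.

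First I would quantify the bias. Letting $k'\ge k$ and sending $k'\to \infty$ in \cref{item:for-later-1} of \cref{lem:cvg_first_mmt}, while using \cref{item:cvg_first_mmt-2} together with the identification $\lambda=\alpha$ from \cref{rmk:lambda=alpha}, one gets
\[
\bigg|\Ec{\frac{\#\mcl L^{\max}_\cap(T^k)}{k}}-\alpha\bigg| \;\le\; 2\sup_{\mathsf{k}\ge 1} \Ecp{\mathsf{k}}{F_A} \;+\; (1-c)^{s(A,k)}
\]
for every $A\ge 1$, where the second term uses \cref{item:full_coupling_quantitative} of \cref{cor:full_coupling}. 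The first term is at most $C\,Q(A)$ by \cref{lem:forks_mmt}, and \cref{thm:good_regularity} combined with Potter's bounds (\cref{item:slow_variation2} of \cref{lem:slow_variation}) gives $Q(A)\le C_\delta\, A^{-1/(2\alpha)+\delta}$ for any $\delta>0$. For the second term, \cref{rk:all_scales_good} ensures that once \cref{thm:good_regularity} is in hand, the coupling construction of \cref{sec:coupling_blue_red} can be rerun with $\mathrm{GoodScales}$ taken to be \emph{any} well-separated infinite subset of $\N$; choosing for instance $\{8^j\}_{j\ge 0}$ yields $s(A,k)\ge c'\log_8(k/A)$ and hence $(1-c)^{s(A,k)}\le (A/k)^{\kappa}$ for some $\kappa>0$. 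Picking $A=k^\theta$ with $\theta\in(0,1)$ chosen so that $(1/(2\alpha)-\delta)\theta=\kappa(1-\theta)$ then produces $\big|\bbE[\#\mcl L^{\max}_\cap(T^k)/k]-\alpha\big|=O(k^{-\eps_1})$ for some $\eps_1>0$.

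Next I would quantify the second moment via \cref{item:2nd_mmt_1} of \cref{lem:cvg_2nd_mmt}, which states
\[
\bigg|\Ec{\Big(\tfrac{\#\mcl L^{\max}_\cap(T^k)}{k}\Big)^{\!2}}-\alpha^2\bigg| \;\le\; \Ecp{k}{F_A} + 3\sup_{\mathsf{k}\ge A}\bigg|\Ec{\tfrac{\#\mcl L^{\max}_\cap(T^{\mathsf{k}})}{\mathsf{k}}}-\alpha\bigg| + 7\Big(\tfrac{A^2}{k}+\tfrac{C}{A}\Big).
\]
The first term is $O(A^{-1/(2\alpha)+\delta})$ as above, the middle one is $O(A^{-\eps_1})$ by the bias estimate just established, and the last two are explicit powers of $A$ and $A^2/k$. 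Optimizing $A=k^\theta$ with $\theta\in(0,1/2)$ gives $\big|\bbE[(\#\mcl L^{\max}_\cap(T^k)/k)^2]-\alpha^2\big|=O(k^{-\eps_2})$ for some $\eps_2>0$.

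Combining the two moment bounds yields
\[
\Ec{\Big(\tfrac{\#\mcl L^{\max}_\cap(T^k)}{k}-\alpha\Big)^{\!2}} \;=\; O(k^{-\eps_1\wedge\eps_2}),
\]
and \eqref{eq:polynomial convergence to c0} then follows from Jensen's inequality with $\eps=(\eps_1\wedge\eps_2)/2>0$. The main obstacle is the careful bookkeeping of the several competing exponents $\delta,\kappa,\theta,\eps_1,\eps_2$ that appear along the way: one needs to check that each successive optimization of $A$ produces a strictly positive rate. A related subtlety is that the slowly varying functions in \cref{thm:good_regularity} are not quantitatively controlled, which is why Potter's bounds are used to absorb any slowly varying factor into an arbitrarily small polynomial loss $\delta>0$ before any balancing takes place; this is harmless here because $1/(2\alpha)>0$ strictly.
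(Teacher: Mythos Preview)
Your proposal is correct and follows essentially the same route as the paper: after invoking \cref{rk:all_scales_good} to rerun the coupling with a geometric set of good scales, you bound the bias via \cref{item:for-later-1} of \cref{lem:cvg_first_mmt} and \cref{item:full_coupling_quantitative} of \cref{cor:full_coupling}, the second moment via \cref{item:2nd_mmt_1} of \cref{lem:cvg_2nd_mmt}, and conclude by Jensen. The only cosmetic differences are that the paper picks $\mathrm{GoodScales}=\{10^m\}$ and simply fixes $A=k^{1/4}$ throughout (using the rough estimate $Q(k)=k^{-1/(2\alpha)+o(1)}$ from \cref{prop:rough_regularity} rather than Potter's bounds), instead of optimizing the exponent $\theta$ as you do.
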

This theorem ensures that for a $p$-signed critical binary Bienaymé--Galton--Watson tree $T$ conditioned on having $\LIS(T)=k$,
the number of leaves $\#\mathcal{L}^{\max}_{\cap}(T)$ in the intersection of all its maximal positive subtrees is close to $\alpha \LIS(T)=\alpha k$. 
In our case, we can see from \eqref{eq:def_rn} that $R_n$ is rather defined as (a constant times) the corresponding quantity for $T_n$, where we recall that $T_n$ has the distribution of $T$ conditioned on $\{|T|=n\}$. 
Nevertheless, with some additional work, we manage to translate the information of  \cref{thm:quantitative_moments} under this different conditioning into some usable result on the remainders $R_n$. 

Below, we first prove \cref{thm:quantitative_moments} in \cref{sec:proof_remaind_rn_1}. We then turn it into \cref{lem:the error terms are summable as}, which is a quantitative result about the remainders $R_n$, through a series of technical lemmas (Lemmas~\ref{lem:a reinterpretation of thm quantitative moments},~\ref{lem:control sum remainders by sum of controlled terms},~\ref{lem:remainder series in k is polynomially small}~and~\ref{lem:as lower bound for X_n}) in \cref{sec:proof_remaind_rn_2}. 
Based on that, we will be able in \cref{sec:proof_scaling_Xn} to make rigorous sense of the martingale arguments sketched above, and to conclude the proof of \cref{thm:main3}.

\subsection{A quantitative version of the law of large numbers}\label{sec:proof_remaind_rn_1}

This section is devoted to the proof of \cref{thm:quantitative_moments}. Recall the definition of  good scales from~\cref{defn:good-scales}.
The proof of \cref{thm:quantitative_moments} relies on the fact that the good regularity estimates from \cref{thm:good_regularity} guarantee (as explained in \cref{rk:all_scales_good}) the existence of a constant $C > 0$ such that every integer satisfies the good scale condition from \cref{item:good-scale-condition} in \cref{defn:good-scales}.

Thus, in what follows, we set 
\begin{align*}
\mathrm{GoodScales} \coloneqq  \enstq{10^m}{m\geq 0}
\end{align*}
to be our new set of good scales.

\begin{proof}[Proof of \cref{thm:quantitative_moments}] 
From \cref{lem:cvg_first_mmt} and \cref{lem:cvg_2nd_mmt} we know that the first and second moments of $\frac{\#\mathcal{L}^{\max}_{\cap}(\TLis{k})}{k}$ converge towards respectively $\alpha$ and $\alpha^2$ (recall that $\lambda = \alpha$). We hence write 
\begin{align}\label{eq:first and second moment as limit plus error term}
\mathbb{E}\left[\frac{\#\mathcal{L}^{\max}_{\cap}(\TLis{k})}{k} \right] 
= \alpha  + \delta_1(k)
\qquad \text{and} \qquad 
\mathbb{E}\left[\left(\frac{\#\mathcal{L}^{\max}_{\cap}(\TLis{k})}{k}\right)^2 \right] 
=  \alpha^2 + \delta_2(k),
\end{align}
with $\delta_1(k),\delta_2(k) \rightarrow 0 $ as $k\rightarrow \infty$.
This ensures that
\begin{align*}
\mathbb{E}\left[ \left( \frac{\#\mathcal{L}^{\max}_{\cap}(\TLis{k})}{k} -  \alpha\right)^2 \right]  
&= \mathbb{E}\left[\left(\frac{\#\mathcal{L}^{\max}_{\cap}(\TLis{k})}{k}\right)^2 \right] - 2  \alpha\mathbb{E}\left[\frac{\#\mathcal{L}^{\max}_{\cap}(\TLis{k})}{k} \right] +  \alpha^2\\
&= \delta_2(k) - 2\alpha \delta_1(k), 
\end{align*}
and hence by Jensen's inequality, we get 
\begin{align}\label{eq:L1 distance bounded by sqrt of error terms}
\mathbb{E}\left[ \left| \frac{\#\mathcal{L}^{\max}_{\cap}(\TLis{k})}{k} -  \alpha \right| \right] 
\leq \sqrt{\mathbb{E}\left[ \left( \frac{\#\mathcal{L}^{\max}_{\cap}(\TLis{k})}{k} -  \alpha\right)^2 \right]} = \sqrt{\delta_2(k) - 2\alpha \delta_1(k)}. 
\end{align}
From there, it is clear that if we obtain that $\delta_1(k),\delta_2(k)=O(k^{-\eps})$ for some $\eps>0$, \emph{i.e.}\ that the convergence \eqref{eq:first and second moment as limit plus error term} of the first and second moments happens at least with polynomial speed, then we get the result claimed in the theorem. 
We fix $k\geq 1$ and take care of the two terms separately.

First, we bound $\delta_1(k)$. \cref{item:for-later-1}~in~\cref{lem:cvg_first_mmt} ensures that for any $k'\geq k$ and any $A>0$, we have
\begin{align}\label{eq:difference first moment k kprime}
\left\vert\mathbb{E}\left[\frac{\#\mathcal{L}^{\max}_{\cap}(\TLis{k})}{k} \right] - \mathbb{E}\left[\frac{\# \mathcal{L}^{\max}_{\cap}(\TLis{k'})}{k'} \right]\right\vert 
\le
2 \sup_{\mathsf{k}\ge 1}\Ecp{\mathsf{k}}{F_A} + \Pppt{k,k'}{\xmer \le A}.
\end{align}
We fix $A=k^{1/4}$ and estimate the two terms on the right-hand side of \eqref{eq:difference first moment k kprime} in turn.
First, by \cref{lem:forks_mmt}, for some constant $C>0$ we have 
\begin{align}\label{eq:expected number of forks is small}
\sup_{\mathsf{k}\ge 1}\Ecp{\mathsf{k}}{F_A} \leq C\cdot Q(A)=C \cdot Q(k^{1/4})= k^{-\frac{1}{8\alpha}+o(1)},
\end{align}
where the last equality comes from the rough estimate on $Q$ in~\cref{prop:rough_regularity}. 
Second, the number of good intervals included in 
$\intervalleentier{k^{1/4}}{k}$ is $s(k^{1/4},k)=\frac{3\log k}{4\log 10} (1+o(1))$. Therefore, by \cref{item:full_coupling_quantitative}~of~\cref{cor:full_coupling}, there exists a constant $c>0$ such that
\begin{align*}
\Pppt{k,k'}{\xmer \le k^{1/4}}\leq (1-c)^{s(k^{1/4},k)} = (1-c)^{\frac{3\log k}{4\log 10} (1+o(1))} \leq k^{\frac{3\log (1-c)}{4\log 10} (1+o(1))} \leq k^{-\eps+o(1)}
\end{align*}
as $k\rightarrow \infty$, for some $\eps>0$. 
Since we have upper bounded the two terms on the right-hand side of \eqref{eq:difference first moment k kprime} by quantities that do not depend on $k'$, we can then take $k'\rightarrow \infty$ in that inequality and get, for $\eps'>0$ small enough,
\begin{align}\label{eq:first moment converges polynomially fast}
|\delta_1(k)| = \left\vert\mathbb{E}\left[\frac{\#\mathcal{L}^{\max}_{\cap}(\TLis{k})}{k} \right] - \alpha \right\vert 
\le
k^{-\frac{1}{8\alpha}+o(1)} +k^{-\eps+o(1)} = O(k^{-\eps'}).
\end{align}

Second, we bound $\delta_2(k)$. We use \cref{item:2nd_mmt_1} of \cref{lem:cvg_2nd_mmt}: for any $k\geq A>1$, we have 
\begin{align*}
|\delta_2(k)|= \left|\mathbb{E}\left[\left(\frac{\#\mathcal{L}^{\max}_{\cap}(\TLis{k})}{k}\right)^2 \right] - \lambda^2  \right| \leq \mathbb{E}_k[F_A] + 3 \sup_{\mathsf{k}\geq A} \left| \delta_1(\mathsf{k})\right| + 7\left(\frac{A^2}{k} +\frac{C}{A}\right).
\end{align*}
We again take $A=k^{1/4}$ and use \eqref{eq:expected number of forks is small} and \eqref{eq:first moment converges polynomially fast} to obtain
\begin{align}\label{eq:second moment converges polynomially fast}
|\delta_2(k)| \leq  k^{-\frac{1}{8\alpha}+o(1)} + O(k^{-\frac{\eps}{8}}) + O(k^{-\frac{1}{4}}) = O(k^{-\eps'})
\end{align}
for some $\eps'>0$ small enough. 

Finally, substituting the bounds \eqref{eq:first moment converges polynomially fast} and \eqref{eq:second moment converges polynomially fast} into \eqref{eq:L1 distance bounded by sqrt of error terms} yields the claimed result. 
\end{proof}

\subsection{Control of the remainders}
\label{sec:proof_remaind_rn_2}
The goal of this subsection is to prove the following lemma, which allows us to control the sum of the remainder terms $R_n$ introduced in~\eqref{eq:def_rn}. 
\begin{lem}[Control on the remainders]\label{lem:the error terms are summable as}
The random sum $\sum_{n=2}^{\infty} |R_n|$
is almost surely finite.
Furthermore, there exists $\eps>0$ such that almost surely, for $n_0$ large enough,
\begin{align*}
\sum_{n=n_0}^{\infty} |R_n| \leq n_0^{-\eps}.
\end{align*}
\end{lem}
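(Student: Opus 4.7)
\textbf{Proof plan for Lemma~\ref{lem:the error terms are summable as}.}

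The two statements in the lemma are related: it suffices to establish the expectation bound
\[
\sum_{n \geq n_0} \mathbb{E}|R_n| \;\leq\; C\, n_0^{-\eps'}
\qquad \text{for some } \eps'>0,
\]
and then upgrade this to an a.s.~bound. Indeed, applying Markov's inequality along the geometric subsequence $n_0 = 2^j$ gives $\mathbb{P}\bigl(\sum_{n\geq 2^j}|R_n| > 2^{-j\eps'/2}\bigr) \leq C\,2^{-j\eps'/2}$, which is summable in $j$, so Borel--Cantelli combined with the monotonicity of $n_0 \mapsto \sum_{n \geq n_0}|R_n|$ yields the quantitative a.s.\ bound $\sum_{n \geq n_0}|R_n| \leq n_0^{-\eps}$ for some $\eps > 0$ (and in particular a.s.\ finiteness).

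To prove the expectation bound, use $|R_n| \leq C\,n^{-\alpha-1}|\alpha X_n - \#\mathcal{L}^{\max}_{\cap}(T_n)|$ and the standard transfer identity
\[
r(n)\,\mathbb{E}\bigl|\alpha X_n - \#\mathcal{L}^{\max}_{\cap}(T_n)\bigr|
=
\sum_k q(k)\, \mathbb{E}\bigl[|\alpha k - \#\mathcal{L}^{\max}_{\cap}(T^k)|\, \indicator{|T^k|=n}\bigr]
\]
obtained by conditioning $T$ on $\{|T|=n\}$ versus $\{\LIS(T)=k\}$. Using $r(n)\sim \frac{1}{2\sqrt{\pi}}n^{-3/2}$, so that $n^{-\alpha-1}/r(n) \leq C\,n^{1/2-\alpha}$, and swapping the sums by Fubini, one arrives at
\[
\sum_{n \geq n_0}\mathbb{E}|R_n|
\;\leq\;
C \sum_k q(k)\, \mathbb{E}\!\left[|\alpha k - \#\mathcal{L}^{\max}_{\cap}(T^k)|\cdot |T^k|^{1/2-\alpha}\indicator{|T^k|\geq n_0}\right].
\]
Apply Cauchy--Schwarz to the inner expectation. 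The first resulting factor, $\sqrt{\mathbb{E}[(\alpha k - \#\mathcal{L}^{\max}_{\cap}(T^k))^2]}$, is controlled by the second-moment estimate derived in the proof of \cref{thm:quantitative_moments}: it is bounded by $Ck^{1-\eps/2}$.

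The heart of the argument is the sharp moment estimate
\[
\mathbb{E}\bigl[|T^k|^{1-2\alpha}\bigr] \;\leq\; C\,k^{1/\alpha - 2 + o(1)},
\]
which captures the typical scale $|T^k|\sim k^{1/\alpha}$. To prove it, write
\[
q(k)\,\mathbb{E}\bigl[|T^k|^{1-2\alpha}\bigr]
=
\mathbb{E}\!\left[\sum_{n=\tau_k}^{\tau_{k+1}-1} n^{1-2\alpha}\,r(n)\right],
\]
exploit that $n\mapsto n^{1-2\alpha}r(n)$ is decreasing (since $\alpha > 1/2$), and bound the sum by $(\tau_{k+1}-\tau_k)\cdot \tau_k^{-1/2-2\alpha}$ (up to constants). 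The conditional estimate for $\tau_{k+1}-\tau_k$ given $\tau_k$ from Step~3 of the proof of \cref{lem:q_and_sigma}, the good regularity estimate $\mathbb{E}[\tau_k^{-1/2}] \sim \sqrt{\pi}\,Q(k) = k^{-1/(2\alpha)}\svfbQ(k)$ from \cref{thm:good_regularity}, together with Karamata-type interpolation bounds for $\mathbb{E}[\tau_k^{-s}]$ using $\tau_k \geq k$ a.s., yield the required estimate. This is the main obstacle of the proof: the naive bound via $\tau_k \geq k$ alone falls short, and one genuinely needs the polynomial concentration of $\tau_k$ around $k^{1/\alpha}$ provided by \cref{thm:good_regularity}.

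Plugging the two moment bounds into Cauchy--Schwarz gives $\mathbb{E}[|\alpha k - \#\mathcal{L}^{\max}_{\cap}(T^k)|\cdot |T^k|^{1/2-\alpha}] \leq C\,k^{1/(2\alpha) - \eps/2 + o(1)}$. Using the good regularity $q(k) = k^{-1-1/(2\alpha)}\svfq(k)$, the full sum becomes
\[
\sum_k q(k)\, k^{1/(2\alpha) - \eps/2 + o(1)}
\;=\;
\sum_k k^{-1-\eps/2 + o(1)} \;<\; \infty,
\]
with tail from $k_0$ onwards bounded by $C\,k_0^{-\eps/2+o(1)}$. Restricting to $|T^k|\geq n_0$ corresponds in the typical regime to $k \gtrsim n_0^{\alpha}$, which translates the $k_0^{-\eps/2}$ decay into an $n_0^{-\eps'}$ decay for some $\eps' > 0$ (using a simple splitting between $k \geq n_0^\alpha$ and $k < n_0^\alpha$, the latter range contributing only through $|T^k|$ being atypically large, which is handled by the same moment bound). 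Combined with the Markov--Borel--Cantelli argument of the first paragraph, this completes the proof.
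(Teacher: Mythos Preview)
Your overall architecture---reduce to an expectation bound, then Markov plus Borel--Cantelli---is reasonable, but the argument has a real gap at the moment estimate $\mathbb{E}[|T^k|^{1-2\alpha}]\leq Ck^{1/\alpha-2+o(1)}$. Tracing your sketch, this amounts to showing $\mathbb{E}[\sttm_k^{-(2\alpha-1/2)}]=k^{-(2\alpha-1/2)/\alpha+o(1)}$, i.e.\ the analogue of \cref{cor:lower_tail_sigma} for an exponent $\xi=2\alpha-\tfrac12$ which is \emph{strictly larger than} $\alpha$ (since $\alpha>\tfrac12$). The interpolation you invoke, namely $\sttm_k^{-\xi}\le k^{-(\xi-1/2)}\sttm_k^{-1/2}$ together with $\mathbb{E}[\sttm_k^{-1/2}]\sim\sqrt{\pi}\,Q(k)=k^{-1/(2\alpha)}\svfbQ(k)$, only gives $\mathbb{E}[\sttm_k^{-\xi}]\le Ck^{1-2\alpha-1/(2\alpha)+o(1)}$, and the deficit in the exponent compared to the sharp value $k^{1/(2\alpha)-2}$ is exactly $3-2\alpha-1/\alpha$, which is strictly positive on $(\tfrac12,1)$ and equals $3-2\sqrt{2}\approx 0.17$ at $\alpha=1/\sqrt2$. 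After Cauchy--Schwarz this deficit propagates, and your $k$-sum becomes $\sum_k k^{-1-\eps/2+(3-2\alpha-1/\alpha)/2+o(1)}$, which diverges unless the $\eps$ from \cref{thm:quantitative_moments} happens to exceed $3-2\alpha-1/\alpha$; nothing in the paper guarantees that.

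The paper avoids this issue by \emph{not} attempting an $L^1$ bound. Instead it first runs a separate Borel--Cantelli argument (\cref{lem:remainder series in k is polynomially small}) to obtain the almost-sure lower bound $\sttm_k\ge k^{1/(\alpha+\delta)}$ for all large $k$, with a random threshold $K$. On this a.s.\ event one has the \emph{deterministic} bound $\sttm_k^{1/2-\alpha}\le k^{(1/2-\alpha)/(\alpha+\delta)}$, which is then pulled out before taking expectation of the remaining factor $\sum_{n=\sttm_k}^{\sttm_{k+1}-1} r(n)f(T_n)$ (controlled by \cref{lem:a reinterpretation of thm quantitative moments}). The passage from $k_0$-decay to $n_0$-decay is then handled not via $|T^k|\ge n_0$ but via a separate a.s.\ lower bound $X_{n_0}\ge Mn_0^{p/2}$ obtained from a P\'olya urn comparison (\cref{lem:as lower bound for X_n}). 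The conceptual difference is the order of operations: the paper localizes pathwise first and takes expectation second, whereas your route takes expectation first and then requires a moment bound on $\sttm_k^{-\xi}$ with $\xi>\alpha$ that is not available from the results established.
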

We point out that in this section, as in~\cref{lem:the error terms are summable as}, whenever we write ``almost surely, for $n$ large enough,'' the threshold for ``large enough'' may be random.
The proof will rely on four intermediate lemmas, which we will state and prove as we proceed.  
The proof of \cref{lem:the error terms are summable as} will then be provided at the end of this section.
Recall from \cref{subsec:rough regularity} the notation $\sttm_k$ for the hitting time of $k$ by $X_n$ introduced in~\eqref{eq:defn-stp-time}, as well as the probability $r(n)= \Pp{|T|=n}$ from \cref{lem:prelim_GW}. 
For clearer expressions, we introduce the following notation: for a finite sign-decorated binary tree $t$, we write 
\[f(t):=\lvert \alpha \LIS(t)-\#\mathcal{L}_\cap^{\max}(t) \rvert.\]
Note that since \( |R_{n}| =
    \frac{c_{n-1}}{n} \left|  \alpha X_{n} - \#\mathcal{L}^{\max}_{\cap}(T_{n}) \right| 
   =\frac{c_{n-1}}{n} f(T_n)
\), we can write for any $n_0 \ge 2$,
\begin{equation} \label{eq:sum of the error terms to control}
\sum_{n=n_0}^{\infty}\left\lvert R_n\right\rvert
     =\sum_{n=n_0}^{\infty} \frac{c_{n-1}}{n} f(T_{n}).
\end{equation}
Looking at the form of the expression \eqref{eq:sum of the error terms to control}, we can already see some similarity with the quantities appearing (in expectation) in the following rephrasing of \cref{thm:quantitative_moments}.
\begin{lem}[Rephrasing of \cref{thm:quantitative_moments}]
\label{lem:a reinterpretation of thm quantitative moments}
There is $\eps>0$ such that we have 
\begin{align} \label{eq:control sum pn ftn}
\Ec{\sum_{n=\sttm_k}^{\sttm_{k+1}-1} r(n) \cdot f(T_n)} =   O(k^{1-\eps} q(k))
\quad
\text{as } k\to\infty.
\end{align}
\end{lem}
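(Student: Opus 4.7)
The plan is to recognize that Lemma~\ref{lem:a reinterpretation of thm quantitative moments} is essentially a bookkeeping reformulation of Theorem~\ref{thm:quantitative_moments}: the left-hand side of \eqref{eq:control sum pn ftn}, after integrating out the randomness of $|T|$, collapses directly onto $q(k) \cdot \Ec{f(T^k)}$, which Theorem~\ref{thm:quantitative_moments} already controls. The only real ingredient is the identification of the law of $T_n$ as $T$ conditioned on $\{|T|=n\}$, combined with the definition of $T^k$ as $T$ conditioned on $\{\LIS(T)=k\}$.

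Concretely, I would proceed as follows. First, observe that $\sttm_k \le n \le \sttm_{k+1}-1$ is equivalent to $X_n = \LIS(T_n) = k$, so that
\[
\sum_{n=\sttm_k}^{\sttm_{k+1}-1} r(n)\, f(T_n) \;=\; \sum_{n\ge 1} r(n)\, f(T_n)\,\mathds{1}_{\{\LIS(T_n)=k\}}.
\]
Taking expectations and exchanging expectation and sum by Tonelli (all terms being non-negative), each summand becomes $r(n)\,\Ec{f(T_n)\mathds{1}_{\{\LIS(T_n)=k\}}}$. Since $T_n$ has the law of $T$ conditioned on $\{|T|=n\}$ and $r(n)=\Pp{|T|=n}$, this equals $\Ec{f(T)\mathds{1}_{\{|T|=n,\,\LIS(T)=k\}}}$. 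Summing over $n$ gives
\[
\Ec{\sum_{n=\sttm_k}^{\sttm_{k+1}-1} r(n)\, f(T_n)} \;=\; \Ec{f(T)\,\mathds{1}_{\{\LIS(T)=k\}}} \;=\; q(k)\cdot \Ec{f(T^k)}.
\]

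Finally, since $\LIS(T^k)=k$ by definition, we have $f(T^k)=|\alpha k - \#\mathcal{L}^{\max}_\cap(T^k)| = k\,\bigl|\alpha - \#\mathcal{L}^{\max}_\cap(T^k)/k\bigr|$. Applying Theorem~\ref{thm:quantitative_moments} then yields $\Ec{f(T^k)} = O(k^{1-\eps})$ for some $\eps>0$, and plugging this back gives the claim \eqref{eq:control sum pn ftn}. There is no genuine obstacle here: once the sum is reorganized in this way, the estimate follows immediately from Theorem~\ref{thm:quantitative_moments}.
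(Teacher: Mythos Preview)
Your proposal is correct and follows essentially the same approach as the paper: both rewrite the expectation by conditioning to identify it with $q(k)\,\Ec{f(T^k)}$, and then invoke Theorem~\ref{thm:quantitative_moments} to bound $\Ec{f(T^k)}=O(k^{1-\eps})$. The only difference is cosmetic ordering of the steps.
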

\begin{proof}
First, \cref{thm:quantitative_moments} tells us that there exists $\eps>0$ such that
\begin{align*}
\Ec{f(T^k)}=\Ec{\lvert \alpha \LIS(T^k)-\#\mathcal{L}_\cap^{\max}(T^k) \rvert}=k \mathbb{E}\left[ \left| \frac{\#\mathcal{L}^{\max}_{\cap}(\TLis{k})}{k} -  \alpha \right| \right]   =O(k^{1-\eps}).
\end{align*}
Then, recalling that $r(n)= \Pp{|T|=n}$, we can rewrite the left-hand side as 
\begin{align*}
\Ec{f(T^k)} 
= \frac{1}{\Pp{|T|=k}} \cdot \Ec{f(T) \cdot \indicator{\LIS(T)=k}}
&= \frac{1}{q(k)} \sum_{n=1}^\infty r(n) \cdot \Ec{f(T_n) \indicator{\LIS(T_n)=k}}\\
&=\frac{1}{q(k)}  \Ec{\sum_{n=\sttm_k}^{\sttm_{k+1}-1} r(n) \cdot f(T_n)}.
\end{align*}
Combining the last two displays finishes the proof.
\end{proof}
The next lemma aims to upper bound the sum of the remainders $R_n$ in terms of the quantities controlled (in expectation) by the previous result.  
\begin{lem}[Upper bound on the sum of remainders]
\label{lem:control sum remainders by sum of controlled terms}
There exists a constant $C>0$ such that, almost surely, for any $n_0\geq 2$, we have
\begin{align*}
\sum_{n=n_0}^{\infty} |R_n| \leq C \cdot \sum_{k=X_{n_0}}^\infty (\sttm_k)^{\frac{1}{2}-\alpha }\left(\sum_{n=\sttm_k}^{\sttm_{k+1}-1} r(n) f(T_n)\right).
\end{align*}
\end{lem}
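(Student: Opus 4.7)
The plan is to rewrite the sum $\sum_{n\geq n_0}|R_n|$ using the identity $|R_n|=\frac{c_{n-1}}{n}f(T_n)$ from \eqref{eq:def_rn}, compare the deterministic weight $\frac{c_{n-1}}{n}$ with $r(n)$, and then group the terms according to the value of $X_n$, which is constant equal to $k$ on each block $\intervalleentier{\sttm_k}{\sttm_{k+1}-1}$.

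First I would establish the pointwise bound
\[
\frac{c_{n-1}}{n}\leq C\, n^{\frac{1}{2}-\alpha}\,r(n)\qquad\text{for every }n\geq 1,
\]
for some constant $C>0$ depending only on $\alpha$. Indeed, \eqref{eq:asymptotic behaviour cn} gives $c_{n-1}\sim \frac{1}{\Gamma(1-\alpha)}\,n^{-\alpha}$, while \cref{lem:prelim_GW} gives $r(n)\sim \frac{1}{2\sqrt{\pi}}\,n^{-3/2}$. Hence
\[
\frac{c_{n-1}/n}{n^{1/2-\alpha}r(n)}\xrightarrow[n\to\infty]{}\frac{2\sqrt{\pi}}{\Gamma(1-\alpha)},
\]
so this ratio is bounded on $\intervallefo{1}{\infty}$ (both numerator and denominator are positive for every $n\geq 1$, so any finite range is handled by boundedness), yielding the displayed inequality.

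Next, I would use that $1/2-\alpha<0$ by \cref{thm:main}, so the function $x\mapsto x^{1/2-\alpha}$ is decreasing on $\bbR_{>0}$. Grouping the sum by the value of $X_n$, we have
\begin{align*}
\sum_{n=n_0}^{\infty}|R_n|
=\sum_{n=n_0}^{\infty}\frac{c_{n-1}}{n}f(T_n)
&\leq C\sum_{n=n_0}^{\infty} n^{\frac{1}{2}-\alpha}\,r(n)\,f(T_n)\\
&= C\sum_{k=X_{n_0}}^{\infty}\sum_{n=n_0\vee\sttm_k}^{\sttm_{k+1}-1} n^{\frac{1}{2}-\alpha}\,r(n)\,f(T_n)\\
&\leq C\sum_{k=X_{n_0}}^{\infty} (\sttm_k)^{\frac{1}{2}-\alpha}\sum_{n=\sttm_k}^{\sttm_{k+1}-1}r(n)\,f(T_n),
\end{align*}
where in the last line we used that $n\geq \sttm_k$ on each block together with the decreasing power, and that extending the inner sum from $n_0\vee\sttm_k$ down to $\sttm_k$ only adds non-negative terms. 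This is exactly the claimed bound.

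There is essentially no obstacle: the argument is a purely deterministic comparison, with the asymptotics in \eqref{eq:asymptotic behaviour cn} and \cref{lem:prelim_GW} doing all the work. The only point that deserves care is the sign of $1/2-\alpha$, which must be strictly negative in order to convert $n\geq \sttm_k$ into $n^{1/2-\alpha}\leq (\sttm_k)^{1/2-\alpha}$; this is guaranteed by the fact that $\alpha\in(1/2,1)$ from \cref{thm:main}.
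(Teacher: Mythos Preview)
Your proof is correct and follows essentially the same approach as the paper: both use the identity $|R_n|=\frac{c_{n-1}}{n}f(T_n)$, the comparison $\frac{c_{n-1}}{n}\leq C\,n^{1/2-\alpha}r(n)$ coming from the asymptotics~\eqref{eq:asymptotic behaviour cn} and \cref{lem:prelim_GW}, and then group by the value of $X_n$ using $n^{1/2-\alpha}\leq(\sttm_k)^{1/2-\alpha}$ on each block. The only cosmetic difference is that the paper first extends the range down to $\sttm_{X_{n_0}}$ and then groups, whereas you group with lower index $n_0\vee\sttm_k$ and then extend; these are equivalent.
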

\begin{proof}
Using \eqref{eq:sum of the error terms to control}, we write
\begin{align}\label{eq:rewr-sums}
\sum_{n=n_0}^{\infty} |R_n| 
=
\sum_{n=n_0}^{\infty} \frac{c_{n-1}}{n} f(T_{n})
&\leq \sum_{n=\tau_{X_{n_0}}}^\infty \frac{c_{n-1}}{n} f(T_{n}) \notag\\
&= 
\sum_{k=X_{n_0}}^\infty \left(\sum_{n=\sttm_k}^{\sttm_{k+1}-1} \frac{c_{n-1}}{n} f(T_{n})\right).
\end{align}
Since $c_n\sim \frac{1}{\Gamma(1-\alpha)}n^{-\alpha}$ by \eqref{eq:asymptotic behaviour cn} and $r(n)\sim c n^{-3/2}$ thanks to \cref{lem:prelim_GW}, 
we bound the term appearing in front of $f(T_n)$ in the sum as
\begin{align*}
\frac{c_{n-1}}{n} \leq C n^{\frac{1}{2}-\alpha }  r(n),
\end{align*}
for some constant $C>0$.
We substitute this into \eqref{eq:rewr-sums} and additionally use the fact that if $n\geq \sttm_k$ then $n^{\frac{1}{2}-\alpha} \leq (\sttm_k)^{\frac{1}{2}-\alpha}$ because $\frac{1}{2}-\alpha<0$ (recall \cref{thm:prev-bound}). We get 
\begin{align*}
\sum_{n=n_0}^{\infty} \frac{c_{n-1}}{n} f(T_n) \leq C \cdot \sum_{k=X_{n_0}}^\infty (\sttm_k)^{\frac{1}{2}-\alpha }\left(\sum_{n=\sttm_k}^{\sttm_{k+1}-1} r(n) f(T_n)\right),
\end{align*}
which finishes the proof.
\end{proof}

In the next lemma, we use the estimates from \cref{lem:a reinterpretation of thm quantitative moments} to further control the upper bound in \cref{lem:control sum remainders by sum of controlled terms}. 
\begin{lem}[Polynomial tail control on the random sum]
\label{lem:remainder series in k is polynomially small}
The random sum 
\[
\sum_{k=1}^\infty (\sttm_k)^{\frac{1}{2}-\alpha }\left(\sum_{n=\sttm_k}^{\sttm_{k+1}-1} r(n) f(T_n)\right)
\]
is almost surely finite.
Furthermore, there exists $\eps>0$ such that almost surely, for $k_0$ large enough,
\begin{align*}
\sum_{k=k_0}^\infty (\sttm_k)^{\frac{1}{2}-\alpha }\left(\sum_{n=\sttm_k}^{\sttm_{k+1}-1} r(n) f(T_n)\right) \leq k_0^{-\eps}.
\end{align*}
\end{lem}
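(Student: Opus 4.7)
The plan is to combine a dyadic decomposition in $k$ with two applications of Markov's inequality and the Borel--Cantelli lemma. Writing $Y_k \coloneqq \sum_{n=\sttm_k}^{\sttm_{k+1}-1} r(n) f(T_n)$ and $k_j \coloneqq 2^j$, the main obstacle I have to overcome is that a direct expectation bound using only the deterministic inequality $(\sttm_k)^{1/2-\alpha} \leq k^{1/2-\alpha}$ produces a series with exponent $\tfrac{1}{2}-\alpha - \tfrac{1}{2\alpha}-\eps$ that fails to be summable for intermediate values of $\alpha$, since $\min_{\alpha\in(1/2,1)}(\alpha+\tfrac{1}{2\alpha}) = \sqrt{2} < 3/2$. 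The dyadic strategy circumvents this by exploiting, on each dyadic block, the correlation between the typical lower bound on $\sttm_k$ and the smallness of $Y_k$.

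First I would establish an almost sure lower bound on $\sttm_k$. Using the characterization $\alpha = \limsup_n \log \Ec{X_n}/\log n$ from \cref{thm:main}, Markov's inequality yields, for any fixed small $\delta>0$,
\[ \Pp{\sttm_k < k^{1/\alpha - \delta}} = \Pp{X_{\lfloor k^{1/\alpha - \delta}\rfloor} \geq k} \leq k^{-\delta\alpha + o(1)}. \]
Along the dyadic sequence $(k_j)$ these probabilities are summable in $j$, so by Borel--Cantelli together with the monotonicity of $k\mapsto \sttm_k$, almost surely for $k$ large we have $\sttm_k \geq c\, k^{1/\alpha - \delta}$, which since $\tfrac{1}{2}-\alpha<0$ translates to $(\sttm_k)^{1/2-\alpha} \leq C\, k^{1/(2\alpha) - 1 + \delta(\alpha-1/2)}$.

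Second I would control each dyadic block of the $Y_k$'s. By \cref{lem:a reinterpretation of thm quantitative moments} combined with the good regularity estimate $q(k) = k^{-1/(2\alpha)-1}\svfq(k)$ from \cref{thm:good_regularity}, we have $\Ec{Y_k} = O(k^{-1/(2\alpha) - \eps} \svfq(k))$, and summing over $k\in \intervalleentier{k_j}{k_{j+1}-1}$ (using the uniform convergence part of \cref{lem:slow_variation}) gives $\Ec{\sum_{k=k_j}^{k_{j+1}-1} Y_k} \leq C\, k_j^{1 - 1/(2\alpha) - \eps} \svfq(k_j)$. A second application of Markov's inequality, with threshold $k_j^{1 - 1/(2\alpha) - \eps/2} \svfq(k_j)$, gives a probability of exceedance of order $k_j^{-\eps/2}$, which is summable in $j$; Borel--Cantelli then guarantees that almost surely, for $j$ large, $\sum_{k = k_j}^{k_{j+1}-1} Y_k \leq C\, k_j^{1 - 1/(2\alpha) - \eps/2} \svfq(k_j)$.

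Combining the two bounds, the contribution of the $j$-th dyadic block to the random sum is at most $C\, k_j^{\delta(\alpha - 1/2) - \eps/2} \svfq(k_j)$. Choosing $\delta$ small enough that $\delta(\alpha-1/2)<\eps/4$ and using the Potter bounds of \cref{item:slow_variation2} in \cref{lem:slow_variation} to absorb $\svfq(k_j)$ into an arbitrarily small polynomial factor, this is bounded by $k_j^{-\eps'}$ for some $\eps' > 0$. Summing the resulting geometric series from any $j_0$ then yields $\sum_{k \geq k_0} (\sttm_k)^{1/2-\alpha} Y_k = O(k_0^{-\eps'})$ almost surely for $k_0$ large enough, which gives both conclusions of the lemma.
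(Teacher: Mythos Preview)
Your proposal is correct and follows essentially the same approach as the paper: both arguments first obtain an almost sure polynomial lower bound $\sttm_k \gtrsim k^{1/\alpha - \delta}$ via Markov's inequality and Borel--Cantelli along a dyadic sequence (using monotonicity of $k\mapsto\sttm_k$), and then use this to replace the random factor $(\sttm_k)^{1/2-\alpha}$ by a deterministic power of $k$ before applying \cref{lem:a reinterpretation of thm quantitative moments}. The only difference is organizational: the paper inserts an indicator $\mathbbm{1}_{\{k\geq K\}}$ (with $K$ the last time the lower bound fails) directly inside the expectation $\Ec{\mathbbm{1}_{\{k\geq K\}}(\sttm_k)^{1/2-\alpha}Y_k}$, sums over $k\geq 2^i$ in expectation, and applies Markov plus Borel--Cantelli once to the tail sums, whereas you apply Markov plus Borel--Cantelli separately to the dyadic blocks $\sum_{k_j\leq k<k_{j+1}} Y_k$ and then combine the two almost sure bounds pathwise---both routes yield the same $k_0^{-\eps'}$ tail control.
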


\begin{proof}[Proof of \cref{lem:remainder series in k is polynomially small}]
Let $\delta>0$ be a small parameter to be fixed later. By \eqref{eqn:alpha_as_limsup} in \cref{thm:main}, there is a constant $C>0$ such that, for all $n\ge 1$, $\Ec{X_{n}} \le C n^{\alpha+\delta/2}$.
Using Markov's inequality for $X_{2^i}$, for all $i\ge 1$, we have
\[
\Pp{X_{2^i} \geq (2^{i-1})^{\alpha +\delta}}
\le
(2^{i-1})^{-\alpha -\delta} \Ec{X_{2^i}}
\le
C 2^{-i\delta/2}
\]
for some (other) constant $C>0$. Therefore
\begin{align*}
\sum_{i\geq 1} \Pp{X_{2^i} \geq (2^{i-1})^{\alpha +\delta}} 
<\infty. 
\end{align*}
By the Borel--Cantelli lemma and the fact that $X_n$ is non-decreasing, we conclude that, almost surely, $X_n\leq n^{\alpha+\delta}$ for all $n$ large enough, so that $\sttm_k \geq k^{\frac{1}{\alpha+\delta}}$ for $k$ large enough.
We let 
\[K= 1+\sup \enstq{k\geq 1}{\sttm_k <  k^{\frac{1}{\alpha +\delta}}},\] 
and note that the argument above proves that $K<\infty$ almost surely.  

The definition of $K$ ensures that on the event $\{k\geq K\}$ we  have $\sttm_k \geq k^{\frac{1}{\alpha +\delta}}$ so that, 
for any $k\geq 1$, noting that $1/2-\alpha<0$ by \cref{thm:prev-bound},
\begin{align*}
\Ec{\indicator{k\geq K} (\sttm_k)^{\frac{1}{2}-\alpha }\left(\sum_{n=\sttm_k}^{\sttm_{k+1}-1} r(n) f(T_n)\right) } 
&\leq \Ec{\indicator{k\geq K} k^{\frac{1-2\alpha}{2\alpha +2\delta}}\left(\sum_{n=\sttm_k}^{\sttm_{k+1}-1} r(n) f(T_n)\right)}\\
&\leq k^{\frac{1-2\alpha}{2\alpha +2\delta}}\Ec{\sum_{n=\sttm_k}^{\sttm_{k+1}-1} r(n) f(T_n)}.
\end{align*}
By 
\cref{lem:a reinterpretation of thm quantitative moments}
this can be bounded as
\[
\Ec{\indicator{k\geq K}(\sttm_k)^{\frac{1}{2}-\alpha }\left(\sum_{n=\sttm_k}^{\sttm_{k+1}-1} r(n) f(T_n)\right) } 
\leq 
C k^{\frac{1-2\alpha}{2\alpha +2\delta}} \cdot q(k)\cdot k^{1-\eps}
\]
for some constants $C, \eps>0$.
Now, using the fact that $q(k)=k^{-\frac{1}{2\alpha}-1+o(1)}$, thanks to~\cref{prop:rough_regularity}, we find that by taking $\delta$ small enough, the last expression is 
smaller than $Ck^{-1-\eps/2}$.
Summing over $k\geq 2^i$ we get that 
\begin{align*}
\Ec{\sum_{k=2^i}^\infty\indicator{k\geq K}(\sttm_k)^{\frac{1}{2}-\alpha }\left(\sum_{n=\sttm_k}^{\sttm_{k+1}-1} r(n) f(T_n)\right)} =O(2^{-i\eps/2}) 
\quad \text{as } i\to\infty,
\end{align*}
which is summable. 
Hence, by Markov's inequality, 
\[
\sum_{i\ge 1} \Pp{\sum_{k=2^i}^\infty\indicator{k\geq K}(\sttm_k)^{\frac{1}{2}-\alpha }\left(\sum_{n=\sttm_k}^{\sttm_{k+1}-1} r(n) f(T_n)\right) > 2^{-i\eps/4}} 
< \infty.
\]
Therefore, by the Borel--Cantelli lemma and monotonicity, with probability one we have 
\begin{align*}
    \sum_{k=k_0}^\infty\indicator{k\geq K}(\sttm_k)^{\frac{1}{2}-\alpha }\left(\sum_{n=\sttm_k}^{\sttm_{k+1}-1} r(n) f(T_n)\right) \leq 2 k_0^{-\eps/4}
\end{align*}
for $k_0$ large enough. Finally, by taking $k_0$ large enough, we can assume that $k_0\ge K$, which yields the claim of \cref{lem:remainder series in k is polynomially small}.
\end{proof}

Lastly, because the sum appearing on the right-hand side of \cref{lem:control sum remainders by sum of controlled terms} starts from the random variable $X_{n_0},$ which can, in principle, be small, we state the next lemma to ensure that this quantity grows at least polynomially fast in $n_0$ almost surely.

\begin{lem}[Almost sure polynomial lower bound on $X_n$]\label{lem:as lower bound for X_n}
There exists an almost surely positive random variable $M$ such that for all $n\geq 1$,
\begin{align*}
X_n \geq M n^{\frac{p}{2}} \quad \text{almost surely}.
\end{align*}
\end{lem}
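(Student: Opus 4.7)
The plan is to dominate the sequence $X_n$ from below by a tractable time-inhomogeneous birth chain with growth exponent $p/2$, and then analyze this chain via a martingale renormalization. First I would observe that if $P_n$ is any maximal positive subtree of $T_n$, then among the $2n-1$ edges of $T_n$, at least $X_n$ are adjacent to a leaf of $P_n$; picking one such edge (probability $\ge X_n/(2n-1)$) and inserting a $\oplus$ node (probability $p$) extends $P_n$ into a positive subtree of $T_{n+1}$ with $X_n+1$ leaves, so that
\[
\Ppsq{X_{n+1}=X_n+1}{\Ff_n}\ge \frac{p X_n}{2n-1}.
\]
A standard thinning then allows me to build, on a suitably enlarged probability space, a Markov chain $(W_n)_{n\ge 1}$ with $W_1=1$ and transitions $\Ppsq{W_{n+1}=W_n+1}{W_n}=pW_n/(2n-1)$, coupled so that $W_n\le X_n$ almost surely for every $n\ge 1$.

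Next I would analyze $(W_n)$ by setting $c_n\coloneqq \prod_{i=1}^{n-1}(1+p/(2i-1))$; Stirling's expansion of the Gamma function yields $c_n\sim C_p\, n^{p/2}$ with an explicit constant $C_p>0$. The process $M_n\coloneqq W_n/c_n$ is a non-negative martingale, and the bound $\Ecsq{(M_{n+1}-M_n)^2}{W_n}\le pW_n/((2n-1)c_{n+1}^2)$ together with $\Ec{W_n}=c_n$ gives $\sum_{n}\Ec{(M_{n+1}-M_n)^2}=O\bigl(\sum n^{-1-p/2}\bigr)<\infty$. Hence $M_n\to M_\infty$ almost surely and in $L^2$, with $\Ec{M_\infty}=1$.

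The main obstacle is that $L^2$-convergence alone does not exclude the possibility $M_\infty=0$ on a set of positive probability. To rule this out, I would iterate the quadratic-variation bound from a deterministic time $N$ onwards to get $\Ecsq{(M_\infty-M_N)^2}{\Ff_N}\le C\,W_N/c_N^2$; Chebyshev then gives $\Ppsq{M_\infty>M_N/2}{\Ff_N}\ge 1-C'/W_N$. In parallel, an explicit supermartingale-type recursion $\Ecsq{1/W_{n+1}}{W_n}\le (1-p/(2(2n-1)))/W_n$ iterates to $\Ec{1/W_N}=O(N^{-p/4})\to 0$. Taking expectations and sending $N\to\infty$ then forces $\Pp{M_\infty>0}\ge 1-C'\Ec{1/W_N}\to 1$, so $M_\infty>0$ almost surely.

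Finally, since $W_n/n^{p/2}\to C_p M_\infty$ almost surely to a strictly positive limit and $W_n\ge 1$ for all $n$, the infimum $M\coloneqq \inf_{n\ge 1}X_n/n^{p/2}\ge \inf_{n\ge 1}W_n/n^{p/2}$ is a.s.\ strictly positive: beyond some (random) threshold the ratio is bounded below by half its limit, and before that threshold $W_n/n^{p/2}\ge 1/n^{p/2}$ is bounded away from zero. The quantitative interplay between the conditional variance of $M_\infty-M_N$ and the decay of $\Ec{1/W_N}$ is what pinpoints the positivity of $M_\infty$; the coupling and the bare $L^2$ martingale convergence are routine.
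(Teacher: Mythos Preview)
Your argument is correct, and in spirit it matches the paper: both of you lower bound $X_n$ by the same birth chain $W_n$ (the paper's $Y_n$) with $W_1=1$ and $\Ppsq{W_{n+1}=W_n+1}{W_n}=pW_n/(2n-1)$, and then show that $n^{-p/2}W_n$ converges a.s.\ to a strictly positive limit. The difference lies in how that chain is constructed and analyzed. The paper builds $Y_n$ concretely as the number of leaves of a greedily grown positive subtree $\underline{T}_n$, then observes that $(pY_n,\,2n-1-pY_n)$ evolves as a triangular balanced P\'olya urn and invokes a known limit theorem for such urns to get $n^{-p/2}Y_n\to L>0$ a.s. You instead build $W_n$ abstractly by thinning, and then prove the limit theorem by hand: you renormalize to a nonnegative $L^2$ martingale $M_n=W_n/c_n$, and establish positivity of $M_\infty$ via the conditional variance bound $\Ecsq{(M_\infty-M_N)^2}{\Ff_N}\le C W_N/c_N^2$ combined with the supermartingale estimate $\Ec{1/W_N}=O(N^{-p/4})$. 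Your route is longer but entirely self-contained (no urn literature needed), and the positivity argument via conditional Chebyshev against $\Ec{1/W_N}\to 0$ is a nice elementary substitute for the cited result.
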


\begin{proof}[Proof of \cref{lem:as lower bound for X_n}]
We use a greedy construction to get a positive subtree $\bunderline{T}_n$ of $T_n$.
The way this is done is reminiscent of the proof of \cref{prop:rough_regularity}.
We let $\bunderline{T}_1=T_1$. 
Then at every step $n\geq 1$ of the forward Rémy's algorithm (\cref{sect:remy-algo}):
\begin{itemize}
 \item[(i)] If the edge $\uedge{n+1}$ that is picked is adjacent to one of the leaves of $\bunderline{T}_{n}$ \emph{and} the created vertex is a $\oplus$, then we add the newly added leaf to $\bunderline{T}_{n}$ to get $\bunderline{T}_{n+1}$. 
\item[(ii)]  Otherwise we let $\bunderline{T}_{n+1}\coloneqq \bunderline{T}_{n}$. 
\end{itemize}
Now let us consider the dynamics of the number $Y_n$ of leaves in $\bunderline{T}_n$. 
Conditionally on $Y_1,\dots , Y_{n}$ we have for all $n\ge 1$,
\begin{equation*}
Y_{n+1} = 
\left\lbrace
\begin{aligned}
Y_{n} + 1  \qquad & \text{with probability } \frac{p Y_{n}}{2n-1},\\
Y_{n} \qquad &\text{with probability } \frac{2n-1 -p Y_{n}}{2n-1}.\\
\end{aligned} \right.
\end{equation*}
From these transitions, we note that the sequence of vectors $(pY_n ,2n-1 - pY_n)_{n\geq 1}$
follows
the same dynamics as the (non-necessarily integer) amount of black and white balls in a generalized P\'olya urn where:
\begin{itemize}
\item At time $1$, the urn contains $p$ black balls and $1-p$ white balls.
\item At every step, a color is drawn at random from the urn with a distribution corresponding to its current composition. Then if the sampled color is black, we add $p$ black balls and $2-p$ white balls to the urn. Otherwise, we add $2$ white balls to the urn.
\end{itemize}
Such an urn is called \emph{balanced} because the number of balls added at each step is constant, and \emph{triangular} because the corresponding replacement matrix is triangular.
From general results on triangular balanced P\'olya urns
(e.g.\ \cite[Theorem 2]{aguech2009limit}),
we get that $n^{-\frac{p}{2}} \cdot Y_n \rightarrow L$ almost surely as $n \rightarrow \infty$, where $L>0$ almost surely. 
Hence, for $M\coloneqq \min_{n\geq 1} Y_n n^{-\frac{p}{2}}$ we get the result stated in the lemma, because by definition of $X_n$ we have $X_n\geq Y_n$ for all $n\geq 1$.
\end{proof}

We can now prove \cref{lem:the error terms are summable as} by combining all of the above. 
\begin{proof}[Proof of \cref{lem:the error terms are summable as}]
Using consecutively the results of Lemmas~\ref{lem:control sum remainders by sum of controlled terms},~\ref{lem:remainder series in k is polynomially small}~and~\ref{lem:as lower bound for X_n}, we get
\begin{align*}
\sum_{n=n_0}^{\infty} |R_n| 
\leq C \cdot \sum_{k=X_{n_0}}^\infty (\sttm_k)^{\frac{1}{2}-\alpha }\left(\sum_{n=\sttm_k}^{\sttm_{k+1}-1} r(n) f(T_n)\right) 
\leq C X_{n_0}^{-\eps} 
\leq C M^{-\eps} n_0^{-p\eps/2}.
\end{align*}
Taking $n_0=2$ shows the first claim of the lemma.
Then taking $\eps' = \frac{p\eps}{4}$ ensures that for $n_0$ large enough (the threshold being random), we have 
\begin{align*}
\sum_{n=n_0}^{\infty} |R_n| \leq n_0^{-\eps'}.
\end{align*}
This concludes the proof of \cref{lem:the error terms are summable as}.
\end{proof}

\subsection{The scaling limit result}
\label{sec:proof_scaling_Xn}

We can now establish the scaling limit result of \cref{thm:main3} for the sequence $(X_n)_{n\geq 1}$.

\begin{proof}[Proof of \cref{thm:main3}]
Recall the notation $\revF_n=\sigma(T_m, \ m\geq n)$ and denote $\revF_\infty = \bigcap_{n\geq 1} \revF_n$.
Recall from \cref{sec:sc_limit_Xn_outline} that we defined, for all $n\geq 1$,
\begin{align*}
c_n= \prod_{i=1}^n \left(1-\frac{\alpha}{i}\right) \quad \text{and} \quad \resproc_n = c_n X_n \quad \text{and} \quad R_{n+1}=\frac{c_{n}}{n+1}\left( \alpha X_{n+1} - \#\mathcal{L}^{\max}_{\cap}(T_{n+1})\right) .
\end{align*}
We can now introduce the  
auxiliary $(\revF_n)$-adapted process $(\auxproc_n)_{n\geq 1}$ as
\begin{equation}\label{eq:Bn_def}
\auxproc_n\coloneqq \resproc_n - \sum_{i=n+1}^{\infty}R_i, \quad n\ge 1,
\end{equation}
which is almost surely finite thanks to \cref{lem:the error terms are summable as}.

\medskip 
\noindent
\emph{\underline{Step 1:} The process $(\auxproc_n)_{n\geq 1}$ has martingale increments.}
Using $X_{n}= X_{n+1} - \mathds{1}_{L_{n+1} \in \mathcal{L}^{\max}_{\cap}(T_{n+1})}$ and $c_{n+1}= \left(1-\frac{\alpha}{n+1}\right) c_n$ along with the expression $R_{n+1} = \frac{c_n}{n+1} \big(\alpha X_{n+1} - \#\mathcal{L}^{\max}_{\cap}(T_{n+1})\big)$, we can write the increment of the auxiliary process as 
\begin{align*}
\auxproc_n - \auxproc_{n+1} 
= (\resproc_n - \resproc_{n+1}) - R_{n+1} 
&= c_n\left(\frac{\#\mathcal{L}^{\max}_{\cap}(T_{n+1})}{n+1} - \mathds{1}_{L_{n+1} \in \mathcal{L}^{\max}_{\cap}(T_{n+1})}\right).
\end{align*}
From this expression, it is clear that 
\begin{equation}\label{eq:first moment increment auxproc}
\Ecsq{\auxproc_n - \auxproc_{n+1} }{\revF_{n+1}}=0,
\end{equation}
and 
\begin{align}\label{eq:second moment increment auxproc}
\Ecsq{(\auxproc_n - \auxproc_{n+1})^2}{\revF_{n+1}}
&=(c_n)^2 \cdot \frac{\#\mathcal{L}^{\max}_{\cap}(T_{n+1})}{n+1} \cdot \Big(1- \frac{\#\mathcal{L}^{\max}_{\cap}(T_{n+1})}{n+1} \Big) \notag \\
&\leq  (c_n)^2 = O(n^{-2\alpha}),
\end{align}
where, in the inequality, we applied the deterministic bound $\#\mathcal{L}^{\max}_{\cap}(T_{n+1})\leq n+1$, and then we used the asymptotics \eqref{eq:asymptotic behaviour cn} for the sequence $(c_n)_{n\geq 1}$. 

We have just proved that the \emph{increments} of the process $(\auxproc_n)_{n\geq 1}$ are those of an $L^2$ backward martingale. 
Unfortunately, we actually do not know at this point whether $\auxproc_n$ is in $L^1$ or not, 
so we cannot claim that it is a backward martingale.

\medskip
\noindent
\emph{\underline{Step 2:} Introducing related backward supermartingales.}
We introduce 
\begin{equation}\label{eq:defn-stpmg}
    \stpmg\coloneqq \sup \enstq{ n \geq 2}{ \sum_{i \ge n} |R_i| > n^{-\eps} },
\end{equation}
where $\eps>0$ is as in \cref{lem:the error terms are summable as} and we take the convention that $\stpmg=+\infty$ if no such $n$ exists. 
By the second part of~\cref{lem:the error terms are summable as}, we have $\stpmg<+\infty$ almost surely. 
Then the negative part $\auxproc^{-}_{n\vee \stpmg}\coloneqq -\min(\auxproc_{n\vee \stpmg},0)$ is bounded for all $n\ge 1$, since
\begin{equation} \label{eq:Bntau_bounded}
\auxproc_{n\vee \stpmg}^{-}
\le 
\sum_{i>n\vee \stpmg} |R_i|
\le
({n\vee \stpmg})^{-\eps}
\le 
n^{-\eps} 
\leq 1 \qquad \text{almost surely.}
\end{equation}
Moreover, using that $\{\stpmg\le n\}$ is $\revF_{n+1}$-measurable, we have
\begin{align*}
\Ecsq{\auxproc_{n\vee \stpmg} - \auxproc_{(n+1)\vee \stpmg}}{\revF_{n+1}}
&=
\Ecsq{\mathds{1}_{\stpmg\le n} (\auxproc_{n\vee \stpmg} - \auxproc_{(n+1)\vee \stpmg})}{\revF_{n+1}} \\
&=
\mathds{1}_{\stpmg\le n} \Ecsq{\auxproc_{n} - \auxproc_{n+1}}{\revF_{n+1}} = 0.
\end{align*}
Hence, the increments of $(\auxproc_{n\vee \stpmg})_{n\ge 1}$ have vanishing (conditional) expectation.  
Now for any $K>0$, the process $(\auxproc_{n\vee \stpmg} \wedge K)_{ n\ge 1}$ is bounded between $-1$ and $K$ so it is in $L^1$. 
Noting that for any $x\in \mathbb{R}$, the function 
$y\mapsto (x + y)\wedge K$
is concave,
we use Jensen's inequality to get 
\begin{align*}
\Ecsq{\auxproc_{n\vee \stpmg} \wedge K}{\revF_{n+1}}
&=
\Ecsq{(\auxproc_{(n+1)\vee \stpmg} + (\auxproc_{n\vee \stpmg} - \auxproc_{(n+1)\vee \stpmg}))\wedge K}{\revF_{n+1}} \\
&\leq \left(\auxproc_{(n+1)\vee \stpmg}+ \Ecsq{\auxproc_{n\vee \stpmg} - \auxproc_{(n+1)\vee \stpmg}}{\revF_{n+1}}\right) \wedge K \\
&= \auxproc_{(n+1)\vee \stpmg} \wedge K,
\end{align*}
which ensures that the process $(\auxproc_{n\vee \stpmg} \wedge K)_{ n\ge 1}$ is a backward supermartingale.

\medskip
\noindent 
\emph{\underline{Step 3:} Proving the almost sure convergence of $(\auxproc_n)_{n\geq 1}$ and hence of $(n^{-\alpha}X_n)_{n\geq 1}$. }

For any $K>0$, the process $(\auxproc_{n\vee \stpmg} \wedge K)_{ n\ge 1}$ is a bounded backward supermartingale.
By standard results on backward supermartingales (see e.g.\ \cite[Section~5.6]{durrett2019probability}\footnote{Although technically the reference only deals with backward \emph{martingales}, the argument readily extends to supermartingales since it only relies on Doob's upcrossing lemma.}), this implies that $(\auxproc_{n\vee \stpmg} \wedge K)_{n\ge 1}$ converges almost surely as $n\to\infty$ towards a limit $\auxproc_\infty^K$ and that for all $n\ge 1$,
\[
\Ecsq{\auxproc_{n \vee \stpmg}\wedge K}{\revF_{\infty}} \leq \auxproc_{\infty}^K.
\]
By monotonicity in $K$, we get that $(\auxproc_{n\vee \stpmg})_{ n\ge 1}$ converges almost surely towards a (possibly infinite) limit $\auxproc_{\infty} = \lim_{K\rightarrow \infty} \auxproc_{\infty}^K$. 
Now since for all $n\ge 1$ we have $\auxproc_{n \vee \stpmg} \geq -1$, we can use the monotone convergence theorem when we pass to the limit $K\rightarrow \infty$ in the previous display and get 
\begin{equation}\label{eq:cond-exp-lim}
  \Ecsq{\auxproc_{n \vee \stpmg}}{\revF_{\infty}} = \lim_{K\rightarrow \infty} \Ecsq{\auxproc_{n \vee \stpmg}\wedge K }{\revF_{\infty}} 
   \leq  \lim_{K\rightarrow \infty} \auxproc_{\infty}^K = \auxproc_{\infty}.
\end{equation}
Since $\stpmg<\infty$ almost surely, we deduce that $(\auxproc_n)_{n\ge 1}$ also converges almost surely to the same $\auxproc_\infty$.
Note that the definition \eqref{eq:Bn_def} of $(\auxproc_n)_{n\geq 1}$, the asymptotic $c_n \sim n^{-\alpha}/\Gamma(1-\alpha)$ and its a.s.\ convergence towards $\auxproc_\infty$ ensure that we have the almost sure convergence
\begin{align}
n^{-\alpha} X_n \xrightarrow[n \rightarrow \infty]{} \lrv  \qquad \text{where} \qquad \lrv \coloneqq \Gamma(1-\alpha) \auxproc_\infty. 
\end{align}

\medskip
\noindent 
\emph{\underline{Step 4:} Proving that $\lrv<\infty$ almost surely.}
First, we prove that $\lrv < \infty$ almost surely relying on $L^2$ estimates.
By \eqref{eq:second moment increment auxproc}, the random variables $(\auxproc_{n+1}-\auxproc_n)_{n\ge 1}$ are all in $L^2$ and by the martingale property \eqref{eq:first moment increment auxproc}, they are uncorrelated, hence orthogonal in $L^2$. 
By the Pythagorean theorem, for any $1\leq N_0\leq N$ we have, now using the second moment inequality \eqref{eq:second moment increment auxproc}, 
\[
\Ec{(\auxproc_{N_0} - \auxproc_{N})^2} 
= \sum_{n=N_0}^{N-1}\Ec{(\auxproc_{n} - \auxproc_{n+1})^2} 
\stackrel{\eqref{eq:second moment increment auxproc}}{=} \sum_{n=N_0}^{N-1} O(n^{-2\alpha})
\leq \sum_{n=N_0}^{\infty} O(n^{-2\alpha}) 
= O((N_0)^{1-2\alpha}), 
\]
where we used that $\alpha>1/2$, which is guaranteed by \cref{thm:prev-bound}. 
This ensures that the sequence $(\auxproc_n-\auxproc_1)_{n\geq 1}$ is a Cauchy sequence in $L^2$ so it converges in $L^2$ (hence in probability) towards some almost surely finite random variable $Y$. 
Thus $\auxproc_n \rightarrow \lrv=\auxproc_1+Y$ in probability, which is then almost surely finite.

\medskip

\noindent 
\emph{\underline{Step 5:} Proving that $\lrv$ is almost surely positive.}
Secondly, we argue that $\Ec{\lrv}>0$, or equivalently $\Ec{\auxproc_\infty}>0$.
By \eqref{eq:cond-exp-lim}, 
\begin{equation}\label{eq:E[B_infty]}
\Ec{\auxproc_\infty} 
\geq
\Ec{\auxproc_{n\vee \stpmg}}
=
\Ec{\resproc_{n\vee \stpmg}} - \Ec{\sum_{i > n\vee \stpmg} |R_i|}
\ge 
\Ec{\resproc_{n\vee \stpmg}} - n^{-\eps},
\end{equation}
where the last inequality follows by definition of $\stpmg$ in~\eqref{eq:defn-stpmg}.
By the convergence in probability in \cref{thm:main}, 
we know that $\Pp{\resproc_n \geq n^{-\eps/2}} \geq 1/2$ for $n$ large enough. 
In addition, since $\stpmg$ is almost surely finite, we also have $\Pp{\stpmg>n}<1/3$ for $n$ large enough. 
This implies that for $n$ large enough,
\begin{align*}
\Ec{\resproc_{n\vee\stpmg}}
\geq
\Ec{\resproc_{n\vee\stpmg} \indicator{\resproc_n\ge n^{-\eps/2}} \indicator{\stpmg \le n}}
&\geq
n^{-\eps/2} \Pp{\resproc_n\ge n^{-\eps/2},\stpmg \le n} \\
&\geq
n^{-\eps/2} \Big(\Pp{\resproc_n\ge n^{-\eps/2}} - \Pp{\stpmg > n}\Big) \\
&\geq \frac16 n^{-\eps/2}.
\end{align*}
Therefore we deduce from \eqref{eq:E[B_infty]} that $\Ec{\lrv}>0$.

Now let us prove that $\lrv> 0$ almost surely.
Picking two independent uniform leaves $L_n^{(1)}$ and $L_n^{(2)}$ of $T_n$, we break the tree into three regions: $T_n^{(1)}$ and $T_n^{(2)}$ are the two subtrees above the highest common ancestor of $L_n^{(1)}$ and $L_n^{(2)}$ containing the leaves $L_n^{(1)}$ and $L_n^{(2)}$ respectively, and $T_n^{(3)}$ is the rest of the tree. 
According to \cref{lem:decomp-tree2} we have $(|T_n^{(1)}|-1, |T_n^{(2)}|-1, |T_n^{(3)}|) \sim \DirM(n-2; 1/2,1/2,1/2)$ and conditionally on those numbers, the trees $T_n^{(1)}$ and $T_n^{(2)}$ are independent $p$-signed uniform binary trees with these sizes. 
We then write
\begin{equation}  \label{eq:cvg_An_split}
\frac{X_n}{n^\alpha} \geq  \max
\left( 
\left(\frac{|T_n^{(1)}|}{n}\right)^\alpha \cdot \frac{\LIS(T_n^{(1)})}{|T_n^{(1)}|^\alpha}, \left(\frac{|T_n^{(2)}|}{n}\right)^\alpha \cdot \frac{\LIS(T_n^{(2)})}{|T_n^{(2)}|^{\alpha}}
\right).
\end{equation}
By \cref{lem:discrete-multinomial-distrd-rv},  $\frac{1}{n}(|T_n^{(1)}|, |T_n^{(2)}|, |T_n^{(3)}|) \xlongrightarrow[]{\mathrm{d}} (D_1, D_2, D_3)$ where $(D_1,D_2,D_3) \sim \Dir(1/2,1/2,1/2)$. Besides, Step 3 above implies that, jointly with the last convergence, we have
\begin{equation} \label{eq:cvg_LIS_1_2}
\left(\frac{\LIS(T_n^{(1)})}{|T_n^{(1)}|^\alpha},\frac{\LIS(T_n^{(2)})}{|T_n^{(2)}|^\alpha}\right) \xlongrightarrow[]{\mathrm{\mathrm{d}}} (\lrv^{(1)}, \lrv^{(2)})
\end{equation}
as $n \to \infty$, where $\lrv^{(1)}, \lrv^{(2)}$ are copies of $\lrv$,  independent from each other and from $(D_1,D_2,D_3)$.
Taking $n\rightarrow \infty$ in \eqref{eq:cvg_An_split}, we get that 
\begin{align*}
\lrv \succeq \max\left(D_1^\alpha \cdot \lrv^{(1)}, D_2^\alpha \cdot \lrv^{(2)}\right).
\end{align*}
Thus, 
\begin{align*}
\Pp{\lrv = 0} \leq \Pp{\lrv^{(1)} = \lrv^{(2)}=0} =\Pp{\lrv = 0} ^2, 
\end{align*}
so that $\Pp{\lrv = 0} \in \{0,1\}$. 
We know since $\Ec{\lrv}>0$ that this probability cannot be $1$, so we must have $\Pp{\lrv = 0}=0$.

\medskip

\noindent 
\emph{\underline{Step 6:} Proving that $\lrv$ is not deterministic.}
Finally, we justify using similar arguments that $\lrv$ cannot be a finite constant almost surely.
Assume by contradiction that $\lrv = b\in \intervalleoo{0}{\infty}$ almost surely.  
Then, considering the same decomposition as above, we write $S_n^{(1,2)}$ for the sign in $\{\ominus, \oplus\}$ of the highest common ancestor of the leaves $L_n^{(1)}$ and $L_n^{(2)}$. 
We have 
\begin{align}\label{eq:stochastic domination when node has a plus}
\frac{\LIS(T_n)}{n^\alpha} \geq 
\indicator{\{S_n^{(1,2)} = \oplus\}} \cdot  \left(\left(\frac{|T_n^{(1)}|}{n}\right)^\alpha \cdot \frac{\LIS(T_n^{(1)})}{|T_n^{(1)}|^\alpha} + \left(\frac{|T_n^{(2)}|}{n}\right)^\alpha \cdot \frac{\LIS(T_n^{(2)})}{|T_n^{(2)}|^{\alpha}}\right)
.
\end{align}
Now, from our assumptions and using again Step 3 above, the left-hand side converges in law towards a constant $b$. 
The right-hand side converges by \eqref{eq:cvg_LIS_1_2} towards a random variable 
\begin{align*}
\mathrm{Ber}(p) \cdot b \cdot \left(D_1^\alpha +D_2^\alpha \right),
\end{align*}
where $(D_1,D_2,D_3)\sim \Dir(1/2,1/2,1/2)$ and $\mathrm{Ber}(p)$ denotes a Bernoulli random variable with parameter $p$ independent of everything else. Hence, $b \succeq \mathrm{Ber}(p) \cdot b \cdot \left(D_1^\alpha +D_2^\alpha \right)$.
Since $\alpha<1$, the random variable on the right takes values strictly greater than $b$ with positive probability. 
This is a contradiction, so $\lrv$ is not deterministic.
\end{proof}

\subsection{The limiting random variable $X$ is a measurable function of the Brownian separable permuton}\label{sect:measurability}

We prove the only remaining claim of~\cref{thm:main_permutations}, \emph{i.e.}\ that $X$ is a deterministic measurable function of the Brownian separable permuton. The same explanation works for the companion remaining claim in the case of the Brownian cographon in~\cref{thm:main_graphs}.

Recall from \cref{remk:coupled-trees} that the permutations $\sigma_n$ in~\cref{thm:main_permutations} can be obtained from the $p$-signed uniform binary trees $\widetilde{T}_n=t(\efrak,\sfrak,p;U_1,\dots,U_n)$, where $(U_i)_{i\geq 1}$ are i.i.d.\ uniform random variables in $[0,1]$ and $(\efrak,\sfrak,p)$ is the signed Brownian excursion used to construct the Brownian separable permuton $\bm{\mu}_p$. 
In particular, the trees $(\widetilde{T}_n)_{n\geq 1}$ are coupled as in Rémy's algorithm.

Fix $m\geq 1$ and introduce, for all $n\geq 1$, the tree $\widetilde{T}^{(m)}_n= t(\efrak,\sfrak,p;U_{m+1},\dots U_{m+n})$.
Note that, since the two collections of points $\{U_1,\dots, U_n\}$ and $\{U_{m+1},\dots, U_{m+n}\}$ have all but $2m$ points in common,  
by definition of $\LIS(\cdot)$, we have for all $n\geq 1$ the deterministic bound
\[\left|\LIS(\widetilde{T}^{(m)}_n) - \LIS(\widetilde{T}_n)\right| \leq 2m.\]
As a consequence, from \cref{thm:main3}, we have the almost sure convergence
\begin{align}\label{eq:quenched}
X= 
\lim_{n\rightarrow \infty} n^{-\alpha} \LIS(\widetilde{T}_n) =  \lim_{n\rightarrow \infty} n^{-\alpha} \LIS(\widetilde{T}^{(m)}_n).
\end{align}
Hence, the limiting random variable $X$ can be written as a deterministic measurable function of $((\efrak,\sfrak,p),(U_i)_{i> m})$. 
Since this is true for any $m\geq 1$, we get that $X$ is $\mathcal{F}$-measurable for $\mathcal{F}=\bigcap_{m\geq 1}\sigma((\efrak,\sfrak,p),(U_i)_{i> m})$.
Since in addition the random variables $U_i$ for $i\ge 1$ are i.i.d., by Kolmogorov's $0$-$1$ law, the $\sigma$-field $\mathcal{G}=\bigcap_{m> 1}\sigma((U_i)_{i\geq m})$ is trivial -- in the sense that it only contains events of probability $0$ or $1$. Since  $\mathcal{F}=\sigma(\mathcal{G}\cup\sigma(\efrak,\sfrak,p))$, this ensures that $X$ is almost surely a deterministic measurable function of $(\efrak,\sfrak,p)$, \emph{i.e.}\ of the Brownian separable permuton $\bm{\mu}_p$.

\subsection{Scaling limit of the leftmost maximal positive subtree of $T^k$}\label{subsec:fragmentation_tree}

We prove in this final section the only remaining result, \emph{i.e.}\ \cref{thm:fragmentation_tree}. Recall from \eqref{eq:defn-tklmax} that we introduced the more compact notation $T^{k,\lmax}=(T^k)^{\lmax}$.

\begin{proof}[Proof of~\cref{thm:fragmentation_tree}]
    We first notice that \cref{lem:proba-comp} provides a recursive construction of $T^{k,\lmax}$ as follows. 
    Let $\mathcal{L}^{k,\lmax}$ be the set of leaves of $T^{k,\lmax}$.
    Let $v_1, v_2$ be the two children of the root of $T^{k}$ and let $K_1, K_2$ be their respective number of descendants in $\mathcal{L}^{k,\lmax}$. 
    We choose to order $v_1$ and $v_2$ in such a way that $K_1 \geq K_2$. We also set the convention $K_2=0$ if the root has only one child in $T^{k,\lmax}$, and $K_1=K_2=0$ if it has none (which implies $k=1)$. Then conditionally on $(K_1, K_2)=(k_1, k_2)$, the two trees of descendants of $v_1$ and $v_2$ with leaves in $\mathcal{L}^{k,\lmax}$ are independent, each distributed as $T^{k_1,\lmax}$ and $T^{k_2,\lmax}$ (taking the convention that $T^{0,\lmax}$ is empty). Moreover, the law\footnote{The notation $q_k$ is chosen to be consistent with~\cite{haasmiermont12fragmentation}.} $q_k$ of $(K_1, K_2)$ is given for $k\geq 1$ by
    \begin{alignat*}{3}
        &q_k \left( (k_1,k_2) \right) &&= p \frac{q(k_1) q(k_2)}{q(k)} &&\mbox{if $k_1+k_2=k$ and $1 \leq k_2 < \frac{k}{2}$},\\
        &q_k \left( (k_1,k_2) \right) &&= \frac{1}{2} p \frac{q(k_1) q(k_2)}{q(k)} &&\mbox{if $k_1=k_2=\frac{k}{2}$},\\
        &q_k \left( (k,0) \right) &&= (1-p) \left( \frac{q(k)}{2} + \sum_{i=1}^{k-1} q(i) \right) &&,\\
        &q_k(0,0) &&= \frac{1}{2q(k)} \mathbbm{1}_{k=1}. &&
    \end{alignat*}
    In the terminology of~\cite[Section 1.2.1]{haasmiermont12fragmentation}, this means that $T^{k,\lmax}$ is a Markov branching tree with splitting distributions $q_k$ (more precisely, we are in the case where the splitting is binary, \emph{i.e.}\ $q_k$ is supported on partitions of $k$ with $1$ or $2$ parts). Therefore, in order to conclude by~\cite[Theorem 5]{haasmiermont12fragmentation}, it is sufficient to check that the splitting distributions $(q_k)$ satisfy the assumption (H) of~\cite{haasmiermont12fragmentation}, with the measure $\nu=\nu_{\gamma}$ on binary partitions of $1$ given by~\eqref{eqn:dislocation_measure}.

    Let us now state this assumption (H) explicitly. We first define the normalized version $\overline{q}_k$ of $q_k$ as the measure on $[0,1]^2$ such that, for any measurable function $f:[0,1]^2 \to \bbR_{>0}$, we have
    \[ \int f \,\,\mathrm{d} \overline{q}_k = \sum_{k_1,k_2} q_k \left( (k_1, k_2) \right) f \left( \frac{k_1}{k}, \frac{k_2}{k} \right). \]
    Then assumption (H) with the measure $\nu_{\gamma}$ of~\eqref{eqn:dislocation_measure} states that there is a slowly varying function $\ell(\cdot)$ such that for any continuous function $f: [0,1]^2 \to \bbR_{>0}$, we have
    \begin{equation}\label{eqn:assumption_H_hassmiermont}
        k^{\gamma-1} \ell(k) \int (1-s_1) f(s_1, s_2) \,\, \overline{q}_k (\mathrm{d}s_1, \mathrm{d}s_2) \xrightarrow[k \to \infty]{} \int_0^{1/2} f(1-x,x) x^{1-\gamma} (1-x)^{-\gamma} \, \mathrm{d}x.
    \end{equation}
    Note that the parameter $\gamma$ of~\cite{haasmiermont12fragmentation} corresponds to our $\gamma-1$.  
    It turns out that we can choose $\ell$ to be $1/\varphi$ with $\varphi$ as in Theorem~\ref{thm:good_regularity}, \emph{i.e.}\ $\varphi$ is slowly varying and $q(k)=k^{-\gamma} \varphi(k)$. Using the explicit probabilities $q_k$ given above, the left-hand side of \eqref{eqn:assumption_H_hassmiermont} then rewrites for $k\ge 2$ as
    \[ \frac{k^{\gamma-1}}{\varphi(k)} p \left( \sum_{i=1}^{\lfloor \frac{k-1}{2} \rfloor} \frac{q(k-i) q(i)}{q(k)} \frac{i}{k} f \left( \frac{k-i}{k}, \frac{i}{k} \right) + \mathbbm{1}_{\text{$k$ even}} \frac{1}{2} \frac{q(k/2)^2}{q(k)} \cdot \frac{1}{2} f \left( \frac{1}{2}, \frac{1}{2} \right)\right). \]
    Note first that $\frac{q(k/2)^2}{q(k)}=4^{\gamma}k^{-\gamma} \varphi(k)(1+o(1))$, so the second term is negligible. After replacing $q(k)$ by $k^{-\gamma} \varphi(k)$, the last equation becomes
    \[ p \frac{1}{k}  \sum_{i=1}^{\lfloor \frac{k-1}{2} \rfloor} \left( 1-\frac{i}{k} \right)^{-\gamma} \left( \frac{i}{k} \right)^{1-\gamma} \frac{\varphi(k-i) \varphi(i)}{\varphi(k)^2} f \left( \frac{k-i}{k}, \frac{i}{k} \right) +o(1). \]
    We now fix $\eps>0$, and split the sum according to whether $i<\eps k$ or not. First, by \cref{item:slow_variation1} of Lemma~\ref{lem:slow_variation}, the ratio $\frac{\varphi(k-i) \varphi(i)}{\varphi(k)^2}$ converges to $1$ as $k \to \infty$ uniformly in $i \in \left[ \eps k, \frac{k}{2} \right]$. By Riemann sum approximation, this implies
    \begin{equation}\label{eqn:convergence_disloc_measure_truncated}
        p \frac{1}{k}  \sum_{i=\eps k}^{\lfloor \frac{k-1}{2} \rfloor} \left( 1-\frac{i}{k} \right)^{-\gamma} \left( \frac{i}{k} \right)^{1-\gamma} \frac{\varphi(k-i) \varphi(i)}{\varphi(k)^2} f \left( \frac{k-i}{k}, \frac{i}{k} \right) \xrightarrow[k \to \infty]{} p \int_{\eps}^{1/2} f(1-x,x) x^{1-\gamma} (1-x)^{-\gamma} \, \mathrm{d}x.
    \end{equation}
    
    Now let us now handle the terms where $i<\eps k$. We fix $\delta>0$ small enough so that $\gamma+\delta<2$ (recall from \eqref{eq:gamma-alpha-rel} that $\gamma<2$). Again, by \cref{item:slow_variation1} of Lemma~\ref{lem:slow_variation}, the ratio $\frac{\varphi(k-i)}{\varphi(k)}$ is bounded by an absolute constant which does not depend on $\eps$. So are the values of $f$ and the factors $\left( 1-\frac{i}{k} \right)^{-\gamma}$. On the other hand, \cref{item:slow_variation2} of Lemma~\ref{lem:slow_variation} shows $\frac{\varphi(i)}{\varphi(k)}=O \Big( \Big( \frac{i}{k} \Big)^{-\delta} \Big)$ uniformly in $i<k$, so we can write
    \[ p \frac{1}{k}  \sum_{i=1}^{\eps k} \left( 1-\frac{i}{k} \right)^{-\gamma} \left( \frac{i}{k} \right)^{1-\gamma} \frac{\varphi(k-i) \varphi(i)}{\varphi(k)^2} f \left( \frac{k-i}{k}, \frac{i}{k} \right) = O \left( \frac{1}{k} \sum_{i=1}^{\eps k} \left( \frac{i}{k} \right)^{1-\gamma-\delta} \right) = O \left( \eps^{2-\gamma-\delta} \right).\]
    Combined with~\eqref{eqn:convergence_disloc_measure_truncated}, this shows that~\eqref{eqn:assumption_H_hassmiermont} is satisfied and concludes the proof of Theorem~\ref{thm:fragmentation_tree}.
\end{proof}

\appendix

\section{Numerical simulations for the limiting random variable $X$}\label{sect:num-sim}

We present in~\cref{fig-simulations}~and~\cref{tab:mean_variance} some numerical simulations for the limiting random variables $X(p)$ appearing in \cref{thm:main_permutations}. Recall the notation $\sigma_n(p)\coloneqq \Perm[\bm{\mu}_p,n]$ from \eqref{eq:finite_perm_brownian}.

For the diagram on the left-hand side of~\cref{fig-simulations}, we sampled $100\,000$ independent copies of $\sigma_n(p_i)\coloneqq\Perm[\bm{\mu}_{p_i},n]$ of size $n = 1\,000\,000$, for each $p_i = \frac{i}{10}$ with $i \in \intervalleentier{1}{9}$. We then estimated the density function of (recall \cref{thm:main_permutations})
\[\frac{\LIS(\sigma_n(p_i))}{n^{\alpha(p_i)}} \approx X(p_i)
\qquad \text{for all }i\in \intervalleentier{1}{9},\] 
using the function SmoothKernelDistribution from Mathematica.

For the diagram on the right-hand side of~\cref{fig-simulations}, we sampled $100\,000$ independent uniform separable permutations $\overline{\sigma}_n$ of the same size $n = 1\,000\,000$, and estimated the density function (recall~\cref{conj:sep-and-cograph}) of 
\[\frac{\LIS(\overline{\sigma}_n)}{\symcst \cdot n^{\alpha(1/2)}} \approx X(1/2),\] 
again using SmoothKernelDistribution. The constant $c=\symcst$ was calibrated so that the two estimated densities on the right-hand side of~\cref{fig-simulations} would be as close as possible. More precisely, we chose the constant $c$ as the minimizer of 
\[\int_{0}^{100} |f_c(x)-g(x)|\, \mathrm{d}x,\] 
where $f_c(x)$ is the approximate density function of 
$\frac{\LIS(\overline{\sigma}_n)}{c \cdot n^{\alpha(1/2)}}$ and $g(x)$ is the approximate density function of $\frac{\LIS(\sigma_n(1/2))}{n^{\alpha(1/2)}}$. At $c=\symcst$, the above integral takes the value $0.055$.

Finally, in \cref{tab:mean_variance} we show the approximated values for the mean and variance of the random variables $X(p_i)$.

\begin{figure}[ht]
	\begin{center}
		\includegraphics[width=0.47\textwidth]{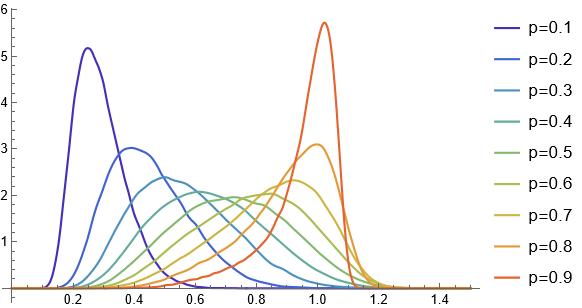} 
        \includegraphics[width=0.47\textwidth]{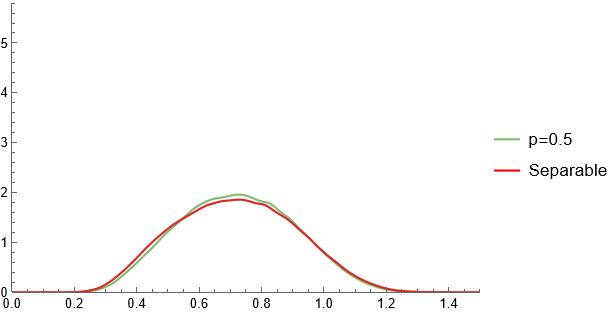} 
		\caption{
        \textbf{Left:} Approximate densities of the random variables $X(p)$. \textbf{Right:} Comparison between the approximate density of the limiting random variable for  $\frac{\LIS(\sigma_n(1/2))}{n^{\alpha(1/2)}}$ when the permutations $\sigma_n$ are sampled from the Brownian separable permuton with parameter $1/2$ and the approximate density of the limiting random variable for $\frac{\LIS(\overline{\sigma}_n)}{\symcst \cdot n^{\alpha(1/2)}}$, when the permutations $\overline{\sigma}_n$ are exactly uniform separable permutations.  
        \label{fig-simulations}}
	\end{center}
	\vspace{-3ex}
\end{figure}

\begin{table}[h!]
\centering
\begin{tabular}{|c|c|c|}
\hline
$p$ & $\Ec{X(p)}$ & $\Var{X(p)}$ \\
\hline
0.1 & 0.288 & 0.007 \\
0.2 & 0.449 & 0.017 \\
0.3 & 0.560 & 0.026 \\
0.4 & 0.648 & 0.032 \\
0.5 & 0.722 & 0.034 \\
0.6 & 0.787 & 0.033 \\
0.7 & 0.846 & 0.029 \\
0.8 & 0.901 & 0.022 \\
0.9 & 0.952 & 0.012 \\
\hline
\end{tabular}
\caption{Approximate mean and variance of $X(p)$ for different values of $p$.\label{tab:mean_variance}}
\end{table}

%
%

\clearpage

\section*{Index of notation}
\addcontentsline{toc}{section}{Index of notation}\label{sect:Index-of-notation}

\begin{flushleft}
  \begin{longtable}{cl}
    $\bm{\mu}_p$ & Brownian separable permuton of parameter $p\in (0,1)$ \\
    $\sigma_n$ & Permutation of size $n$ sampled from $\bm{\mu}_p$ \\
    $\lis(\sigma)$ & Length of the longest increasing sequence of the permutation $\sigma$ \\
    $\bm{W}_p$ & The Brownian cographon of parameter $p\in(0,1)$ \\
    $G_n$ & Random graph on $n$ vertices sampled from the Brownian cographon \\
    $\lcl(G)$, $\lin(G)$ & Size of the largest clique, independent set, of the graph $G$ \\
    $\alpha=\alpha(p)$ & Explicit exponent (depending on the parameter $p$), see \eqref{eq:eq-to-defn-alpha} \\
    $\gamma=\gamma(p)$ & Regular variation exponent related to $\alpha$, see \eqref{eq:gamma-alpha-rel} \\
    $t|_{\mathcal{L}}$ & Subtree of $t$ induced by the set of leaves $\mathcal{L}$, see \cref{sect:positive-subtrees} \\
    $|t|$ & Size (\emph{i.e.}\ number of leaves) of the tree $t$ \\ 
    $\oplus$, $\ominus$ & Signs carried by the nodes of a sign-decorated tree \\
    $\LIS(t)$ & Maximal size of a positive subtree of $t$ \\
    $\mathcal{L}^{\max}_{\cap}(t)$ & Set of leaves in the intersection of all maximal positive subtrees of $t$ \\
    $\uedge{n}, \usign{n}, \uleaf{n}$ & Edge, sign and leaf at step $n$ of Rémy's algorithm, see \cref{sect:remy-algo} \\
    $T_n$ & $p$-signed uniform binary tree with $n$ leaves, see \cref{sect:perm-main} \\
    $T$ & $p$-signed critical binary Bienaymé--Galton--Watson tree \\
    $T^k$ & $T$ conditioned on $\LIS(T) = k$ \\
    $q$, $Q$ & Law and tail function of $\LIS(T)$, see \eqref{eq:def_q_Q} \\
    $X_n$ & $X_n = \LIS(T_n)$, see \eqref{eq:def_X_X_m} \\
    $\svfq$, $\svfbQ$ & Slowly varying functions in \cref{thm:good_regularity} \\
    $r(n)$ & $r(n) =\mathbb{P}(|T|=n)$, see \cref{lem:prelim_GW} \\
    $\Ff_n$, $\revF_n$ & Filtrations associated with the forward, backward, Rémy's algorithm \\
    $\sttm_k$ & Stopping time $\sttm_k = \min \{ n \geq 1 \mid X_n=k\}$, see \eqref{eq:defn-stp-time} \\
    $t^{\lmax}$ & Leftmost maximal subtree of $t$, see \cref{sect:mark-desc} \\
    $L^k$ & Uniformly chosen leaf of $T^{k,\lmax}$ \\
    $Z^k=(S,D,\Mst{},W)$ & Four-component Markov chain, see \eqref{eq:def_MC_Z} \\
    $\eta$ & Death time of $\Mst{}$ \\
    $\mathbb{P}_k$ & Law of $Z^k$ started at $k$, see \cref{rmk:start-notation} \\
    $\pi((\ast, \ast, \ast, \ast) ; (\ast, \ast, \ast, \ast) )$ & Transition probabilities of $Z^k$, see \cref{sect:trans-prob2} \\
    $\pi(\ast ; \ast)$ & Transition probabilities of $\DMC$, see \eqref{eq:transitions decreasing chain} \\
    $\stht{t}$ & Time-change in $\Mst{}$, see \eqref{eq:defn-stopping-times} \\
    $\DMC$ & Decreasing Markov chain, $(\stht{t})$ time-change of $\Mst{}$ \\
    $\mathtt{T}$ & Death time of $\DMC$ \\
    $\theta(m)$ & Parameters of geometric random variable in \eqref{eq:trans_Mtau} \\
    $(B_m)_m$ & Black instructions, see \eqref{eq:blue-distrib} \\
    $(G_m)_m$ & Golden instructions, see \eqref{eq:red-law} \\
    $\mathbb{P}_{k,k'}$ & Law of the coupling of $M, M'$ started from $k,k'$, see \cref{defn:black-golden coupling} \\
    $\widetilde{\mathbb{P}}_{k,k'}$ & Law of the coupling of $Z, Z'$ started from $k,k'$, see \cref{cor:full_coupling} \\
    $\xmer$ & Merging value of $\DMC$ and $\DMC'$, see \cref{prop:coupling_merging} \\
    $\stmer$, $\stmerpr$ & Merging times of $\DMC$ and $\DMC'$, see \cref{prop:coupling_merging} \\
    $\mathrm{GoodScales}$ & A set of good scales, as defined in \cref{defn:good-scales} \\
    $\stt_i$ & Hitting time of $\intervalleentier{0}{i}$ by $\DMC$, see \eqref{eq:stt-defn} \\
    $\Gf_{\ell}$ & $\sigma$-field recording information strictly above $7\ell$, 
    see \eqref{eq:def_Ff} \\
    $\lambda$ & Scaling limit of $\mathcal{L}^{\max}_{\cap}(T^k)$, see \cref{thm:convergence_size_intersection}; later equated with $\alpha$ \\
    $\stdivv$ & Divergence height between $L^k$ and $\widetilde{L}^k$, see \eqref{eq:definition stdivv} \\
  \end{longtable}
\end{flushleft}

\addcontentsline{toc}{section}{References}
\bibliography{cibib,cibib2}
\bibliographystyle{hmralphaabbrv}

\end{document}